\newcommand{\COMMENT}[1]{}
\DeclarePairedDelimiter\abs{\lvert}{\rvert}
\let\oldabs\abs
\def\abs{\@ifstar{\oldabs}{\oldabs*}}
\newcommand{\vast}{\bBigg@{4}}
\newcommand{\Vast}{\bBigg@{5}}
\newcommand{\eps}{\varepsilon}
\newcommand{\ty}{{\tt{y}}}
\newcommand{\tz}{{\tt{z}}}
\newcommand{\tR}{{\tt{R}}}
\newcommand{\R}{\mathbb{R}}
\newcommand{\p}{\partial}
\newcommand{\Ds}{(-\Delta)^{s}}
\newcommand{\Dsx}{(-\Delta_x)^{s}}
\newcommand{\Dsyz}{(-\Delta_{(y,z)})^{s}}
\newcommand{\PV}{\textnormal{P.V.}\,}
\newcommand{\loc}{\textnormal{loc}}
\newcommand{\norm}[2][]{\left\|{#2}\right\|_{#1}}
\newcommand{\sign}{\textnormal{sign}\,}
\newcommand{\set}[1]{\left\{#1\right\}}
\newcommand{\absgrad}[1]{\abs{\nabla#1}}
\newcommand{\textif}{\text{ if }}
\newcommand{\textas}{\text{ as }}
\newcommand{\texton}{\text{ on }}
\newcommand{\textin}{\text{ in }}
\newcommand{\textfor}{\text{ for }}
\newcommand{\textforall}{\text{ for all }}
\newcommand{\textand}{\text{ and }}
\newcommand{\textprovided}{\text{ provided }}
\newcommand{\chiset}[1]{\chi_{\set{#1}}}
\newcommand{\pd}[2]{\frac{\partial#1}{\partial#2}}
\newcommand{\pnu}[1]{\dfrac{\partial{#1}}{\partial\nu}}
\newcommand{\dist}{{\rm dist}\, }
\newcommand{\supp}{{\rm supp}\, }
\newcommand{\cN}{\mathcal{N}}
\newcommand{\cG}{\mathcal{G}}
\newcommand{\cI}{\mathcal{I}}
\newcommand{\cJ}{\mathcal{J}}
\newcommand{\cL}{\mathcal{L}}
\newcommand{\angles}[1]{\left\langle{#1}\right\rangle}
\newcommand{\sqrtt}[1]{\sqrt{1+{#1}^2}}
\theoremstyle{plain}
\newtheorem{thm}{Theorem}[section]
\newtheorem{lem}[thm]{Lemma}
\newtheorem{cor}[thm]{Corollary}
\newtheorem{prop}[thm]{Proposition}
\newtheorem{conj}[thm]{Conjecture}
\theoremstyle{definition}
\theoremstyle{remark}
\newtheorem{remark}[thm]{Remark}
\newcommand{\bremark}{\begin{remark} \em}
\newcommand{\eremark}{\end{remark} }
\numberwithin{equation}{section}
\definecolor{g2}{rgb}{0,0.6,0}
\definecolor{r2}{rgb}{0.8,0,0}
\begin{document}

\title[Deformed catenoidal solutions]{Existence and instability of deformed catenoidal solutions for fractional Allen--Cahn equation}

\author{Hardy Chan}

\author{Yong Liu}

\author{Juncheng Wei}

\address[H.~Chan,~J.~Wei]{Department of Mathematics, University of British Columbia, Vancouver, B.C., Canada, V6T 1Z2}

\email[H.~Chan]{hardychan69@gmail.com}
\email[J.~Wei]{jcwei@math.ubc.ca}

\address[Y.~Liu]{Department of Mathematics, University of Science and Technology of China, Hefei, China}

\email[Y.~Liu]{yliumath@ustc.edu.cn}


%
%
\begin{abstract}
We develop a new infinite dimensional gluing method for fractional elliptic equations. As a model problem,  we construct solutions of the fractional Allen--Cahn equation vanishing on a rotationally symmetric surface which resembles a catenoid and have sub-linear growth at infinity. Moreover, such solutions are unstable. 
\end{abstract}

\maketitle


\section{Introduction}

\subsection{The Allen--Cahn equation}

In this paper we are concerned with the fractional Allen--Cahn equation, which 
takes the form
\begin{equation}\label{eq:fracAC}
\Ds{u}+f(u)=0\quad\textin\R^n
\end{equation}
where 
$f(u)=u^3-u=W'(u)$ is a typical example that $W(u)=\left(\frac{1-u^2}{2}\right)^2$ is a bi-stable, balanced double-well potential.

\medskip

In the classical case when $s=1$, such equation arises in the phase transition phenomenon \cite{Allen-Cahn,Cahn-Hilliard}. Let us consider, in a bounded domain $\Omega$, a rescaled form of the equation \eqref{eq:fracAC},
\[-\eps^2\Delta{u_\eps}+f(u_\eps)=0\quad\textin\Omega.\]
This is the Euler--Lagrange equation of the energy functional
\[J_\eps(u)=\int_{\Omega}\left(\dfrac{\eps}{2}\absgrad{u}^2+\dfrac{1}{\eps}W(u)\right)\,dx.\]
The constant solutions $u=\pm1$ corresponds to the stable phases. For any subset $S\in\Omega$, we see that the discontinuous function $u_S=\chi_S-\chi_{\Omega\setminus{S}}$ minimize the potential energy, the second term in $J_\eps(u)$.
The gradient term, or the kinetic energy, is inserted to penalize unnecessary forming of the interface $\p{S}$.

\medskip

Using $\Gamma$-convergence, Modica \cite{Modica} proved that any family of minimizers $(u_\eps)$ of $J_\eps$ with uniformly bounded energy has to converge to some $u_S$ in certain sense, where $\p{S}$ has minimal perimeter. Caffarelli and C\'{o}rdoba \cite{Caffarelli-Cordoba} proved that the level sets $\set{u_\eps=\lambda}$ in fact converge locally uniformly to the interface.

\medskip

Observing that the scaling $v_\eps(x)=u_\eps(\eps{x})$ solves
\[-\Delta{v_\eps}+f(v_\eps)=0\quad\textin\eps^{-1}\Omega,\]
which formally tends as $\eps\to0$ to \eqref{eq:fracAC}, the intuition is that $v_\eps(x)$ should resemble the one-dimensional solution $\tilde{w}(z)=\tanh\frac{z}{\sqrt2}$ where $z$ is the normal coordinate on the interface $M$, an asymptotically flat minimal surface. Indeed, we have that
\[J_\eps(v_\eps)\approx\text{Area}(M)\int_{\R}\left(\dfrac{1}{2}\tilde{w}'(z)^2+W(\tilde{w}(z))\right)\,dz.\]
Thus a classification of solutions of \eqref{eq:fracAC} was conjectured by E. De Giorgi \cite{DeGiorgi}.

\medskip

\begin{conj}\label{conj:DG}
Let $s=1$. At least for $n\leq8$, all bounded solutions to \eqref{eq:fracAC} monotone in one direction must be one-dimensional, i.e. $u(x)=w(x_1)$ up to a translation and a rotation.
\end{conj}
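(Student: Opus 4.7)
The plan is to follow the classical scheme that has resolved the conjecture in low dimensions (Ghoussoub--Gui for $n=2$, Ambrosio--Cabr\'{e} for $n=3$) combined with Savin's improvement-of-flatness approach for $4\leq n \leq 8$. Up to rotation, the monotone direction can be taken to be $x_n$, so $e:=\partial_n u>0$. Differentiating the equation in each coordinate, both $e$ and $\partial_i u$ solve the linearized equation $-\Delta v + f'(u)v = 0$ in $\R^n$. The existence of the positive solution $e$ is equivalent to the \emph{stability} of $u$, namely $\int_{\R^n}(\absgrad{\phi}^2+f'(u)\phi^2)\,dx\geq 0$ for every $\phi\in C_c^\infty(\R^n)$, which is the starting point for everything that follows.

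The next step is to derive the sharp energy estimate $\int_{B_R}\big(\tfrac{1}{2}\absgrad{u}^2+W(u)\big)\,dx\leq CR^{n-1}$, matching the $(n-1)$-dimensional nature of the anticipated interface; this follows from Modica's pointwise gradient bound $\tfrac12\absgrad{u}^2\leq W(u)$ and an integration by parts. With this in hand, form the ratio $\sigma_i:=\partial_i u/e$, which a direct computation shows satisfies the degenerate equation $\Div(e^2 \nabla \sigma_i)=0$. The Sternberg--Zumbrun identity yields, for every Lipschitz cutoff $\eta$, a bound of the form $\int e^2 \absgrad{\sigma_i}^2 \eta^2\,dx\leq C\int\absgrad{\eta}^2\absgrad{u}^2\,dx$. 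Inserting logarithmic cutoffs on $B_R$ and applying the $R^{n-1}$ energy estimate, the right-hand side is $O(R^{n-1}/\log R)$. For $n=2$ this forces $\sigma_i$ to be constant, so that $\nabla u$ has a fixed direction and $u$ is one-dimensional; a more delicate variant using the stability inequality handles $n=3$.

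For $4\leq n\leq 8$ the linear Liouville argument no longer suffices, and one switches to a geometric blow-down scheme. Consider $u_R(x):=u(Rx)$; the rescaled energy $\Gamma$-converges to a multiple of the perimeter functional, so along a subsequence the nodal sets $\{u_R=0\}$ converge in Hausdorff distance to a set $\Sigma_\infty$ of least perimeter. The monotonicity $\partial_n u>0$ forces $\Sigma_\infty$ to be a minimal graph over $\R^{n-1}$, and the Bernstein theorem for minimal graphs, valid precisely in dimensions $n\leq 8$, then shows $\Sigma_\infty$ is a hyperplane. A quantitative improvement-of-flatness lemma $\grave{\rm a}$ la De Giorgi propagates this asymptotic flatness from arbitrarily large scales down to all scales, eventually upgrading the conclusion to $u(x)=w(x\cdot\nu)$ for some unit $\nu$.

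The main obstacle will be this final improvement-of-flatness step: converting a geometric statement about blow-down interfaces into pointwise rigidity for the PDE requires carefully matching the inner transition profile against the outer minimal-surface geometry at every scale, which forces one to confront the full strength of the Bernstein theorem. This is also where the dimensional restriction $n\leq 8$ enters sharply, since for $n\geq 9$ the existence of nonflat area-minimizing Simons cones makes the conjecture genuinely false, as constructed by del~Pino--Kowalczyk--Wei, so any proof must be flexible enough to detect this dimensional threshold.
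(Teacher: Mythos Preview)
The paper does not prove this statement: it is recorded as a \emph{conjecture}, and the surrounding discussion only surveys its status (Ghoussoub--Gui for $n=2$, Ambrosio--Cabr\'{e} for $n=3$, Savin for $4\leq n\leq 8$ \emph{under an extra limit assumption}, and the del~Pino--Kowalczyk--Wei counterexample for $n\geq 9$). So there is no ``paper's own proof'' to compare against; what you have written is a summary of the known partial results rather than a proof of the conjecture as stated.

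That said, your sketch contains a genuine mathematical gap precisely where the conjecture remains open. In the range $4\leq n\leq 8$ you pass from monotonicity to a blow-down whose limit is an area-minimizing graph, and then invoke Bernstein. But monotonicity $\partial_n u>0$ only gives \emph{stability}; it does not by itself make $u$ a \emph{minimizer} of the energy, which is what drives the $\Gamma$-convergence to a perimeter minimizer. Savin's argument requires the additional hypothesis $\lim_{x_n\to\pm\infty}u(x',x_n)=\pm 1$ (the ``mild limit assumption'' the paper explicitly flags), because this is what upgrades a monotone stable solution to a global minimizer via a sliding/foliation argument. Without that hypothesis the step ``$\Sigma_\infty$ is a minimal graph'' is unjustified, and indeed the full conjecture for $4\leq n\leq 8$ is, to date, unresolved. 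Your outline is a fair description of the state of the art, but it is not a proof of Conjecture~\ref{conj:DG}.
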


\medskip

This conjecture has been proven for $n=2$ by Ghoussoub and Gui \cite{Ghoussoub-Gui:2}, $n=3$ by Ambrosio and Cabr\'{e} \cite{Ambrosio-Cabre}, and for $4\leq{n}\leq8$ under an extra mild limit assumption by Savin \cite{Savin8}. In higher dimensions $n\geq9$, a counter-example has been constructed by del Pino, Kowalczyk and Wei \cite{DelPino-Kowalczyk-Wei:9}. See also \cite{Cabre-Terra,Ghoussoub-Gui:45,Jerison-Monneau}.

\medskip

Concerning solutions that are not monotone, it is known to del Pino, Kowalczyk and Wei \cite{DelPino-Kowalczyk-Wei:catenoid} that solutions exist with zero level set close to nondegenerated minimal surfaces of finite total curvature. From existence results in classical minimal surface theory, this class of solutions is huge. In this article, we aim to construct a non-local analogy of the solution found in the local case \cite{DelPino-Kowalczyk-Wei:catenoid}, whose zero level set is close to the logarithmically growing catenoid. In the case $s\in(\frac12,1)$, it diverges much more from the catenoid and grows sub-linearly at infinity. This is a new phenomenon due to the interaction between the upper and lower ends of the solution. For a precise statement, see \Cref{thm:main} below.

\subsection{The fractional case and non-local minimal surfaces}

While Conjecture \ref{conj:DG} is almost completely settled, a recent and intense interest arises in the study of the fractional non-local equations. A typical non-local diffusion term is the fractional Laplacian $\Ds$, $s\in(0,1)$, which is defined as a pseudo-differential operator with symbol $\abs{\xi}^{2s}$, or equivalently by a singular integral formula
\[
\Ds{u}(x_0)
=C_{n,s}\PV\int_{\R^n}
    \dfrac{
        u(x_0)-u(x)
    }{
        \abs{x_0-x}^{n+2s}
    }\,dx,
\qquad
{C_{n,s}}
=
    \dfrac{
        2^{2s}s(1-s)
            \Gamma\left(\frac{n+2s}{2}\right)
    }{
        \Gamma(2-s)\pi^{\frac{n}{2}}},
\]
for locally $C^2$ functions with at most mild growth at infinity. Caffarelli and Silvestre \cite{Caffarelli-Silvestre} formulated a local extension problem where the fractional Laplacian is realized as a Dirichlet-to-Neumann map. This extension theorem was generalized by Chang and Gonz\'{a}lez \cite{Chang-Gonzalez} in the setting of conformal geometry. Expositions to the fractional Laplacian can be found in \cite{Abatangelo-Valdinoci:Ds, Bucur-Valdinoci, DiNezza-Palatucci-Valdinoci, Gonzalez:survey}. 


\medskip

In a parallel line of thought, $\Gamma$-convergence results have been obtained by Ambrosio, De Philippis and Martinazzi \cite{Ambrosio-DePhilippis-Martinazzi}, Gonz\'{a}lez \cite{Gonzalez:Gamma}, and Savin and Valdinoci \cite{Savin-Valdinoci:Gamma}. The latter authors also proved the uniform convergence of level sets \cite{Savin-Valdinoci:Uniform}. Owing to the varying strength of the non-locality, the energy
\[J_\eps(u)=\eps^{2s}\norm[H^s(\Omega)]{u}+\int_{\Omega}W(u)\,dx\]
$\Gamma$-converges (under a suitable rescaling) to the classical perimeter functional when $s\in[\frac12,1)$, and to a non-local perimeter when $s\in(0,\frac12)$.

\medskip

A singularly perturbed version of \eqref{eq:fracAC} was studied by Millot and Sire \cite{Millot-Sire} for the critical parameter $s=\frac12$, and also by these two authors and Wang \cite{Millot-Sire-Wang} in the case $s\in(0,\frac12)$.

\medskip

In the highly non-local case $s\in(0,\frac12)$, the corresponding non-local minimal surface was first studied by Caffarelli, Roquejoffre and Savin \cite{Caffarelli-Roquejoffre-Savin}.

\medskip

Concerning regularity, Savin and Valdinoci \cite{Savin-Valdinoci:regularity2} proved that any non-local minimal surface is locally $C^{1,\alpha}$ except for a singular set of Hausdorff dimension $n-3$. Caffarelli and Valdinoci \cite{Caffarelli-Valdinoci:regularity} showed that in the asymptotic case $s\to (1/2)^-$, in accordance to the classical minimal surface theory, any $s$-minimal cone is a hyperplane for $n\leq7$ and any $s$-minimal surface is locally a $C^{1,\alpha}$ graph except for a singular set of codimension at least 8. Recently Cabr\'{e}, Cinti and Serra \cite{Cabre-Cinti-Serra} classified stable $s$-minimal cones in $\R^3$ when $s$ is close to $(1/2)^-$. Barrios, Figalli and Valdinoci \cite{Barrios-Figalli-Valdinoci} improved the regularity of $C^{1,\alpha}$ $s$-minimal surfaces to $C^{\infty}$. Graphical properties and boundary stickiness behaviors were investigated by Dipierro, Savin and Valdinoci \cite{Dipierro-Savin-Valdinoci:graph, Dipierro-Savin-Valdinoci:stick}.

\medskip

Non-trivial examples of such non-local minimal surface were constructed by D\'{a}vila, del Pino and Wei \cite{Davila-DelPino-Wei:minimal} at the limit $s\to(1/2)^-$, as an analog to the catenoid. Note that the non-local catenoid they constructed has eventual linear, as opposed to logarithmic, growth at infinity; a similar effect is seen in the construction in the present article.

\medskip

Strong interests are also seen in a fractional version of De Giorgi Conjecture.
\begin{conj}\label{conj:fracDG}
Bounded monotone entire solutions to \eqref{eq:fracAC} must be one-dimensional, at least for dimensions $n\leq8$.
\end{conj}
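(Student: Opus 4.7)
My plan is to lift the problem to the Caffarelli--Silvestre extension and then pursue the Sternberg--Zumbrun / Savin strategy. Writing $U(x,y)$ for the $s$-harmonic extension to $\R^n\times(0,\infty)$ of a bounded solution $u$ of \eqref{eq:fracAC} that is monotone in $x_n$, we have $\textnormal{div}(y^{1-2s}\nabla U)=0$ with the weighted Neumann condition $-\lim_{y\to0^+}y^{1-2s}\partial_y U$ equal to $f(u)$ up to a constant. Differentiating in $x_n$ and applying the Hopf lemma on the weighted half-space, one gets $\partial_{x_n}U>0$ in the interior. This positive supersolution of the linearized operator is the key algebraic object for Liouville-type arguments.

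In low dimensions ($n=2$, and $n=3$ at least for $s\in[\frac12,1)$) I would follow the Ghoussoub--Gui / Ambrosio--Cabr\'{e} route: given any other tangential derivative $\phi=\partial_e U$, the ratio $\phi/\partial_{x_n}U$ should be shown constant by testing the linearized equation against a logarithmic cutoff and exploiting the weighted energy identity. The nonlocal input is a Liouville theorem in the weighted half-space for the $A_2$-weight $y^{1-2s}$; this mirrors the classical case but the test function must be carefully chosen so that the boundary contribution $f'(u)\phi^2/\sigma$ absorbs correctly.

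For higher dimensions I would switch to the minimizer plus blow-down approach. Monotone solutions are shown to be minimizers of the fractional Allen--Cahn energy by a sliding / Savin-type argument, which yields the sharp bound $J_\eps(u(\cdot/\eps))\le CR^{n-1}$ on balls of radius $R$. Rescaling and invoking the $\Gamma$-convergence (to the classical perimeter for $s\in[\frac12,1)$, and to the nonlocal perimeter for $s\in(0,\frac12)$, as cited above), together with the uniform level-set convergence of Savin--Valdinoci, one concludes that blow-downs of $\{u=0\}$ subconverge to a minimizing $(s$-$)$minimal cone. The classification of such cones in low dimensions, combined with an improvement-of-flatness step, would then force $u$ to be one-dimensional.

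The main obstacle is the classification of minimizing nonlocal minimal cones for $s\in(0,\frac12)$ in intermediate dimensions $3\le n\le 7$: outside the perturbative regime $s\to(\tfrac12)^-$ treated by Caffarelli--Valdinoci and the $\R^3$ work of Cabr\'{e}--Cinti--Serra, no Simons-type result is available, and even the Bernstein problem for nonlocal minimal graphs is open in this range. A secondary technical difficulty is the analog of Savin's extra limit assumption in the fractional setting for $4\le n\le 8$, which would require a density estimate and a one-sided flatness result compatible with the weighted extension; the weight $y^{1-2s}$ degenerates or blows up at the boundary and makes the usual De Giorgi-style iteration considerably more delicate.
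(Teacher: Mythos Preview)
The statement you are attempting to prove is a \emph{conjecture}, and the paper does not contain a proof of it. The paper only surveys the known partial results (Sire--Valdinoci and Cabr\'{e}--Sire for $n=2$, Cabr\'{e}--Cinti for $n=3$, Figalli--Serra for $n=4$ with $s=\tfrac12$, and Savin for $4\le n\le 8$ under an additional limit assumption), and then goes in the \emph{opposite} direction: its main theorems construct a non-one-dimensional solution in $\R^3$ whose zero level set is a deformed catenoid (Theorem~\ref{thm:main}) and show it is unstable (Theorem~\ref{thm:unstable}). In particular, the paper establishes that without the monotonicity hypothesis the conclusion of Conjecture~\ref{conj:fracDG} fails already in dimension $3$, and it announces (for a forthcoming paper) counter-examples to the conjecture itself in dimensions $n\ge 9$ for $s\in(\tfrac12,1)$. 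There is therefore no ``paper's own proof'' to compare your proposal against.

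Your outline is a reasonable summary of the strategy behind the cited partial results, and you have correctly identified the genuine obstructions: the classification of minimizing nonlocal minimal cones for $s\in(0,\tfrac12)$ in dimensions $3\le n\le 7$ is open outside the perturbative regime, and removing Savin's extra limit assumption in the fractional setting for $4\le n\le 8$ with $s\in(\tfrac12,1)$ is likewise open. These are not technical gaps in your write-up but unresolved problems in the literature, so your proposal cannot be completed into a proof with current tools. In short, you have sketched the state of the art rather than a proof, and that is all one can do for this statement.
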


\medskip

In the rest of this paper we will focus on the mildly non-local regime. For $s\in[\frac12,1)$ positive results have been obtained: $n=2$ by Sire and Valdinoci \cite{Sire-Valdinoci} and by Cabr\'{e} and Sire \cite{Cabre-Sire2}, $n=3$ by Cabr\'{e} and Cinti \cite{Cabre-Cinti2} (see also Cabr\'{e} and Sol\`{a}-Morales \cite{Cabre-SolaMorales}), $n=4$ and $s=\frac12$ by Figalli and Serra \cite{Figalli-Serra}, and the remaining cases for $n\leq8$ by Savin \cite{Savin8frac} under an additional mild assumption. A natural question is whether or not Savin's result is \emph{optimal}. In a forthcoming paper \cite{Chan-Davila-DelPino-Liu-Wei}, we will construct global minimizers in dimension $8$ and give counter-examples to Conjecture \ref{conj:fracDG} for $n\geq9$ and $s\in(\frac12,1)$.

\medskip

Some work related to Conjecture \ref{conj:fracDG} involving more general operators includes \cite{Bucur, Savin-Valdinoci:1D, Farina-Valdinoci, Cabre-Serra, Dipierro-Serra-Valdinoci}.
For similar results in elliptic systems, the readers are referred to \cite{Berestycki-Lin-Wei-Zhao, Berestycki-Terracini-Wang-Wei, Dipierro, Farina, Farina-Soave, Farina-Sciunzi-Valdinoci, Fazly-Ghoussoub, Wang1, Wang2} for the local case, and
\cite{Brasseur-Dipierro, Dipierro-Pinamonti, Fazly-Sire, Wang-Wei} under the fractional setting.

\medskip

The construction of solution by gluing for non-local equations is a relatively new subject. Du, Gui, Sire and Wei \cite{Du-Gui-Sire-Wei} proved the existence of multi-layered solutions of \eqref{eq:fracAC} when $n=1$. 
Other work involves the fractional Schr\"{o}dinger equation \cite{Chen-Zheng, Davila-DelPino-Wei:NLS}, the fractional Yamabe problem \cite{DelaTorre-Ao-Gonzalez-Wei} and non-local Delaunay surfaces \cite{Davila-DelPino-Dipierro-Valdinoci}.

\medskip

For general existence theorems for non-local equations, the readers may consult, among others, \cite{Chen-Liu-Zheng, Cinti-Davila-DelPino, Figueiredo-Siciliano, Fiscella-Valdinoci, Gui-Zhao, MolicaBisci-Repovs, Pagliardini, Pucci-Saldi, Qiu-Xiang, Tan, TorresLedesma, Wei-Su, Xiang-Zhang-Radulescu} as well as the references therein. Related questions on the fractional Allen--Cahn equations, non-local isoperimetric problems and non-local free boundary problems are also widely studied in \cite{Caffarelli-Savin-Valdinoci, DeSilva-Roquejoffre, DeSilva-Savin, Brasco-Lindgren-Parini, DiCastro-Novaga-Ruffini-Valdinoci, Dipierro-Figalli-Palatucci-Valdinoci, Dipierro-Savin-Valdinoci:free, Figalli-Fusco-Maggi-Millot-Morini, Knuepfer-Muratov, Ludwig}. See also the expository articles \cite{Abatangelo-Valdinoci:curvature, Fusco, Valdinoci}.

\medskip

Despite similar appearance, \eqref{eq:fracAC} for $s\in(0,1)$ is different from that for $s=1$ in a number of striking ways. Firstly, the non-local nature disallows the exact local computations using Fermi coordinates. Secondly, the one-dimensional solution $w(z)$ only has an algebraic decay of order $2s$ at infinity, in contrast to the exponential decay when $s=1$. Thirdly, the fractional Laplacian is a strongly coupled operator and hence it is impossible to ``integrate in parts'' in lower dimensions. Finally the inner-outer gluing using cut-off functions no longer works due to the nonlocality of the fractional operator.

\medskip

The purpose of this article is to establish a \emph{new} gluing approach for fractional elliptic equations for constructing solutions with a layer over higher-dimensional sub-manifolds. In particular, in the second part we will apply it to partially answer Conjecture \ref{conj:fracDG}. To overcome the aforementioned difficulties, the main tool is an expansion of the fractional Laplacian in the Fermi coordinates, a refinement of the computations already seen in \cite{Chan-Wei}, supplemented by technical integral calculations.  This can be considered {\it fractional Fermi coordinates}. When applying an infinite dimensional Lyapunov--Schmidt reduction, the orthogonality condition is to be expressed in the extension. The essential difference from the classical case \cite{DelPino-Kowalczyk-Wei:catenoid} is that the inner problem is subdivided into many pieces of size $R=o(\eps^{-1})$, where $\eps$ is the scaling parameter, so that the manifold is nearly flat on each piece. In this way, in terms of the Fermi normal coordinates, the equations can be well approximated by a model problem.

\subsection{A brief description}
We define an approximate solution $u^*(x)$ using the one-dimensional profile in the tubular neighborhood of $M_\eps=\set{\abs{x_n}=F_\eps(\abs{x'})}$, namely $u^*(x)=w(z)$ where $z$ is the normal coordinate and $F_\eps$ is close to the catenoid $\eps^{-1}\cosh^{-1}(\eps\abs{x'})$ near the origin. In contrast to the classical case we take into account the non-local interactions near infinity and define $u^*(x)=w(z_+)+w(z_-)+1$ where $z_\pm$ are the signed distances to the upper and lower leaves $M_\eps^{\pm}=\set{x_n=\pm{F}_\eps(\abs{x'})}$. As hinted in Corollary \ref{cor:F0}, 
$F_\eps(r)\sim{r}^{\frac{2}{2s+1}}$ as $r\to+\infty$. The parts of $u^*$ will be smoothly glued to the constant solutions $\pm1$ to the regions where the Fermi coordinates are not well-defined.

\medskip

We look for a real solution of the form $u=u^*+\varphi$, where $\varphi$ is small and satisfies
\begin{equation}\label{eq:varphi}
\Ds\varphi+f'(u^*)\varphi=g.
\end{equation}
Our new idea is to localize the error in the near interface into many pieces of diameter $R=o(\eps^{-1})$ for another parameter $R$ which is to be taken large. At each piece the hypersurface is well-approximated by some tangent hyperplane. Therefore, using Fermi coordinates, it suffices to study the model problem where $u^*(x)$ is replaced by $w(z)$ in \eqref{eq:varphi}.

\medskip

As opposed to the local case $s=1$, an integration by parts is not available for the fractional Laplacian in the $z$-direction, unless $n=1$. 
So we develop a linear theory using the Caffarelli--Silvestre local extension \cite{Caffarelli-Silvestre}.  

\medskip

Finally we will solve a non-local, non-linear reduced equation which takes the form
\[\begin{cases}
H[F_\eps]=O(\eps^{2s-1})
    &\textfor1<r\leq{r_0},\\
H[F_\eps]=\dfrac{C\eps^{2s-1}}{F_\eps^{2s}}(1+o(1))
    &\textfor{r}>{r_0},
\end{cases}\]
where $H[F_\eps]$ denotes the mean curvature of the surface described by $F_\eps$.  (Note that the surface is adjusted far away through the nonlocal interactions of the leafs. A similar phenomenon has been observed in Agudelo, del Pino and Wei \cite{Agudelo-DelPino-Wei} for $s=1$ and dimensions $\geq 4$.) A solution of the desired form
can be obtained using the contraction mapping principle, justifying the \emph{a priori} assumptions on $F_\eps$.

In this setting, our main result can be stated as follows.

\medskip

\begin{thm}\label{thm:main}
Let $1/2<s<1$ and $n=3$. For all sufficiently small $\eps>0$, there exists a rotationally symmetric solution $u$ to \eqref{eq:fracAC} with the zero level set $M_\eps=\set{(x',x_3)\in\R^3:\abs{x_3}=F_\eps(\abs{x'})}$, where
\[F_\eps(r)\sim\begin{cases}
\eps^{-1}\cosh^{-1}(\eps{r})
    &\textfor{r}\leq{r_\eps},\\
{r}^{\frac{2}{2s+1}}
    &\textfor{r}\geq{\delta_0\abs{\log\eps}r_\eps},
\end{cases}\]
where $r_\eps=\left(\frac{\abs{\log\eps}}{\eps}\right)^{\frac{2s-1}{2}}$ and $\delta_0>0$ is a small fixed constant.
\end{thm}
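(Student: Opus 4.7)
The plan is to follow the infinite-dimensional Lyapunov--Schmidt reduction scheme sketched in the introduction, with the catenoid profile $F_\eps$ as the infinite-dimensional parameter. First, I would construct an approximate solution $u^*$ adapted to the candidate surface $M_\eps$. Near each leaf, the Fermi normal coordinate $z$ is well defined, and I take $u^*(x) = w(z)$ as a first guess. However, because $w(z) - \sign(z) = O(|z|^{-2s})$ only decays algebraically, the two leaves of the catenoid interact non-trivially. I would therefore set $u^*(x) = w(z_+) + w(z_-) + 1$ where $z_\pm$ are signed distances to the upper and lower leaves, smoothly interpolated with $\pm1$ outside a tubular neighborhood where the Fermi coordinates fail. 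The size of the leaves should be dictated by the asymptotic $F_\eps(r) \sim \eps^{-1}\cosh^{-1}(\eps r)$ for $r \le r_\eps$ and $F_\eps(r) \sim r^{2/(2s+1)}$ for large $r$, which is consistent with Corollary \ref{cor:F0}.

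Next, writing $u = u^* + \varphi$, the equation becomes $\Ds\varphi + f'(u^*)\varphi = E(u^*) + N(\varphi)$, where $E(u^*) = -\Ds u^* - f(u^*)$ and $N(\varphi) = -(f(u^* + \varphi) - f(u^*) - f'(u^*)\varphi)$. The new idea is to cut the tubular neighborhood into patches of radius $R = o(\eps^{-1})$; on each patch $M_\eps$ is nearly flat, so after flattening via Fermi coordinates the linearization is well approximated by the model operator $\Ds + f'(w(z))$ on $\R^n$. I would then develop the linear theory for this model problem using the Caffarelli--Silvestre extension --- enforcing the orthogonality condition $\int \varphi\, w'(z)\, dz = 0$ in the extended variable and producing a bounded right inverse in a weighted $L^\infty$ norm that captures the algebraic decay in $z$ and the correct radial decay along the surface. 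Patching these local inverses together using a partition of unity on $M_\eps$ --- not on $\R^n$ directly, since cut-offs and $\Ds$ do not commute --- and controlling the commutator errors through the refined Fermi expansion of $\Ds$ (a refinement of the computation of \cite{Chan-Wei}) gives a linear theory for $\Ds + f'(u^*)$ modulo the kernel generated by $w'(z)$. A fixed point argument in a suitably chosen Banach space then solves the projected (outer) problem for $\varphi = \varphi[F_\eps]$.

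The projection procedure leaves a reduced equation for $F_\eps$, which, after testing with $w'(z)$ and computing the dominant terms in each regime, takes the non-local mean-curvature form
\[
\begin{cases}
H[F_\eps] = O(\eps^{2s-1}) & \text{for } 1 < r \le r_0,\\[2pt]
H[F_\eps] = \dfrac{C\eps^{2s-1}}{F_\eps^{2s}}(1+o(1)) & \text{for } r > r_0.
\end{cases}
\]
The right-hand side in the far regime encodes the non-local coupling between the two leaves and forces the sub-linear growth $F_\eps \sim r^{2/(2s+1)}$ (balancing $F^{-2}$ and $F^{-2s}$ yields $F^{2s-1} \sim r$, i.e. $F \sim r^{2/(2s+1)}$). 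I would linearize the Jacobi operator $\cJ = H'[F_0]$ of the catenoid around $F_0(r) = \eps^{-1}\cosh^{-1}(\eps r)$, use its explicit kernel (coming from translations and dilations) to set up a Fredholm-type solvability, and then solve by contraction in a weighted norm incorporating the two different asymptotics, thereby justifying \emph{a posteriori} the ansatz on $F_\eps$.

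The main obstacle I expect is the interplay between the non-locality and the two-scale geometry. The linear analysis on each patch is delicate because $f'(w)$ has the indefinite kernel $w'$, and one must control the tails of $\Ds$ crossing patch boundaries; here the absence of exponential decay (only $O(|z|^{-2s})$) makes the cross-patch error marginal and forces the precise choice $R = o(\eps^{-1})$. Equally subtle is the matching between the inner regime, where $F_\eps$ tracks the classical catenoid and the reduced equation looks like the local Jacobi equation, and the outer regime, where the non-local leaf--leaf interaction dominates over curvature; obtaining uniform estimates of the inverse of $\cJ$ across the transition at $r \sim r_\eps$ --- so that the contraction closes --- is the technical heart of the argument.
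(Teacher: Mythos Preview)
Your overall strategy tracks the paper closely: the two-leaf additive ansatz for $u^*$, the Fermi expansion of $\Ds$, the patch decomposition at scale $R=o(\eps^{-1})$, the extension-based linear theory with orthogonality to $w_z$, and the form of the reduced equation all match. One structural point: the paper does not patch local inverses on $M_\eps$ but writes $\varphi=\phi_o+\sum_{j}\zeta_j\phi_j$ with an \emph{outer} piece governed by the positive operator $\Ds+2$ (solved first for given inner data) and \emph{inner} pieces (compact patches $\phi_i$ plus two end pieces $\phi_\pm$), and then runs a single fixed point on the coupled system; the commutators $[\Ds,\zeta_j]$ are estimated, not avoided, and carry extra decay $R^{-2s}$.

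The genuine gap is in solving the reduced equation. Linearizing around the pure catenoid $F_0(r)=\eps^{-1}\cosh^{-1}(\eps r)$ does not close: after rescaling, the far-field residual is $-\bar{C}_0\eps^{2s-1}F_0^{-2s}\sim\eps^{2s-1}(\log r)^{-2s}$, and since the catenoid Jacobi operator is essentially $\Delta$ at infinity, inverting produces a correction growing like $r^{2}(\log r)^{-2s}$, which overtakes $F_0\sim\log r$ and destroys the perturbative setup. The paper instead first constructs $F_0$ as the catenoid glued at $r=r_\eps$ to the solution $f_\eps$ of the nonlinear ODE $f_\eps''+f_\eps'/r=\eps^{2s-1}f_\eps^{-2s}$, proves via an Emden--Fowler substitution and a Hamiltonian/phase-plane argument that $f_\eps(r)\sim r^{2/(2s+1)}$, and only then linearizes around this composite $F_0$. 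The resulting operator carries the potential $2s\eps^{2s-1}F_0^{-(2s+1)}$ in the far region, whose kernels (coming from the scaling symmetry of the nonlinear ODE, not from catenoid Jacobi fields) decay like $r^{-(2s-1)/(2s+1)}$; this decay is exactly what makes the right inverse bounded in the weighted norm $\norm[*]{\cdot}$ and allows the contraction to close. (Incidentally, your balancing heuristic has a slip: matching $F''\sim Fr^{-2}$ against $F^{-2s}$ gives $F^{2s+1}\sim r^{2}$, not $F^{2s-1}\sim r$, though your final exponent $2/(2s+1)$ is correct.)
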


We remark that, while the proof is given for the specific nonlinearity $f(u)=u-u^3$, the same construction works for more general nonlinearities associated to double-well potentials, with obvious modifications.

\medskip

As an immediate consequence, without the monotonicity condition, \Cref{conj:fracDG} is not true in dimension 3.

\medskip

The curvature estimates of \cite{Figalli-Serra} provides an easy indirect proof for the instability of such solution. Recall that a solution to \eqref{eq:fracAC} is \emph{stable} if and only if
\[
\int_{\R^n}
	\varphi\Ds\varphi+f'(u)\varphi
\,dx
\geq 0,
	\quad \text{ for all } \varphi\in C_c^\infty(\R^n).
\]

\begin{thm}\label{thm:unstable}
The solution obtained in Theorem \ref{thm:main} is unstable.
\end{thm}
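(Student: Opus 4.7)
The plan is to argue by contradiction. Suppose the solution $u$ obtained in \Cref{thm:main} is stable, so that
\[
Q(\varphi):=\int_{\R^3}\varphi\Ds\varphi\,dx+\int_{\R^3}f'(u)\varphi^2\,dx\geq0
\qquad\textforall\varphi\in C_c^\infty(\R^3).
\]
Since $u$ is a bounded smooth solution of the fractional Allen--Cahn equation on $\R^3$ with $s\in(1/2,1)$, I would invoke the curvature estimates of Figalli and Serra \cite{Figalli-Serra} for stable solutions. These estimates give scale-invariant control on the second fundamental form of the level sets of $u$, which combined with a standard blow-down argument of De Giorgi type forces every bounded stable solution in $\R^3$ in the mildly non-local regime to be one-dimensional, namely $u(x)=w(a\cdot x+b)$ for some unit vector $a\in\R^3$ and constant $b\in\R$. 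In particular, the zero level set $\{u=0\}$ must be a single affine hyperplane.

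This contradicts the explicit structure of the solution from \Cref{thm:main}. The zero level set is the rotationally symmetric surface $M_\eps=\{|x_3|=F_\eps(|x'|)\}$, whose two leaves $M_\eps^{\pm}=\{x_3=\pm F_\eps(|x'|)\}$ are disjoint, separated by a catenoidal waist, and grow like $F_\eps(r)\sim r^{2/(2s+1)}$ at infinity. In particular $M_\eps$ is not contained in any hyperplane, ruling out the one-dimensional conclusion and hence ruling out stability.

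The delicate step above is ensuring that the Figalli--Serra rigidity indeed yields one-dimensionality of bounded stable solutions for the full range $s\in(1/2,1)$ in dimension three; if the cited estimate is not directly available in this form, I would replace this step by a direct construction of a negative test function for $Q$. Using that the classical catenoid in $\R^3$ is an unstable minimal surface of Morse index one, there is a compactly supported $\zeta$ on the catenoid with $\int_M(|\nabla\zeta|^2-|A|^2\zeta^2)<0$. Setting $\varphi(x)=\zeta_\eps(x')w'(z)$, where $z$ is the Fermi normal coordinate to $M_\eps^+$ and $\zeta_\eps$ rescales $\zeta$ to the geometry of $M_\eps$, a Fermi-coordinate expansion of $Q$ analogous to the one developed in the construction of $u$ reproduces at leading order a positive multiple of the Jacobi quadratic form of the classical catenoid, yielding $Q(\varphi)<0$ for all sufficiently small $\eps$. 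Either route produces the instability.
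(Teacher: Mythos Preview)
Your first route---argue by contradiction and invoke Figalli--Serra to force one-dimensionality---is exactly the paper's approach. The gap you yourself flag (``ensuring that the Figalli--Serra rigidity indeed yields one-dimensionality for the full range $s\in(1/2,1)$'') is real, and the paper resolves it rather than bypassing it. The point is that in \cite{Figalli-Serra} the hypothesis $s=\tfrac12$ is used \emph{only} to prove the sharp energy growth $E_R(u)\le CR^{2}$ for stable solutions; everything downstream (the blow-down, the curvature/flatness argument, and the one-dimensionality conclusion) works verbatim once that bound is in hand. For the specific solution constructed in Theorem~\ref{thm:main}, the energy bound $E_R(u)\le CR^2$ follows directly from the asymptotic description of $u$ (the layer profile over a surface of area $O(R^2)$, with $s>\tfrac12$ so the nonlocal energy localizes to the classical perimeter scaling). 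So you do not need Figalli--Serra's energy estimate for general stable solutions at $s\neq\tfrac12$; you need it only for \emph{this} $u$, and that is a computation from the construction. With this observation the contradiction goes through.

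Your alternative route via a negative test function built from the unstable Jacobi field of the catenoid is a different and in principle workable idea, but it is substantially harder to make rigorous here: the fractional Laplacian does not split along Fermi coordinates, the interface $M_\eps$ is only a catenoid on a bounded region (it is sublinearly growing for large $r$), and the second-variation expansion around a two-leaf nonlocal layer involves cross terms between the leaves. The paper's indirect argument avoids all of this.
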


\medskip

In a forthcoming paper \cite{Chan-Davila-DelPino-Liu-Wei}, together with Juan D\'{a}vila and Manuel del Pino,  we will construct similarly a family of global minimizers based on the Simons' cone. Via the Jerison--Monneau program \cite{Jerison-Monneau}, this provides counter-examples to the De Giorgi conjecture for fractional Allen--Cahn equation in dimensions $n\geq9$ for $s\in(\frac12,1)$.

\begin{remark}
Our approach depends crucially on the assumption $s\in(\frac12,1)$. Firstly, it is only in this regime that the local mean curvature \emph{alone} appears in the main order error estimate. A related issue is also seen in the choice of those parameters between $0$ and (a factor times) $2s-1$. Secondly, it gives the $L^2$ integrability of an integral involving the kernel $w_z$ in the extension. It will be interesting to see whether this gluing method will work in the case $s=\frac12$ under suitable modifications.

On the other hand, we do not know yet how to deal with other pseudo-differential operators which cannot be realized locally.
\end{remark}

\medskip

This paper is organized as follows. We outline the argument with key results in Section \ref{sec:outline}. In Section \ref{sec:error} we compute the error using an expansion of the fractional Laplacian in the Fermi coordinates. In Section \ref{sec:linear} we develop a linear theory and then the gluing reduction is carried out in Section \ref{sec:gluing}. Finally in Section \ref{sec:reduced} we solve the reduced equation.

\bigskip

\subsection*{Acknowledgements}
H.C. and J.W. are supported by NSERC of Canada.

This article is an expansion of a major part of H.C.'s Ph.D. thesis completed at the University of British Columbia and he is very grateful to his advisors Prof. Juncheng Wei and Prof. Nassif Ghoussoub for their patient guidance and constant encouragement. After the first draft has been written, H.C. thanks Prof. Manuel del Pino for some knowledge deepening discussions and a suggestion for reorganizing the series of papers on this subject, as well as the kind hospitality he has received at the University of Bath. H.C. is also indebted to Prof. Alessio Figalli for a particularly helpful comment concerning a principal value integral.

\bigskip

\section{Outline of the construction}\label{sec:outline}

\subsection{Notations and the approximate solution}
Let
\begin{itemize}
\item $s\in(\frac12,1)$, $\alpha\in(0,2s-1)$, $\tau\in\left(1,1+\frac{\alpha}{2s}\right)$, 
\item $M$ be an approximation to the catenoid defined by the function $F$, $$M=\set{(x',x_n):\abs{x_n}=F(\abs{x'}),\,\abs{x'}\geq1},$$
\item $\eps>0$ be the scaling parameter in $M_\eps=\eps^{-1}M=\set{x_n=F_\eps(\abs{x'})=\eps^{-1}F(\eps\abs{x'})}$,
\item $z$ be the normal coordinate direction in the Fermi coordinates of the rescaled manifold, i.e. signed distance to the $M_\eps$, with $z>0$ for $x_n>F(\eps\abs{x'})>0$,
\item $y_+$, $z_+$ be respectively the projection onto and signed distance (increasing in $x_n$) to the upper leaf
    $$M_\eps^+=\set{x_n=F_\eps(\abs{x'})},$$
\item $y_-$, $z_-$ be respectively the projection onto and signed distance (decreasing in $x_n$) to the lower leaf
    $$M_\eps^-=\set{x_n=-F_\eps(\abs{x'})},$$
\item $\bar\delta>0$ be a small fixed constant so that the Fermi coordinates near $M_\eps$ is defined for $\abs{z}\leq\frac{8\bar\delta}{\eps}$,
\item $\bar{R}>0$ be a large fixed constant,
\item $R_0$ be the width of the tubular neighborhood of $M_\eps$ where Fermi coordinates are used, see \eqref{eq:approxsol},
\item $R_1$ be the radius of the cylinder from which the main contribution of $\Ds$ is obtained, see Proposition \ref{prop:error},
\item $R_2>\frac{4\bar{R}}{\eps}$ be the radius of the inner gluing region (i.e. threshold of the end, see Section \ref{sec:gluingreduction}),
\item $u^*_{o}(x)=\sign\left(x_n-F_\eps(\abs{x'})\right)$ for $x_n>0$ and is extended continuously (i.e. $u^*_o(x)=+1$ for $\abs{x'}\leq\eps^{-1}$),
\item $\eta:\R\to[0,1]$ be a cut-off with $\eta=1$ on $(-\infty,1]$ and $\eta=0$ on $[2,+\infty)$,
\item $\chi:\R\to[0,1]$ be a cut-off with $\chi=0$ on $(-\infty,0]$ and $\chi=1$ on $[1,+\infty)$,
\item $\kappa_i$ be the principal curvatures and $H_{M_\eps}=\frac{\kappa_1+\kappa_2}{2}$ be the mean curvature of the surface $M_{\eps}$,
\item $\norm[\alpha]{\kappa}$ ($0\leq\alpha<1$) be the H\"{o}lder norm of the curvature, see Lemma \ref{lem:g},
\item $\angles{x}=\sqrt{1+\abs{x}^2}$.
\end{itemize}
Define the approximate solution
\begin{equation}\label{eq:approxsol}\begin{split}
u^*(x)
&=\eta\left(\dfrac{\eps\abs{z}}{\bar{\delta}R_0(\abs{x'})}\right)
\left(w(z)+\chi\left(\abs{x'}-\dfrac{\bar{R}}{\eps}\right)(w(z_+)+w(z_-)+1-w(z))\right)\\
&\qquad+\left(1-\eta\left(\dfrac{\eps\abs{z}}{\bar{\delta}R_0(\abs{x'})}\right)\right)u^*_{o}(x),
\end{split}\end{equation}
where
\[R_0=R_0(\abs{x'})=1+\chi\left(\abs{x'}-\bar{R}\right)\left(F_\eps^{2s}(\abs{x'})-1\right).\]
Roughly,
\begin{itemize}
\item $u^*(x)=+1$ for large $\abs{z}$, small $\abs{x'}$ and large $\abs{x_n}$,
\item $u^*(x)=-1$ for large $\abs{z}$, large $\abs{x'}$ and small $\abs{x_n}$,
\item $u^*(x)=w(z)$ for small $\abs{z}$ and small $\abs{x'}$,
\item $u^*(x)=w(z_+)+w(z_-)+1$ for small $\abs{z}$ and large $\abs{x'}$.
\end{itemize}

The main contributions of $\Ds$ come from the inner region with certain radius. We choose such radius that joins a small constant times $\eps^{-1}$ to a superlinear power of $F_\eps$ as $\abs{x'}$ increases. More precisely, let us set
\begin{equation}\label{eq:R1}
R_1=R_1(\abs{x'})=\eta\left(\abs{x'}-\dfrac{2\bar{R}}{\eps}+2\right)\dfrac{\bar{\delta}}{\eps}+\left(1-\eta\left(\abs{x'}-\dfrac{2\bar{R}}{\eps}+2\right)\right)F_{\eps}^{\tau}(\abs{x'}),
\end{equation}
where $\tau\in\left(1,1+\frac{\alpha}{2s}\right)$. We remark that the factor $2$ is inserted to make sure that $u^*(x)=w(z_+)+w(z_-)-1$ in the whole ball of radius $F_\eps^\tau(\abs{x'})$ where the main order terms of $\Ds{u^*}$ are obtained.

\subsection{The error}

Denote the error by $S(u^*)=\Ds{u^*}+(u^*)^3-u^*$. In a tubular neighborhood where the Fermi coordinates are well-defined, write $x=y+z\nu(y)$ where $y=y(\abs{x'})=(\abs{x'},F_\eps(\abs{x'}))\in{M_\eps}$ and $\nu(y)=\nu(y(\abs{x'}))=\dfrac{(-DF_\eps(\abs{x'}),1)}{\sqrtt{|DF_\eps(\abs{x'})|}}$ be the unit normal pointing up in the upper half space (and down in the lower half). 

\begin{prop}\label{prop:error}
Let $x=\ty+z\nu(\ty)\in\R^n$. If $\abs{z}\leq{R_1}$, where $R_1$ as in \eqref{eq:R1}, then we have
\[S(u^*)(x)=\begin{cases}
c_H(z)H_{M_\eps}(\ty)+O(\eps^{2s}),
    &\textfor\dfrac{1}{\eps}\leq{r}\leq\dfrac{4\bar{R}}{\eps},\\
c_H(z_{+})H_{M_\eps^+}(\ty_{+})+c_H(z_{-})H_{M_\eps^-}(\ty_{-})\\
    \qquad+3(w(z_{+})+w(z_{-}))(1+w(z_{+}))(1+w(z_{-}))+O\left(F_{\eps}^{-2s\tau}\right),
    &\textfor{r}\geq\dfrac{4\bar{R}}{\eps}.
\end{cases}\]
\end{prop}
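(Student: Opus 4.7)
The overall strategy is to use a \emph{fractional Fermi-coordinate expansion} of $\Ds$ — the one alluded to in the introduction, refined from \cite{Chan-Wei} — which asserts that for a sufficiently smooth hypersurface $N\subset\R^n$ with unit normal $\nu$, and for $x=y+z\nu(y)$ in a tubular neighbourhood,
\[
\Ds w(z(x))
= \pzs w(z)
+ c_H(z)\,H_N(y)
+ \mathcal{R}(x),
\]
where the remainder $\mathcal{R}$ collects (i) quadratic-in-curvature corrections coming from the Jacobian of the normal map, (ii) tangential derivatives of the principal curvatures entering through the expansion of $\abs{x-x'}^2$ in Fermi coordinates, and (iii) non-local tail integrals coming from the portion of $\R^n$ where either Fermi coordinates break down or where the ansatz $u^*$ already differs from $w(z)$. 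Since $w$ solves $\pzs w+f(w)=0$, this cancels the nonlinearity on each one-dimensional slice and leaves $c_H(z)H_N(y)+\mathcal{R}$.

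For the inner region $\eps^{-1}\leq r\leq 4\bar R/\eps$, I would apply this expansion to $N=M_\eps$. The rescaled catenoidal surface satisfies $\abs{\kappa_i},\abs{\nabla\kappa_i}\lesssim\eps$ uniformly, and here $R_1=\bar\delta/\eps$, so the first two pieces of $\mathcal{R}$ are $O(\eps^2)$. The outer tail coming from $\abs{x-x'}>R_1$ is bounded via the standard estimate $\int_{\abs{y}>R_1}\abs{y}^{-n-2s}\,dy\lesssim R_1^{-2s}=O(\eps^{2s})$, which dominates since $s<1$. In the subregion where the cut-off $\chi$ is non-zero, $u^*$ is modified from $w(z)$ by $\chi(w(z_+)+w(z_-)+1-w(z))$; using $w(z_-)+1=O(\angles{z_-}^{-2s})$ together with the separation $\abs{z_-}\gtrsim\eps^{-1}$ of the two leaves, this perturbation is $O(\eps^{2s})$, and by the regularity of $w$ and $\chi$ it contributes $O(\eps^{2s})$ to both $\Ds u^*$ and $f(u^*)-f(w)$, yielding the claimed remainder.

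For the outer region $r\geq 4\bar R/\eps$, I would exploit the linearity of $\Ds$ and the fact that, on $B_{R_1}(x)$, $u^*=w(z_+)+w(z_-)+1$; the choice $R_1=F_\eps^\tau$ together with the extra factor $2$ in \eqref{eq:R1} precisely ensures this. Applying the Fermi expansion separately to $M_\eps^\pm$,
\[
\Ds u^* = -f(w(z_+))-f(w(z_-)) + c_H(z_+)H_{M_\eps^+}(\ty_+) + c_H(z_-)H_{M_\eps^-}(\ty_-) + \mathcal{R}_+ + \mathcal{R}_-.
\]
Adding $f(u^*)=f(w(z_+)+w(z_-)+1)$ and using the algebraic identity
\[
f(a+b+1)-f(a)-f(b)=3(a+b)(1+a)(1+b),\qquad a=w(z_+),\;b=w(z_-),
\]
produces the claimed main-order expression. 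It then remains to check that $\mathcal{R}_\pm$ and the non-local tail outside $B_{R_1}$ are $O(F_\eps^{-2s\tau})$; this uses $\abs{H_{M_\eps^\pm}}\lesssim F_\eps^{-1}$, the Hölder bound $\norm[\alpha]{\kappa}$, and $R_1^{-2s}=F_\eps^{-2s\tau}$, while the constraint $\tau<1+\alpha/(2s)$ is exactly what guarantees the curvature contributions produced inside $B_{R_1}$ remain dominated by the sharp one.

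The main obstacle will be the tail estimate in the outer region: the principal-value integral defining $\Ds$ has to be decomposed according to whether $x'$ lies inside or outside the Fermi tube of each leaf, and the various interaction integrals between the two leaves must be shown to decay at the rate $F_\eps^{-2s\tau}$ rather than the naive rate $F_\eps^{-2s}$. This is a more delicate version of the tail analysis in \cite{Chan-Wei}, and it is here that the choice of $\tau$ barely exceeding $1$ and the Hölder regularity of the curvature will enter essentially.
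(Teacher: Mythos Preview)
Your proposal is correct and follows the paper's approach: the Fermi-coordinate expansion (Proposition~\ref{prop:DsFermi}) is applied to $M_\eps$ in the near region and to each leaf $M_\eps^\pm$ separately in the far region, with the algebraic identity $f(a+b+1)-f(a)-f(b)=3(a+b)(1+a)(1+b)$ producing the interaction term exactly as you describe. One clarification on where the work actually lies: the anticipated obstacle of cross-leaf tail integrals does not arise --- the leaf interaction enters only algebraically through $f$, and the substantive analysis is the single-leaf expansion itself (Lemmas~\ref{lem:g}--\ref{lem:3to1}, expanding the kernel, the Jacobian, and handling the principal value), where the constraint $\tau<1+\alpha/(2s)$ is used to ensure $\norm[\alpha]{\kappa}R_1^{2s}=O(1)$ so that all inner curvature corrections are absorbed into $O(R_1^{-2s})$.
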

The proof is given in Section \ref{sec:error}.

\subsection{The gluing reduction}\label{sec:gluingreduction}

We look for a solution of \eqref{eq:fracAC} of the form $u=u^*+\varphi$ so that
$$\Ds\varphi+f'(u^*)\varphi=S(u^*)+N(\varphi)\quad\textin\R^n,$$
where $N(\varphi)=f(u^*+\varphi)-f(u^*)-f'(u^*)\varphi$. Consider the partition of unity
$$1=\tilde{\eta}_o+\tilde{\eta}_{+}+\tilde{\eta}_{-}+\sum_{i=1}^{\bar{i}}\tilde{\eta}_i,$$
where the support of each $\tilde{\eta}_i$ is a region of radius $R$ centered at some $\ty_i\in{M_\eps}$, and $\tilde{\eta}_\pm$ are supported on a tubular neighborhood of the ends of $M_\eps^{\pm}$ respectively. It will be convenient to denote $\cI=\set{1,\dots,\bar{i}}$ and $\cJ=\cI\cup\set{+,-}$. For $j\in\cJ$, let $\zeta_j$ be cut-off functions such that the sets $\set{\zeta_j=1}$ include $\supp\tilde{\eta}_j$, with comparable spacing that is to be made precise. We decompose
$$\varphi=\phi_o+\zeta_{+}\phi_{+}+\zeta_{-}\phi_{-}+\sum_{i=1}^{\bar{i}}\zeta_i\phi_i=\phi_o+\sum_{j\in{\cJ}}\zeta_j\phi_j,$$
in which
\begin{itemize}
\item $\phi_o$ solves for the contribution of the error away from the interface (support of $\tilde{\eta}_o$),
\item $\phi_\pm$ solves for that in the far interfaces near $M_\eps^\pm$ (support of $\tilde{\eta}_\pm$),
\item $\phi_i$ solves for that in a compact region near the manifold (support of $\tilde{\eta}_i$).
\end{itemize}
In the following we write $\Delta_{(y,z)}=\Delta_{y}+\p_{zz}$. We consider the approximate linear operators
\[\begin{cases}
L_o=\Ds+2&\textfor\phi_o,\\
L=\Dsyz+f'(w)&\textfor\phi_j,\quad{j\in\cJ}.
\end{cases}\]
Notice that $w$ is not exactly the approximate solution in the far interface.
We rearrange the equation as
\begin{gather*}
\Ds\left(\phi_o+\sum_{j\in{\cJ}}\zeta_j\phi_j\right)+f'(u^*)\left(\phi_o+\sum_{j\in{\cJ}}\zeta_j\phi_j\right)=S(u^*)+N(\varphi),
\end{gather*}
\begin{multline}\label{eq:varphicombined}
L_o\phi_o+\zeta_{+}L\phi_{+}+\zeta_{-}L\phi_{-}+\sum_{i=1}^{\bar{i}}\zeta_iL\phi_i\\
=\left(\tilde{\eta}_o+\tilde{\eta}_{+}+\tilde{\eta}_{-}+\sum_{i=1}^{\bar{i}}\tilde{\eta}_i\right)
    \Bigg(S(u^*)+N(\varphi)+(2-f'(u^*))\phi_o-\sum_{j\in{\cJ}}[\Dsyz,\zeta_j]\phi_j\\
    +\sum_{j\in{\cJ}}\zeta_j(f'(w)-f'(u^*))\phi_j-\sum_{j\in\cJ}(\Dsx-\Dsyz)(\zeta_j\phi_j)\Bigg),
\end{multline}
where $[\Dsyz,\zeta_j]\phi_j=\Dsyz(\zeta_j\phi_j)-\zeta_j\Dsyz\phi_j$, and the summands in the last term means
\[\Dsx(\zeta_j\phi_j)(Y_j(y)+z\nu(Y_j(y)))-\Dsyz(\bar\eta_j\bar\zeta\bar\phi(y,z))\]
for $\zeta_j=\bar\eta_j(y)\bar{\zeta}(z)$ and $\phi_j(Y_j(y)+z\nu(Y_j(y)))=\bar{\phi}_j(y,z)$ with a chart $\ty=Y_j(y)$ of $M_\eps$. In fact, for $j\in\cI$ one can parameterize $M_\eps$ locally by a graph over a tangent hyperplane, and for $j\in\set{+,-}$ one uses the natural graph $M_\eps^{\pm}=\set{(y,\pm{F}_\eps(\abs{y})):\abs{y}\geq{R_2}}$.

Let us denote the last bracket of the right hand side of \eqref{eq:varphicombined} by $\cG$. Since $\tilde\eta_j=\zeta_j\tilde\eta_j$, we will have solved \eqref{eq:varphicombined} if we get a solution to the system
\begin{equation*}\begin{cases}
L_o\phi_o=\tilde{\eta}_{o}\cG&\textfor{x}\in\R^n,\\
L\bar\phi_+=\tilde{\eta}_{+}\cG&\textfor(y,z)\in\R^{n-1}\times\R,\\
L\bar\phi_-=\tilde{\eta}_{-}\cG&\textfor(y,z)\in\R^{n-1}\times\R,\\
L\bar\phi_i=\tilde{\eta}_{i}\cG&\textfor(y,z)\in\R^{n-1}\times\R,
\end{cases}\end{equation*}
for all $i\in\cI$. Except the outer problem with $L_o=\Ds+2$, the linear operator $L$ in all the other equations has a kernel $w'$ and so we will use an infinite dimensional Lyapunov--Schmidt reduction procedure.

From now on we consider the product cut-off functions, defined in the Fermi coordinates $(\ty,z)$ where $\ty=Y(y)$ is given by a chart of $M_\eps$,
$$\tilde\eta_{j}(x)=\eta_{j}(\ty)\zeta(z),\quad\textfor{j}\in\cJ.$$
The diameters of $\zeta(z)$ and $\eta_i(\ty)$ are of order $R$, a parameter which we choose to be large (before fixing $\eps$). We may assume, without loss of generality, that for $i\in\cI$, $\eta_i(\ty)$ is centered at $\ty_i\in{M_\eps}$, $B_{R}(\ty_i)\subset\set{\tilde{\eta}_i=1}\subset\supp\tilde{\eta}_i\subset{B_{2R}(\ty_i)}$, $\abs{D\tilde\eta_i}=O(R^{-1})$, and $\frac{\abs{\ty_{i_1}-\ty_{i_2}}}{R}\geq{c}>0$ for any $i_1,i_2\in\cI$.

We define the \emph{projection} orthogonal to the kernels $w'(z)$,
\[\Pi{g}(y,z)=g(y,z)-c(y)w'(z),\quad
c(y)=\dfrac{\displaystyle\int_{\R}\zeta(\tilde{z})g(y,\tilde{z})w'(\tilde{z})\,d\tilde{z}}
{\displaystyle\int_{\R}\zeta(\tilde{z})w'(\tilde{z})^2\,d\tilde{z}}.\]
Note that in the region of integration $\abs{z}\leq2R<\bar{\delta}\eps^{-1}$ the Fermi coordinates are well-defined, and that the projection is independent of $j\in\cJ$.

We define the norm
\[\norm[\mu,\sigma]{\phi}=\sup_{(y,z)\in\R^n}\angles{y}^\mu\angles{z}^{\sigma}\abs{\phi(y,z)},\]
where $\angles{y}=\sqrt{1+|y|^2}$. Motivated by Proposition \ref{prop:error} and Lemma \ref{lem:aprioriyz}, for each $i\in\cI$ we expect the decay
\[\norm[\mu,\sigma]{\bar\phi_i(y,z)}\leq{C}R^{\mu+\sigma}\angles{\ty_i}^{-\frac{4s}{2s+1}}.\]
So we define
\[\norm[i,\mu,\sigma]{\phi_i}=\angles{\ty_i}^{\theta}\norm[\mu,\sigma]{\bar\phi_i}=\angles{\ty_i}^{\theta}\sup_{(y,z)\in\R^{n}}\angles{y}^{\mu}\angles{z}^{\sigma}\abs{\bar\phi_i(y,z)},\]
with $1<\theta<1+\frac{2s-1}{2s+1}=\frac{4s}{2s+1}<2s$.
At the ends $M_\eps^{\pm}$ where $r\geq{R_2}$, we have, for $\mu<\frac{4s}{2s+1}-\theta$,
\[\norm[\mu,\sigma]{\bar\phi_\pm(y,z)}\leq{C}R_2^{-\left(\frac{4s}{2s+1}-\mu\right)}.\]
This suggests
\[\norm[\pm,\mu,\sigma]{\phi_\pm}=R_2^{\theta}\norm[\mu,\sigma]{\bar\phi_\pm}=R_2^{\theta}\sup_{(y,z)\in\R^{n}}\angles{y}^{\mu}\angles{z}^{\sigma}\abs{\bar\phi_\pm(y,z)},\]
with $0<\theta<\frac{2s-1}{2s+1}-\mu$. Therefore for $j\in\cJ$, we consider the Banach spaces
\[X_j=\set{\phi_j:\norm[j,\mu,\sigma]{\phi_j}<\tilde{C}\delta},\]
under the constraint $R\leq\abs{\log\eps}$, $\delta=\delta(R,\eps)=R^{\mu+\sigma}\eps^{\frac{4s}{2s+1}-\theta}$ with $1<\theta<1+\frac{2s-1}{2s+1}=\frac{4s}{2s+1}$. For other parameters, we take $0<\mu<\frac{4s}{2s+1}-\theta<\theta$ sufficiently small and $R_2$ sufficiently large, so that $R_2^{\mu}\delta$ is small and $2-2s<\sigma<2s-\mu$. The decay of order $\sigma>2-2s$ in the $z$-direction will be required in the orthogonality condition \eqref{eq:orthoext}. That $R_2^\mu\delta$ is small will be used in the inner gluing reduction. The condition $\sigma+\mu<2s$ ensures that the contribution of the term $(2-f'(u^*))\phi_o$ is small compared to $S(u^*)$.


We will first solve the outer equation for $\phi_o$. Let us write $M_{\eps,R}=\set{\ty+z\nu(\ty):\ty\in{M_\eps}\textand\abs{z}<R}$ for the tubular neighborhood of $M_\eps$ with width $R$.
\begin{prop}\label{prop:outer}
Consider
\[\norm[\theta]{\phi_o}=\sup_{(x',x_n)\in\R^n}\angles{x'}^{\theta}\angles{\dist\left(x,M_{\eps,R}\right)}^{2s}\abs{\phi_o(x)},\]
\[X_o=\set{\phi_o:\norm[\theta]{\phi_o}\leq\tilde{C}\eps^{\theta}}.\]

If $\phi_j\in{X_j}$ for all $j\in\cJ$ with $\sup_{j\in\cJ}\norm[j,\mu,\sigma]{\phi_j}\leq1$, then there exists a unique solution $\phi_o=\Phi_o((\phi_j)_{j\in\cJ})$ to
\begin{multline}\label{eq:outer}
L_o\phi_o=\tilde{\eta}_{o}\cG
=\tilde{\eta}_{o}\Bigg(S(u^*)+N(\varphi)+(2-f'(u^*))\phi_o-\sum_{j\in{\cJ}}[\Dsyz,\zeta_j]\phi_j\\
+\sum_{j\in{\cJ}}\zeta_j(f'(w)-f'(u^*))\phi_j-\sum_{j\in\cJ}(\Dsx-\Dsyz)(\zeta_j\phi_j)\Bigg)\quad\textin\R^n
\end{multline}
in $X_o$ such that for any pairs $(\phi_j)_{j\in\cJ}$ and $(\psi_j)_{j\in\cJ}$ in the respective $X_j$ with $\sup_{j\in\cJ}\norm[j,\mu,\sigma]{\phi_j}\leq1$, we have
\begin{equation}\label{eq:Phio}
\norm[\theta]{\Phi_o((\phi_j)_{j\in\cJ})-\Phi_o((\psi_j)_{j\in\cJ})}
    \leq{C}\eps^{\theta}\sup_{j\in\cJ}\norm[j,\mu,\sigma]{\phi_j-\psi_j}.
\end{equation}
\end{prop}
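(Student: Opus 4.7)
The equation \eqref{eq:outer} is implicit in $\phi_o$ through the self-interaction terms $(2-f'(u^*))\phi_o$ and $N(\varphi) = N(\phi_o+\sum\zeta_j\phi_j)$. The natural strategy is to view it as the fixed-point problem
\[
\phi_o = L_o^{-1}\bigl[\tilde\eta_o\,\cG(\phi_o;(\phi_j)_{j\in\cJ})\bigr]
\]
and apply the Banach contraction principle on the ball $X_o$, treating the inner data $(\phi_j)_{j\in\cJ}$ as frozen parameters. The crucial observation is that $f'(\pm 1)=2$ and $u^*\equiv\pm 1$ outside the tubular neighborhood $M_{\eps,R}$, so the coefficient $2-f'(u^*)$ is uniformly small on $\supp\tilde\eta_o$; this smallness is what will make the implicit dependence of $\cG$ on $\phi_o$ harmless.

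\textbf{Step 1: linear theory for $L_o$.} The first task is to develop a weighted $L^\infty$ theory for $L_o=\Ds+2$: given a source $g$ with finite seminorm
\[
\norm[(o)]{g}:=\sup_{x\in\R^n}\angles{x'}^\theta\angles{\dist(x,M_{\eps,R})}^{2s+\sigma'}\abs{g(x)},
\]
for some small $\sigma'>0$, produce a unique $\phi_o$ with $L_o\phi_o=g$ and $\norm[\theta]{\phi_o}\le C\,\norm[(o)]{g}$. I would construct a barrier of the form $\angles{x'}^{-\theta}\angles{\dist(\cdot,M_{\eps,R})}^{-2s}$ and check that $L_o$ applied to it dominates any source with $\norm[(o)]{\cdot}$-norm equal to one: the constant $+2$ absorbs the $\angles{x'}^{-\theta}$ weight, while the fractional Laplacian acting on the distance weight produces the extra $\angles{\dist}^{-2s-\sigma'}$-decay. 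Existence and the $L^\infty$-bound then follow from the maximum principle applied in the Caffarelli--Silvestre extension, while uniqueness is automatic since $L_o$ is coercive.

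\textbf{Step 2: bounds on each piece of $\cG$.} With the $(\phi_j)_{j\in\cJ}$ satisfying $\sup_j\norm[j,\mu,\sigma]{\phi_j}\le 1$, each of the six contributions to $\tilde\eta_o\,\cG$ must be estimated in $\norm[(o)]{\cdot}$. On $\supp\tilde\eta_o$ one has $|z|\gtrsim R$, so using the algebraic tails $|w(z)\mp 1|\lesssim\angles{z}^{-2s}$ together with Proposition \ref{prop:error} and a direct nonlocal estimate in the far field gives $\norm[(o)]{\tilde\eta_o S(u^*)}\le C\eps^\theta$. The nonlinear term $N(\varphi)=3u^*\varphi^2+\varphi^3$ is quadratic in $\varphi$ and therefore controlled by $(\norm[\theta]{\phi_o}+\sum_j\norm[j,\mu,\sigma]{\phi_j})^2$. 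The factor $2-f'(u^*)$ in $(2-f'(u^*))\phi_o$ is of order $\angles{z}^{-2s}$ on $\supp\tilde\eta_o$, giving an extra $R^{-2s}$ smallness. The three remaining terms $[\Dsyz,\zeta_j]\phi_j$, $\zeta_j(f'(w)-f'(u^*))\phi_j$ and $(\Dsx-\Dsyz)(\zeta_j\phi_j)$ all involve the inner corrections multiplied by factors supported away from the interface; they are controlled by combining the $\norm[j,\mu,\sigma]{\cdot}$ bounds on $\phi_j$ with the smoothness of the Fermi change of coordinates and the cut-off $\zeta_j$, yielding an extra gain in either $R^{-2s}$ or $\eps$.

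\textbf{Step 3: contraction, and the main obstacle.} Combining Steps 1 and 2, the map $\phi_o\mapsto L_o^{-1}[\tilde\eta_o\cG(\phi_o)]$ sends $X_o$ into itself for $\tilde C$ large and $\eps$ small; the contraction property in $\phi_o$ follows from the smallness of the coefficient $2-f'(u^*)$ on $\supp\tilde\eta_o$ together with the quadratic character of $N$. The Lipschitz dependence \eqref{eq:Phio} on $(\phi_j)_{j\in\cJ}$ is then obtained by subtracting the fixed-point identity for two tuples $(\phi_j)$ and $(\psi_j)$ and using the linearity of the commutator, change-of-coordinates, and $\zeta_j(f'(w)-f'(u^*))$ terms together with the local Lipschitzness of $N$. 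The step I expect to be most delicate is the bound on $(\Dsx-\Dsyz)(\zeta_j\phi_j)$, which mixes the nonlocality of $\Ds$ with the geometry of $M_\eps$: one must carefully expand $\Dsx$ in the Fermi coordinates as in the proof of Proposition \ref{prop:error}, and extract a factor controlled by $\eps^{2s-1}\norm[\alpha]{\kappa}$ from the curvature, which is precisely what allows the contraction to close.
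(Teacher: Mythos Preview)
Your fixed-point strategy is exactly what the paper does, and your identification of the two self-interaction terms $(2-f'(u^*))\phi_o$ and $N(\varphi)$ as the sources of the contraction constant (with smallness coming from $R^{-2s}$ and $\eps^\theta$ respectively) matches the paper's argument precisely.

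The one place where you take a slightly different route is Step~1. The paper's linear theory (Lemma~\ref{lem:Ds+2}) does not introduce an auxiliary source norm with extra $\sigma'$-decay in the distance variable, nor does it build a barrier. Instead it exploits the purely structural fact that $\tilde\eta_o\cG$ \emph{vanishes identically} on $M_{\eps,R}$ (since $\supp\tilde\eta_o$ is disjoint from the tubular neighborhood by construction of the partition of unity). With the source supported at distance $\geq R$ from the interface, the $\angles{\dist(x,M_{\eps,R})}^{-2s}$ decay of the solution follows directly from the $|x|^{-(n+2s)}$ decay of the Green's function for $\Ds+2$, while the $\angles{x'}^{-\theta}$ decay is obtained by a maximum principle. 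This is cleaner than your proposed barrier construction: you would otherwise have to verify the extra $\sigma'$-decay for every term in $\cG$, whereas the vanishing on $M_{\eps,R}$ is automatic and makes the issue disappear. Your approach would still work, but it does more than necessary.

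Your instinct that $(\Dsx-\Dsyz)(\zeta_j\phi_j)$ is the most delicate term is correct; the paper isolates this (together with the commutators and the $f'(w)-f'(u^*)$ terms) into a separate preliminary lemma (Lemma~\ref{lem:nonlocalterms}), where the gain indeed comes from the Fermi-coordinate expansion of $\Dsx$ and produces a factor $R\norm[0]{\kappa}=O(R\eps)$, consistent with what you anticipated.
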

The proof is carried out in Section \ref{sec:gluingouter}.

Now the equations
\[L\bar\phi_{j}(y,z)=\eta_j(y)\zeta(z)\cG(y,z)\]
are solved in two steps:

\noindent (1) eliminating the part of error orthogonal to the kernels, i.e.
\begin{equation}\label{eq:gluingperp}
L\bar\phi_{j}(y,z)=\eta_j(y)\zeta(z)\Pi{\cG}(y,z);
\end{equation}
 (2) adjust $F_\eps(r)$ such that $c(y)=0$, i.e. to solve the reduced equation
\begin{equation}\label{eq:gluingparallel}
\int_{\R}\zeta(z)\cG(y,z)w'(z)\,dz=0.
\end{equation}

Using the linear theory in Section \ref{sec:linear}, step (1) is proved in the following
\begin{prop}\label{prop:inner}
Suppose $\mu\leq\theta$. Then there exists a unique solution $(\phi_j)_{j\in\cJ}$, $\phi_j\in{X_j}$, to the system
\begin{multline}\label{eq:inner}
L\bar\phi_j=\tilde{\eta}_{j}\Pi\cG
=\eta_j\zeta\Pi\Bigg(S(u^*)+N(\varphi)+(2-f'(u^*))\phi_o-\sum_{j\in{\cJ}}[\Dsyz,\zeta_j]\phi_j\\
+\sum_{j\in{\cJ}}\zeta_j(f'(w)-f'(u^*))\phi_j-\sum_{j\in\cJ}(\Dsx-\Dsyz)(\zeta_j\phi_j)\Bigg)\quad\textfor(y,z)\in\R^n.
\end{multline}
\end{prop}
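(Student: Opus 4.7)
The plan is to reformulate the system \eqref{eq:inner} as a fixed-point equation on the product Banach space $\prod_{j\in\cJ}X_j$ and invoke the contraction mapping principle. By Proposition \ref{prop:outer}, once $(\phi_j)_{j\in\cJ}$ is fixed, the outer component $\phi_o=\Phi_o((\phi_j)_{j\in\cJ})$ is uniquely determined and depends Lipschitz-continuously on $(\phi_j)$ with constant $O(\eps^\theta)$. Consequently the right-hand side $\cG$ of \eqref{eq:inner} may be regarded as a map $\cG=\cG((\phi_k)_{k\in\cJ})$, and the reduced system reads
\[L\bar\phi_j=\eta_j\zeta\,\Pi\cG((\phi_k)_{k\in\cJ}),\qquad j\in\cJ.\]
Applying the linear theory of Section \ref{sec:linear} to the model operator $L=\Dsyz+f'(w)$ on the subspace orthogonal to the kernel $w'(z)$ produces a bounded right inverse $T$ sending weighted data with norm $\angles{y}^{-\mu}\angles{z}^{-\sigma-2s}$ to solutions in an $\norm[\mu,\sigma]{\cdot}$ class. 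Setting $\cT((\phi_k)_{k\in\cJ})=(T(\eta_j\zeta\,\Pi\cG))_{j\in\cJ}$, solving \eqref{eq:inner} becomes equivalent to finding a fixed point of $\cT$ in the closed ball of radius $\tilde C\delta$ in $\prod_{j\in\cJ}X_j$.

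To verify the self-mapping property I estimate each summand of $\cG$ in the norm dual to that of $T$, multiplied by the prescribed weight $\angles{\ty_i}^\theta$ (or $R_2^\theta$ at the ends). The driving term $S(u^*)$ is controlled by Proposition \ref{prop:error}; after applying $\Pi$ the leading mean-curvature contribution $c_H(z)H_{M_\eps}(\ty)$ is eliminated, and the remainder contributes to $\norm[j,\mu,\sigma]{\cdot}$ an amount of order $\delta=R^{\mu+\sigma}\eps^{4s/(2s+1)-\theta}$. The commutator $[\Dsyz,\zeta_j]\phi_j$ gains a factor $R^{-2s}$ from the slow variation of $\zeta_j$, which together with $\sigma+\mu<2s$ keeps the contribution below $\delta$. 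The Fermi-coordinate discrepancy $(\Dsx-\Dsyz)(\zeta_j\phi_j)$ is handled by the expansion of the fractional Laplacian in Fermi coordinates developed in Section \ref{sec:error}, producing a curvature factor controlled by $\eps$ near the waist and by $F_\eps^{-2s}$ at the ends. The outer coupling $(2-f'(u^*))\phi_o$ is small because $\norm[\theta]{\phi_o}=O(\eps^\theta)$ and the factor $2-f'(u^*)$ is supported away from the interface, with $\sigma+\mu<2s$ providing the required weight gain. The mismatch $\zeta_j(f'(w)-f'(u^*))\phi_j$ vanishes at the waist where $u^*=w$, while at the ends $u^*-w=w(z_\mp)+1=O(F_\eps^{-2s})$, which is compatible with the prescribed decay. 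Finally $N(\varphi)$ is quadratic in $\varphi=\phi_o+\sum_j\zeta_j\phi_j$ and contributes at order $\delta^2$, negligible for small $\eps$.

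The contraction estimate is obtained by applying the same bounds to differences $(\phi_j)-(\psi_j)$ of two input tuples, exploiting the Lipschitz bound \eqref{eq:Phio} for the outer dependence and the linearity of the commutator and discrepancy terms in $\phi_j$; the nonlinear term contributes an additional factor of order $\delta$, which is negligible. The assumption $\mu\le\theta$ enters precisely to reconcile the waist and end weights in the cross-piece estimates. The main obstacle is the quantitative handling of the two genuinely nonlocal contributions, the commutator $[\Dsyz,\zeta_j]\phi_j$ and the Fermi-coordinate discrepancy $(\Dsx-\Dsyz)(\zeta_j\phi_j)$, both of which generate non-diagonal interactions between distinct pieces $j_1\ne j_2$; these must be controlled using the decay of the singular kernel $|x-\tilde x|^{-(n+2s)}$ together with the separation $|\ty_{i_1}-\ty_{i_2}|\gtrsim R$, so that the cross-interactions do not spoil the diagonal dominance of $\cT$. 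Once these estimates are in place, the Banach fixed-point theorem yields the unique $(\phi_j)_{j\in\cJ}\in\prod_{j\in\cJ}X_j$ solving \eqref{eq:inner}.
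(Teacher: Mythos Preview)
Your overall architecture is correct and matches the paper's: reformulate \eqref{eq:inner} as a fixed point for $\cT$ on $\prod_{j\in\cJ}X_j$, invoke Proposition~\ref{prop:linearexist} for the right inverse of $L$ on data orthogonal to $w'$, estimate each summand of $\cG$ in the $\norm[j,\mu,\sigma]{\cdot}$ norms using Lemma~\ref{lem:nonlocalterms} for the nonlocal pieces, and close by contraction.

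There is, however, a genuine misunderstanding in your treatment of $S(u^*)$. You write that ``after applying $\Pi$ the leading mean-curvature contribution $c_H(z)H_{M_\eps}(\ty)$ is eliminated.'' This is false: the projection $\Pi$ subtracts only the $w'$-component, and $c_H(z)$ is \emph{not} a multiple of $w'(z)$. After projection one still has $\Pi(c_H H_{M_\eps})=(c_H-\bar c\,w')H_{M_\eps}$ with $\bar c=\int c_H\zeta w'/\int \zeta (w')^2$, which is of the same size $H_{M_\eps}$ as before. The role of $\Pi$ here is purely to guarantee solvability (orthogonality to the kernel), not to reduce the magnitude of the error; the paper even remarks that the norms can be estimated ignoring $\Pi$ up to a constant. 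The reason $\norm[j,\mu,\sigma]{\tilde\eta_j S(u^*)}=O(\delta)$ is the intrinsic decay $|H_{M_\eps}(\ty)|\lesssim\angles{\ty}^{-4s/(2s+1)}$ (and likewise the interaction term $\sim F_\eps^{-2s}$ at the ends), combined with $c_H(z)\lesssim\angles{z}^{-(2s-1)}$. With this correction your bound $O(\delta)$ is recovered, but the mechanism is different from what you stated.

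A minor second point: the linear theory of Proposition~\ref{prop:linearexist} gives $\norm[\mu,\sigma]{\phi}\le C\norm[\mu,\sigma]{g}$ with the \emph{same} weights on both sides; there is no gain of $\angles{z}^{-2s}$ from data to solution as you wrote. This does not affect your argument, since you only need boundedness of $T$, but the stated mapping property is inaccurate.
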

The proof is given in Section \ref{sec:gluinginner}.

Step (2) is outlined in the next subsection.

\subsection{Projection of error and the reduced equation}
As shown above, the error is to be projected onto $w'$ weighted with a cut-off function $\zeta$ supported on $[-2R,2R]$. In fact we have

\begin{prop}[The reduced equation]\label{prop:reduced}
In terms of the rescaled function $F(r)=\eps{F}_\eps(\eps^{-1}{r})$ and its inverse $r=G(\tz)$ where $G:[0,+\infty)\to[1,+\infty)$, \eqref{eq:gluingparallel} is equivalent to the system
\begin{equation}\label{eq:reduced1}
\begin{cases}
H_{M}(G(\tz),\tz)=\left(\dfrac{G'(\tz)}{\sqrtt{G'(\tz)}}\right)'-\dfrac{1}{G(\tz)\sqrtt{G'(\tz)}}=N_1[F]&\textfor{0}\leq\tz\leq\tz_1,\\
H_{M}(r,F(r))=\dfrac{1}{r}\left(\dfrac{rF'(r)}{\sqrtt{F'(r)}}\right)'=N_1[F]&\textfor{r_1}\leq{r}\leq{4\bar{R}},\\
F''(r)+\dfrac{F'(r)}{r}-\dfrac{\bar{C}_0\eps^{2s-1}}{F^{2s}(r)}=N_2[F]&\textfor{r}\geq{4\bar{R}},\\
\end{cases}
\end{equation}
subject to the boundary conditions
\begin{equation}\label{eq:boundary}\begin{cases}
G(0)=1\\
G'(0)=0\\
F(r_1)=\tz_1\\
F'(r_1)=\dfrac{1}{G'(\tz_1)},
\end{cases}\end{equation}
where $\tz_1=F(r_1)=O(1)$, $N_1[F]=O(\eps^{2s-1})$ and $N_2[F]=o\left(\dfrac{\eps^{2s-1}}{F_0^{2s}(r)}\right)$, with $F_0$ as in Corollary \ref{cor:F0}. Moreover, $N_1$ and $N_2$ have a Lipschitz dependence on $F$.
\end{prop}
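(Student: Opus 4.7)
The plan is to expand the projection equation \eqref{eq:gluingparallel} by substituting the explicit form of $\cG$ used in \eqref{eq:inner} and then applying the error estimate of Proposition \ref{prop:error}. Of the six terms inside $\cG$, namely $S(u^*)$, $N(\varphi)$, $(2-f'(u^*))\phi_o$, the commutators $[\Dsyz,\zeta_j]\phi_j$, the Fermi-flattening difference $(\Dsx-\Dsyz)(\zeta_j\phi_j)$, and the correction $\zeta_j(f'(w)-f'(u^*))\phi_j$, only $S(u^*)$ carries the leading order. Once $\phi_o=\Phi_o((\phi_j)_{j\in\cJ})$ and $(\phi_j)_{j\in\cJ}$ are the fixed points produced by Propositions \ref{prop:outer} and \ref{prop:inner}, the remaining contributions are controlled in the weighted norms introduced there, so that after testing against $w'(z)\zeta(z)$ they are absorbed into $N_1[F]$ or $N_2[F]$ with Lipschitz dependence on $F$ inherited directly from \eqref{eq:Phio} and its inner analogue.

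For the near-manifold regime $1\leq r\leq 4\bar R/\eps$, I insert the first case of Proposition \ref{prop:error} into \eqref{eq:gluingparallel} to obtain
\[
\int_{\R}\zeta(z)\,w'(z)\bigl(c_H(z) H_{M_\eps}(\ty)+O(\eps^{2s})\bigr)\,dz = \text{remainder of order } O(\eps^{2s-1}).
\]
Dividing by the constant $c_{*}:=\int_{\R}\zeta(z)c_H(z)w'(z)\,dz$, whose non-vanishing I would check from the construction of $c_H$ in Section \ref{sec:error}, this becomes $H_{M_\eps}(\ty)=O(\eps^{2s-1})$. Rescaling via $F(r)=\eps F_\eps(\eps^{-1}r)$ and switching to the parametrization by $G=F^{-1}$ in the near-neck region (where the tangent is nearly vertical and a graph parametrization by $F$ is singular) produces the first line of \eqref{eq:reduced1}, while for $r_1\leq r\leq 4\bar R$ the graph $F$ is regular and the second line is simply the mean curvature written in divergence form. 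The boundary conditions \eqref{eq:boundary} encode the reflection symmetry at the neck ($G(0)=1$, $G'(0)=0$) together with the $C^1$ matching of the two parametrizations at $r_1$.

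For the far-interface regime $r\geq 4\bar R/\eps$, the leading part of Proposition \ref{prop:error} is the sum of the two one-sided mean curvatures plus the interaction $3(w(z_+)+w(z_-))(1+w(z_+))(1+w(z_-))$. Projecting against $w'(z_+)\zeta(z_+)$ on the upper leaf and using the algebraic asymptotics $w(z)+1\sim C|z|^{-2s}$ as $z\to-\infty$ (this is precisely where $s>\tfrac12$ enters, ensuring convergence of the relevant integrals), the factor $1+w(z_-)$ is well approximated by $C(2F_\eps(r))^{-2s}$ when $z_+$ is bounded, while $\int_{\R} 3 w(z_+)(1+w(z_+))w'(z_+)\zeta(z_+)\,dz_+$ reduces by direct computation to a fixed positive constant. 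Combined with the $\eps$-rescaling, this produces the prescribed coefficient $\bar C_0\eps^{2s-1}/F^{2s}(r)$; expanding the mean curvature of the upper leaf under the expected smallness $F'(r)=o(1)$ at infinity yields $F''(r)+F'(r)/r$, and the remainders drop into $N_2[F]$.

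The main obstacle I anticipate is the careful extraction of $\bar C_0$ and the verification that the interaction term produces exactly $\bar C_0\eps^{2s-1}/F^{2s}(r)$ up to a relative $o(1)$: this requires a precise two-scale expansion of $w(z_-)$ around $z_-\approx -2F_\eps(r)$ and an integration against $w'$ that uses the $|z|^{-2s}$ decay in an essential way. A secondary but delicate point is the Lipschitz dependence of $N_1$ and $N_2$ on $F$; for this I would use that the Fermi coordinates $(\ty,z)\mapsto x$ depend smoothly on $F$ in the Banach topology where $F$ is sought, combine this with \eqref{eq:Phio} and the inner Lipschitz estimates from Proposition \ref{prop:inner}, and exploit the fact that the projection $g\mapsto \int\zeta g w'\,dz\big/\int\zeta (w')^2\,dz$ is a bounded linear functional in $z$ and therefore automatically Lipschitz in the parameters.
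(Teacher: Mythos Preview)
Your outline matches the paper's proof essentially step by step: project $S(u^*)$ via Proposition~\ref{prop:error}, divide by $\bar C=\int c_H\zeta w'\,dz$ to isolate $H_{M_\eps}$ in the near region, extract the interaction $\sim\bar C_\pm/F_\eps^{2s}$ from the $|z|^{-2s}$ asymptotics of $1+w(z_-)$ in the far region, then rescale and pass to the inverse $G$ near the neck, with the lower-order and $\phi_o,\phi_j$ contributions absorbed into $N_1,N_2$. One small correction: after factoring out $1+w(z_-)\sim c_w(2F_\eps)^{-2s}$ you must also use $w(z_+)+w(z_-)\approx w(z_+)-1$, so the surviving integral is $\int_\R 3(w^2-1)\zeta w'\,dz$ (the paper's $-\bar C_\pm/c_w$), not $\int 3w(1+w)\zeta w'\,dz$.
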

This is proved in Section \ref{sec:reducedform}.

The equation \eqref{eq:reduced1}--\eqref{eq:boundary} is to be solved in a space with weighted H\"{o}lder norms allowing sub-linear growth. More precisely, for any $\alpha\in(0,1)$, $\gamma\in\R$ we define the norms
\begin{multline}\label{eq:norm1}
\norm[*]{\phi}
    =\sup_{[r_1,+\infty)}r^{\gamma-2}\abs{\phi(r)}
        +\sup_{[r_1,+\infty)}r^{\gamma-1}\abs{\phi'(r)}
        +\sup_{[r_1,+\infty)}r^{\gamma}\abs{\phi''(r)}\\
        +\sup_{r\neq\rho\textin{[r_1,+\infty)}}\min\set{r,\rho}^{\gamma+\alpha}\dfrac{\abs{\phi''(r)-\phi''(\rho)}}{\abs{r-\rho}^{\alpha}}
\end{multline}
and
\begin{equation}\label{eq:norm2}
\norm[**]{h}
    =\sup_{r\in[1,+\infty)}r^{\gamma}\abs{h(r)}
        +\sup_{r\neq\rho\textin{[1,+\infty)}}\min\set{r,\rho}^{\gamma+\alpha}\dfrac{\abs{h(r)-h(\rho)}}{\abs{r-\rho}^{\alpha}}.
\end{equation}

\begin{prop}\label{prop:reducedsol}
There exists a solution to \eqref{eq:reduced1} in the space
\[X_*=\set{(G,F)\in{C}^{2,\alpha}([0,\tz_1])\times{C}_\loc^{2,\alpha}([r_1,+\infty)):
    \norm[C^{2,\alpha}({[0,\tz_1]})]{G}<+\infty,\,\norm[*]{F}<+\infty,\,\eqref{eq:boundary}\text{ holds}}.\]
\end{prop}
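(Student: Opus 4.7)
The plan is to set up a fixed-point scheme around the background profile $F_0$ from Corollary~\ref{cor:F0}. Writing $F=F_0+\phi$ on $[r_1,+\infty)$ and $G=G_0+\psi$ on $[0,\tz_1]$ (where $G_0$ is the inverse of $F_0$ near $r=1$), the system \eqref{eq:reduced1}--\eqref{eq:boundary} becomes a nonlinear problem of the schematic form $\cL(\psi,\phi)=(N_1[F],N_1[F],N_2[F])$ on the three regions, together with two matching conditions at $(r_1,\tz_1)$. Here $\cL$ is the linearization of the three differential operators at $(G_0,F_0)$. Since $N_1=O(\eps^{2s-1})$ and $N_2=o(\eps^{2s-1}F_0^{-2s})$ with Lipschitz dependence on $F$, it suffices to invert $\cL$ with suitable bounds in the norms \eqref{eq:norm1}--\eqref{eq:norm2} and to apply the Banach contraction principle in the ball
\[
\set{(G,F)\in X_*:\norm[C^{2,\alpha}([0,\tz_1])]{G-G_0}+\norm[*]{F-F_0}\leq C\eps^{2s-1}}.
\]

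The first main step is the linear theory on each of the three regions. On $[0,\tz_1]$ the linearized mean-curvature equation for $\psi$ is a regular second-order ODE with regular singular point at $\tz=0$ (due to the $1/G$ term); the initial conditions $\psi(0)=\psi'(0)=0$ single out a unique solution, with $C^{2,\alpha}$ estimate controlled by the $C^{0,\alpha}$ norm of the right-hand side. On $[r_1,4\bar R]$ the linearization is again a standard second-order ODE on a compact interval, to be solved with Cauchy data at $r_1$ prescribed by matching to $(\psi,\psi')$ at $\tz_1$. On $[4\bar R,+\infty)$ the linearized operator is
\[
\cL_{\mathrm{out}}\phi=\phi''+\frac{\phi'}{r}+\frac{2s\bar C_0\eps^{2s-1}}{F_0^{2s+1}(r)}\,\phi,
\]
with a potential of order $\eps^{2s-1}r^{-2}$ near infinity (since $F_0\sim r^{2/(2s+1)}$). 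A variation-of-parameters formula based on two independent homogeneous solutions $\phi_1,\phi_2$ will give the basic representation, and the weights $r^{-\gamma+k}$ in \eqref{eq:norm1}--\eqref{eq:norm2} are precisely tuned so that the integrals converge and the $C^{2,\alpha}$ Schauder estimates close up; with $\gamma$ chosen so that $r^{2-\gamma}$ stays sub-linear and larger than $F_0(r)$, the forcing $r^{-\gamma}\eps^{2s-1}F_0^{-2s}$ gives a response of order $\eps^{2s-1}r^{2-\gamma-4s/(2s+1)}$, which sits inside the prescribed growth rate.

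The second main step is to couple the three linear pieces by the matching conditions \eqref{eq:boundary}, viewed as two linear relations determining the two free Cauchy constants on $[r_1,4\bar R]$ in terms of the data. Since the point $(r_1,\tz_1)$ is bounded away from the catenoid singularity and from the outer region, this matching is non-degenerate for $\eps$ small and can be inverted with constants independent of $\eps$. Combining with the estimates from the first step yields the bound
\[
\norm[C^{2,\alpha}]{\psi}+\norm[*]{\phi}\leq C\bigl(\norm[**]{h_1}+\norm[**]{h_2}+\norm[**]{h_3}\bigr)
\]
for the full linearized system, uniformly in small $\eps$.

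Finally, feeding $N_1[F]$ and $N_2[F]$ into this linear inverse defines a map $\cT$ on the ball above. Since $N_1,N_2$ are uniformly small of order $\eps^{2s-1}$ (respectively $\eps^{2s-1}F_0^{-2s}$) and Lipschitz in $F$ with a small constant (by Proposition~\ref{prop:reduced}), $\cT$ maps the ball to itself and is a contraction, producing the required $(G,F)\in X_*$. The main obstacle is the outer linear theory: securing uniform-in-$\eps$ decay estimates for $\cL_{\mathrm{out}}^{-1}$ in the weighted Hölder norms, which requires carefully tracking the two homogeneous solutions of the Euler-type equation with the small oscillatory exponents $\pm i\sqrt{c\,\eps^{2s-1}}$ and showing that the resulting Green's function respects the prescribed growth $r^{2-\gamma}$ without developing logarithmic corrections outside the allowed range.
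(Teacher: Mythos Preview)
Your overall strategy --- perturb around the explicit profile $F_0$ of Corollary~\ref{cor:F0}, invert the linearization, and close by contraction --- is exactly what the paper does. But there is a genuine gap in your picture of the outer linear problem. You assert that on $[4\bar R,+\infty)$ the potential $2s\bar C_0\eps^{2s-1}F_0^{-(2s+1)}$ is of order $\eps^{2s-1}r^{-2}$, with homogeneous solutions of Euler type carrying \emph{small} oscillatory exponents $\pm i\sqrt{c\,\eps^{2s-1}}$. This is incorrect: by Corollary~\ref{cor:F0} one has $F_0(r)\sim \eps^{(2s-1)/(2s+1)}r^{2/(2s+1)}$ in the far region (you have dropped the $\eps$-prefactor, perhaps confusing $F$ with $F_\eps$), so $F_0^{2s+1}\sim \eps^{2s-1}r^2$ and the potential is $\sim 2s\bar C_0/r^2$, of order one. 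The indicial roots are therefore $O(1)$ complex, not small. More seriously, the interval $[4\bar R,+\infty)$ contains three qualitatively different zones for $F_0$: on $[4\bar R,r_\eps]$ it is still catenoidal ($F_0\sim\log r$, potential $\sim\eps^{2s-1}(\log r)^{-(2s+1)}$), on $[r_\eps,\delta_0\abs{\log\eps}r_\eps]$ it is transitional, and only beyond $\delta_0\abs{\log\eps}r_\eps$ does the power law hold. A single variation-of-parameters formula based on a fixed Euler model cannot give uniform-in-$\eps$ bounds across all three.

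The paper handles this by organizing the linear theory (Proposition~\ref{prop:invertlinear}) around the breakpoints $r_\eps=(\abs{\log\eps}/\eps)^{(2s-1)/2}$ and $\tilde r_\eps=\delta_0\abs{\log\eps}r_\eps$ rather than $4\bar R$: on $[r_1,r_\eps]$ one inverts the linearized mean-curvature operator directly; on the intermediate zone $[r_\eps,\tilde r_\eps]$ the potential term is absorbed as a small perturbation of $\phi''+\phi'/r$ (this is where the choice of $\delta_0$ small enters); and on $[\tilde r_\eps,+\infty)$ the two kernels are analyzed via an Emden--Fowler substitution (Lemma~\ref{lem:kernelfar}), which reduces the problem to a constant-coefficient equation whose characteristic roots are $-1\pm i\sqrt{2s}$ in the rescaled time --- fixed, not $\eps$-small. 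This yields kernels with decay $\bigl(r/\tilde r_\eps\bigr)^{-(2s-1)/(2s+1)}$, which is exactly what the variation-of-parameters integrals need under the restriction $\gamma\le 2+\tfrac{2s-1}{2s+1}$. Once you correct the asymptotics of $F_0$ and adopt this finer three-zone decomposition of the linear inverse, the contraction argument you outline goes through as written.
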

The proof is contained in Section \ref{sec:reduced}.

%

\section{Computation of the error: Fermi coordinates expansion}\label{sec:error}

We prove the following

\begin{prop}[Expansion in Fermi coordinates]\label{prop:DsFermi}
Suppose $0<\alpha<2s-1$ and $F_\eps\in{C}_{\loc}^{2,\alpha}([1,+\infty))$. Let $x_0=\ty_0+z_0\nu(\ty_0)$ where $\ty_0=(x',F_\eps(\abs{x'}))$ 
is the projection of $x_0$ onto $M_\eps$, and $u_0(x)=w(z)$. Then for any $\tau\in\left(1,1+\frac{\alpha}{2s}\right)$ and $\abs{z_0}\leq{R_1}$, we have
\[\begin{split}
\Ds{u_0}(x_0)
&=w(z_0)-w(z_0)^3+c_H(z_0)H_{M_\eps}\left(\ty_0\right)
    +N_1[F]
\end{split}\]
where
\[c_H(z_0)=C_{1,s}\int_{\R}\dfrac{w(z_0)-w(z)}{\abs{z_0-z}^{1+2s}}(z_0-z)\,dz,\]
\[R_1=R_1(\abs{x'})=\eta\left(\abs{x'}-\dfrac{2\bar{R}}{\eps}+2\right)\dfrac{\bar{\delta}}{\eps}+\left(1-\eta\left(\abs{x'}-\dfrac{2\bar{R}}{\eps}+2\right)\right)F_{\eps}^{\tau}(\abs{x'}),\]
and $N_1[F]=O\left(R_1^{-2s}\right)$ is finite in the norm $\norm[**]{\cdot}$.
\end{prop}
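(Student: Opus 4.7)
The plan is to compute $\Ds u_0(x_0)$ via its singular integral representation, splitting into near and far contributions at scale $R_1$ and performing a Fermi coordinate expansion on the near part. Write
\[
\Ds u_0(x_0) = C_{n,s} \PV \int_{\R^n} \frac{w(z_0)-w(z(x))}{|x-x_0|^{n+2s}}\,dx = I_{\text{near}} + I_{\text{far}},
\]
split at $|x-x_0| = R_1$. The bound $|w|\leq 1$ and the decay of the kernel immediately give $|I_{\text{far}}| \leq C R_1^{-2s}$, which will be absorbed into $N_1[F]$.

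For $I_{\text{near}}$ I change variables via $x = y + z\nu(y)$ with $y\in M_\eps$. The definition of $R_1$ in \eqref{eq:R1}, together with the bound on $\norm[\alpha]{\kappa}$ coming from $F_\eps \in C^{2,\alpha}_{\loc}$, guarantees $R_1 \norm[\alpha]{\kappa} = o(1)$; hence the Fermi parametrization is a diffeomorphism on the relevant region. The Jacobian expands as
\[
J(y,z) = \det(I - z A(y)) = 1 - (n-1) z H_{M_\eps}(y) + O\bigl(z^2 |\kappa(y)|^2\bigr),
\]
and the squared distance as
\[
|x - x_0|^2 = (z-z_0)^2 + |y-\ty_0|^2 + z\, Q(y-\ty_0) + O\bigl(|z|^2\, |y-\ty_0|^2\, \norm[\alpha]{\kappa}^2\bigr),
\]
with $Q$ quadratic in the tangent variable and linear in the second fundamental form at $\ty_0$.

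The leading-order analysis extracts two pieces. First, the product of the zeroth-order Jacobian and the isotropic part of $|x-x_0|^{-(n+2s)}$, after tangential integration via
\[
\int_{\R^{n-1}} \frac{d\tilde y}{(|\tilde y|^2 + t^2)^{(n+2s)/2}} = \frac{c_{n,s}}{|t|^{1+2s}},
\]
reconstructs $\pzs w(z_0) = w(z_0) - w(z_0)^3$ using the one-dimensional layer equation satisfied by $w$. Second, the first-order-in-curvature contributions, namely the $-(n-1)z H_{M_\eps}$ term in $J$ together with the anisotropic $Q$-correction to $|x-x_0|^{-(n+2s)}$, combine after tangential integration into $c_H(z_0) H_{M_\eps}(\ty_0)$; the extra factor $(z_0-z)$ in $c_H$ arises precisely from the normal-tangential coupling produced by $Q$.

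All remaining terms go into $N_1[F]$: (a) the $O(R_1^{-2s})$ from $I_{\text{far}}$; (b) the $O(\norm[\alpha]{\kappa}\, R_1^{1+\alpha-2s}) = O(R_1^{-2s})$ contribution from the $O(z^2|\kappa|^2)$ Jacobian and distance-expansion remainders, where the gain is powered by $\tau < 1 + \alpha/(2s)$; (c) the error $H_{M_\eps}(y)-H_{M_\eps}(\ty_0)$, controlled by $\norm[\alpha]{\kappa}$. Each piece is $O(R_1^{-2s})$ in the $\norm[**]{\cdot}$ norm. The main obstacle, as I see it, is the bookkeeping needed to verify that all symmetric-in-$(y-\ty_0)$ components of the second fundamental form beyond the trace $H_{M_\eps}$ cancel at leading order by oddness against the isotropic tangential kernel, so that only the mean curvature survives the integration; and to show that the remainder depends on $F$ in a Lipschitz fashion with respect to $\norm[**]{\cdot}$, which is what will feed into the contraction argument for the reduced equation later. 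The choice $\tau < 1 + \alpha/(2s)$ is exactly what balances the near-field remainder $R_1^{1+\alpha-2s}$ against the far-field $R_1^{-2s}$.
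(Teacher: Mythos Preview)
Your approach is essentially the same as the paper's: split at scale $R_1$, pass to Fermi coordinates on the near part, expand Jacobian and kernel to first order in curvature, integrate tangentially, and collect remainders into $N_1[F]$ using the constraint $\tau<1+\alpha/(2s)$.

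One concrete inaccuracy to fix: in your distance expansion the curvature correction should carry the factor $(z_0+z)$, not just $z$. The paper (Lemma~\ref{lem:changevar}) obtains
\[
|x-x_0|^2 = |y|^2 + (z_0-z)^2 - (z_0+z)\sum_i \kappa_i(0)\,y_i^2 + \text{(lower order)},
\]
so after tangential integration the kernel correction contributes a factor $(z_0+z)$ to the mean-curvature term, while the Jacobian's $-(\kappa_1+\kappa_2)z$ contributes $-2z$; it is their \emph{sum} $(z_0+z)-2z = z_0-z$ that produces the factor in $c_H(z_0)$. Your sentence attributing the $(z_0-z)$ factor ``precisely'' to the $Q$-coupling alone is therefore wrong and, if followed literally with only $zQ$ in the distance expansion, would yield the incorrect weight $-z$ instead of $z_0-z$. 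Once you correct the $(z_0+z)$ coefficient and track both contributions, the computation goes through exactly as in the paper.
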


\begin{remark}
$c_H(z_0)$ is even in $z_0$. Also
$$c_H(z_0)=\dfrac{C_{1,s}}{2s-1}\int_{\R}\dfrac{w'(z)}{\abs{z_0-z}^{2s-1}}\,dz\sim\angles{z_0}^{-(2s-1)}.$$
\end{remark}

This implies Proposition \ref{prop:error}. A proof is given at the end of this section.

A similar computation gives the decay in $r=\abs{x'}$ away from the interface.
\begin{cor}\label{cor:Fermidecay}
Suppose $x_0=\ty_0+z_0\nu(\ty_0)$, $\ty_0=(x_0',F_\eps(r_0))$ and $z_0\geq{c}r_0^{\frac{2}{2s+1}}$.
\[\Ds{u^*}(x_0)=O\left(r_0^{-\frac{4s}{2s+1}}\right)\quad\textas{r_0\to+\infty}.\]
\end{cor}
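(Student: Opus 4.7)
I would reduce the corollary to Proposition~\ref{prop:DsFermi} and then track how each term decays as $r_0 \to +\infty$. In the regime $r_0$ large and $z_0 \geq c r_0^{2/(2s+1)}$, the cut-off factors in \eqref{eq:approxsol} take values $\eta=\chi=1$ in a full neighborhood of $x_0$, since $\eps z_0/(\bar\delta R_0(r_0)) \sim \eps r_0^{(2-4s)/(2s+1)} \to 0$ for $s > 1/2$ and $F_\eps(r) \sim r^{2/(2s+1)}$ by Corollary~\ref{cor:F0}. Hence $u^*(x) = w(z_+) + w(z_-) + 1$ near $x_0$, and by linearity of $\Ds$,
\[
\Ds u^*(x_0) = \Ds w(z_+)(x_0) + \Ds [w(z_-)+1](x_0) + (\text{tail from the gluing region}),
\]
the last term reflecting the difference between $u^*$ and this sum outside the full-formula region.

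For the upper-leaf contribution I would apply Proposition~\ref{prop:DsFermi} to $u_0(x) = w(z_+)$ with leaf $M_\eps^+$. Using $z_0 \geq c r_0^{2/(2s+1)}$, I can bound each of the three terms produced as follows. The reaction satisfies $|w(z_0) - w(z_0)^3| \lesssim z_0^{-2s} \leq C r_0^{-4s/(2s+1)}$ by the algebraic decay $1 - w(z_0) \sim z_0^{-2s}$. The mean curvature correction satisfies $|c_H(z_0) H_{M_\eps^+}(\ty_0)| \lesssim z_0^{-(2s-1)} r_0^{-4s/(2s+1)}$, which is of strictly smaller order than $r_0^{-4s/(2s+1)}$ under $s > 1/2$; here I use $H_{M_\eps^+}(\ty_0) = O(r_0^{-4s/(2s+1)})$, obtained from $F_\eps \sim r^{2/(2s+1)}$ by the standard formula for the mean curvature of a graph of revolution. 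Finally, the remainder is $|N_1[F]| = O(R_1^{-2s}) = O(r_0^{-4s\tau/(2s+1)})$, which is negligible since $\tau > 1$.

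The lower-leaf piece $\Ds[w(z_-)+1](x_0)$ is handled symmetrically, applying Proposition~\ref{prop:DsFermi} with $M_\eps^-$ in place of $M_\eps^+$; since the signed distance from $x_0$ to $M_\eps^-$ has magnitude $\gtrsim F_\eps(r_0) + z_0 \gtrsim r_0^{2/(2s+1)}$, the same three bounds apply and give a contribution of size $O(r_0^{-4s/(2s+1)})$. The tail from the gluing region is supported where $\eps|z| \sim \bar\delta R_0(|x'|)$, which in our regime is far from $x_0$, so a routine outer tail estimate yields a contribution of strictly smaller order. The only technical point---not a real obstacle---is the exponent book-keeping: the two sources of smallness, the heteroclinic decay $z_0^{-2s}$ and the manifold decay $r_0^{-4s/(2s+1)}$, balance to the stated exponent precisely at the lower threshold $z_0 = c r_0^{2/(2s+1)}$, which is where the estimate is saturated.
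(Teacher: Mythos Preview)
Your approach is correct, but it takes a genuinely different (and heavier) route than the paper's. The paper's argument is elementary: one takes a ball of radius $R\sim r_0^{2/(2s+1)}$ around $x_0$ and splits the singular integral into inner and outer parts. The outer part is $O(R^{-2s})=O(r_0^{-4s/(2s+1)})$ by boundedness of $u^*$. On the inner ball one uses only that $u^*-1=w(z_+)-1+w(z_-)+1$ is small in $C^2$ there (since $z_+\gtrsim r_0^{2/(2s+1)}$ and $|z_-|\gtrsim r_0^{2/(2s+1)}$ throughout the ball, and $w'$, $w''$ decay algebraically), which controls the principal value integral directly. No Fermi expansion is invoked.

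You instead appeal to Proposition~\ref{prop:DsFermi} leaf by leaf, working in the much larger cylinder of radius $R_1=F_\eps^\tau\sim r_0^{2\tau/(2s+1)}$, and then bound each of the three terms $w-w^3$, $c_H H$, $N_1$ separately. This works and is even informative: it identifies the reaction term $w(z_0)-w(z_0)^3$ as the one saturating the rate, with the curvature and remainder strictly subordinate. The price is that the decomposition $\Ds u^*=\Ds w(z_+)+\Ds[w(z_-)+1]+\text{tail}$ is not literally well-posed, since $w(z_\pm)$ are only defined in tubular neighborhoods of $M_\eps^\pm$; to make your linearity step rigorous you must fix bounded global extensions and then verify that the resulting tail (supported where the cut-offs in \eqref{eq:approxsol} are active or Fermi coordinates fail) contributes at most $O(r_0^{-4s/(2s+1)})$. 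That is routine but should be said. The paper's direct estimate avoids this bookkeeping entirely by never splitting $u^*$.
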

\begin{proof}
Take a ball around $x_0$ of radius of order $r_0^{\frac{2}{2s+1}}$. Then in the inner region,  we use the closeness to $+1$ of the approximate solution $u^*$ to estimate the error. We omit the details. 
\end{proof}

For more general functions one has a less precise expansion. On compact sets, we have
\begin{cor}\label{cor:Fermiyzin}
Let $u_1(x)=\phi(y,z)$ in a neighborhood of $x_0=\ty_0+z_0\nu(\ty_0)$ where $\abs{y_0},\abs{z_0}\leq{4}R=o(\eps^{-1})$, and $u_1=0$ outside a ball of radius $8R$. Then
\begin{multline*}
\Dsx{u_1}(x_0)=\Dsyz\phi(y_0,z_0)\cdot\left(1+O\left(R\norm[0]{\kappa}\right)\right)\\
+O\left(R_1^{-2s}\left(\abs{\phi(y_0,z_0)}
    +\sup_{\abs{(y_0-y,z_0-z)}\geq{R_1}}\,\abs{\phi(y,z)}\right)\right).
\end{multline*}
\end{cor}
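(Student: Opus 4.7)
The plan is to adapt the Fermi-coordinate analysis of Proposition~\ref{prop:DsFermi} to a general function $\phi(y,z)$. The essential difference is that, since $w$ depended only on $z$, that proposition could isolate the leading curvature contribution $c_H(z_0)H_{M_\eps}(\ty_0)$ explicitly; for general $\phi$ no such clean identification is available, so the curvature effect must be kept as a multiplicative error rather than an additive term. I would start from
\[
\Dsx u_1(x_0)=C_{n,s}\,\PV\int_{\R^n}\frac{u_1(x_0)-u_1(x)}{\abs{x_0-x}^{n+2s}}\,dx
\]
and split the integration into the near piece $B_{R_1}(x_0)$ and the far piece $\R^n\setminus B_{R_1}(x_0)$, handling the former via a Fermi change of variables and the latter via the decay of the kernel.

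For the near piece, since $\abs{y_0},\abs{z_0}\le 4R=o(\eps^{-1})$ and $R_1\le R$, the substitution $x=y+z\nu(y)$ is admissible on $B_{R_1}(x_0)$. The geometric estimates already collected in the proof of Proposition~\ref{prop:DsFermi} yield, uniformly on this ball, the Jacobian $J(y,z)=1+O(\abs{z}\norm[0]{\kappa})$ and the distance expansion
\[
\abs{x_0-x}^2=\bigl(\abs{y_0-y}^2+\abs{z_0-z}^2\bigr)\bigl(1+O(R\norm[0]{\kappa})\bigr).
\]
Combining these, the transformed integrand differs from the flat $(y,z)$-kernel by a multiplicative factor $1+O(R\norm[0]{\kappa})$, and extending back to all of $\R^n$ produces the main term $\Dsyz\phi(y_0,z_0)\cdot\bigl(1+O(R\norm[0]{\kappa})\bigr)$; the mismatch between the image of $B_{R_1}(x_0)$ under the Fermi chart and the Euclidean ball in $(y,z)$-coordinates is absorbed into the outer estimate below.

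For the far piece, using $u_1\equiv 0$ outside $B_{8R}(x_0)$ and the elementary bound $\int_{\R^n\setminus B_{R_1}(x_0)}\abs{x_0-x}^{-(n+2s)}\,dx\le CR_1^{-2s}$, I would obtain a remainder dominated by
\[
CR_1^{-2s}\Bigl(\abs{\phi(y_0,z_0)}+\sup_{\abs{(y_0-y,z_0-z)}\ge R_1}\abs{\phi(y,z)}\Bigr),
\]
and the analogous tail of $\Dsyz\phi(y_0,z_0)$ on the $(y,z)$-side obeys the same bound. The main obstacle is the principal-value treatment of the non-constant multiplicative correction: one must verify that the odd-in-$(y-y_0,z-z_0)$ part of $\phi(y_0,z_0)-\phi(y,z)$ still cancels against the symmetric kernel up to a controlled error, while the variation of the correction factor only multiplies the absolutely convergent remainder. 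This is precisely the PV analysis already carried out for $w$ in Proposition~\ref{prop:DsFermi}, so once that proof is in hand, the corollary follows by essentially mechanical bookkeeping.
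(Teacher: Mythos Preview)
Your approach matches the paper's: both reduce to the Fermi expansion already carried out in Proposition~\ref{prop:DsFermi} and Lemmata~\ref{lem:changevar}--\ref{lem:PV}, observing that for a general $\phi$ the curvature corrections collapse into a multiplicative $(1+O(R\norm[0]{\kappa}))$ factor rather than the explicit $c_H(z_0)H_{M_\eps}$ term. The paper's one-line proof simply records that ``the lower order terms contain either $\kappa_i\abs{z_0}$ or $\kappa_i\abs{y_0}$,'' each of which is $O(R\norm[0]{\kappa})$ by hypothesis.

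There is one slip worth correcting. You write $R_1\le R$, but in fact $R_1\sim\bar\delta/\eps$ (or $F_\eps^\tau$ at the ends) while $R=o(\eps^{-1})$, so $R_1\gg R$. The Fermi chart is valid on a region of size $R_1$ not because $R_1$ is small but because $R_1$ was chosen within the tubular-neighborhood width. More importantly, on the full near region the distortion factor is \emph{a priori} only $1+O(R_1\norm[0]{\kappa})$; the sharper bound $1+O(R\norm[0]{\kappa})$ comes from the compact support of $\phi$. On $\supp\phi\subset B_{8R}$ every spatial quantity $\abs{y},\abs{z},\abs{y_0},\abs{z_0}$ is $O(R)$, so the lower-order terms from Lemma~\ref{lem:changevar} (of the form $\kappa_i z$, $(z_0+z)\kappa_i y_i^2/\rho^2$, etc.) are all $O(R\norm[0]{\kappa})$ there --- this is precisely the content of the paper's one-liner. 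Outside $\supp\phi$ but still within the chart, the integrands of $\Dsx u_1$ and $\Dsyz\phi$ both reduce to $\phi(y_0,z_0)$ times the respective kernels, and the discrepancy between these tails is of order $\norm[0]{\kappa}R^{1-2s}\abs{\phi(y_0,z_0)}$, which is absorbed by the stated $O(R_1^{-2s}\abs{\phi(y_0,z_0)})$ remainder. Once you replace the claim ``$R_1\le R$'' by this support argument, your proof is complete.
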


\begin{proof}
Here we use the fact that the lower order terms contain either $\kappa_i\abs{z_0}$ or $\kappa_i\abs{y_0}$, where $i=1$ or $2$.
\end{proof}

At the ends of the catenoidal surface we need the following
\begin{cor}\label{cor:Fermiyzout}
Let $u_1(x)=\phi(y,z)$ in a neighborhood of $x_0=y_0+z_0\nu(y_0)$ where $\abs{y_0}\geq{R_2}$, $\abs{z_0}\leq{4}R=o(\eps^{-1})$, and $u_1=0$ when $z\geq8R$. Then
\begin{multline*}
\Dsx{u_1}(x_0)=\Dsyz\phi(y_0,z_0)\cdot\left(1+O\left(F_\eps^{-(2s-\tau)}\right)\right)\\
+O\left(F_\eps^{-2s\tau}\left(\abs{\phi(y_0,z_0)}
    +\sup_{\abs{(y_0-y,z_0-z)}\geq{F}_\eps^{\tau}}\,\abs{\phi(y,z)}\right)\right).
\end{multline*}
\end{cor}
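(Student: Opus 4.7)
The plan is to adapt the strategy underlying \Cref{prop:DsFermi} and \Cref{cor:Fermiyzin} to the outer regime $\abs{y_0}\geq R_2$, the key difference being that at the ends the relevant length scale for the Fermi expansion is $R_1=F_\eps^{\tau}(\abs{y_0})$ rather than $R$, and the principal curvatures of $M_\eps$ decay. From the a priori asymptotics $F_\eps(r)\sim r^{2/(2s+1)}$ one computes $F_\eps'\sim r^{(1-2s)/(2s+1)}$ and $F_\eps''\sim r^{-4s/(2s+1)}$, so that on $B_{R_1}(\ty_0)\cap M_\eps$ both principal curvatures of the graphical ends satisfy $\kappa_i=O(F_\eps^{-2s}(\abs{y_0}))$. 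Consequently $\norm{\kappa}_{L^\infty}\,R_1=O(F_\eps^{\tau-2s})=O(F_\eps^{-(2s-\tau)})$, which is precisely the size of the multiplicative correction appearing in the statement.

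First I would split the singular integral
\[
\Dsx u_1(x_0)=C_{n,s}\,\PV\int_{\R^n}\frac{u_1(x_0)-u_1(x)}{\abs{x_0-x}^{n+2s}}\,dx
\]
into a near-diagonal piece on $B_{R_1}(x_0)$ and a tail on $\R^n\setminus B_{R_1}(x_0)$. Since $u_1$ is supported in a tubular neighborhood of $M_\eps$ where the Fermi coordinates are well defined (recall $R=o(\eps^{-1})$), on the near piece I would parameterize $x=y+z\nu(y)$ with Jacobian $J(y,z)=1+\sum_i\kappa_i z+O(\kappa^2 z^2)$, and expand the Euclidean distance in Fermi coordinates as
\[
\abs{x-x_0}^2=\abs{y-y_0}^2+\abs{z-z_0}^2+R_{\mathrm{curv}}(y,z;y_0,z_0),
\]
where the remainder $R_{\mathrm{curv}}$ is controlled by $\norm{\kappa}_{L^\infty}\bigl(\abs{y-y_0}+\abs{z-z_0}\bigr)^3$. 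On $B_{R_1}(x_0)$ this gives
\[
\frac{J(y,z)}{\abs{x-x_0}^{n+2s}}=\frac{1+O(F_\eps^{-(2s-\tau)})}{(\abs{y-y_0}^2+\abs{z-z_0}^2)^{(n+2s)/2}}.
\]

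Next, inserting this into the near-piece integral and extending the resulting flat kernel back to all of $(y,z)\in\R^n$ yields $\Dsyz\phi(y_0,z_0)\cdot(1+O(F_\eps^{-(2s-\tau)}))$, modulo the contribution from extending past the truncation at scale $R_1$, which is bounded by $R_1^{-2s}\abs{\phi(y_0,z_0)}=F_\eps^{-2s\tau}\abs{\phi(y_0,z_0)}$. For the tail piece I would use the crude pointwise estimate
\[
\left\lvert C_{n,s}\int_{\R^n\setminus B_{R_1}(x_0)}\frac{u_1(x_0)-u_1(x)}{\abs{x_0-x}^{n+2s}}\,dx\right\rvert
\lesssim R_1^{-2s}\Bigl(\abs{\phi(y_0,z_0)}+\sup_{\abs{x-x_0}\geq R_1}\abs{u_1(x)}\Bigr),
\]
and translate the supremum into $(y,z)$ coordinates, where (by the comparability of Euclidean and Fermi distance at the ends, again using $\kappa R_1=O(F_\eps^{-(2s-\tau)})$) the set $\set{\abs{x-x_0}\geq R_1}$ becomes the set $\set{\abs{(y_0-y,z_0-z)}\geq F_\eps^\tau}$, up to a uniform multiplicative constant. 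Summing the near and far contributions produces exactly the displayed expansion.

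The main obstacle is the bookkeeping for the Fermi change of variables at the ends: one must check that the curvature-generated corrections to the Jacobian and to the Euclidean metric combine so that the multiplicative error is $1+O(F_\eps^{-(2s-\tau)})$ rather than a worse power, and that the truncation at $R_1$ does not produce spurious terms of order larger than $F_\eps^{-2s\tau}$. This is where the range $\tau\in(1,1+\alpha/(2s))$ is used, ensuring $\tau<2s$ (as $s>1/2$) so that $F_\eps^{-(2s-\tau)}$ is indeed small; apart from this, the computation parallels that of \Cref{cor:Fermiyzin}, with $R$ replaced by $R_1$ and $\norm{\kappa}_{L^\infty}$ estimated from the end asymptotics of $F_\eps$.
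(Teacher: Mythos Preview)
Your proposal is correct and takes essentially the same route as the paper: split at scale $R_1=F_\eps^{\tau}$, use the Fermi change of variables to show both the Jacobian and the kernel carry a multiplicative error $1+O(\kappa R_1)=1+O(F_\eps^{-(2s-\tau)})$, and bound the tail by $R_1^{-2s}=F_\eps^{-2s\tau}$. The only cosmetic difference is that the paper works directly with the natural graph parameterization $\ty=(y,F_\eps(\abs{y}))$ at the ends, so the Jacobian and distance corrections are read off explicitly as $O(F_\eps'{}^2)$ and $O(F_\eps''\,F_\eps^{\tau})$ rather than via the abstract curvature bound $\norm[0]{\kappa}R_1$; both reduce to $O(F_\eps^{-(2s-\tau)})$ under the end asymptotics you quote.
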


%

To prove Proposition \ref{prop:DsFermi}, we consider $M_\eps$ as a graph in a neighborhood of $\ty_0$ over its tangent hyperplane and use the Fermi coordinates.
Suppose $(y_1,y_2,z)$ is an orthonormal basis of the tangent plane of $M_\eps$ at $\ty_0$. Write
$$C_{R_1}=\set{(y,z)\in\R^2\times\R:\abs{y}\leq{R_1},\,\abs{z}\leq{R_1}}.$$
Then there exists a smooth function $g:B_{R_1}(0)\to\R$ such that, in the $(y,z)$ coordinates,
\begin{equation}M_\eps\cap{C_{R_1}}=\set{(y,g(y))\in\R^3:\abs{y}\leq{R_1}}.\label{gdefi}\end{equation}
Then $g(0)=0$, $Dg(0)=0$ and $\Delta{g}(0)=2H_{M_\eps}(x_0)$. We may also assume that $\p_{y_1y_2}g(0)=0$. We denote the principal curvatures at $y$ by $\kappa_i(y)$ so that $\kappa_i(0)=\p_{y_iy_i}g(0)$.


We state a few lemmata whose non-trivial proofs are postponed to the end of this section.


\begin{lem}[Local expansions]\label{lem:g}
Let $\abs{y}\leq{R_1}$. For $i=1,2$ we have
\[\begin{split}
\abs{\kappa_i(y)-\kappa_i(0)}
&\lesssim\norm[C^{\alpha}(B_{2R_1}(\abs{x'}))]{\kappa_i}\abs{y}^{\alpha}
    \lesssim\norm[C^{\alpha}(B_{1}(\abs{x'}))]{F_\eps^{-2s}}\abs{y}^{\alpha}\\
&\lesssim
\begin{cases}
\eps^{2s+\alpha}\abs{y}^{\alpha}&\textforall\abs{x'}\leq\dfrac{2\bar{R}}{\eps},\medskip\\
\dfrac{F_\eps^{-2s}(\abs{x'})}{\abs{x'}^\alpha}\abs{y}^{\alpha}&\textforall\abs{x'}\geq\dfrac{\bar{R}}{\eps}.
\end{cases}\\
\end{split}\]
The quantity $\norm[C^{2,\alpha}(B_{R_1}(\abs{x'}))]{F_\eps}\lesssim\norm[C^{\alpha}(B_{1}(\abs{x'}))]{F_\eps^{-2s}}$ will be used repeatedly and will be simply denoted by $\norm[\alpha]{\kappa}$, as a function of $\abs{x'}$, for any $0\leq\alpha<1$. We have
\[\begin{split}
g(y)
&=\dfrac12\sum_{i=1}^{2}\kappa_i(0)y_i^2
    +O\left(\norm[\alpha]{\kappa}\abs{y}^{2+\alpha}\right),\\
Dg(y)\cdot{y}
&=\sum_{i=1}^{2}\kappa_i(0)y_i^2
    +O\left(\norm[\alpha]{\kappa}\abs{y}^{2+\alpha}\right),\\
\abs{Dg(y)}^2
&=O\left(\norm[0]{\kappa}^2\abs{y}^2\right).\\
\end{split}\]
In particular,
\[\begin{split}
g(y)-Dg(y)\cdot{y}
&=-\dfrac12\sum_{i=1}^{2}\kappa_i(0)y_i^2
    +O\left(\norm[\alpha]{\kappa}\abs{y}^{2+\alpha}\right)
=O(\norm[0]{\kappa}\abs{y}^2),\\
\sqrtt{\abs{Dg(y)}}-1
&=O\left(\norm[0]{\kappa}^2\abs{y}^2\right),\\
1-\dfrac{1}{\sqrtt{\abs{Dg(y)}}}
&=O\left(\norm[0]{\kappa}^2\abs{y}^2\right),\\
g(y)^2
&=O\left(\norm[0]{\kappa}^2\abs{y}^4\right).\\
\end{split}\]
\end{lem}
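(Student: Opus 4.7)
The plan is to first extract the regularity of the principal curvatures from the a priori form of the profile $F_\eps$, and then feed it into Taylor's theorem applied to the local graph $g$ defined in \eqref{gdefi}.

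First I would compute the principal curvatures of the surface of revolution $M_\eps^+=\{x_n=F_\eps(\abs{x'})\}$ explicitly,
\[
\kappa_1(r)=\frac{F''_\eps(r)}{(1+F'_\eps(r)^2)^{3/2}},
\qquad
\kappa_2(r)=\frac{F'_\eps(r)}{r\sqrt{1+F'_\eps(r)^2}},
\]
and differentiate in $r$. The a priori behavior of $F_\eps$---catenoidal shape $F_\eps(r)\sim\eps^{-1}\cosh^{-1}(\eps r)$ in the neck region and power law $F_\eps(r)\sim r^{2/(2s+1)}$ at the ends, together with the $C^{2,\alpha}$ control coming out of the reduced equation (Proposition~\ref{prop:reduced})---yields the two regime-dependent bounds on $\norm[C^\alpha]{\kappa_i}$ stated in the lemma. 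The subsuming estimate by $\norm[C^\alpha]{F_\eps^{-2s}}$ reflects the reduced-equation scaling $H[F_\eps]\sim\eps^{2s-1}/F_\eps^{2s}$.

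Second I would use the graph representation \eqref{gdefi} over the tangent plane at $\ty_0$, choosing axes aligned with the principal directions so that $\partial_{y_1y_2}g(0)=0$. Then $g(0)=0$, $Dg(0)=0$, and the Hessian satisfies $\partial_{y_iy_i}g(0)=\kappa_i(0)$ by the standard second-fundamental-form formula for graphs over their tangent plane. Translating the curvature Hölder estimate to $g$ gives $\norm[C^\alpha(B_{R_1})]{D^2 g}\lesssim\norm[\alpha]{\kappa}$.

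The remaining expansions follow from Taylor's theorem with Hölder remainder. Writing
\[
g(y)=\tfrac12\sum_{i=1}^{2}\kappa_i(0)y_i^2+\int_0^1(1-t)\bigl(D^2g(ty)-D^2g(0)\bigr)y\cdot y\,dt,
\]
the remainder integral is bounded by $C\norm[\alpha]{\kappa}\abs{y}^{2+\alpha}$. Analogously $Dg(y)=D^2g(0)\,y+O(\norm[\alpha]{\kappa}\abs{y}^{1+\alpha})$ yields the expressions for $Dg(y)\cdot y$ and $g(y)-Dg(y)\cdot y$. The bounds on $\abs{Dg(y)}^2$, $\sqrtt{\abs{Dg(y)}}-1$, $1-1/\sqrtt{\abs{Dg(y)}}$, and $g(y)^2$ are obtained by inserting $\abs{Dg(y)}=O(\norm[0]{\kappa}\abs{y})$ into the Taylor expansions of $\sqrt{1+t}$ and $1/\sqrt{1+t}$ around $t=0$; the smallness of $\abs{Dg(y)}^2$ for $\abs{y}\leq R_1$ is ensured by the choice of $R_1$ together with $\norm[0]{\kappa}R_1=o(1)$.

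The principal obstacle is the curvature Hölder estimate and its reduction to $\norm[C^\alpha(B_1(\abs{x'}))]{F_\eps^{-2s}}$: one must carefully track how the Hölder seminorm propagates through the nonlinear principal-curvature formulas and then interpolate between the catenoidal neck and the power-law ends, drawing on the a priori $C^{2,\alpha}$ regularity of $F_\eps$ produced by the reduced equation. Once this estimate is in hand, all the expansions for $g$ and its derivatives are routine.
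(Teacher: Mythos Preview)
The paper does not actually prove Lemma~\ref{lem:g}: it is listed among ``a few lemmata whose non-trivial proofs are postponed to the end of this section,'' but only the proofs of Lemmas~\ref{lem:changevar}, \ref{lem:PV}, and \ref{lem:3to1} appear there, so the authors evidently regard this one as routine. Your outline is exactly the natural argument---explicit principal-curvature formulas for the surface of revolution, the a~priori $C^{2,\alpha}$ profile of $F_\eps$ from Corollary~\ref{cor:F0} and Proposition~\ref{prop:reduced}, and Taylor's theorem with H\"older remainder for $g$---and it fills the gap correctly.
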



\begin{lem}[The change of variable]\label{lem:changevar}
Let $\abs{y},\abs{z},\abs{z_0}\leq{R_1}$. Under the Fermi change of variable $x=\Phi(y,z)=y+z\nu(y)$, the Jacobian determinant 
$$J(y,z)=\sqrtt{\abs{Dg(y)}}(1-\kappa_1(y)z)(1-\kappa_2(y)z)$$
satisfies
\[J(y,z)=1-(\kappa_1(0)+\kappa_2(0))z+O\left(\norm[\alpha]{\kappa}\abs{y}^{\alpha}\abs{z}\right)+O\left(\norm[0]{\kappa}^2(\abs{y}^2+\abs{z}^2)\right),\]
and the kernel $\abs{x_0-x}^{-3-2s}$ has an expansion
\[\begin{split}
\abs{x_0-x}^{-3-2s}
&=\abs{(y,z_0-z)}^{-3-2s}\Bigg[1+\dfrac{3+2s}{2}(z_0+z)\sum_{i=1}^{2}\kappa_i(0)\dfrac{y_i^2}{\abs{(y,z_0-z)}^2}\\
&\qquad+O\left(\dfrac{\norm[\alpha]{\kappa}\abs{y}^{2+\alpha}(\abs{z}+\abs{z_0})}{\abs{(y,z_0-z)}^2}\right)
    +O\left(\dfrac{\norm[0]{\kappa}^2\abs{y}^2(\abs{y}^2+\abs{z}^2+\abs{z_0}^2)}{\abs{(y,z_0-z)}^2}\right)\Bigg].\\
\end{split}\]
\end{lem}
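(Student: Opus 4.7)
The proof will be a direct two-part computation. Both parts depend crucially on the local expansions of $g$, $Dg$, and $\kappa_i$ supplied by \Cref{lem:g}.

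For the Jacobian, I will start from the Fermi parametrization $\Phi(y,z)=(y,g(y))+z\nu(y)$ with $\nu(y)=(-Dg(y),1)/J_0(y)$ and $J_0(y)=\sqrtt{\abs{Dg(y)}}$. Working in an orthonormal frame on $M_\eps$ that diagonalizes the Weingarten map at $y$, the classical tubular-neighborhood calculation (where $D\Phi$ on the tangent frame acts as $I_2-z\,\mathrm{Sh}(y)$ with $\mathrm{Sh}$ the shape operator, and the normal direction contributes $\partial_z\Phi=\nu$, unit and orthogonal to the tangent columns) yields $\abs{\det D\Phi}=J_0(y)(1-\kappa_1(y)z)(1-\kappa_2(y)z)$, which is the stated formula for $J(y,z)$. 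Substituting $J_0=1+O(\norm[0]{\kappa}^2\abs{y}^2)$ and $\kappa_i(y)=\kappa_i(0)+O(\norm[\alpha]{\kappa}\abs{y}^{\alpha})$ from \Cref{lem:g} and multiplying out gives the claimed expansion, with all cross products absorbed into the two error classes.

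For the kernel, I will place coordinates so that $\ty_0=0$, the tangent plane of $M_\eps$ is $\set{z=0}$, and the $y_1,y_2$ axes point along the principal directions, so that $x_0=(0,0,z_0)$. A direct expansion of $\abs{x_0-x}^2$, using the identity $z^2(1+\abs{Dg}^2)/J_0^2=z^2$ to recover a clean $z^2$ from the normal displacement, gives $\abs{x_0-x}^2=\abs{y}^2+(z_0-z)^2+\mathcal{E}$ with
\begin{equation*}
\mathcal{E}=\frac{2z}{J_0}(g(y)-y\cdot Dg(y))+2z_0z\Bigl(1-\frac{1}{J_0}\Bigr)+g(y)^2-2g(y)z_0.
\end{equation*}
Plugging in $g-y\cdot Dg=-\tfrac12\sum_i\kappa_i(0)y_i^2+O(\norm[\alpha]{\kappa}\abs{y}^{2+\alpha})$, $1-1/J_0=O(\norm[0]{\kappa}^2\abs{y}^2)$, $g^2=O(\norm[0]{\kappa}^2\abs{y}^4)$, and $-2gz_0=-z_0\sum_i\kappa_i(0)y_i^2+O(\norm[\alpha]{\kappa}\abs{y}^{2+\alpha}\abs{z_0})$ and regrouping gives
\begin{equation*}
\mathcal{E}=-(z+z_0)\sum_{i=1}^{2}\kappa_i(0)y_i^2+O(\norm[\alpha]{\kappa}\abs{y}^{2+\alpha}(\abs{z}+\abs{z_0}))+O(\norm[0]{\kappa}^2\abs{y}^2(\abs{y}^2+\abs{z}^2+\abs{z_0}^2)).
\end{equation*}
Finally I apply the binomial expansion $(1+t)^{-(3+2s)/2}=1-\tfrac{3+2s}{2}t+O(t^2)$ to $t=\mathcal{E}/\abs{(y,z_0-z)}^2$. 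Since $\abs{y},\abs{z},\abs{z_0}\leq R_1$ and $\norm[0]{\kappa}R_1\ll 1$ throughout the region under consideration, the relative error $\abs{t}$ stays small and $O(t^2)$ is absorbed into the same two error classes, producing exactly the stated expansion of $\abs{x_0-x}^{-3-2s}$.

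The main obstacle I anticipate is bookkeeping: correctly tracing which cross terms produced by the expansion fall into the $\norm[\alpha]{\kappa}\abs{y}^{2+\alpha}(\abs{z}+\abs{z_0})$ class and which into the $\norm[0]{\kappa}^2\abs{y}^2(\abs{y}^2+\abs{z}^2+\abs{z_0}^2)$ class. The key algebraic point to exploit is that the quadratic $\sum_i\kappa_i(0)y_i^2$ appears with a coefficient $z$ (from $(g-y\cdot Dg)$ scaled by $2z/J_0$) and with a coefficient $z_0$ (from $-2gz_0$), and these combine with the same sign to produce $(z+z_0)\sum_i\kappa_i(0)y_i^2$ rather than two independent contributions.
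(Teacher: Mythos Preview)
Your proposal is correct and follows essentially the same route as the paper's own proof: both start from the explicit formula for $J(y,z)$ and substitute the expansions of \Cref{lem:g}, and for the kernel both compute $\abs{x_0-x}^2=\abs{(y,z_0-z)}^2+\mathcal{E}$ via the same decomposition (the paper writes out the intermediate step $\abs{y}^2+g^2+z^2+z_0^2-2zz_0/J_0+2z(g-Dg\cdot y)/J_0-2z_0g$ before regrouping, which is exactly your $\mathcal{E}$ after isolating $(z_0-z)^2$), then apply the binomial expansion and absorb the quadratic remainder using $\norm[0]{\kappa}R_1\ll1$. Your identification of how the $(z+z_0)\sum_i\kappa_i(0)y_i^2$ term assembles from the $2z(g-y\cdot Dg)/J_0$ and $-2z_0g$ contributions is precisely the mechanism in the paper.
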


\begin{lem}[On the Cauchy principal value]\label{lem:PV}
In the above setting, there holds
\begin{equation}\label{eq:lem-PV}\begin{split}
	\PV\int_{\Phi(C_{R_1})}\dfrac{u_0(x_0)-u_0(x)}{\abs{x-x_0}^{3+2s}}\,dx
&=
	\PV\iint_{C_{R_1}}\dfrac{w(z_0)-w(z)}{\abs{\Phi(y_0,z_0)-\Phi(y,z)}^{3+2s}}J(y,z)\,dydz.
\end{split}\end{equation}
Here the principal value on the left hand side means
\[
\PV\int_{\Phi(C_{R_1})}
=\lim_{\epsilon\to0^+}
    \int_{\Phi(C_{R_1})\setminus\set{|x-x_0|<\epsilon}}.
\]
and on the right hand side it means
\[
\PV\iint_{C_{R_1}}
=\lim_{\epsilon\to0^+}
	\iint_{C_{R_1}\setminus\set{\tilde{\rho}<\epsilon}},
\]
where
\[
\tilde{\rho}^2
=\sum_{i=1}^{n-1}
    (1-\kappa_i(0)z_0)^2y_i^2
    +|z-z_0|^2.
\]
\end{lem}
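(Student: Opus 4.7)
The plan is to carry out the diffeomorphic change of variables $x = \Phi(y,z) = y + z\nu(y)$ in the principal value on the left, and then to reconcile the mismatch between the two excised neighborhoods of $x_0 = \Phi(0, z_0)$. Because $\Phi$ is a $C^{2,\alpha}$-diffeomorphism on $C_{R_1}$ (by Lemma \ref{lem:g}) with Jacobian determinant $J(y,z)$, applying the standard change of variables to each truncated integral gives, for every $\varepsilon > 0$,
\[
\int_{\Phi(C_{R_1}) \setminus B_\varepsilon(x_0)} \frac{w(z_0) - w(z)}{|x - x_0|^{3+2s}}\,dx
= \iint_{C_{R_1} \setminus \Phi^{-1}(B_\varepsilon(x_0))} \frac{(w(z_0)-w(z))\, J(y,z)}{|\Phi(y_0, z_0) - \Phi(y,z)|^{3+2s}}\,dy\,dz.
\]
The left-hand side has a well-defined limit as $\varepsilon \to 0^+$ by the classical theory of the fractional Laplacian, since $u_0(x) = w(z)$ is smooth with bounded derivatives near $x_0$. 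Hence it suffices to show that replacing the excised set $\Phi^{-1}(B_\varepsilon(x_0))$ by the ellipsoidal region $E_\varepsilon = \{\tilde\rho < \varepsilon\}$ changes the integral by $o(1)$ as $\varepsilon \to 0$.

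Expanding $\Phi(y,z) - \Phi(0, z_0)$ at $y_0 = 0$ using $g(y) = \tfrac{1}{2} \sum_i \kappa_i(0) y_i^2 + O(\norm[\alpha]{\kappa} |y|^{2+\alpha})$ from Lemma \ref{lem:g} together with the expansion of Lemma \ref{lem:changevar}, one obtains the cubic comparison
\[
|\Phi(y,z) - \Phi(0, z_0)|^2 = \tilde\rho^2 + O(\tilde\rho^3) \quad \text{near } (0, z_0).
\]
This cubic improvement over a purely quadratic comparison is the decisive point: it implies that the symmetric difference $\Phi^{-1}(B_\varepsilon(x_0)) \triangle E_\varepsilon$ is contained in the thin shell
\[
S_\varepsilon = \{(y,z) \in C_{R_1} : (1-C\varepsilon)\varepsilon \leq \tilde\rho(y,z) \leq (1+C\varepsilon)\varepsilon\},
\]
whose three-dimensional Lebesgue measure is the product of the $O(\varepsilon^2)$ surface area of the ellipsoid and the $O(\varepsilon^2)$ thickness, hence $|S_\varepsilon| = O(\varepsilon^4)$.

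To conclude, I bound the integrand $F(y,z) := (w(z_0) - w(z)) J(y,z) / |\Phi(y_0, z_0) - \Phi(y,z)|^{3+2s}$ on $S_\varepsilon$ using the Lipschitz bound $|w(z_0) - w(z)| \leq C|z - z_0| \leq C\tilde\rho$, the lower bound $|\Phi(y_0, z_0) - \Phi(y,z)| \geq c\tilde\rho$ coming from the same expansion, and the boundedness of $J$. This yields $|F(y,z)| \leq C\tilde\rho^{-(2+2s)} \leq C\varepsilon^{-(2+2s)}$ on $S_\varepsilon$, so
\[
\left| \iint_{\Phi^{-1}(B_\varepsilon(x_0)) \triangle E_\varepsilon} F(y,z)\,dy\,dz \right|
\leq C\varepsilon^{-(2+2s)} \cdot \varepsilon^4 = C\varepsilon^{2 - 2s} \longrightarrow 0
\]
as $\varepsilon \to 0$, where the last convergence uses $s < 1$. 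This establishes \eqref{eq:lem-PV} together with the existence of the principal value on the right. The principal obstacle is precisely the cubic comparison $|\Phi - \Phi_0|^2 = \tilde\rho^2 + O(\tilde\rho^3)$: without this extra power of $\tilde\rho$, the shell thickness would be only $O(\varepsilon)$, producing $\varepsilon^{1 - 2s}$, which diverges for $s > 1/2$. The cubic bound must be extracted carefully from the identity $g(y) - Dg(y)\cdot y = O(\norm[0]{\kappa}|y|^2)$ of Lemma \ref{lem:g} and the controlled cancellation between the factors $(1-z\kappa_i)^2$ and $(1-z_0\kappa_i)^2$ entering $\tilde\rho$.
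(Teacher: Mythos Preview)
Your proof is correct and follows essentially the same route as the paper: change variables, establish the cubic comparison $|x-x_0|^2=\tilde\rho^2(1+O(\tilde\rho))$ so that the two excised regions differ only by a shell of thickness $O(\varepsilon^2)$, and bound the integral over that shell by $O(\varepsilon^{2-2s})$. Your explicit use of the Lipschitz bound $|w(z_0)-w(z)|\le C\tilde\rho$ is in fact what makes the $\varepsilon^{2-2s}$ estimate work; the paper writes $\norm[L^\infty]{w}$ in the displayed error term but reaches the same conclusion.
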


\begin{lem}[Reducing the kernel]\label{lem:3to1}
There hold
\[\begin{split}
C_{3,s}\int_{\R^2}\dfrac{1}{\abs{(y,z_0-z)}^{3+2s}}\,dy
&=C_{1,s}\dfrac{1}{\abs{z_0-z}^{1+2s}},\\
C_{3,s}\int_{\R^2}\dfrac{y_i^2}{\abs{(y,z_0-z)}^{5+2s}}\,dy
&=\dfrac{1}{3+2s}C_{1,s}\dfrac{1}{\abs{z_0-z}^{1+2s}}\quad\textfor{i=1,2},\\
\int_{\R^2}\dfrac{\abs{y}^{\alpha}}{\abs{(y,z_0-z)}^{3+2s}}\,dy
&=C\dfrac{1}{\abs{z_0-z}^{1+2s-\alpha}}.
\end{split}\]
\end{lem}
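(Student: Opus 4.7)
All three identities are two-dimensional radial integrals that reduce to elementary Beta-function computations. The plan is to pass to polar coordinates in $y\in\R^2$, substitute $u=r^2$ (or $t=u/a^2$ with $a=|z_0-z|$) and apply the standard Beta identity
\[
\int_0^{\infty}\frac{u^{p-1}}{(a^2+u)^{p+q}}\,du
=a^{-2q}\,B(p,q)
=a^{-2q}\,\frac{\Gamma(p)\Gamma(q)}{\Gamma(p+q)}.
\]
Scaling out $a$ shows that each left hand side carries exactly the power $|z_0-z|^{-(1+2s)}$ (respectively $|z_0-z|^{-(1+2s-\alpha)}$) required, so only the constant needs to be checked.

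For the first identity, polar coordinates and the antiderivative $\frac{d}{dr}[-\frac{1}{(1+2s)(r^2+a^2)^{(1+2s)/2}}]$ give
\[
\int_{\R^2}\frac{dy}{|(y,z_0-z)|^{3+2s}}
=\frac{2\pi}{(1+2s)}\,|z_0-z|^{-(1+2s)}.
\]
Multiplying by $C_{3,s}$ and using the ratio
\[
\frac{C_{3,s}}{C_{1,s}}
=\frac{\Gamma((3+2s)/2)}{\pi\,\Gamma((1+2s)/2)}
=\frac{1+2s}{2\pi},
\]
which follows from $\Gamma((3+2s)/2)=\tfrac{1+2s}{2}\Gamma((1+2s)/2)$, produces $C_{1,s}|z_0-z|^{-(1+2s)}$, as claimed.

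For the second identity, rotational symmetry reduces $y_i^2$ to $\tfrac{1}{2}|y|^2$. Polar coordinates and $u=r^2$ transform the radial integral into the Beta integral with $p=2$, $q=(1+2s)/2$; using $\Gamma(2)=1$ and $\Gamma((5+2s)/2)=\tfrac{3+2s}{2}\cdot\tfrac{1+2s}{2}\Gamma((1+2s)/2)$ gives the value $\frac{2\pi}{(3+2s)(1+2s)}|z_0-z|^{-(1+2s)}$, and the constant $\frac{1+2s}{2\pi}\cdot\frac{2\pi}{(3+2s)(1+2s)}=\frac{1}{3+2s}$ matches. For the third identity, the same route with $p=1+\alpha/2$ and $q=(1+2s-\alpha)/2$ (both positive since $0<\alpha<2s-1<2s$) yields
\[
\int_{\R^2}\frac{|y|^{\alpha}\,dy}{|(y,z_0-z)|^{3+2s}}
=\pi\,B\!\left(1+\tfrac{\alpha}{2},\tfrac{1+2s-\alpha}{2}\right)|z_0-z|^{-(1+2s-\alpha)},
\]
which is the desired form with an explicit constant $C=C(s,\alpha)$.

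There is no substantive obstacle: convergence of all integrals at infinity is controlled by the assumption $0<\alpha<2s-1$ (so $1+2s-\alpha>2$), while finiteness at the origin is automatic. The only mild verification is the identity $\frac{C_{3,s}}{C_{1,s}}=\frac{1+2s}{2\pi}$, which is a one-line consequence of the definition of $C_{n,s}$ and the recursion $\Gamma(x+1)=x\Gamma(x)$.
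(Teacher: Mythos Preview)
Your proof is correct. For the first and third identities, both you and the paper use essentially the same idea---the paper simply writes ``change of variable $y=|z_0-z|\tilde{y}$'' to extract the power of $|z_0-z|$, while you make the radial computation explicit via the Beta function; the verification of the constant $C_{3,s}/C_{1,s}=(1+2s)/(2\pi)$ is the same Gamma recursion either way.

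For the second identity the routes differ. You use rotational symmetry to replace $y_i^2$ by $\tfrac12|y|^2$ and evaluate a single Beta integral with $p=2$. The paper instead writes
\[
y_i^2=\tfrac12\bigl((|y|^2+|z_0-z|^2)-|z_0-z|^2\bigr),
\]
which splits the integral into two pieces of the form already handled by the first identity (one with exponent $3+2s$, one with exponent $5+2s$), and then computes the combination $1-\dfrac{C_{3,s}^2}{C_{1,s}C_{5,s}}=\dfrac{2}{3+2s}$ from the explicit formula for $C_{n,s}$. Your approach is more direct and avoids introducing $C_{5,s}$; the paper's trick has the slight advantage of reusing the first identity verbatim rather than computing a new radial integral. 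Both are equally valid and of comparable length.

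One minor remark: your convergence check invokes $1+2s-\alpha>2$, which is true under the paper's assumption $\alpha<2s-1$ but stronger than necessary (the Beta integral only requires $q=(1+2s-\alpha)/2>0$). This does not affect correctness.
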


\begin{proof}[Proof of Proposition \ref{prop:DsFermi}]

The main contribution of the fractional Laplacian comes from the local term which we compute in Fermi coordinates $\Phi(y,z)=y+z\nu(y)$, namely
\[\begin{split}
\Ds{u_0}(x_0)
&=C_{3,s}\PV\int_{\Phi(C_{R_1})}\dfrac{u_0(x_0)-u_0(x)}{\abs{x-x_0}^{3+2s}}\,dx+O(R_1^{-2s})\\
&=C_{3,s}\PV\iint_{C_{R_1}}\dfrac{w(z_0)-w(z)}{\abs{\Phi(y_0,z_0)-\Phi(y,z)}^{3+2s}}J(y,z)\,dydz+O(R_1^{-2s}).
\end{split}\]
Here the second line follows from Lemma \ref{lem:PV}. By Lemma \ref{lem:changevar} we have
\[\begin{split}
J(y,z)
&=1-(\kappa_1(0)+\kappa_2(0))z+O\left(\norm[\alpha]{\kappa}\abs{y}^{\alpha}\abs{z}\right)+O\left(\norm[0]{\kappa}^2(\abs{y}^2+\abs{z}^2)\right),\\
\dfrac{1}{\abs{\Phi(y_0,z_0)-\Phi(y,z)}^{3+2s}}
&=\dfrac{1}{\abs{(y,z_0-z)}^{3+2s}}\Bigg[1+\dfrac{3+2s}{2}(z_0+z)\sum_{i=1}^{2}\kappa_i(0)\dfrac{y_i^2}{\abs{(y,z_0-z)}^2}\\
&\qquad+O\left(\dfrac{\norm[\alpha]{\kappa}\abs{y}^{2+\alpha}(\abs{z}+\abs{z_0})}{\abs{(y,z_0-z)}^2}\right)
    +O\left(\dfrac{\norm[0]{\kappa}^2\abs{y}^2(\abs{y}^2+\abs{z}^2+\abs{z_0}^2)}{\abs{(y,z_0-z)}^2}\right)\Bigg].\\
\end{split}\]
Hence
\[\begin{split}
&\quad\,\dfrac{J(y,z)}{\abs{\Phi(y_0,z_0)-\Phi(y,z)}^{3+2s}}\\
&=\dfrac{1}{\abs{(y,z_0-z)}^{3+2s}}\Bigg[1-(\kappa_1(0)+\kappa_2(0))z+O\left(\norm[\alpha]{\kappa}\abs{y}^{\alpha}\abs{z}\right)+O\left(\norm[0]{\kappa}^2(\abs{y}^2+\abs{z}^2)\right)\Bigg]\\
&\qquad\qquad\qquad\qquad\cdot\Bigg[1+\dfrac{3+2s}{2}(z_0+z)\sum_{i=1}^{2}\kappa_i(0)\dfrac{y_i^2}{\abs{(y,z_0-z)}^2}\\
&\qquad\qquad\qquad\qquad\qquad+O\left(\dfrac{\norm[\alpha]{\kappa}\abs{y}^{2+\alpha}(\abs{z}+\abs{z_0})}{\abs{(y,z_0-z)}^2}\right)
    +O\left(\dfrac{\norm[0]{\kappa}^2\abs{y}^2(\abs{y}^2+\abs{z}^2+\abs{z_0}^2)}{\abs{(y,z_0-z)}^2}\right)\Bigg]\\
&=\dfrac{1}{\abs{(y,z_0-z)}^{3+2s}}\Bigg[1-(\kappa_1(0)+\kappa_2(0))z+\dfrac{3+2s}{2}(z_0+z)\sum_{i=1}^{2}\kappa_i(0)\dfrac{y_i^2}{\abs{(y,z_0-z)}^2}\\
&\qquad\qquad\qquad\qquad\qquad+O\left(\norm[\alpha]{\kappa}\abs{y}^{\alpha}(\abs{z}+\abs{z_0})\right)+O\left(\norm[0]{\kappa}^2(\abs{y}^2+\abs{z}^2+\abs{z_0}^2)\right)\Bigg].
\end{split}\]
We have
\[\begin{split}
&\quad\,\Ds{u_0}(x_0)\\
&=C_{3,s}\iint_{C_{R_1}}\dfrac{w(z_0)-w(z)}{\abs{\Phi(y_0,z_0)-\Phi(y,z)}^{3+2s}}J(y,z)\,dydz+O(R_1^{-2s})\\
&=C_{3,s}\iint_{C_{R_1}}\dfrac{w(z_0)-w(z)}{\abs{(y,z_0-z)}^{3+2s}}\Bigg[1-(\kappa_1(0)+\kappa_2(0))z+\dfrac{3+2s}{2}(z_0+z)\sum_{i=1}^{2}\kappa_i(0)\dfrac{y_i^2}{\abs{(y,z_0-z)}^2}\\
&\qquad\qquad\qquad\qquad\qquad+O\left(\norm[\alpha]{\kappa}\abs{y}^{\alpha}(\abs{z}+\abs{z_0})\right)+O\left(\norm[0]{\kappa}^2(\abs{y}^2+\abs{z}^2+\abs{z_0}^2)\right)\Bigg]\\
&=I_1+I_2+I_3+I_4+I_5.
\end{split}\]
where
\[\begin{split}
I_1
&=C_{3,s}\iint_{C_{R_1}}\dfrac{w(z_0)-w(z)}{\abs{(y,z_0-z)}^{3+2s}}\,dydz,\\
I_2
&=-C_{3,s}(\kappa_1(0)+\kappa_2(0))\iint_{C_{R_1}}\dfrac{w(z_0)-w(z)}{\abs{(y,z_0-z)}^{3+2s}}z\,dydz,\\
I_3
&=C_{3,s}\dfrac{3+2s}{2}\sum_{i=1}^{2}\kappa_i(0)\iint_{C_{R_1}}\dfrac{w(z_0)-w(z)}{\abs{(y,z_0-z)}^{5+2s}}(z_0+z)y_i^2\,dydz,\\
I_4
&=O\left(\norm[\alpha]{\kappa}\right)\iint_{C_{R_1}}\dfrac{\abs{w(z_0)-w(z)-\chi_{B_1^1(z_0)}(z)w'(z_0)(z_0-z)}}{\abs{(y,z_0-z)}^{3+2s}}\abs{y}^{\alpha}(\abs{z}+\abs{z_0})\,dydz,\\
I_5
&=O\left(\norm[0]{\kappa}^2\right)\iint_{C_{R_1}}\dfrac{\abs{w(z_0)-w(z)-\chi_{B_1^1(z_0)}(z)w'(z_0)(z_0-z)}}{\abs{(y,z_0-z)}^{3+2s}}(\abs{y}^2+\abs{z}^2+\abs{z_0}^2)\,dydz.
\end{split}\]
In the last terms $I_4$ and $I_5$, the linear odd term near the origin has been added to eliminate the principal value before the integrals are estimated by their absolute values.
One may obtain more explicit expressions by extending the domain and using Lemma \ref{lem:3to1} as follows. $I_1$ resembles the fractional Laplacian of the one-dimensional solution.
\[\begin{split}
I_1
&=C_{3,s}\iint_{\R^3}\dfrac{w(z_0)-w(z)}{\abs{(y,z_0-z)}^{3+2s}}\,dydz-C_{3,s}\iint_{\R^3\setminus{C_{R_1}}}\dfrac{w(z_0)-w(z)}{\abs{(y,z_0-z)}^{3+2s}}\,dydz\\
&=C_{3,s}\int_{\R}(w(z_0)-w(z))\int_{\R^2}\dfrac{1}{\abs{(y,z_0-z)}^{3+2s}}\,dydz+O\left(\int_{R_1}^{\infty}\rho^{-3-2s}\rho^2\,d\rho\right)\\
&=C_{1,s}\int_{\R}\dfrac{w(z_0)-w(z)}{\abs{z_0-z}^{1+2s}}\,dz+O\left(R_1^{-2s}\right)\\
&=w(z_0)-w(z_0)^3+O\left(R_1^{-2s}\right).\\
\end{split}\]
Hereafter $\rho=\sqrt{\abs{y}^2+\abs{z_0-z}^2}$. $I_2$ and $I_3$ are of the next order where we see the mean curvature. 
\[\begin{split}
I_2
&=-C_{3,s}\sum_{i=1}^{2}\kappa_i(0)\iint_{C_{R_1}}\dfrac{w(z_0)-w(z)}{\abs{(y,z_0-z)}^{3+2s}}z\,dydz\\
&=-C_{3,s}\sum_{i=1}^{2}\kappa_i(0)\iint_{\R^3}\dfrac{w(z_0)-w(z)}{\abs{(y,z_0-z)}^{3+2s}}z\,dydz\\
&\qquad-C_{3,s}\sum_{i=1}^{2}\kappa_i(0)\iint_{\R^3\setminus{C_{R_1}}}\dfrac{w(z_0)-w(z)}{\abs{(y,z_0-z)}^{3+2s}}(z_0+(z-z_0))\,dydz\\
&=-C_{1,s}\sum_{i=1}^{2}\kappa_i(0)\int_{\R}\dfrac{w(z_0)-w(z)}{\abs{z_0-z}^{1+2s}}z\,dz\\
&\qquad+O\left(\norm[0]{\kappa}\abs{z_0}\int_{R_1}^{\infty}\dfrac{1}{\rho^{3+2s}}\rho^2\,d\rho\right)
    +O\left(\norm[0]{\kappa}\int_{R_1}^{\infty}\dfrac{\rho}{\rho^{3+2s}}\rho^2\,d\rho\right)\\
&=-2\left(C_{1,s}\int_{\R}\dfrac{w(z_0)-w(z)}{\abs{z_0-z}^{1+2s}}z\,dz\right)H_{M_\eps}(\ty_0)
    +O\left(\norm[0]{\kappa}R_1^{-2s}(\abs{z_0}+R_1)\right).
\end{split}\]
Also,
\[\begin{split}
I_3
&=C_{3,s}\dfrac{3+2s}{2}\sum_{i=1}^{2}\kappa_i(0)\iint_{\R^3}\dfrac{w(z_0)-w(z)}{\abs{(y,z_0-z)}^{5+2s}}(z_0+z)y_i^2\,dydz\\
&\qquad+O\left(\norm[0]{\kappa}\right)\iint_{\R^3\setminus{C_{R_1}}}\dfrac{w(z_0)-w(z)}{\abs{(y,z_0-z)}^{5+2s}}(2z_0-(z_0-z))y_i^2\,dydz\\
&=C_{1,s}\dfrac12\sum_{i=1}^{2}\kappa_i(0)\int_{\R}\dfrac{w(z_0)-w(z)}{\abs{z_0-z}^{1+2s}}(z_0+z)\,dz\\
&\qquad+O\left(\norm[0]{\kappa}\abs{z_0}\int_{R_1}^{\infty}\dfrac{\rho^2}{\rho^{5+2s}}\rho^2\,d\rho\right)+O\left(\norm[0]{\kappa}\int_{R_1}^{\infty}\dfrac{\rho^3}{\rho^{5+2s}}\rho^2\,d\rho\right)\\
&=\left(C_{1,s}\int_{\R}\dfrac{w(z_0)-w(z)}{\abs{z_0-z}^{1+2s}}(z_0+z)\,dz\right)H_{M_\eps}(y_0)
    +O\left(\norm[0]{\kappa}R_1^{-2s}(\abs{z_0}+R_1)\right).
\end{split}\]
The remainder terms $I_4$ and $I_5$ are estimated as follows.
\[\begin{split}
I_4
&=O\left(\norm[\alpha]{\kappa}\right)\iint_{C_{R_1}}\dfrac{\abs{w(z_0)-w(z)-\chi_{B_1^1(z_0)}(z)w'(z_0)(z_0-z)}}{\abs{(y,z_0-z)}^{3+2s}}\abs{y}^{\alpha}(\abs{z}+\abs{z_0})\,dydz\\
&=O\left(\norm[\alpha]{\kappa}\right)\int_{\R}\abs{w(z_0)-w(z)+\chi_{B_1^1(0)}(z)w'(z_0)(z_0-z)}\int_{\R^2}\dfrac{\abs{y}^{\alpha}(\abs{z_0-z}+\abs{z_0})}{\left(\abs{y}^2+\abs{z_0-z}^{2}\right)^{\frac{3+2s}{2}}}\,dydz\\
&\qquad\qquad+O\left(\norm[\alpha]{\kappa}(\abs{z}+\abs{z_0})\int_{R_1}^{\infty}\dfrac{\rho^\alpha}{\rho^{3+2s}}\rho^2\,d\rho\right)\\
&=O\left(\norm[\alpha]{\kappa}\right)\left(\int_{\R}\dfrac{\abs{w(z_0)-w(z)+\chi_{B_1^1(0)}(z)w'(z_0)(z_0-z)}}{\abs{z_0-z}^{1+2s-\alpha}}(\abs{z_0-z}+\abs{z_0})\right)\,dz\\
&\qquad\qquad+O\left(\norm[\alpha]{\kappa}R_1^{-2s+\alpha}(\abs{z}+\abs{z_0})\right)\\
&=O\left(\norm[\alpha]{\kappa}(1+R_1^{-2s+\alpha}(\abs{z}+\abs{z_0}))\right).
\end{split}\]
\[\begin{split}
I_5
&=O\left(\norm[0]{\kappa}^2\right)\iint_{C_{R_1}}\dfrac{\abs{w(z_0)-w(z)-\chi_{B_1^1(z_0)}(z)w'(z_0)(z_0-z)}}{\abs{(y,z_0-z)}^{3+2s}}(\abs{y}^2+\abs{z}^2+\abs{z_0}^2)\,dydz\\
&=O\left(\norm[0]{\kappa}^2\right)\left(1+\int_{1}^{R_1}\dfrac{\rho^2+\abs{z_0}^2}{\rho^{3+2s}}\rho^2\,d\rho\right)\\
&=O\left(\norm[0]{\kappa}^2(1+R_1^{2-2s}+R_1^{-2s}\abs{z_0}^2)\right).
\end{split}\]
In conclusion, we have, since $\abs{z_0}\leq{R_1}$ and $\alpha<2s-1$,
\[\begin{split}
\Ds{u_0}(x_0)
&=w(z_0)-w(z_0)^3+\left(C_{1,s}\int_{\R}\dfrac{w(z_0)-w(z)}{\abs{z_0-z}^{1+2s}}(z_0-z)\,dz\right)H_{M_\eps}(y_0)\\
&\qquad+O\left(R_1^{-2s}\left(1+\norm[0]{\kappa}R_1+\norm[\alpha]{\kappa}R_1^{2s}+\norm[0]{\kappa}^2R_1^2\right)\right)\\
&=w(z_0)-w(z_0)^3+c_H(z_0)H_{M_\eps}(y_0)+O(R_1^{-2s}),
\end{split}\]
the last line following from the estimate
\[\begin{split}
\norm[\alpha]{\kappa}R_1^{2s}
&\lesssim
\begin{cases}
\eps^{\alpha}&\textfor\abs{x'}\leq\dfrac{2\bar{R}}{\eps}\medskip\\
\dfrac{F_\eps^{2s(\tau-1)}}{\abs{x'}^{\alpha}}&\textfor\abs{x'}\geq\dfrac{\bar{R}}{\eps}
\end{cases}\\
&\lesssim
\begin{cases}
\eps^{\alpha}&\textfor\abs{x'}\leq\dfrac{2\bar{R}}{\eps}\medskip\\
\eps^{\alpha-2s(\tau-1)}(\eps\abs{x'})^{-2s(\tau-1)(1-\frac{2}{2s+1})}&\textfor\abs{x'}\geq\dfrac{\bar{R}}{\eps}
\end{cases}\\
&\lesssim\eps^{\alpha-2s(\tau-1)}.
\end{split}\]
The finiteness of the remainder in the norm $\norm[**]{\cdot}$ is a tedious but straightforward computation. As an example, the difference of the exterior error with two radii $F_\eps^\tau$ and $G_\eps^\tau$ is controlled by
\[\begin{split}
&\quad\,\abs{\int_{\Phi(C_{F_\eps^\tau}^c)}\dfrac{u_0(x_0)-u_0(x)}{\abs{x-x_0}^{3+2s}}\,dx
    -\int_{\Phi(C_{G_\eps^\tau}^c)}\dfrac{u_0(x_0)-u_0(x)}{\abs{x-x_0}^{3+2s}}\,dx}\\
&=\abs{\iint_{C_{G_\eps^\tau}\setminus{C}_{F_\eps^\tau}}\dfrac{w(z_0)-w(z)}{\abs{\Phi(y_0,z_0)-\Phi(y,z)}^{3+2s}}J(y,z)\,dydz}.\\
\end{split}\]
Following the computations in the above proof, a typical term would be
\[O\left(G_\eps^{-2s\tau}-F_\eps^{-2s\tau}\right)=O\left(r^{-\frac{2(2s\tau+1)}{2s+1}}\abs{F_\eps-G_\eps}\right),\]
which implies Lipschitz continuity with the Lipschitz constant decaying in $r$.

%
\end{proof}

Similarly we prove the expansion at the end.
\begin{proof}[Proof of Corollary \ref{cor:Fermiyzout}]
We recall that a tubular neighborhood of an end of $M_\eps^{+}$ are parameterized by
\[x=\ty+z\nu(\ty)=(y,F_\eps(r))+z\dfrac{\left(-F_\eps'(r)\frac{y}{r},1\right)}{\sqrtt{F_\eps'(r)}}\quad\textfor{r}=\abs{y}>{r_0},\,\abs{z}<\dfrac{\bar\delta}{\eps},\]
where $r=\abs{y}$. In place of Lemma \ref{lem:changevar} we have for $\abs{z}\leq{F_\eps^\tau}(r)$ with $1<\tau<\frac{2s+1}{2}$,
\[\begin{split}
J(y,z)
&=\left(1+O\left(F_\eps'(r)^2\right)\right)\left(1+O\left(F_\eps''(r)F_\eps^{\tau}(r)\right)\right)^2\\
&=\left(1+O\left(F_\eps^{-(2s-1)}(r)\right)\right)\left(1+O\left(F_\eps^{-(2s-\tau)}(r)\right)\right)^2\\
&=1+O\left(F_\eps^{-(2s-\tau)}(r)\right),\\
\abs{x-x_0}^2
&=\left(\abs{y_0-y}^2+\abs{z_0-z}^2\right)\left(1+O\left(F_\eps^{\tau}(r)F_\eps''(r)\right)\right)\\
&=\left(\abs{y_0-y}^2+\abs{z_0-z}^2\right)\left(1+O\left(F_\eps^{-(2s-\tau)}\right)\right).
\end{split}\]
The result follows by the same proof as in Proposition \ref{prop:DsFermi}.
\end{proof}

We now give a proof of the error estimate stated in Section \ref{sec:outline}.

\begin{proof}[Proof of Proposition \ref{prop:error}]
Using the Fermi coordinates expansion of the fractional Laplacian (Proposition \ref{prop:DsFermi}), we have, in an expanding neighborhood 
of $M_\eps$, the following estimates on the error:
\begin{itemize}
\item For $\dfrac{1}{\eps}\leq\abs{x'}\leq\dfrac{2\bar{R}}{\eps}$ and $\abs{z}\leq\dfrac{\bar\delta}{\eps}$,
    $$S(u^*)(x)=c_H(z)H_{M_\eps}(\ty)+O\left(\eps^{2s}\right).$$
\item For $\abs{x'}\geq\dfrac{4\bar{R}}{\eps}$ 
    and $\abs{z}\leq{F_\eps^\tau(\abs{x'})}$,
    \[\begin{split}
    S(u^*)(x)
    &=\Ds(w(z_{+})+w(z_{-})+1)+f(w(z_{+})+w(z_{-})-1)+O\left(F_{\eps}^{-2s\tau}\right)\\
    &=f(w(z_{+})+w(z_{-})+1)-f(w(z_{+}))-f(w(z_{-}))\\
    &\qquad+c_H(z_{+})H_{M_\eps^+}(\ty_{+})+c_H(z_{-})H_{M_\eps^-}(\ty_{-})+O\left(F_{\eps}^{-2s\tau}\right)\\
    &=3(w(z_{+})+w(z_{-}))(1+w(z_{+}))(1+w(z_{-}))\\
    &\qquad+c_H(z_{+})H_{M_\eps^+}(\ty_{+})+c_H(z_{-})H_{M_\eps^-}(\ty_{-})+O\left(F_{\eps}^{-2s\tau}\right).
    \end{split}\]
\item For $\dfrac{2\bar{R}}{\eps}\leq\abs{x'}\leq\dfrac{4\bar{R}}{\eps}$, $x_n>0$ and $\abs{z}\leq{R_1(\abs{x'})}$, 
    \[\begin{split}
    S(u^*)(x)
    &=\Ds{w(z_+)}+\Ds\left(\left(1-\eta\left(\abs{x'}-\dfrac{\bar{R}}{\eps}\right)(w(z_-)+1)\right)\right)\\
    &\qquad+f\left(w(z_+)+\left(1-\eta\left(\abs{x'}-\dfrac{\bar{R}}{\eps}\right)(w(z_-)+1)\right)\right)\\
    &=c_H(z_+)H_{M_\eps}(\ty_+)+O(\eps^{2s}).
    \end{split}\]
    Here the second term is small because of the smallness of the cut-off error up to two derivatives.
\item For $\dfrac{2\bar{R}}{\eps}\leq\abs{x'}\leq\dfrac{4\bar{R}}{\eps}$, $x_n<0$ and $\abs{z}\leq{R_1(\abs{x'})}$, we have similarly
    \[S(u^*)(x)=c_H(z_-)H_{M_\eps}(\ty_-)+O(\eps^{2s}).\]
\end{itemize}
This completes the proof.
\end{proof}

\begin{proof}[Proof of Lemma \ref{lem:changevar}]
Referring to Lemma \ref{lem:g} and keeping in mind that $\norm[0]{\kappa}R_1=o(1)$, for the Jacobian determinant we have
\[\begin{split}
J(y,z)
&=1+(\kappa_1(0)+\kappa_2(0))z
    +((\kappa_1+\kappa_2)(y)-(\kappa_1+\kappa_2)(0))z\\
&\qquad+\left(\sqrtt{\abs{Dg(y)}}-1\right)(1+(\kappa_1(y)+\kappa_2(y))z+\kappa_1(y)\kappa_2(y)z^2)\\
&=1+(\kappa_1(0)+\kappa_2(0))z+O\left(\norm[\alpha]{\kappa}\abs{y}^{\alpha}\abs{z}\right)+O\left(\norm[0]{\kappa}^2\abs{z}^2\right)\\
&\qquad+O\left(\norm[0]{\kappa}^2\abs{y}^2\right)\left(1+O\left(\norm[0]{\kappa}\abs{z}\right)\right)^2\\
&=1+(\kappa_1(0)+\kappa_2(0))z+O\left(\norm[\alpha]{\kappa}\abs{y}^{\alpha}\abs{z}\right)+O\left(\norm[0]{\kappa}^2(\abs{y}^2+\abs{z}^2)\right).\\
\end{split}\]
To expand the kernel we first consider
$$x_0-x=({y},g({y}))-(0,z_0)+z\dfrac{(-Dg({y}),1)}{\sqrtt{\abs{Dg({y})}}},$$
\[\begin{split}
\abs{x_0-x}^2
&=\abs{y}^2+g(y)^2+z^2+z_0^2-\dfrac{2zz_0}{\sqrtt{\abs{Dg(y)}}}+\dfrac{2z(g(y)-Dg(y)\cdot{y})}{\sqrtt{Dg(y)}}-2z_0g(y)\\
&=\abs{y}^2+\abs{z_0-z}^2+2z(g(y)-Dg(y)\cdot{y})-2z_0g(y)\\
&\qquad+g(y)^2+\left(2zz_0-2z(g(y)-Dg(y)\cdot{y})\right)\left(1-\dfrac{1}{\sqrtt{\abs{Dg(y)}}}\right)\\
&=\abs{(y,z_0-z)}^2-(z_0+z)\sum_{i=1}^{2}\kappa_i(0)y_i^2
    +O\left(\norm[\alpha]{\kappa}\abs{y}^{2+\alpha}(\abs{z}+\abs{z_0})\right)\\
&\qquad+O\left(\norm[0]{\kappa}^2\abs{y}^4\right)+O\left(\norm[0]{\kappa}^2\abs{y}^2\abs{z}\left(\abs{z_0}+\norm[0]{\kappa}\abs{y}^2\right)\right)\\
&=\abs{(y,z_0-z)}^2-(z_0+z)\sum_{i=1}^{2}\kappa_i(0)y_i^2\\
&\qquad+O\left(\norm[\alpha]{\kappa}\abs{y}^{2+\alpha}(\abs{z}+\abs{z_0})\right)
    +O\left(\norm[0]{\kappa}^2\abs{y}^2(\abs{y}^2+\abs{z}\abs{z_0})\right).\\
\end{split}\]
By binomial theorem,
\[\begin{split}
\abs{x_0-x}^{-3-2s}
&=\abs{(y,z_0-z)}^{-3-2s}\Bigg[1+\dfrac{3+2s}{2}(z_0+z)\sum_{i=1}^{2}\kappa_i(0)\dfrac{y_i^2}{\abs{(y,z_0-z)}^2}\\
&\qquad+O\left(\dfrac{\norm[\alpha]{\kappa}\abs{y}^{2+\alpha}(\abs{z}+\abs{z_0})}{\abs{(y,z_0-z)}^2}\right)
    +O\left(\dfrac{\norm[0]{\kappa}^2\abs{y}^2(\abs{y}^2+\abs{z}\abs{z_0})}{\abs{(y,z_0-z)}^2}\right)\\
&\qquad+O\left(\dfrac{\norm[0]{\kappa}^2\abs{y}^4(\abs{z_0}^2+\abs{z}^2)}{\abs{(y,z_0-z)}^4}\right)\Bigg]\\
&=\abs{(y,z_0-z)}^{-3-2s}\Bigg[1+\dfrac{3+2s}{2}(z_0+z)\sum_{i=1}^{2}\kappa_i(0)\dfrac{y_i^2}{\abs{(y,z_0-z)}^2}\\
&\qquad+O\left(\dfrac{\norm[\alpha]{\kappa}\abs{y}^{2+\alpha}(\abs{z}+\abs{z_0})}{\abs{(y,z_0-z)}^2}\right)
    +O\left(\dfrac{\norm[0]{\kappa}^2\abs{y}^2(\abs{y}^2+\abs{z}^2+\abs{z_0}^2)}{\abs{(y,z_0-z)}^2}\right)\Bigg].\\
\end{split}\]
\end{proof}

\begin{proof}[Proof of Lemma \ref{lem:PV}]
We first show that the domains of integration
\[
\set{|x-x_0|<\epsilon}
	\quad \text{ and } \quad
\set{\tilde{\rho}
    <\epsilon
}
\]
coincide up to a higher power of $\epsilon$. We have, in the $(y,z)$ coordinate, 
\[\begin{split}
x-x_0
&=\begin{pmatrix}
y+z\nu'\\
g(y)+z\nu^3-z_0
\end{pmatrix}\\
&=\begin{pmatrix}
y+(z_0+z-z_0)(1+\nu^3-1)(-Dg(y))\\
z-z_0+g(y)+(z_0+z-z_0)(\nu^3-1)
\end{pmatrix}\\
&=\begin{pmatrix}
(1-\kappa_1(0)z_0)y_1+O(\tilde{\rho}^2)\\
(1-\kappa_{2}(0)z_0)y_{2}+O(\tilde{\rho}^2)\\
z-z_0+O(\tilde{\rho}^2)
\end{pmatrix},
\end{split}\]
where the constants in the big-$O$ depends on $|z_0|$ and the curvatures $\norm[\alpha]{\kappa}$ ($\alpha\in[0,1)$). Then $\abs{x-x_0}^2=\tilde{\rho}^2(1+O(\tilde{\rho}))$, and in particular,
\begin{equation*}
\set{|x-x_0|<\epsilon}
=\set{\tilde{\rho}<\epsilon+O(\epsilon^2)}.
\end{equation*}
As rough estimates, we have
\[
\abs{x-x_0}^{-(3+2s)}
=O(\tilde{\rho}^{-(3+2s)})
\]
and
\[\begin{split}
J(y,z)
&=
    \sqrtt{g(y)}
    \prod_{i=1}^{2}(1-\kappa_i(y)z)
=O(1).
\end{split}\]
Putting altogether we have
\[\begin{split}
&\quad\;
    \int_{\Phi(C_{R_1})\setminus
            \set{|x-x_0|<\epsilon}}
        \dfrac{
            u(x)-u(x_0)
        }{
            |x-x_0|^{3+2s}
        }
    \,dx\\
&=
    \iint_{C_{R_1}\setminus
            \set{\rho<\epsilon+O(\epsilon^2)}
    }
        \dfrac{
            w(z_0)-w(z)
        }{
            \abs{\Phi(y_0,z_0)-\Phi(y,z)}^{3+2s}
        }
        J(y,z)
    \,dydz\\
&=
    \iint_{C_{R_1}\setminus
            \set{\rho<\epsilon+C\epsilon^2}
    }
        \dfrac{
            w(z_0)-w(z)
        }{
            \abs{\Phi(y_0,z_0)-\Phi(y,z)}^{3+2s}
        }
        J(y,z)
    \,dydz
    +O\left(
        \int_{\epsilon-C\epsilon^2}
            ^{\epsilon+C\epsilon^2}
            \dfrac{
                \norm[L^\infty]{w}
            }{
                \tilde{\rho}^{3+2s}
            }
        \tilde{\rho}^{2}\,d\tilde{\rho}
    \right),
\end{split}\]
with the error bounded by
\[
C\epsilon^{1-2s}((1+C\epsilon)-(1-C\epsilon))
    \leq C\eps^{2-2s}.
\]
Sending $\epsilon\to0^+$, we get the desired equality.
\end{proof}

\begin{proof}[Proof of Lemma \ref{lem:3to1}]
The first and third equalities follow from the change of variable $y=\abs{z_0-z}\tilde{y}$. Indeed, to prove the second one, we have
\[\begin{split}
\int_{\R^2}\dfrac{y_i^2}{\abs{(y,z_0-z)}^{5+2s}}\,dy
&=\dfrac12\int_{\R^2}\dfrac{\left(\abs{y}^2+\abs{z_0-z}^2\right)-\abs{z_0-z}^2}{\left(\abs{y}^2+\abs{z_0-z}^2\right)^{\frac{5+2s}{2}}}\,dy\\
&=\dfrac12\int_{\R^2}\dfrac{dy}{\left(\abs{y}^2+\abs{z_0-z}^2\right)^{\frac{3+2s}{2}}}-\dfrac12\abs{z_0-z}^2\int_{\R^2}\dfrac{dy}{\left(\abs{y}^2+\abs{z_0-z}^2\right)^{\frac{5+2s}{2}}}\\
&=\dfrac12\dfrac{C_{1,s}}{C_{3,s}}\dfrac{1}{\abs{z_0-z}^{1+2s}}-\dfrac12\dfrac{C_{3,s}}{C_{5,s}}\dfrac{\abs{z_0-z}^2}{\abs{z_0-z}^{3+2s}}\\
&=\dfrac12\dfrac{C_{1,s}}{C_{3,s}}\left(1-\dfrac{C_{3,s}^2}{C_{1,s}C_{5,s}}\right)\dfrac{1}{\abs{z_0-z}^{1+2s}}.
\end{split}\]
Recalling that
$$C_{n,s}=\dfrac{2^{2s}s}{\Gamma(1-s)}\dfrac{\Gamma\left(\frac{n+2s}{2}\right)}{\pi^{\frac{n}{2}}},$$
we have
$$1-\dfrac{C_{3,s}^2}{C_{1,s}C_{5,s}}=1-\dfrac{\Gamma\left(\frac{3+2s}{2}\right)^2}{\Gamma\left(\frac{1+2s}{2}\right)\Gamma\left(\frac{5+2s}{2}\right)}=1-\dfrac{1+2s}{3+2s}=\dfrac{2}{3+2s}$$
and hence
$$\int_{\R^2}\dfrac{y_i^2}{\abs{(y,z_0-z)}^{5+2s}}\,dy=\dfrac{1}{3+2s}\dfrac{C_{1,s}}{C_{3,s}}\dfrac{1}{\abs{z_0-z}^{1+2s}}.$$
\end{proof}

\section{Linear theory}\label{sec:linear}

In this section we use a different notation. We write $w=w(z,t)$ for the layer in the extension and $\underline{w}(z)$ for its trace.

\subsection{Non-degeneracy of one-dimensional solution}

Consider the linearized equation of $\Ds{u}+f(u)=0$ at $\underline{w}$, the one-dimensional solution, namely
\begin{equation}\label{linearized}
\Ds\phi+f'(\underline{w})\phi=0\quad\textfor(y,z)\in\R^{n},
\end{equation}
or the equivalent extension problem (here $a=1-2s$)
\begin{equation}\label{linearizedext}\begin{cases}
\nabla\cdot(t^a\nabla{\phi})=0&\textfor(y,z,t)\in\R^{n+1}_+\\
t^a\pnu{\phi}+f'(w)\phi=0&\textfor(y,z)\in\R^n.
\end{cases}\end{equation}

Given $\xi\in\R^{n-1}$, we define on
$$X={H}^1(\R^2_+,t^a)$$
the bilinear form
$$(u,v)_{X}=\int_{\R^2_+}t^a\left(\nabla{u}\cdot\nabla{v}+\abs{\xi}^2uv\right)\,dzdt+\int_{\R}f'(w)uv\,dz.$$


\begin{lem}[An inner product]
Suppose $\xi\neq0$. Then $(\cdot,\cdot)_X$ defines an inner product on $X$.
\end{lem}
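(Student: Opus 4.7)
Bilinearity and symmetry of $(\cdot,\cdot)_X$ are immediate from the definition, so the entire matter reduces to checking positive definiteness: $(u,u)_X\ge 0$, with equality only when $u\equiv 0$. The strategy is to split
\[
(u,u)_X = A(u) + B(u),\qquad
A(u) = \int_{\R^2_+} t^a\abs{\nabla u}^2\,dzdt + \int_{\R} f'(\underline{w})\underline{u}^2\,dz,\qquad
B(u) = \int_{\R^2_+} t^a\abs{\xi}^2 u^2\,dzdt,
\]
where $\underline{u}$ denotes the trace of $u$ at $t=0$. Then $B(u)\ge 0$ is trivial, and positivity is extracted from $B$ together with the non-negativity of $A$.

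The key step is showing $A(u)\ge 0$, for which I would use a ground-state transformation. Let $w_e$ be the extension of the one-dimensional kernel $\underline{w}'$, so that $\nabla\cdot(t^a\nabla w_e)=0$ in $\R^2_+$ with boundary condition $\lim_{t\to 0^+} t^a\partial_t w_e = f'(\underline{w})\underline{w}'$ (this follows from the fact that $\underline{w}'$ lies in the kernel of $(-\partial_{zz})^s + f'(\underline{w})$). Since $\underline{w}$ is strictly monotone, $\underline{w}'>0$, and the maximum principle for the degenerate elliptic extension operator gives $w_e>0$ on $\R^2_+$. For $u\in C_c^\infty(\overline{\R^2_+})$, write $u = v\,w_e$ and expand
\[
\abs{\nabla u}^2 = w_e^2\abs{\nabla v}^2 + 2vw_e\nabla v\cdot\nabla w_e + v^2\abs{\nabla w_e}^2.
\]
Integrating the cross term by parts (rewriting it as $\tfrac12 \nabla(v^2)\cdot\nabla(w_e^2)$), the interior contribution $-\tfrac12\int v^2\nabla\cdot(t^a\nabla(w_e^2)) = -\int t^a v^2\abs{\nabla w_e}^2$ cancels the last term, while the boundary contribution at $t=0$ produces exactly $-\int f'(\underline{w})\underline{u}^2\,dz$ by the boundary condition on $w_e$. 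The outcome is the Picone-type identity
\[
A(u) = \int_{\R^2_+} t^a w_e^2\abs{\nabla v}^2\,dzdt \ge 0,
\]
which I would extend to all $u\in X$ by a density argument.

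Combining $A(u)\ge 0$ with $B(u)\ge 0$ gives $(u,u)_X\ge 0$. For the strictness, if $(u,u)_X=0$ then both $A(u)=0$ and $B(u)=0$; using $\xi\ne 0$ and $t^a>0$ for $t>0$, vanishing of $B(u)$ forces $u=0$ a.e.\ in $\R^2_+$, hence $u=0$ in $X$. The main obstacle in this plan is the rigorous justification of the ground-state transformation: one has to handle the weighted Neumann trace carefully in the Caffarelli--Silvestre sense, and ensure that the substitution $v=u/w_e$ is permitted despite $w_e$ decaying at infinity in $z$ (recall that $\underline{w}'$ decays only algebraically, like $\abs{z}^{-(1+2s)}$). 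This is most easily accomplished by first verifying the identity on test functions with compact support away from $t=0$, then using the standard trace theory for the weighted space $H^1(\R^2_+,t^a)$ together with the boundedness of $f'(\underline{w})$ to extend by density.
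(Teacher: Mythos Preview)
Your proposal is correct and follows essentially the same route as the paper: the paper also proves $A(u)\ge 0$ via the ground-state substitution $u=vw_z$ (your $w_e$ is the paper's $w_z$), obtaining the identical Picone-type identity $\int t^a w_z^2\abs{\nabla(u/w_z)}^2$ by the same integration by parts using the boundary condition $t^a\partial_\nu w_z+f'(w)w_z=0$. The only cosmetic difference is that the paper carries out the computation on half-balls $B_R^+$ and sends $R\to\infty$, whereas you propose density from compactly supported test functions; your treatment of the equality case via $B(u)=0$ and $\xi\neq 0$ is more explicit than the paper's.
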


\begin{proof}
Clearly $(u,u)_X<\infty$ for any $u\in{X}$. For $R>0$, denote $B_R^+=B_R(0)\cap\R^2_+$ and its boundary in $\R^2_+$ by $\p{B_R^+}$. It suffices to prove that
\begin{equation}\label{BR}
\int_{B_R^+}t^a\absgrad{u}^2\,dzdt+\int_{\p{B_R^+}}f'(w)u^2\,dz=\int_{B_R^+}t^aw_z^2\absgrad{\left(\dfrac{u}{w_z}\right)}^2\,dzdt.
\end{equation}
Since the right hand side is non-negative, the result follows as we take $R\to+\infty$. To check the above equality, we compute
\[\begin{split}
\int_{B_R^+}t^aw_z^2\absgrad{\left(\dfrac{u}{w_z}\right)}^2\,dzdt
&=\int_{B_R^+}t^a\abs{\nabla{u}-\dfrac{u}{w_z}\nabla{w_z}}^2\,dzdt\\
&=\int_{B_R^+}t^a\absgrad{u}^2\,dzdt+\int_{B_R^+}t^a\dfrac{u^2}{w_z^2}\absgrad{w_z}^2\,dzdt-\int_{B_R^+}t^a\nabla(u^2)\cdot\dfrac{\nabla{w_z}}{w_z}\,dzdt.
\end{split}\]
Since $\nabla\cdot(t^a\nabla{w_z})=0$ in $\R^2_+$, we can integrate the last integral by parts as
\[\begin{split}
-\int_{B_R^+}t^a\nabla(u^2)\cdot\dfrac{\nabla{w_z}}{w_z}\,dzdt
&=-\int_{\p{B_R^+}}u^2\dfrac{t^a\p_{\nu}w_z}{w_z}\,dz+\int_{B_R^+}u^2\nabla\cdot\left(t^a\dfrac{\nabla{w_z}}{w_z}\right)\,dzdt\\
&=\int_{\p{B_R^+}}u^2\dfrac{f'(w)w_z}{w_z}\,dz+\int_{B_R^+}t^au^2\nabla{w_z}\cdot\nabla\cdot\dfrac{1}{w_z}\,dzdt\\
&=\int_{\p{B_R^+}}f'(w)u^2\,dz-\int_{B_R^+}t^a\dfrac{u^2}{w_z^2}\absgrad{w_z}^2\,dzdt.
\end{split}\]
Therefore, \eqref{BR} holds and the proof is complete.
\end{proof}

\begin{lem}[Solvability of the linear equation]\label{lem:linearsolve}
Suppose $\xi\neq0$. For any $g\in{C}_c^{\infty}(\overline{\R^2_+})$ and $h\in{C}_c^{\infty}(\R)$, there exists a unique $u\in{X}$ of
\begin{equation}\label{linear}\begin{cases}
-\nabla\cdot(t^a\nabla{u})+t^a\abs{\xi}^2{u}=g&\textin\R^2_+\\
t^a\pnu{u}+f'(w)u=h&\texton\p\R^2_+.
\end{cases}\end{equation}
\end{lem}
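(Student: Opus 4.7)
The plan is to recast the boundary value problem variationally and apply the Riesz representation theorem in the Hilbert space $X$ equipped with the inner product $(\cdot,\cdot)_X$ just introduced. Starting from a test function $v\in C_c^\infty(\overline{\R^2_+})$, multiplying the bulk equation by $v$, integrating by parts, and using the boundary condition to eliminate $t^a\partial_\nu u$, one obtains the weak formulation
\[
(u,v)_X = \int_{\R^2_+} g\,v\,dzdt + \int_{\R} h\,\underline{v}\,dz =: \ell(v),
\]
where $\underline{v}$ denotes the trace on $\partial\R^2_+$. Since $g$, $h$ are compactly supported and the trace operator $X\to L^2_{\loc}(\R)$ is continuous for the Caffarelli--Silvestre space $H^1(\R^2_+,t^a)$, the functional $\ell$ extends to a bounded linear functional on $X$.

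Next I would verify that $(\cdot,\cdot)_X$ induces a Hilbert norm equivalent to the standard $H^1(\R^2_+,t^a)$ norm, so that Riesz representation applies directly on $X$. Continuity is immediate from the boundedness of $f'(\underline{w})$ and the continuity of the trace. For coercivity when $\xi\neq 0$, the preceding lemma gives the nonnegativity
\[
(u,u)_X = \int_{\R^2_+} t^a w_z^2\,\abs{\nabla(u/w_z)}^2\,dzdt + \abs{\xi}^2 \int_{\R^2_+} t^a u^2\,dzdt \geq \abs{\xi}^2\,\|u\|_{L^2(\R^2_+,t^a)}^2,
\]
which controls the $L^2(t^a)$-norm. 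To upgrade this to control of $\int t^a\abs{\nabla u}^2$, I combine the identity
\[
\int_{\R^2_+} t^a\abs{\nabla u}^2\,dzdt = (u,u)_X - \abs{\xi}^2\int_{\R^2_+} t^a u^2 \,dzdt- \int_\R f'(\underline{w})u^2\,dz
\]
with a sharp weighted trace inequality of the form $\|\underline{u}\|_{L^2(\R)}^2 \leq \epsilon \int t^a\abs{\nabla u}^2 + C_\epsilon \int t^a u^2$. Choosing $\epsilon$ small and absorbing the gradient term, then using the $L^2(t^a)$ bound already proved, yields $\|u\|_{H^1(\R^2_+,t^a)}^2 \leq C(\xi)\,(u,u)_X$.

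Once coercivity and continuity are in place, the Riesz representation theorem produces a unique $u\in X$ with $(u,v)_X=\ell(v)$ for every $v\in X$. Reversing the integration by parts identifies $u$ as a distributional solution of $-\nabla\cdot(t^a\nabla u)+t^a\abs{\xi}^2u = g$ in $\R^2_+$, and the natural boundary condition $t^a\partial_\nu u + f'(\underline{w})u = h$ is then read off from the remaining boundary integral.

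The principal difficulty is the coercivity step, because $\int f'(\underline{w})u^2$ can be negative and $f'(\underline{w})$ does not have a sign. The $\abs{\xi}^2$-mass term together with the sharp trace inequality should suffice, but if such a trace bound with arbitrarily small constant in front of $\int t^a\abs{\nabla u}^2$ is not directly available, I would fall back on a contradiction/compactness argument: a sequence $(u_n)$ with $\|u_n\|_{H^1(t^a)}=1$ and $(u_n,u_n)_X\to 0$ must satisfy $u_n\to 0$ in $L^2(t^a)$; combining this with the vanishing of $\int t^a w_z^2\abs{\nabla(u_n/w_z)}^2$ and local Rellich compactness should force $u_n\to 0$ in $H^1$ and hence a contradiction.
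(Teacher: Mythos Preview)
Your approach is the same as the paper's: write the weak formulation $(u,v)_X=\ell(v)$ and invoke the Riesz representation theorem on $X$. The paper's proof is in fact only two sentences --- it records the weak formulation and says ``By Riesz representation theorem, there is a unique solution $u\in X$'' --- relying entirely on the preceding lemma (that $(\cdot,\cdot)_X$ is an inner product when $\xi\neq0$) and leaving the completeness/norm-equivalence issue implicit. Your discussion of coercivity via the trace inequality (or the compactness fallback) is more careful than the paper on precisely this point, but it is not a different route.
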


\begin{proof}

This equation has the weak formulation
$$(u,v)_X=\int_{\R^2_+}t^a\left(\nabla{u}\cdot\nabla{v}+\abs{\xi}^2uv\right)\,dzdt+\int_{\R}f'(w)uv\,dz=\int_{\R^2_+}gv\,dzdt+\int_{\R}hv\,dz.$$
By Riesz representation theorem, there is a unique solution $u\in{X}$.
\end{proof}

\begin{lem}[Non-degeneracy in one dimension {\cite[Lemma 4.2]{Du-Gui-Sire-Wei}}]\label{non-degen1d}
Let $\underline{w}(z)$ be the unique increasing solution of
$$(-\p_{zz})^s{\underline{w}}+f(\underline{w})=0\quad\textin\R.$$
If $\phi(z)$ is a bounded solution of
$$(-\p_{zz})^s\phi+f'(\underline{w})\phi=0\quad\textin\R,$$
then $\phi(z)=C\underline{w}'(z)$.
\end{lem}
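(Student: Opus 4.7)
My plan is to run the standard ``quotient of positive solutions'' Liouville argument, but carried out for the Caffarelli--Silvestre extension rather than for the trace equation, exploiting strongly the strict positivity of $\underline{w}'$ and the condition $s>\frac12$.

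Let $w(z,t)$ denote the $s$-harmonic extension of $\underline{w}(z)$ and $\Phi(z,t)$ that of $\phi(z)$. Both solve
\[
\nabla\cdot(t^a\nabla\,\cdot)=0 \text{ in }\R^2_+,\qquad t^a\partial_\nu(\cdot)+f'(\underline{w})(\cdot)=0 \text{ on }\{t=0\}.
\]
Differentiating the equation for $w$ in $z$ shows that $V:=w_z$ satisfies the same linear extension problem (with the same Robin-type boundary condition, since $f'(\underline{w})$ is the linearization at $\underline{w}$). Because $\underline{w}$ is strictly increasing, $V>0$ on $\{t=0\}$, and then by the strong maximum principle for the degenerate operator $\nabla\cdot(t^a\nabla\,\cdot)$ one gets $V>0$ throughout $\overline{\R^2_+}$.

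Next I would form the ratio $\sigma:=\Phi/V$ in $\overline{\R^2_+}$. A direct computation gives
\[
\nabla\cdot(t^a V^2\nabla\sigma)=V\,\nabla\cdot(t^a\nabla\Phi)-\Phi\,\nabla\cdot(t^a\nabla V)=0 \quad\text{in }\R^2_+,
\]
and on the trace
\[
t^a V^2\partial_\nu\sigma=V(t^a\partial_\nu\Phi)-\Phi(t^a\partial_\nu V)=-Vf'(\underline{w})\Phi+\Phi f'(\underline{w})V=0.
\]
So $\sigma$ satisfies a divergence-form equation with no zeroth order term and a homogeneous Neumann-type condition. Testing against $\eta_R^2\sigma$ with $\eta_R(z,t)=\eta(\sqrt{z^2+t^2}/R)$ (where $\eta$ is a standard cutoff, $\eta\equiv 1$ on $[0,1]$ and $\eta\equiv 0$ on $[2,\infty)$), integrating by parts, and using Cauchy--Schwarz with absorption yields
\[
\int_{\R^2_+}t^a V^2|\nabla\sigma|^2\eta_R^2\,dzdt\le C\int_{\R^2_+}t^a V^2\sigma^2|\nabla\eta_R|^2\,dzdt=C\int_{\R^2_+}t^a\Phi^2|\nabla\eta_R|^2\,dzdt,
\]
where I used $V\sigma=\Phi$ pointwise. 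By the maximum principle for the extension $|\Phi|\le\|\phi\|_{L^\infty(\R)}$, so the right-hand side is bounded by
\[
\frac{C\|\phi\|_\infty^2}{R^2}\int_{B_{2R}^+\setminus B_R^+}t^a\,dzdt\le \frac{C\|\phi\|_\infty^2}{R^2}\int_R^{2R}r^{a+1}\,dr\le C\|\phi\|_\infty^2\,R^{a}=C\|\phi\|_\infty^2\,R^{-(2s-1)}.
\]
Since $2s-1>0$, the right-hand side tends to $0$ as $R\to\infty$, forcing $\nabla\sigma\equiv 0$. Hence $\sigma\equiv C$, so $\Phi\equiv CV$ in $\overline{\R^2_+}$, and taking traces gives $\phi(z)=C\underline{w}'(z)$.

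The main obstacle I anticipate is twofold. First, one must rigorously justify that $V=w_z>0$ on the closed half-plane; the strict positivity on $\{t=0\}$ is clear from monotonicity of $\underline{w}$, but extending it to $t>0$ requires the strong maximum principle and Hopf-type lemma for the weighted operator $\nabla\cdot(t^a\nabla\,\cdot)$, which is classical but needs to be invoked carefully because $V$ is not, a priori, known to be bounded. Second, the simple cutoff computation above is valid only when $a=1-2s<0$, i.e.\ precisely when $s>\frac12$; for $s\le\frac12$ the $R^a$ factor is non-decaying and one would need a genuinely different argument (e.g.\ using the $\angles{z}^{-(1+2s)}$ decay of $\underline{w}'$ to get better estimates, or a moving-plane/sliding argument). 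Fortunately the hypothesis $s\in(\frac12,1)$ is standing throughout this paper, so the energy approach above suffices.
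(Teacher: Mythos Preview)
The paper does not prove this lemma at all; it is simply quoted from \cite{Du-Gui-Sire-Wei}. Your argument is therefore not a comparison but a genuine self-contained proof, and it is correct in the paper's standing regime $s\in(\tfrac12,1)$.

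A few remarks. The quotient identity you use is exactly the one the paper exploits a few lines earlier (equation \eqref{BR}) to show that $(\cdot,\cdot)_X$ is positive definite; you are applying the same ground-state substitution $\sigma=\Phi/V$ globally and combining it with a Caccioppoli-type cutoff estimate. The concern you flag about $V=w_z>0$ is not a real obstacle: $w_z$ is the $s$-harmonic extension of $\underline{w}'>0$, hence strictly positive throughout $\R^2_+$ by the Poisson representation with positive kernel, with no need for a Hopf lemma. You should, however, make explicit that on $\supp\eta_R$ the function $V$ is bounded below by a positive constant (since $\underline{w}'>0$ on compact sets and $V$ is continuous up to the boundary), so that $\sigma$ is locally bounded and $\eta_R^2\sigma$ is an admissible test function; this is the only point where the decay $\underline{w}'(z)\to0$ could cause trouble, and the cutoff avoids it. Your observation that the argument fails for $s\le\tfrac12$ (because $R^{1-2s}$ no longer decays) is correct and honest; the cited reference \cite{Du-Gui-Sire-Wei} covers the full range by a different route, but for the purposes of this paper your proof suffices.
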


\begin{lem}[Non-degeneracy in higher dimensions]\label{lem:non-degen}
Let $\phi(y,z,t)$ be a bounded solution of
\begin{equation}\label{eq:non-degen}\begin{cases}
\nabla_{(y,z,t)}\cdot(t^a\nabla_{(y,z,t)}\phi)=t^a\left(\p_{tt}+\dfrac{a}{t}\p_t+\p_{zz}+\Delta_y\right)\phi=0&\textin\R^{n+1}_+\\
t^a\pnu{\phi}+f'(w)\phi=0&\texton\p\R^{n+1}_+,
\end{cases}\end{equation}
where $w(z,t)$ is the one-dimensional solution so that
\[\begin{cases}
\nabla_{(z,t)}\cdot(t^a\nabla_{(z,t)}w_z)=t^a\left(\p_{tt}+\dfrac{a}{t}\p_t+\p_{zz}\right)w_z=0&\textin\R^{2}_+\\
t^a\pnu{w_z}+f'(w)w_z=0&\texton\p\R^2_+.
\end{cases}\]
Then $\phi(y,z,t)=cw_z(z,t)$ for some constant $c$.
\end{lem}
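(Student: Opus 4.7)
The plan is to reduce this higher-dimensional statement to the one-dimensional non-degeneracy already established (Lemma \ref{non-degen1d}) by a partial Fourier transform in the tangential variable $y$, exploiting the positivity of the bilinear form $(\cdot,\cdot)_X$ proved in the first lemma of this section at each nonzero frequency. Since $\phi$ is bounded, for each $(z,t)\in\overline{\R^2_+}$ the function $y\mapsto\phi(y,z,t)$ defines a tempered distribution, so its partial Fourier transform $\hat\phi(\xi,z,t)$ exists as a tempered distribution in $\xi$ with values in functions of $(z,t)$. The system \eqref{eq:non-degen} becomes, distributionally in $\xi$,
\[
-\nabla_{(z,t)}\cdot\bigl(t^a\nabla_{(z,t)}\hat\phi\bigr)+t^a|\xi|^2\hat\phi=0\text{ in }\R^2_+,
\qquad
t^a\p_\nu\hat\phi+f'(w)\hat\phi=0\text{ on }\p\R^2_+.
\]

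For each fixed $\xi\neq0$ I would show that any bounded solution $v(z,t)$ of the above system must vanish. The mass term $t^a|\xi|^2 v$ yields Agmon-type exponential decay of bounded solutions in $(z,t)$ via a barrier argument against $e^{-|\xi||z|/2}$, using boundedness of $f'(w)$ and the fact that $w\to\pm1$, hence $f'(w)\to 2>0$, at infinity. Such a $v$ then lies in $X=H^1(\R^2_+,t^a)$, and testing the homogeneous equation by $v$ together with the positivity of $(\cdot,\cdot)_X$ (in the case $\xi\neq0$) forces $(v,v)_X=0$, whence $v\equiv0$; this is equivalently the uniqueness clause of Lemma \ref{lem:linearsolve} with $g=h=0$.

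Consequently $\mathrm{supp}\,\hat\phi\subseteq\{\xi=0\}$. By the classical structure theorem for tempered distributions supported at a single point, $\hat\phi$ is a finite sum of derivatives of $\delta_{\xi=0}$ with coefficients that are functions of $(z,t)$; inverting the Fourier transform, $\phi(y,z,t)$ is a polynomial in $y$. Since $\phi$ is bounded in $y\in\R^{n-1}$, this polynomial must be of degree zero, i.e. $\phi(y,z,t)=\psi(z,t)$. The trace $\underline{\psi}(z)=\psi(z,0)$ then satisfies $(-\p_{zz})^s\underline{\psi}+f'(\underline{w})\underline{\psi}=0$ on $\R$, so Lemma \ref{non-degen1d} yields $\underline{\psi}(z)=c\underline{w}'(z)$ for some constant $c$. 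By uniqueness of the bounded Caffarelli--Silvestre extension with a prescribed trace, $\psi(z,t)=cw_z(z,t)$, which is the desired conclusion.

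The main technical obstacle I expect is the rigorous justification of step two: upgrading the mere boundedness of $\phi$ (and hence of the distributional Fourier mode $\hat\phi(\xi,\cdot,\cdot)$ at a fixed $\xi\neq0$) to membership in the weighted energy space $X$, so that the positivity of $(\cdot,\cdot)_X$ can be activated. The heart of the matter is that at positive frequency the equation is coercive, but the passage from the pointwise $L^\infty$ a priori bound to an $L^2$-type integrable setting, and the careful treatment of the distributional partial Fourier transform of an only bounded function, require some care (e.g.\ localizing in $\xi$ against a smooth cut-off, mollifying in $y$ first, or arguing directly on the primal side with exponential barriers in $|y|$).
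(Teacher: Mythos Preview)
Your overall strategy---partial Fourier transform in $y$, show $\operatorname{supp}\hat\phi\subset\{0\}$, invoke the structure theorem to get a polynomial in $y$, use boundedness to reduce to degree zero, then apply Lemma~\ref{non-degen1d}---is exactly the paper's. The difference lies in how one shows $\operatorname{supp}\hat\phi\subset\{0\}$, and here the gap you yourself flag is real and is precisely what the paper's argument is designed to avoid.

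You try to argue pointwise in $\xi$: fix $\xi\neq0$, regard $\hat\phi(\xi,\cdot,\cdot)$ as a bounded solution, upgrade it to $X$ via decay, and kill it with $(\cdot,\cdot)_X$. But since $\phi$ is only bounded, $\hat\phi(\cdot,z,t)$ is merely a tempered distribution in $\xi$; evaluating it at a single $\xi$ is not meaningful, and there is no a priori reason it is represented by a function, let alone a bounded one. Your proposed fixes (mollify in $y$, localize in $\xi$) can be made to work but are genuinely delicate.

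The paper sidesteps this by a duality trick: rather than using \emph{uniqueness} on the solution side, it uses \emph{solvability} (Lemma~\ref{lem:linearsolve}) on the test-function side. Given any $\mu\in C_c^\infty(\R^{n-1})$ with $0\notin\operatorname{supp}\mu$ and any $\varphi_+\in C_c^\infty(\overline{\R^2_+})$, $\varphi_0\in C_c^\infty(\R)$, one solves for each $\xi\in\operatorname{supp}\mu$ the problem
\[
-\nabla\cdot(t^a\nabla\psi)+t^a|\xi|^2\psi=\mu(\xi)\varphi_+,\qquad t^a\partial_\nu\psi+f'(w)\psi=\mu(\xi)\varphi_0,
\]
producing a $\psi(\xi,z,t)$ that vanishes off $\operatorname{supp}\mu$ and is rapidly decreasing in $\xi$. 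Pairing the distributional equation for $\hat\phi$ against this $\psi$ yields $\langle\hat\phi(\cdot,z,t),\mu\rangle=0$ for all such $\mu$ and all $(z,t)$, hence $\operatorname{supp}\hat\phi\subset\{0\}$. This never requires interpreting $\hat\phi$ at a single frequency, which is what buys the cleanness: solvability of the adjoint (here self-adjoint) problem replaces the pointwise uniqueness argument you were aiming for.
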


\begin{proof}
For each $(z,t)\in\R^2_+$, let $\psi(\xi,z,t)$ be a smooth function in $\xi$ rapidly decreasing as $\abs{\xi}\to+\infty$.
The Fourier transform $\hat\phi(\xi,z,t)$ of $\phi(y,z,t)$ in the $y$-variable, which is the distribution defined by
$$\langle\hat\phi(\cdot,z,t),\mu\rangle_{\R^{n-1}}=\langle\phi(\cdot,z,t),\hat\mu\rangle_{\R^{n-1}}=\int_{\R^{n-1}}\phi(\xi,z,t)\hat\mu(\xi)\,d\xi$$
for any smooth rapidly decreasing function $\mu$, satisfies
$$\int_{\R^{n+1}_+}\left(-\nabla\cdot(t^a\nabla\psi)+t^a\abs{\xi}^2\psi\right)\hat\phi(\xi,z,t)\,d{\xi}dzdt=\int_{\R^n}\left(-f'(w)\psi+\left.t^a\psi_t\right|_{t=0}\right)\hat\phi(\xi,z,0)\,d{\xi}dz.$$
Let $\mu\in{C}_c^{\infty}(\R^{n-1})$, $\varphi_+\in{C}_c^{\infty}(\overline{\R^2_+})$ and $\varphi_0\in{C}_c^{\infty}(\R)$ such that
$$0\notin\supp(\mu).$$
By Lemma \ref{lem:linearsolve}, for any $\xi\neq0$ we can solve the equation
\[\begin{cases}
-\nabla\cdot(t^a\nabla\psi)+t^a\abs{\xi}^2\psi=\mu(\xi)\varphi_+(z,t)&\textin\R^2_+\\
t^a\pnu{\psi}+f'(w)\psi=\mu(\xi)\varphi_0(z)&\texton\p\R^2_+
\end{cases}\]
uniquely for $\psi(\xi,\cdot,\cdot)\in{X}$ such that
$$\psi(\xi,z,t)=0\quad\textif\quad\xi\notin\supp(\mu).$$
In particular, $\psi(\cdot,z,t)$ is rapidly decreasing for any $(z,t)\in\R^2_+$.
This implies
$$\int_{\R^2_+}\langle\hat\phi(\cdot,z,t),\mu\rangle_{\R^{n-1}}\varphi_+(z,t)\,dzdt=\int_{\R}\langle\hat\phi(\cdot,z,0),\mu\rangle_{\R^{n-1}}\varphi_0(z)\,dz$$
for any $\varphi_+\in{C}_c^{\infty}(\overline{\R^2_+})$ and $\varphi_0\in{C}_c^{\infty}(\R)$. In other words, whenever $0\notin\supp(\mu)$, we have
$$\langle\hat\phi(\cdot,z,t),\mu\rangle_{\R^{n-1}}=0\quad\textforall(z,t)\in\overline{\R^2_+}.$$
Such distribution with $\supp(\hat\phi(\cdot,z,t))\subset\set{0}$ is characterized as a linear combination of derivatives up to a finite order of Dirac masses at zero, namely
\[\hat\phi(\xi,z,t)=\sum_{j=0}^{N}a_j(z,t)\delta^{(j)}_0(\xi),\]
for some integer $N\geq0$. Taking inverse Fourier transform, we see that $\phi(y,z,t)$ is a polynomial in $y$ with coefficients depending on $(z,t)$. Since we assumed that $\phi$ is bounded, it is a zeroth order polynomial, i.e. $\phi$ is independent of $y$. Now the trace $\phi(z,0)$ solves
$$\Ds\phi+f'(\underline{w})\phi=0\quad\textin\R.$$
By Lemma \ref{non-degen1d},
$$\phi(z,t)=Cw_z(z,t)$$
for some constant $C\in\R$. This completes the proof.
\end{proof}

\subsection{A priori estimates}

Consider the equation
\begin{equation}\label{eq:apriori}
\Ds\phi(y,z)+f'(w(z))\phi(y,z)=g(y,z)\quad\textfor(y,z)\in\R^n.
\end{equation}

Let $\angles{y}=\sqrt{1+\abs{y}^2}$ and define the norm
\[\norm[\mu,\sigma]{\phi}=\sup_{(y,z)\in\R^n}\angles{y}^\mu\angles{z}^{\sigma}\abs{\phi(y,z)}\]
for 
$0\leq{\mu}<{n-1+2s}$ and $2-2s<{\sigma}<{1+2s}$ such that $\mu+\sigma<n+2s$.

\begin{lem}[Decay in $z$]\label{lem:aprioriz}
Let $\phi\in{L}^{\infty}(\R^n)$ and $\norm[0,\sigma]{g}<+\infty$. Then we have
\[\norm[0,\sigma]{\phi}\leq{C}.\]
\end{lem}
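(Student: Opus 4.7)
The approach is a barrier/comparison argument leveraging that $f'(w(z))\to 2$ as $|z|\to\infty$. Take $\Phi(y,z):=A\langle z\rangle^{-\sigma}$, depending only on $z$. Integrating the singular kernel of $\Ds$ over the $y$-variables first collapses $\Phi$ to a one-dimensional problem,
\[
\Ds\Phi(y,z) \;=\; (-\p_{zz})^s\!\bigl(A\langle\cdot\rangle^{-\sigma}\bigr)(z),
\]
and a standard near--far split of this integral (second-order Taylor on $|z-\tilde z|<|z|/2$, direct size estimates elsewhere) gives, for $0<\sigma<1+2s$,
\[
\bigl|(-\p_{zz})^s\langle\cdot\rangle^{-\sigma}(z)\bigr|
\;\le\; C\langle z\rangle^{-\min(\sigma+2s,\,1+2s)}
\;=\; o\bigl(\langle z\rangle^{-\sigma}\bigr) \text{ as } |z|\to\infty.
\]
Choosing $Z_0\ge 1$ with $f'(w(z))\ge 1$ on $\{|z|\ge Z_0\}$, and then $A$ large depending on $\norm[L^\infty]{\phi}$ and $\norm[0,\sigma]{g}$, yields $\Phi\ge|\phi|$ on $\{|z|\le Z_0\}$ together with the pointwise barrier inequality $\Ds\Phi+f'(w)\Phi\ge|g|$ on $\{|z|>Z_0\}$.

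To apply the maximum principle I also need $\phi(y,z)\to 0$ as $|z|\to\infty$, uniformly in $y$; otherwise $\sup(\phi-\Phi)$ could escape at infinity. If this vanishing fails, there exist $(y_k,z_k)$ with $|z_k|\to\infty$ and $|\phi(y_k,z_k)|\ge\ell>0$. The translates $\tilde\phi_k(y,z):=\phi(y+y_k,z+z_k)$ are uniformly bounded, and local $C^\alpha$ regularity in the Caffarelli--Silvestre extension together with $f'(w(\cdot+z_k))\to 2$ and $g(\cdot+y_k,\cdot+z_k)\to 0$ locally allow me to extract, along a subsequence, a locally uniform limit $\tilde\phi_\infty$ which is bounded on $\R^n$, solves $\Ds\tilde\phi_\infty+2\tilde\phi_\infty=0$, and has $|\tilde\phi_\infty(0,0)|\ge\ell$. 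Since the Fourier symbol $|\xi|^{2s}+2$ is nowhere zero, multiplication by it is invertible on tempered distributions, forcing $\tilde\phi_\infty\equiv 0$, a contradiction.

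With the vanishing in hand, suppose for contradiction $\sup(\phi-\Phi)>0$. Then, since $\phi-\Phi\le 0$ on $\{|z|\le Z_0\}$ and $\phi-\Phi\to 0$ at $|z|=\infty$, any maximizing sequence lies in a bounded slab $\{Z_0<|z|\le Z_1\}$; translating in $y$ and passing to a local subsequential limit (again via regularity in the extension) produces $\tilde v$ attaining a positive maximum at some $(0,z_\infty)$ with $|z_\infty|>Z_0$. There $\Ds\tilde v(0,z_\infty)\ge 0$ while the equation gives $\Ds\tilde v+f'(w)\tilde v\le 0$, and $f'(w(z_\infty))\ge 1$ then forces $\tilde v(0,z_\infty)\le 0$, a contradiction. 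Hence $\phi\le\Phi$, and the symmetric argument for $-\phi$ yields $|\phi|\le A\langle z\rangle^{-\sigma}$, i.e. $\norm[0,\sigma]{\phi}\le A$. The principal obstacle is precisely the possible non-attainment of $\sup(\phi-\Phi)$, caused by the absence of any a priori decay of $\phi$ in $y$; it is overcome by combining $y$-translation invariance with the Liouville-type identity for $\Ds+2$ used in the preliminary vanishing step.
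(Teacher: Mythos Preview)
Your proof is correct and follows the same approach as the paper: a barrier $A\langle z\rangle^{-\sigma}$ together with a maximum principle, the key decay estimate for $\Ds\langle z\rangle^{-\sigma}$ being exactly the paper's Lemma~\ref{lem:supersol}(2). The paper's own proof is a one-line citation of that lemma and of a maximum principle from \cite{Chan-Wei}; you have essentially reconstructed the latter, including the translation--compactness step and the Liouville theorem for $\Ds+2$ needed to cope with possible non-attainment of the supremum in the $y$-direction.
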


With the decay established, the following orthogonality condition \eqref{eq:orthoext} is well-defined.

\begin{lem}[{\em A priori} estimate in $y,z$]\label{lem:aprioriyz}
Let $\phi\in{L}^{\infty}(\R^n)$ and $\norm[\mu,\sigma]{g}<+\infty$. If the $s$-harmonic extension $\phi(t,y,z)$ is orthogonal to $w_z(t,z)$ in $\R^{n+1}_+$, namely,
\begin{equation}\label{eq:orthoext}
\iint_{\R^2_+}t^a\phi{w_z}\,dtdz=0,
\end{equation}
then we have
\[\norm[\mu,\sigma]{\phi}\leq{C}\norm[\mu,\sigma]{g}.\]
\end{lem}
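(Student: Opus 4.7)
The plan is to prove the estimate by a contradiction and blow-up argument, combined with the non-degeneracy in Lemma~\ref{lem:non-degen} and a Liouville property for $\Ds+2I$ at the far field.

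Suppose the conclusion fails. Then there are sequences $(\phi_k)\subset L^\infty(\R^n)$ solving \eqref{eq:apriori} with forcing $(g_k)$ and satisfying the orthogonality \eqref{eq:orthoext}, such that
\[
\norm[\mu,\sigma]{g_k}\to 0 \textand \norm[\mu,\sigma]{\phi_k}=1.
\]
Pick $(y_k,z_k)\in\R^{n-1}\times\R$ with $\angles{y_k}^{\mu}\angles{z_k}^{\sigma}|\phi_k(y_k,z_k)|\geq \tfrac12$. I would split into two regimes according to whether $|z_k|$ stays bounded or not; within each regime, the translation in $y$ handles uniformly the sub-case $|y_k|\to\infty$.

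First regime: $|z_k|\leq C$. Introduce $\tilde\phi_k(y,z)=\angles{y_k}^{\mu}\phi_k(y+y_k,z)$ and $\tilde g_k(y,z)=\angles{y_k}^{\mu}g_k(y+y_k,z)$. Since the coefficient $f'(w(z))$ depends only on $z$, the pair $(\tilde\phi_k,\tilde g_k)$ satisfies \eqref{eq:apriori} again, with
\[
|\tilde\phi_k(y,z)|\leq C\angles{z}^{-\sigma},\qquad |\tilde g_k(y,z)|\leq C\norm[\mu,\sigma]{g_k}\angles{z}^{-\sigma}\textfor |y|\leq \tfrac12|y_k|,
\]
via $\angles{y_k}^{\mu}\angles{y+y_k}^{-\mu}\leq C$ on that range. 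By local H\"older regularity for the Caffarelli--Silvestre extension, a subsequence converges locally uniformly to a bounded $\tilde\phi_\infty$ solving the homogeneous problem in $\R^n$, with $|\tilde\phi_\infty(0,z_*)|>0$ at some limit $z_*$. Lemma~\ref{lem:non-degen} applied to its extension yields $\tilde\phi_\infty(y,z)=c\,\underline{w}'(z)$. The pointwise-in-$y$ orthogonality \eqref{eq:orthoext} survives the translation and passes to the limit by dominated convergence: the algebraic decay of $w_z(z,t)$ in $(z,t)$, combined with $\sigma>2-2s$ and the integrability of the weight $t^{a}$ with $a=1-2s>-1$, furnishes the required integrable majorant. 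Hence $c=0$, contradicting $\tilde\phi_\infty(0,z_*)\neq 0$.

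Second regime: $|z_k|\to\infty$, say $z_k\to+\infty$. Set
\[
\tilde\phi_k(y,z)=\angles{y_k}^{\mu}\angles{z_k}^{\sigma}\phi_k(y+y_k,z+z_k).
\]
On any compact set, the factors $\angles{y_k}^{\mu}\angles{y+y_k}^{-\mu}$ and $\angles{z_k}^{\sigma}\angles{z+z_k}^{-\sigma}$ remain bounded, so $\tilde\phi_k$ is uniformly bounded with $|\tilde\phi_k(0,0)|\geq \tfrac12$, while the rescaled right-hand side tends to zero locally uniformly. Since $f'(w(z+z_k))\to f'(1)=2$ locally uniformly, any local limit $\tilde\phi_\infty\in L^\infty(\R^n)$ solves
\[
\Ds\tilde\phi_\infty+2\tilde\phi_\infty=0\quad\textin\R^n.
\]
The operator $\Ds+2I$ has strictly positive Fourier symbol $|\xi|^{2s}+2\geq 2$, so every bounded entire solution vanishes identically (either by Fourier analysis on tempered distributions, or by the maximum principle for the non-local operator $\Ds+2I$ on $\R^n$). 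This contradicts $|\tilde\phi_\infty(0,0)|\geq \tfrac12$ and closes the argument.

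The main obstacle is preserving the orthogonality \eqref{eq:orthoext} in the limit of the first regime. One must upgrade local uniform convergence of $\phi_k^{\text{ext}}$ on compact subsets of $\overline{\R^{n+1}_+}$ to convergence of the weighted integral over $\R^2_+$ for each fixed $y$. This is where the lower bound $\sigma>2-2s$ in the admissible exponent range is essential: together with the algebraic decay of $w_z$ it produces an integrable dominating function, enabling dominated convergence and thus the elimination of the one-dimensional kernel.
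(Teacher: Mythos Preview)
Your blow-up strategy matches the first half of the paper's argument, but there is a genuine gap: the normalization $\norm[\mu,\sigma]{\phi_k}=1$ presupposes $\norm[\mu,\sigma]{\phi}<+\infty$, whereas the hypothesis only gives $\phi\in L^\infty(\R^n)$. If $\norm[\mu,\sigma]{\phi}=+\infty$ the conclusion certainly fails, yet your contradiction sequence cannot be normalized and the argument never starts. The paper's proof is explicitly in two parts: first the blow-up under the extra assumption $\norm[\mu,\sigma]{\phi}<+\infty$ (yielding \eqref{eq:apriorifinite}), and then a separate cut-off argument to remove that assumption. In the second step one applies the already-proved estimate to $\psi=\eta_{\tR}(y)\phi$, where $\eta_{\tR}$ is a cut-off at scale $\tR$ in $y$; Lemma~\ref{lem:aprioriz} gives $\norm[0,\sigma]{\phi}<+\infty$ so that $\norm[\mu,\sigma]{\eta_{\tR}\phi}<+\infty$, and the commutator $[\Ds,\eta_{\tR}]\phi$ is controlled by Lemma~\ref{lem:supersol}(5) and shown to be $O(\tR^{-(2s-\mu)})$ in the $\norm[\mu,\sigma]{\cdot}$ norm. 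Letting $\tR\to\infty$ gives the full estimate. This entire step is missing from your proposal.

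A secondary issue concerns your rescaling. You multiply by the \emph{constant} $\angles{y_k}^\mu$ (and $\angles{z_k}^\sigma$), so $\tilde\phi_k$ is uniformly bounded only on compact sets; near $y=-y_k$ it can be of size $\angles{y_k}^\mu$. Passing to the limit in the nonlocal equation then requires a global tail control that you do not supply. The paper instead rescales by the \emph{position-dependent} weight $\angles{y_m+y}^{\mu}$ (and $\angles{z_m+z}^{\sigma}$), which yields $|\tilde\phi_m|\le\angles{z}^{-\sigma}$ globally; the price is that extra terms $\Ds\angles{y_m+y}^{-\mu}$ and $(\tilde\phi_m,\angles{y_m+y}^{-\mu})_s$ appear in the equation for $\tilde\phi_m$, and Lemma~\ref{lem:supersol}(1)--(4) is used to show these vanish in the limit.
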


Before we give the proof, we estimate some integrals which arise from the product rule
\[\begin{split}
\Ds(uv)(x_0)
&=u(x_0)\Ds{v}(x_0)+C_{n,s}\int_{\R^n}\dfrac{u(x_0)-u(x)}{\abs{x_0-x}^{n+2s}}v(x)\,dx\\
&=u(x_0)\Ds{v}(x_0)+v(x_0)\Ds{u}(x_0)-(u,v)_{s}(x_0),
\end{split}\]
where
$$(u,v)_{s}(x_0)=C_{n,s}\int_{\R^n}\dfrac{(u(x_0)-u(x))(v(x_0)-v(x))}{\abs{x_0-x}^{n+2s}}\,dx.$$

\begin{lem}[Decay estimates]\label{lem:supersol}
Suppose $\phi(y,z)$ is a bounded function.
\begin{enumerate}
\item As $\abs{y}\to+\infty$, $$\Ds\angles{y}^{-\mu}=O\left(\angles{y}^{-2s-\min\set{\mu,n-1}}\right),$$
    $$(\phi,\angles{y}^{-\mu})_{s}=O\left(\angles{y}^{-2s-\min\set{\mu,n-1}}\right).$$
\item As $\abs{z}\to+\infty$, $$\Ds\angles{z}^{-\sigma}=O\left(\angles{z}^{-2s-\min\set{\sigma,1}}\right),$$
    $$(\phi,\angles{z}^{-\sigma})_{s}=O\left(\angles{z}^{-2s-\min\set{\sigma,1}}\right).$$
\item As $\min\set{\abs{y},\abs{z}}\to+\infty$,
    \[\begin{split}
    (\angles{y}^{-\mu},\angles{z}^{-\sigma})_{s}
    &=O\left(\abs{(y,z)}^{-n-2s}(\abs{y}^{n-1-\mu}+1)(\abs{z}^{1-\sigma}+1)\right)\\
    &\qquad+O\left(\abs{y}^{-n-2s}(\abs{y}^{n-1-\mu}+1)\abs{z}^{-\sigma-2}\min\set{\abs{y},\abs{z}}^{3}\right)\\
    &\qquad+O\left(\abs{y}^{-\mu-2}\abs{z}^{-n-2s}(\abs{z}^{1-\sigma}+1)\min\set{\abs{y},\abs{z}}^{n+1}\right)\\
    &\qquad+O\left(\abs{z}^{-\sigma}\left(\abs{y}+\abs{z}\right)^{-(n-1+2s)}(\abs{y}^{n-1-\mu}+1)\right)\\
    &\qquad+O\left(\abs{y}^{-\mu}\left(\abs{y}+\abs{z}\right)^{-1-2s}(\abs{z}^{1-\sigma}+1)\right)\\
    &\qquad+O\left(\abs{y}^{-\mu}\abs{z}^{-\sigma}\left(\abs{y}+\abs{z}\right)^{-2s}\right).
    \end{split}\]
    In particular, if $\mu<{n-1+2s}$ and $\sigma<1+2s$, then
    $$(\angles{y}^{-\mu},\angles{z}^{-\sigma})_{s}=o\left(\abs{y}^{-\mu}\abs{z}^{-\sigma}\right)\quad\textas\min\set{\abs{y},\abs{z}}\to+\infty.$$
\item Suppose $\mu<{n-1+2s}$ and $\sigma<1+2s$. As $\min\set{\abs{y},\abs{z}}\to+\infty$, $$\Ds\left(\angles{y}^{-\mu}\angles{z}^{-\sigma}\right)=o\left(\abs{y}^{-\mu}\abs{z}^{-\sigma}\right),$$
    $$(\phi,\angles{y}^{-\mu}\angles{z}^{-\sigma})_{s}=o\left(\abs{y}^{-\mu}\abs{z}^{-\sigma}\right).$$
\item Suppose $\eta_\tR(y)=\eta\left(\frac{\abs{y}}{\tR}\right)$ where $\eta$ is a smooth cut-off function as in \eqref{eq:eta}, and $\phi(y,z)\leq{C}\angles{z}^{-\sigma}$. For all sufficiently large $\tR>0$, we have
    \begin{equation}\label{eq:etaR}
    \abs{[\Ds,\eta_\tR]\phi(y,z)}\leq{C}\left(\angles{z}^{-1}+\angles{z}^{-\sigma}\right)\max\set{\abs{y},\tR}^{-2s}.
    \end{equation}
\end{enumerate}
\end{lem}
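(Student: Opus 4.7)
The plan is to reduce all five parts to two basic techniques: pointwise near/far splittings of the defining singular integrals, and the pointwise product rule $\Ds(uv) = u\Ds v + v\Ds u - (u,v)_s$ stated just above the lemma.

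For parts (1) and (2), I would exploit that $\angles{y}^{-\mu}$ depends only on $y\in\R^{n-1}$ (and $\angles{z}^{-\sigma}$ only on $z\in\R$), so by Fubini on the singular integral the $n$-dimensional $\Ds$ reduces to the $(n-1)$- (resp.\ $1$-) dimensional fractional Laplacian acting on the profile. At a point with $|y_0|$ large, I would split the principal value into a near part $\{|y-y_0|\leq |y_0|/2\}$ controlled by a second-order Taylor expansion of $\angles{y}^{-\mu}$, yielding a contribution of order $|y_0|^{-\mu-2}\cdot|y_0|^{2-2s}=|y_0|^{-\mu-2s}$; a tail $\{|y|\geq 2|y_0|\}$ bounded by $\int_{|y|\geq 2|y_0|}|y|^{-\mu}|y-y_0|^{-(n-1)-2s}\,dy$, which integrates to $|y_0|^{-\mu-2s}$ when $\mu<n-1$ and to $|y_0|^{-(n-1)-2s}$ when $\mu\geq n-1$ (this is the origin of the $\min\{\mu,n-1\}$); and an intermediate annulus treated identically. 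The bilinear-form variants follow at once from
\[
|(\phi,v)_s(x_0)|\leq 2\|\phi\|_\infty\int_{\R^n}\frac{|v(x_0)-v(x)|}{|x_0-x|^{n+2s}}\,dx
\]
and repeating the splitting. The argument for (2) is identical with $n-1$ replaced by $1$.

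For part (3) I would partition $\R^n$ into six regions at $(y_0,z_0)$ according to whether $|y-y_0|$ is smaller than, comparable to, or larger than $|y_0|$, and likewise for $|z-z_0|$ vs.\ $|z_0|$. On each region I either linearise the factor whose variable stays within half its evaluation scale, or bound its difference crudely by the sum $\angles{y_0}^{-\mu}+\angles{y}^{-\mu}$ (resp.\ in $z$); then I separate the $y$- and $z$-integrations via the Riesz identity
\[
\int_\R (A^2+t^2)^{-(n+2s)/2}\,dt = C\,A^{1-n-2s}.
\]
The six listed summands correspond one-to-one to the six regions. The \emph{in particular} assertion for $\mu<n-1+2s$ and $\sigma<1+2s$ then follows by inspection since each summand is $o(|y_0|^{-\mu}|z_0|^{-\sigma})$. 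Part (4) is then immediate: the $\Ds$ assertion from the product rule applied with $u=\angles{y}^{-\mu}$, $v=\angles{z}^{-\sigma}$ together with (1)--(3); and the bilinear form version from the same near/far splitting as in (1)--(2), applied to the product weight $v=\angles{y}^{-\mu}\angles{z}^{-\sigma}$ whose $y$- and $z$-variations separate.

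Finally for part (5), I would write the commutator pointwise as
\[
[\Ds,\eta_\tR]\phi(y_0,z_0)
= C_{n,s}\int_{\R^n}\frac{(\eta_\tR(y_0)-\eta_\tR(y))\phi(y,z)}{|(y-y_0,z-z_0)|^{n+2s}}\,dy\,dz,
\]
bound $|\eta_\tR(y_0)-\eta_\tR(y)|\leq C\min\{|y-y_0|/\tR,1\}$, localise the support where $\min\{|y|,|y_0|\}\lesssim 2\tR$, integrate in $z$ first against $|\phi(y,z)|\leq C\angles{z}^{-\sigma}$ to produce the $\angles{z_0}^{-1}+\angles{z_0}^{-\sigma}$ factor depending on whether $|z-z_0|$ exceeds $\angles{z_0}$, and then integrate in $y$ to extract $\max\{|y_0|,\tR\}^{-2s}$ from the Lipschitz scaling of $\eta_\tR$, with the subcritical case $|y_0|\sim\tR$ handled by using the two-sided bound on $|\eta_\tR(y_0)-\eta_\tR(y)|$ symmetrically. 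The main obstacle I anticipate is the bookkeeping in part (3): the six-term decomposition is not produced by a single clean bound but reflects six distinct dominant balances of scales, and tracking which factor gives which decay in each regime is the most error-prone step of the argument.
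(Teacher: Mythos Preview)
Your plan for parts (1), (2), (4) matches the paper exactly: reduction to the lower-dimensional fractional Laplacian by Fubini, a near/intermediate/far split with second-order Taylor near $y_0$, and the product rule for (4). Your handling of the bilinear forms via $\|\phi\|_\infty$ times the absolute integrand is also what the paper does.

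Two points deserve comment. In part (3) you describe a six-region partition, but the paper actually uses fourteen: a $4\times 4$ grid in $(y,z)$ indexed by $\{|y|<1\}$, $\{1<|y|<|y_0|/2\}$, $\{|y-y_0|<|y_0|/2\}$, $\{\min(|y|,|y-y_0|)>|y_0|/2\}$ (and likewise in $z$), keeping those with at least one index $\leq 2$, and then splitting the remaining far-far piece into $I^{\mathrm{sing}}$ and $I^{\mathrm{rest}}$. The six summands in the statement are obtained by \emph{grouping} these, not one-to-one; in particular the separate treatment of $|y|<1$ versus $1<|y|<|y_0|/2$ is what produces the ``$+1$'' corrections like $|y_0|^{n-1-\mu}+1$. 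Your coarser split would still work once you integrate $\langle y\rangle^{-\mu}$ over $B_{|y_0|/2}(0)$ carefully, but the one-to-one claim is not right.

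In part (5) there is a genuine gap. Your bound $|\eta_\tR(y_0)-\eta_\tR(y)|\leq C\min\{|y-y_0|/\tR,1\}$ is only first order, and in the transition zone $|y_0|\sim\tR$ the commutator integrand near the diagonal is then $O(\rho)\cdot\rho^{-n-2s}$, giving $\int_0 \rho^{-2s}\,d\rho$, divergent for the paper's range $s\in(\tfrac12,1)$. The paper fixes this by inserting the odd correction $\chi_{\{|y-y_0|<1\}}D\eta_\tR(y_0)\cdot(y-y_0)$ inside the absolute value, using $\|D^2\eta_\tR\|_\infty\leq C\tR^{-2}$ to make the near-diagonal piece $O(\rho^2)\cdot\rho^{-n-2s}$, which integrates. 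Equivalently one can use the product-rule split $[\Ds,\eta_\tR]\phi=\phi\,\Ds\eta_\tR-(\eta_\tR,\phi)_s$, where both terms are absolutely convergent. Either device is needed; the Lipschitz bound alone does not close the estimate in the middle case.
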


Let us assume the validity of Lemma \ref{lem:supersol} for the moment.

\begin{proof}[Proof of Lemma \ref{lem:aprioriz}]
It follows from Lemma \ref{lem:supersol}(2) and a maximum principle \cite{Chan-Wei}.
\end{proof}

\begin{proof}[Proof of Lemma \ref{lem:aprioriyz}]
We will first establish the \emph{a priori} estimate assuming that $\norm[\mu,\sigma]{\phi}<+\infty$. We use a blow-up argument. Suppose on the contrary that there exist a sequence $\phi_m(y,z)$ and $h_m(y,z)$ such that
\[\Ds\phi_{m}+f'(w)\phi_{m}=g_m\quad\textfor(y,z)\in\R^n\]
and
\[\norm[\mu,\sigma]{\phi_m}=1\quad\textand\quad\norm[\mu,\sigma]{g_m}\to0\quad\textas{m\to+\infty}.\]
Then there exist a sequence of points $(y_m,z_m)\in\R^{n}$ such that
\begin{equation}\label{eq:phim}
\phi_m(y_m,z_m)\angles{y_m}^{\mu}\angles{z_m}^{\sigma}\geq\dfrac12.
\end{equation}
We consider four cases.
\begin{enumerate}
\item $y_m$, $z_m$ bounded: \medskip\\
Since $\phi_m$ is bounded and $g_m\to0$ in $L^{\infty}(\R^n)$, by elliptic estimates and passing to a subsequence, we may assume that $\phi_m$ converges uniformly in compact subsets of $\R^n$ to a function $\phi_0$ which satisfies
\[\Ds\phi_0+f'(w)\phi_0=0,\quad\textin\R^n\]
and, by \eqref{eq:orthoext},
\[\iint_{\R^2_+}t^a\phi_0w_z\,dtdz=0.\]
By the non-degeneracy of $w'$ (Lemma \ref{lem:non-degen}), we necessarily have $\phi_0(y,z)=Cw'(z)$. However, the orthogonality condition yields $C=0$, i.e. $\phi_0\equiv0$. This contradicts \eqref{eq:phim}.
\medskip
\item $y_m$ bounded, $\abs{z_m}\to\infty$: \medskip\\
We consider $\tilde{\phi}_m(y,z)=\angles{z_m+z}^{\sigma}\phi_m(y,z_m+z)$, which satisfies in $\R^n$
\begin{multline*}
\angles{z_m+z}^{-\sigma}\Ds\tilde\phi_m(y,z)+\tilde\phi_m(y,z)\Ds\angles{z_m+z}^{-\sigma}-\left(\tilde\phi_m(y,z),\angles{z_m+z}^{\sigma}\right)_{s}\\
+f'(w(z_m+z))\angles{z_m+z}^{-\sigma}\tilde\phi_m(y,z)=g_m(y,z_m+z),
\end{multline*}
or
\[\Ds\tilde\phi_m+\left(f'(w(z_m+z))+\dfrac{\Ds\angles{z_m+z}^{-\sigma}}{\angles{z_m+z}^{-\sigma}}\right)\tilde\phi_m=g_m+\dfrac{\left(\tilde\phi_m(y,z),\angles{z_m+z}^{\sigma}\right)_{s}}{\angles{z_m+z}^{-\sigma}}.\]
Using Lemma \ref{lem:supersol}(2), the limiting equation is
\[\Ds\tilde\phi_0+2\tilde\phi_0=0\quad\textin\R^n.\]
Thus $\tilde\phi_0=0$, contradicting \eqref{eq:phim}.
\medskip
\item $\abs{y_m}\to\infty$, $z_m$ bounded: \medskip\\
We define $\tilde\phi_m(y,z)=\angles{y_m+y}^{\mu}\phi_m(y_m+y,z)$, which satisfies
\begin{multline*}
\Ds\tilde\phi_m(y,z)+\left(f'(w(z))+\dfrac{\Ds\left(\angles{y_m+y}^{-\mu}\right)}{\angles{y_m+y}^{-\mu}}\right)\tilde\phi_m(y,z)\\
=g_m(y_m+y,z)+\dfrac{\left(\tilde\phi_{m}(y,z),\angles{y_m+y}^{-\mu}\right)_{s}}{\angles{y_m+y}^{-\mu}}\quad\textin\R^n.
\end{multline*}
By Lemma \ref{lem:supersol}(1), the subsequential limit $\tilde\phi_0$ satisfies
\[\Ds\tilde\phi_0+f'(w)\tilde\phi_0=0\quad\textin\R^n.\]
This leads to a contradiction as in case (1).
\medskip
\item $\abs{y_m},\abs{z_m}\to\infty$: \medskip\\
This is similar to case (2). In fact for $\tilde\phi_{m}(y,z)=\angles{y_m+y}^{\mu}\angles{z_m+z}^{\sigma}\phi_{m}(y_m+y,z_m+z)$, we have
\begin{multline*}
\Ds\tilde\phi_m(y,z)+\left(f'(w(z_m+z))+\dfrac{\Ds\left(\angles{y_m+y}^{-\mu}\angles{z_m+z}^{-\sigma}\right)}{\angles{y_m+y}^{-\mu}\angles{z_m+z}^{-\sigma}}\right)\tilde\phi_m(y,z)\\
=g_m(y_m+y,z_m+z)+\dfrac{\left(\tilde\phi_{m}(y,z),\angles{y_m+y}^{-\mu}\angles{z_m+z}^{\sigma}\right)_{s}}{\angles{y_m+y}^{-\mu}\angles{z_m+z}^{-\sigma}}\quad\textin\R^n.
\end{multline*}
In the limiting situation $\tilde\phi_m\to\tilde\phi_0$, by Lemma \ref{lem:supersol}(4),
\[\Ds\tilde\phi_0+2\tilde\phi_0=0\quad\textin\R^n,\]
forcing $\tilde\phi_0=0$ which contradicts \eqref{eq:phim}.
\end{enumerate}
We conclude that
\begin{equation}\label{eq:apriorifinite}
\norm[\mu,\sigma]{\phi}\leq{C}\norm[\mu,\sigma]{g}\quad\textprovided\quad\norm[\mu,\sigma]{\phi}<+\infty.
\end{equation}

Now we will remove the condition $\norm[\mu,\sigma]{\phi}<+\infty$. By Lemma \ref{lem:aprioriz}, we know that $\norm[0,\sigma]{\phi}<+\infty$. Let $\eta:[0,+\infty)\to[0,1]$ be a smooth cut-off function such that
\begin{equation}\label{eq:eta}
\eta=1\texton[0,1]\quad\textand\quad\eta=0\texton[2,+\infty).
\end{equation}
Write $\eta_\tR(y)=\eta\left(\frac{\abs{y}}{\tR}\right)$. We apply the above derived \emph{a priori} estimate to $\psi(y,z)=\eta_\tR(y)\phi(y,z)$, which satisfies
\begin{equation}\label{eq:psi}\begin{split}
\Ds\psi+f'(w)\psi
&=\eta_{\tR}g+\phi\Ds\eta_{\tR}-(\eta_{\tR},\phi)_s.
\end{split}\end{equation}
It is clear that $\norm[\mu,\sigma]{\eta_{\tR}{g}}\leq\norm[\mu,\sigma]{g}$ and $\norm[\mu,\sigma]{\phi\Ds\eta_{\tR}}\leq{C}{\tR}^{-2s}$ because of the estimate $\Ds\eta(\abs{y})\leq{C}\angles{y}^{-(n-1+2s)}$. By Lemma \ref{lem:supersol}(5),
\[\abs{[\Ds,\eta_{\tR}]\phi(y_0,z_0)}\leq{C}\left(\abs{z_0}^{-1}+\abs{z_0}^{-\sigma}\right)\max\set{\abs{y_0},{\tR}}^{-2s}.\]

For $\sigma<1$ and $0\leq\mu<2s$, this yields
\[\norm[\mu,\sigma]{[\Ds,\eta_{\tR}]\phi}\leq{C}{\tR}^{-(2s-\mu)}.\]
Therefore, \eqref{eq:apriorifinite} and \eqref{eq:psi} give
\[\norm[\mu,\sigma]{\eta_{\tR}\phi}\leq{C}\norm[\mu,\sigma]{g}+C{\tR}^{-2s}+C{\tR}^{-(2s-\mu)}.\]
Letting ${\tR}\to+\infty$, we arrive at
\[\norm[\mu,\sigma]{\phi}\leq{C}\norm[\mu,\sigma]{g},\]
as desired.
\end{proof}

\begin{proof}[Proof of Lemma \ref{lem:supersol}]
We will only prove the statements regarding the fractional Laplacian of the explicit function. The associated assertion concerning the inner product with $\phi$ will follow from the same proof using its boundedness, since all the terms are estimated in absolute value.
\begin{enumerate}
\item
We have
\[\begin{split}
(-\Delta_{(y,z)})^{s}(\angles{y}^{-\mu})|_{y=y_0}
&=(-\Delta_{y})^{s}\angles{y}^{\mu}|_{y=y_0}\\
&=C_{n-1,s}\int_{\R^{n-1}}\dfrac{\angles{y_0}^{-\mu}-\angles{y}^{-\mu}}{\abs{y_0-y}^{n-1+2s}}\,dy\\
&\equiv{I}_1+I_2+I_3+I_4,\\
\end{split}\]
where
\[\begin{split}
I_1
&=C_{n-1,s}\int_{B_{\frac{\abs{y_0}}{2}}(y_0)}\dfrac{\angles{y_0}^{-\mu}-\angles{y}^{-\mu}-D\angles{y}^{-\mu}|_{y=y_0}(y_0-y)}{\abs{y_0-y}^{n-1+2s}}\,dy,\\
I_2
&=C_{n-1,s}\int_{B_{1}(0)}\dfrac{\angles{y_0}^{-\mu}-\angles{y}^{-\mu}}{\abs{y_0-y}^{n-1+2s}}\,dy,\\
I_3
&=C_{n-1,s}\int_{B_{\frac{\abs{y_0}}{2}}(0)\backslash{B}_{1}(0)}\dfrac{\angles{y_0}^{-\mu}-\angles{y}^{-\mu}}{\abs{y_0-y}^{n-1+2s}}\,dy,\\
I_4
&=C_{n-1,s}\int_{\R^{n-1}\backslash\left(B_{\frac{\abs{y_0}}{2}}(y_0)\cup{B}_{\frac{\abs{y_0}}{2}}(0)\right)}\dfrac{\angles{y_0}^{-\mu}-\angles{y}^{-\mu}}{\abs{y_0-y}^{n-1+2s}}\,dy.
\end{split}\]
If $\abs{y_0}\leq1$, it is relatively easy to get boundedness, since $\angles{y}^{-\mu}$ is smooth and bounded. For $\abs{y_0}\geq1$, we compute
\[\begin{split}
\abs{I_1}
&\lesssim\int_{B_{\frac{\abs{y_0}}{2}}(y_0)}\dfrac{\abs{D^2\angles{y}^{-\mu}|_{y=y_0}[y_0-y]^2}}{\abs{y_0-y}^{n-1+2s}}\,dy\\
&\lesssim\abs{y_0}^{-\mu-2}\int_0^{\frac{\abs{y_0}}{2}}\dfrac{\rho^2}{\rho^{1+2s}}\,d\rho\\
&\lesssim\abs{y_0}^{-(\mu+2s)},\\
\abs{I_2}
&\lesssim\int_{B_1(0)}\dfrac{1}{\abs{y_0}^{n-1+2s}}\,dy\\
&\lesssim\abs{y_0}^{-(n-1+2s)},\\
\abs{I_3}
&\lesssim\abs{y_0}^{-(n-1+2s)}\int_{B_{\frac{\abs{y_0}}{2}}(0)\backslash{B}_{1}(0)}\left(\angles{y_0}^{-\mu}+\abs{y}^{-\mu}\right)\,dy\\
&\lesssim\abs{y_0}^{-(n-1+2s)}\int_{1}^{\frac{\abs{y_0}}{2}}\left(\angles{y_0}^{-\mu}+\rho^{-\mu}\right)\rho^{n-2}\,d\rho\\
&\lesssim\abs{y_0}^{-(n-1+2s)}\left(\angles{y_0}^{-\mu}(\abs{y_0}^{n-1}-1)+\abs{y_0}^{-\mu+n-1}-1\right)\\
&\lesssim\abs{y_0}^{-(\mu+2s)}+\abs{y_0}^{-(n-1+2s)},\\
\abs{I_4}
&\lesssim\abs{y_0}^{-\mu}\int_{\R^{n-1}\backslash\left(B_{\frac{\abs{y_0}}{2}}(y_0)\cup{B}_{\frac{\abs{y_0}}{2}}(0)\right)}\dfrac{1}{\abs{y_0-y}^{n-1+2s}}\,dy\\
&\lesssim\abs{y_0}^{-\mu}\int_{\frac{\abs{y_0}}{2}}^{\infty}\dfrac{1}{\rho^{1+2s}}\,d\rho\\
&\lesssim\abs{y_0}^{-(\mu+2s)}.
\end{split}\]
\item This follows from the same proof as (1).
\item
We divide $\R^{n-1}\times\R$ into 14 regions in terms of the relative size of $\abs{y},\abs{z}$ with respect to $\abs{y_0},\abs{z_0}$ which tend to infinity. We will consider such distance ``small'' if $\abs{y}<1$ and ``intermediate'' if $1<\abs{y}<\frac{\abs{y_0}}{2}$, similarly for $z$. Once the non-decaying part of $\angles{y}^{-\mu},\angles{z}^{-\sigma}$ are excluded, the remaining parts can be either treated radially where we consider $(y_0,z_0)$ as the origin, or reduced to the one-dimensional case. 
More precisely, we write
\[\begin{split}
(\angles{y}^{-\mu},\angles{z}^{-\sigma})_{s}(y_0,z_0)
&=C_{n,s}\iint_{\R^n}\dfrac{\left(\angles{y}^{-\mu}-\angles{y_0}^{-\mu}\right)\left(\angles{z}^{-\sigma}-\angles{z_0}^{-\sigma}\right)}{\abs{(y-y_0,z-z_0)}^{n+2s}}\,dydz\\
&\equiv\sum_{\substack{1\leq{i,j}\leq4\\\min\set{i,j}\leq2}}I_{ij}+I^{sing}+I^{rest},
\end{split}\]
where
\[\begin{split}
I_{11}
&=C_{n,s}\iint_{\abs{y}<1,\,\abs{z}<1}\dfrac{\left(\angles{y}^{-\mu}-\angles{y_0}^{-\mu}\right)\left(\angles{z}^{-\sigma}-\angles{z_0}^{-\sigma}\right)}{\abs{(y-y_0,z-z_0)}^{n+2s}}\,dydz,\\
I_{12}
&=C_{n,s}\iint_{\abs{y}<1,\,1<\abs{z}<\frac{\abs{z_0}}{2}}\dfrac{\left(\angles{y}^{-\mu}-\angles{y_0}^{-\mu}\right)\left(\angles{z}^{-\sigma}-\angles{z_0}^{-\sigma}\right)}{\abs{(y-y_0,z-z_0)}^{n+2s}}\,dydz,\\
I_{13}
&=C_{n,s}\iint_{\abs{y}<1,\,\abs{z-z_0}<\frac{\abs{z_0}}{2}}\dfrac{\left(\angles{y}^{-\mu}-\angles{y_0}^{-\mu}\right)\left(\angles{z}^{-\sigma}-\angles{z_0}^{-\sigma}\right)}{\abs{(y-y_0,z-z_0)}^{n+2s}}\,dydz,\\
I_{14}
&=C_{n,s}\iint_{\abs{y}<1,\,\min\set{\abs{z},\abs{z-z_0}}>\frac{\abs{z_0}}{2}}\dfrac{\left(\angles{y}^{-\mu}-\angles{y_0}^{-\mu}\right)\left(\angles{z}^{-\sigma}-\angles{z_0}^{-\sigma}\right)}{\abs{(y-y_0,z-z_0)}^{n+2s}}\,dydz,\\
\end{split}\]
\[\begin{split}
I_{21}
&=C_{n,s}\iint_{1<\abs{y}<\frac{\abs{y_0}}{2},\,\abs{z}<1}\dfrac{\left(\angles{y}^{-\mu}-\angles{y_0}^{-\mu}\right)\left(\angles{z}^{-\sigma}-\angles{z_0}^{-\sigma}\right)}{\abs{(y-y_0,z-z_0)}^{n+2s}}\,dydz,\\
I_{22}
&=C_{n,s}\iint_{1<\abs{y}<\frac{\abs{y_0}}{2},\,1<\abs{z}<\frac{\abs{z_0}}{2}}\dfrac{\left(\angles{y}^{-\mu}-\angles{y_0}^{-\mu}\right)\left(\angles{z}^{-\sigma}-\angles{z_0}^{-\sigma}\right)}{\abs{(y-y_0,z-z_0)}^{n+2s}}\,dydz,\\
I_{23}
&=C_{n,s}\iint_{1<\abs{y}<\frac{\abs{y_0}}{2},\,\abs{z-z_0}<\frac{\abs{z_0}}{2}}\dfrac{\left(\angles{y}^{-\mu}-\angles{y_0}^{-\mu}\right)\left(\angles{z}^{-\sigma}-\angles{z_0}^{-\sigma}\right)}{\abs{(y-y_0,z-z_0)}^{n+2s}}\,dydz,\\
I_{24}
&=C_{n,s}\iint_{1<\abs{y}<\frac{\abs{y_0}}{2},\,\min\set{\abs{z},\abs{z-z_0}}>\frac{\abs{z_0}}{2}}\dfrac{\left(\angles{y}^{-\mu}-\angles{y_0}^{-\mu}\right)\left(\angles{z}^{-\sigma}-\angles{z_0}^{-\sigma}\right)}{\abs{(y-y_0,z-z_0)}^{n+2s}}\,dydz,\\
\end{split}\]
\[\begin{split}
I_{31}
&=C_{n,s}\iint_{\abs{y-y_0}<\frac{\abs{y_0}}{2},\,\abs{z}<1}\dfrac{\left(\angles{y}^{-\mu}-\angles{y_0}^{-\mu}\right)\left(\angles{z}^{-\sigma}-\angles{z_0}^{-\sigma}\right)}{\abs{(y-y_0,z-z_0)}^{n+2s}}\,dydz,\\
I_{32}
&=C_{n,s}\iint_{\abs{y-y_0}<\frac{\abs{y_0}}{2},\,1<\abs{z}<\frac{\abs{z_0}}{2}}\dfrac{\left(\angles{y}^{-\mu}-\angles{y_0}^{-\mu}\right)\left(\angles{z}^{-\sigma}-\angles{z_0}^{-\sigma}\right)}{\abs{(y-y_0,z-z_0)}^{n+2s}}\,dydz,\\
I_{41}
&=C_{n,s}\iint_{\min\set{\abs{y},\abs{y-y_0}}>\frac{\abs{y_0}}{2},\,\abs{z}<1}\dfrac{\left(\angles{y}^{-\mu}-\angles{y_0}^{-\mu}\right)\left(\angles{z}^{-\sigma}-\angles{z_0}^{-\sigma}\right)}{\abs{(y-y_0,z-z_0)}^{n+2s}}\,dydz,\\
I_{42}
&=C_{n,s}\iint_{\min\set{\abs{y},\abs{y-y_0}}>\frac{\abs{y_0}}{2},\,1<\abs{z}<\frac{\abs{z_0}}{2}}\dfrac{\left(\angles{y}^{-\mu}-\angles{y_0}^{-\mu}\right)\left(\angles{z}^{-\sigma}-\angles{z_0}^{-\sigma}\right)}{\abs{(y-y_0,z-z_0)}^{n+2s}}\,dydz,\\
\end{split}\]
\[\begin{split}
I^{sing}
&=C_{n,s}\iint_{\abs{y}>\frac{\abs{y_0}}{2},\,\abs{z}>\frac{\abs{z_0}}{2},\,\abs{(y-y_0,z-z_0)}<\frac{\abs{y_0}+\abs{z_0}}{2}}\dfrac{\left(\angles{y}^{-\mu}-\angles{y_0}^{-\mu}\right)\left(\angles{z}^{-\sigma}-\angles{z_0}^{-\sigma}\right)}{\abs{(y-y_0,z-z_0)}^{n+2s}}\,dydz,\\
I^{rest}
&=C_{n,s}\iint_{\abs{y}>\frac{\abs{y_0}}{2},\,\abs{z}>\frac{\abs{z_0}}{2},\,\abs{(y-y_0,z-z_0)}>\frac{\abs{y_0}+\abs{z_0}}{2}}\dfrac{\left(\angles{y}^{-\mu}-\angles{y_0}^{-\mu}\right)\left(\angles{z}^{-\sigma}-\angles{z_0}^{-\sigma}\right)}{\abs{(y-y_0,z-z_0)}^{n+2s}}\,dydz.\\
\end{split}\]
We will estimate these integrals one by one. In the unit cylinder we have
\[\begin{split}
\abs{I_{11}}
&\lesssim\dfrac{1}{\abs{(y_0,z_0)}^{n+2s}}\iint_{\abs{y}<1,\,\abs{z}<1}\,dydz\\
&\lesssim\abs{(y_0,z_0)}^{-n-2s}.\\
\end{split}\]
On a thin strip near the origin,
\[\begin{split}
\abs{I_{12}}
&\lesssim\dfrac{1}{\abs{(y_0,z_0)}^{n+2s}}\iint_{\abs{y}<1,\,1<\abs{z}<\frac{\abs{z_0}}{2}}\left(\abs{z}^{-\sigma}+\angles{z_0}^{-\sigma}\right)\,dydz\\
&\lesssim\abs{(y_0,z_0)}^{-n-2s}\left(\abs{z_0}^{1-\sigma}+1\right).\\
\end{split}\]
Similarly
\[\begin{split}
\abs{I_{21}}
&\lesssim\dfrac{1}{\abs{(y_0,z_0)}^{n+2s}}\iint_{1<\abs{y}<\frac{\abs{y_0}}{2},\,\abs{z}<1}\left(\abs{y}^{-\mu}+\angles{y_0}^{-\mu}\right)\,dydz\\
&\lesssim\abs{(y_0,z_0)}^{-n-2s}\left(\abs{y_0}^{n-1-\mu}+1\right),\\
\end{split}\]
and in the intermediate rectangle,
\[\begin{split}
\abs{I_{22}}
&\lesssim\iint_{1<\abs{y}<\frac{\abs{y_0}}{2},\,1<\abs{z}<\frac{\abs{z_0}}{2}}\left(\abs{y}^{-\mu}+\angles{y_0}^{-\mu}\right)\left(\abs{z}^{-\sigma}+\angles{z_0}^{-\sigma}\right)\,dydz\\
&\lesssim\abs{(y_0,z_0)}^{-n-2s}\left(\abs{y_0}^{n-1-\mu}+1\right)\left(\abs{z_0}^{1-\sigma}+1\right).\\
\end{split}\]
The integral on a thin strip afar is more involved. We first integrate the $z$ variable by a change of variable $z=z_0+\abs{y_0-y}\zeta$.
\[\begin{split}
I_{13}
&=C_{n,s}\iint_{\abs{y}<1,\,\abs{z-z_0}<\frac{\abs{z_0}}{2}}\dfrac{\left(\angles{y}^{-\mu}-\angles{y_0}^{-\mu}\right)\left(\angles{z}^{-\sigma}-\angles{z_0}^{-\sigma}-D\angles{z}^{-\sigma}|_{z_0}(z-z_0)\right)}{\abs{(y-y_0,z-z_0)}^{n+2s}}\,dydz\\
&=C_{n,s}\iint_{\abs{y}<1,\,\abs{z-z_0}<\frac{\abs{z_0}}{2}}\dfrac{\left(\angles{y}^{-\mu}-\angles{y_0}^{-\mu}\right)(z-z_0)^2\left(\displaystyle\int_0^{1}(1-t)D^2\angles{z}^{-\sigma}|_{z_0+t(z-z_0)}\,dt\right)}{\abs{(y-y_0,z-z_0)}^{n+2s}}\,dydz\\
&=C_{n,s}\int_{\abs{y}<1}\dfrac{\angles{y}^{-\mu}-\angles{y_0}^{-\mu}}{\abs{y-y_0}^{n-3+2s}}\int_{\abs{\zeta}<\frac{\abs{z_0}}{2\abs{y-y_0}}}\left(\displaystyle\int_0^{1}(1-t)D^2\angles{z}^{-\sigma}|_{z_0+t\abs{y-y_0}\zeta}\,dt\right)\dfrac{\zeta^2\,d\zeta}{(1+\zeta^2)^{\frac{n+2s}{2}}}\,dy.
\end{split}\]
Observing that in this regime $\abs{y-y_0}\sim\abs{y_0}$ and that
$$\int_{0}^{T}\dfrac{t^2}{\left(1+t^2\right)^{\frac{n+2s}{2}}}\,dt\lesssim\min\set{T^{3},1},$$
we have
\[\begin{split}
\abs{I_{13}}
&\lesssim\int_{\abs{y}<1}\dfrac{1}{\abs{y-y_0}^{n-3+2s}}\abs{z_0}^{-\sigma-2}\min\set{\left(\dfrac{\abs{z_0}}{\abs{y-y_0}}\right)^3,1}\,dy\\
&\lesssim\abs{y_0}^{-n-2s}\abs{z_0}^{-\sigma-2}\min\set{\abs{y_0},\abs{z_0}}^3.
\end{split}\]
Similarly, changing $y=y_0+\abs{z-z_0}\eta$, we have
\[\begin{split}
I_{31}
&=C_{n,s}\iint_{\abs{y-y_0}<\frac{\abs{y_0}}{2},\,\abs{z}<1}\dfrac{\left(\angles{y}^{-\mu}-\angles{y_0}^{-\mu}-D\angles{y}^{-\mu}|_{y_0}\cdot(y-y_0)\right)\left(\angles{z}^{-\sigma}-\angles{z_0}^{-\sigma}\right)}{\abs{(y-y_0,z-z_0)}^{n+2s}}\,dydz\\
&=C_{n,s}\iint_{\abs{y-y_0}<\frac{\abs{y_0}}{2},\,\abs{z}<1}\dfrac{\left(\angles{z}^{-\sigma}-\angles{z_0}^{-\sigma}\right)}{\abs{(y-y_0,z-z_0)}^{n+2s}}\\
&\qquad\qquad\cdot\left(\displaystyle\sum_{i,j=1}^{n-1}\int_{0}^{1}(1-t)\p_{ij}\angles{y}^{-\mu}|_{y_0+t(y-y_0)}\,dt\right)(y-y_0)_i(y-y_0)_j\,dydz\\
&=\sum_{i,j=1}^{n-1}\int_{\abs{z}<1}\dfrac{\angles{z}^{-\sigma}-\angles{z_0}^{-\sigma}}{\abs{z-z_0}^{2s-1}}\int_{\abs{\eta}<\frac{\abs{y_0}}{2\abs{z-z_0}}}\left(\displaystyle\int_{0}^{1}(1-t)\p_{ij}\angles{y}^{-\mu}|_{y_0+t\abs{z-z_0}\eta}\,dt\right)\dfrac{\eta_i\eta_j\,d\eta}{\abs{(\eta,1)}^{n+2s}}\,dz.
\end{split}\]
The $t$-integral is controlled by $\angles{y_0}^{-\mu-2}$ since $\big|y_0+t\abs{z-z_0}\eta\big|<\frac{\abs{y_0}}{2}$. Then using
\[\begin{split}
\int_{\abs{\eta}<\eta_0}\dfrac{\abs{\eta_i}\abs{\eta_j}}{\left(\abs{\eta}^2+1\right)^{\frac{n+2s}{2}}}\,d\eta
&\lesssim\int_0^{\eta_0}\dfrac{\rho^2\rho^{n-2}}{(\rho^2+1)^{\frac{n+2s}{2}}}\,d\rho\\
&\lesssim\min\set{\eta_0^{n+1},1},
\end{split}\]
(noting that here we again require $s>1/2$) we have
\[\begin{split}
\abs{I_{31}}
&\lesssim\sum_{i,j=1}^{n-1}\int_{\abs{z}<1}\dfrac{1}{\abs{z-z_0}^{2s-1}}\angles{y_0}^{-\mu-2}\min\set{\left(\dfrac{\abs{y_0}}{\abs{z-z_0}}\right)^{n+1},1}\,dz\\
&\lesssim\abs{z_0}^{-n-2s}\angles{y_0}^{-\mu-2}\min\set{\abs{y_0},\abs{z_0}}^{n+1}.
\end{split}\]
Next we deal with the $y$-intermediate, $z$-far regions, namely $I_{23}$. The treatment is similar to that of $I_{13}$ except that we need to integrate in $y$. We have, as above,
\[\begin{split}
I_{23}
&=C_{n,s}\iint_{1<\abs{y}<\frac{\abs{y_0}}{2},\,\abs{z-z_0}<\frac{\abs{z_0}}{2}}\dfrac{\left(\angles{y}^{-\mu}-\angles{y_0}^{-\mu}\right)\left(\angles{z}^{-\sigma}-\angles{z_0}^{-\sigma}-D\angles{z}^{-\sigma}|_{z_0}(z-z_0)\right)}{\abs{(y-y_0,z-z_0)}^{n+2s}}\,dydz\\
&=C_{n,s}\int_{1<\abs{y}<\frac{\abs{y_0}}{2}}\dfrac{\angles{y}^{-\mu}-\angles{y_0}^{-\mu}}{\abs{y-y_0}^{n-3+2s}}\int_{\abs{\zeta}<\frac{\abs{z_0}}{2\abs{y-y_0}}}\left(\displaystyle\int_0^{1}(1-t)D^2\angles{z}^{-\sigma}|_{z_0+t\abs{y-y_0}\zeta}\,dt\right)\dfrac{\zeta^2\,d\zeta}{(1+\zeta^2)^{\frac{n+2s}{2}}}\,dy.
\end{split}\]
Hence
\[\begin{split}
\abs{I_{23}}
&\lesssim\int_{1<\abs{y}<\frac{\abs{y_0}}{2}}\dfrac{\abs{y}^{-\mu}+\angles{y_0}^{-\mu}}{\abs{y-y_0}^{n-3+2s}}\abs{z_0}^{-\sigma-2}\min\set{\left(\dfrac{\abs{z_0}}{\abs{y-y_0}}\right)^{3},1}\,dy\\
&\lesssim\abs{y_0}^{-n-2s}\abs{z_0}^{-\sigma-2}\min\set{\abs{y_0},\abs{z_0}}^{3}\int_{1<\abs{y}<\frac{\abs{y_0}}{2}}\left(\abs{y}^{-\mu}+\angles{y_0}^{-\mu}\right)\,dy\\
&\lesssim\abs{y_0}^{-n-2s}\abs{z_0}^{-\sigma-2}\min\set{\abs{y_0},\abs{z_0}}^{3}\left(\abs{y_0}^{n-1-\mu}+1\right).
\end{split}\]
Similarly, we estimate
\[\begin{split}
I_{32}
&=C_{n,s}\iint_{\abs{y-y_0}<\frac{\abs{y_0}}{2},\,1<\abs{z}<\frac{\abs{z_0}}{2}}\dfrac{\left(\angles{y}^{-\mu}-\angles{y_0}^{-\mu}-D\angles{y}^{-\mu}|_{y_0}\cdot(y-y_0)\right)\left(\angles{z}^{-\sigma}-\angles{z_0}^{-\sigma}\right)}{\abs{(y-y_0,z-z_0)}^{n+2s}}\,dydz\\
&=\sum_{i,j=1}^{n-1}\int_{1<\abs{z}<\frac{\abs{z_0}}{2}}\dfrac{\angles{z}^{-\sigma}-\angles{z_0}^{-\sigma}}{\abs{z-z_0}^{2s-1}}\int_{\abs{\eta}<\frac{\abs{y_0}}{2\abs{z-z_0}}}\left(\displaystyle\int_{0}^{1}(1-t)\p_{ij}\angles{y}^{-\mu}|_{y_0+t\abs{z-z_0}\eta}\,dt\right)\dfrac{\eta_i\eta_j\,d\eta}{\abs{(\eta,1)}^{n+2s}}\,dz,
\end{split}\]
which yields
\[\begin{split}
\abs{I_{32}}
&\lesssim\sum_{i,j=1}^{n-1}\int_{1<\abs{z}<\frac{\abs{z_0}}{2}}\dfrac{\abs{z}^{-\sigma}+\angles{z_0}^{-\sigma}}{\abs{z-z_0}^{2s-1}}\angles{y_0}^{-\mu-2}\min\set{\left(\dfrac{\abs{y_0}}{\abs{z-z_0}}\right)^{n+1},1}\,dz\\
&\lesssim\abs{z_0}^{-n-2s}\abs{y_0}^{-\mu-2}\min\set{\abs{y_0},\abs{z_0}}^{n+1}\int_{1<\abs{z}<\frac{\abs{z_0}}{2}}\left(\abs{z}^{-\sigma}+\angles{z_0}^{-\sigma}\right)\,dz\\
&\lesssim\abs{z_0}^{-n-2s}\abs{y_0}^{-\mu-2}\min\set{\abs{y_0},\abs{z_0}}^{n+1}\left(\abs{z_0}^{1-\sigma}+1\right).
\end{split}\]
We consider the remaining part of the small strip, namely $I_{14}$ and $I_{41}$. Using the change of variable $z=z_0+\abs{y_0}\zeta$, we have
\[\begin{split}
I_{14}
&=C_{n,s}\iint_{\abs{y}<1,\,\min\set{\abs{z},\abs{z-z_0}}>\frac{\abs{z_0}}{2}}\dfrac{\left(\angles{y}^{-\mu}-\angles{y_0}^{-\mu}\right)\left(\angles{z}^{-\sigma}-\angles{z_0}^{-\sigma}\right)}{\abs{(y-y_0,z-z_0)}^{n+2s}}\,dydz,\\
\abs{I_{14}}
&\lesssim\angles{z_0}^{-\sigma}\iint_{\abs{y}<1,\,\min\set{\abs{z},\abs{z-z_0}}>\frac{\abs{z_0}}{2}}\dfrac{1}{\abs{(y_0,z-z_0)}^{n+2s}}\,dydz\\
&\lesssim\angles{z_0}^{-\sigma}\int_{\min\set{\abs{z},\abs{z-z_0}}>\frac{\abs{z_0}}{2}}\dfrac{1}{\abs{(y_0,z-z_0)}^{n+2s}}\,dz\\
&\lesssim\angles{z_0}^{-\sigma}\dfrac{1}{\abs{y_0}^{n-1+2s}}\int_{\abs{\zeta}>\frac{\abs{z_0}}{2\abs{y_0}},\,\abs{\zeta-\frac{z_0}{\abs{y_0}}}>\frac{\abs{z_0}}{2\abs{y_0}}}\dfrac{1}{\abs{(1,\zeta)}^{n+2s}}\,d\zeta\\
&\lesssim\angles{z_0}^{-\sigma}\abs{y_0}^{-(n-1+2s)}\int_{\frac{\abs{z_0}}{2\abs{y_0}}}^{\infty}\dfrac{d\zeta}{(1+\zeta^2)^{\frac{n+2s}{2}}}\\
&\lesssim\angles{z_0}^{-\sigma}\abs{y_0}^{-(n-1+2s)}\min\set{1,\left(\dfrac{\abs{z_0}}{\abs{y_0}}\right)^{-(n-1+2s)}}\\
&\lesssim\angles{z_0}^{-\sigma}\min\set{\abs{y_0}^{-(n-1+2s)},\abs{z_0}^{-(n-1+2s)}}\\
&\lesssim\angles{z_0}^{-\sigma}\left(\abs{y_0}+\abs{z_0}\right)^{-(n-1+2s)}.
\end{split}\]
Similarly, with $y=y_0+\abs{z_0}\eta$,
\[\begin{split}
I_{41}
&=C_{n,s}\iint_{\min\set{\abs{y},\abs{y-y_0}}>\frac{\abs{y_0}}{2},\,\abs{z}<1}\dfrac{\left(\angles{y}^{-\mu}-\angles{y_0}^{-\mu}\right)\left(\angles{z}^{-\sigma}-\angles{z_0}^{-\sigma}\right)}{\abs{(y-y_0,z-z_0)}^{n+2s}}\,dydz,\\
\abs{I_{41}}
&\lesssim\angles{y_0}^{-\mu}\iint_{\min\set{\abs{y},\abs{y-y_0}}>\frac{\abs{y_0}}{2},\,\abs{z}<1}\dfrac{1}{\abs{(y-y_0,z_0)}^{n+2s}}\,dydz\\
&\lesssim\angles{y_0}^{-\mu}\abs{z_0}^{-(1+2s)}\int_{\abs{\eta}>\frac{\abs{y_0}}{2\abs{z_0}}}\dfrac{d\eta}{(\abs{\eta}^2+1)^{\frac{n+2s}{2}}}\\
&\lesssim\angles{y_0}^{-\mu}\abs{z_0}^{-(1+2s)}\int_{\frac{\abs{y_0}}{2\abs{z_0}}}^{\infty}\dfrac{\rho^{n-2}}{(\rho^2+1)^{\frac{n+2s}{2}}}\,d\rho\\
&\lesssim\angles{y_0}^{-\mu}\abs{z_0}^{-(1+2s)}\min\set{\left(\dfrac{\abs{y_0}}{2\abs{z_0}}\right)^{-(1+2s)},1}\\
&\lesssim\angles{y_0}^{-\mu}\left(\abs{y_0}+\abs{z_0}\right)^{-(1+2s)}.
\end{split}\]
In the remaining intermediate region, we first ``integrate'' in $z$ by the change of variable $z=z_0+\abs{y-y_0}\zeta$ as follows.
\[\begin{split}
I_{24}
&=C_{n,s}\iint_{1<\abs{y}<\frac{\abs{y_0}}{2},\,\min\set{\abs{z},\abs{z-z_0}}>\frac{\abs{z_0}}{2}}\dfrac{\left(\angles{y}^{-\mu}-\angles{y_0}^{-\mu}\right)\left(\angles{z}^{-\sigma}-\angles{z_0}^{-\sigma}\right)}{\abs{(y-y_0,z-z_0)}^{n+2s}}\,dydz,\\
\abs{I_{24}}
&\lesssim\angles{z_0}^{-\sigma}\iint_{1<\abs{y}<\frac{\abs{y_0}}{2},\,\min\set{\abs{z},\abs{z-z_0}}>\frac{\abs{z_0}}{2}}\dfrac{\abs{y}^{-\mu}+\angles{y_0}^{-\mu}}{\abs{(y-y_0,z-z_0)}^{n+2s}}\,dydz\\
&\lesssim\angles{z_0}^{-\sigma}\int_{1<\abs{y}<\frac{\abs{y_0}}{2}}\dfrac{\abs{y}^{-\mu}+\angles{y_0}^{-\mu}}{\abs{y-y_0}^{n-1+2s}}\int_{\abs{\zeta}>\frac{\abs{z_0}}{2\abs{y-y_0}},\,\abs{\zeta-\frac{z_0}{\abs{y-y_0}}}>\frac{\abs{z_0}}{2\abs{y-y_0}}}\dfrac{d\zeta}{(1+\zeta^2)^{\frac{n+2s}{2}}}\,dy\\
&\lesssim\angles{z_0}^{-\sigma}\int_{1<\abs{y}<\frac{\abs{y_0}}{2}}\dfrac{\abs{y}^{-\mu}+\angles{y_0}^{-\mu}}{\abs{y-y_0}^{n-1+2s}}\min\set{1,\left(\dfrac{\abs{z_0}}{\abs{y-y_0}}\right)^{-(n-1+2s)}}\,dy\\
&\lesssim\angles{z_0}^{-\sigma}\int_{1<\abs{y}<\frac{\abs{y_0}}{2}}\left(\abs{y}^{-\mu}+\angles{y_0}^{-\mu}\right)\left(\abs{y-y_0}+\abs{z_0}\right)^{-(n-1+2s)}\,dy\\
&\lesssim\angles{z_0}^{-\sigma}\left(\abs{y_0}+\abs{z_0}\right)^{-(n-1+2s)}\int_{1<\abs{y}<\frac{\abs{y_0}}{2}}\left(\abs{y}^{-\mu}+\angles{y_0}^{-\mu}\right)\,dy\\
&\lesssim\abs{y}^{n-1-\mu}\angles{z_0}^{-\sigma}\left(\abs{y_0}+\abs{z_0}\right)^{-(n-1+2s)}.
\end{split}\]
Similarly,
\[\begin{split}
I_{42}
&=C_{n,s}\iint_{\min\set{\abs{y},\abs{y-y_0}}>\frac{\abs{y_0}}{2},\,1<\abs{z}<\frac{\abs{z_0}}{2}}\dfrac{\left(\angles{y}^{-\mu}-\angles{y_0}^{-\mu}\right)\left(\angles{z}^{-\sigma}-\angles{z_0}^{-\sigma}\right)}{\abs{(y-y_0,z-z_0)}^{n+2s}}\,dydz,\\
\abs{I_{42}}
&\lesssim\angles{y_0}^{-\mu}\iint_{\min\set{\abs{y},\abs{y-y_0}}>\frac{\abs{y_0}}{2},\,1<\abs{z}<\frac{\abs{z_0}}{2}}\dfrac{\abs{z}^{-\sigma}+\angles{z_0}^{-\sigma}}{\abs{(y-y_0,z-z_0)}^{n+2s}}\,dydz\\
&\lesssim\angles{y_0}^{-\mu}\int_{1<\abs{z}<\frac{\abs{z_0}}{2}}\dfrac{\abs{z}^{-\sigma}+\angles{z_0}^{-\sigma}}{\abs{z-z_0}^{1+2s}}\int_{\abs{\eta}>\frac{\abs{y_0}}{2\abs{z-z_0}}}\dfrac{d\eta}{(\abs{\eta}^2+1)^{\frac{n+2s}{2}}}\,dz\\
&\lesssim\angles{y_0}^{-\mu}\int_{1<\abs{z}<\frac{\abs{z_0}}{2}}\dfrac{\abs{z}^{-\sigma}+\angles{z_0}^{-\sigma}}{\abs{z-z_0}^{1+2s}}\min\set{\left(\dfrac{\abs{y_0}}{2\abs{z-z_0}}\right)^{-1-2s},1}\,dz\\
&\lesssim\angles{y_0}^{-\mu}\abs{z_0}^{1-\sigma}\left(\abs{y_0}+\abs{z_0}\right)^{-(1+2s)}.
\end{split}\]

Now we estimate the singular part $I^{sing}$. The only concern is that if, say, $\abs{y_0}\gg\abs{z_0}$, then the line segment joining $z_0$ and $z$ may intersect the $y$-axis. To fix the idea we suppose that $\abs{y_0}\geq\abs{z_0}$. Having all estimates for the integrals in a neighborhood of the axes, one can factor out the decay $\angles{z}^{-\sigma}-\angles{z_0}^{-\sigma}$ and obtain integrability by expanding the bracket with $y$ to second order, as follows. For simplicity let us write
$$\Omega_{sing}=\set{(y,z)\in\R^n:\abs{y}>\frac{\abs{y_0}}{2},\,\abs{z}>\frac{\abs{z_0}}{2},\,\abs{(y-y_0,z-z_0)}<\frac{\abs{y_0}+\abs{z_0}}{2}}.$$
Then
\[\begin{split}
I^{sing}
&=C_{n,s}\iint_{\Omega_{sing}}\dfrac{\left(\angles{y}^{-\mu}-\angles{y_0}^{-\mu}\right)\left(\angles{z}^{-\sigma}-\angles{z_0}^{-\sigma}\right)}{\abs{(y-y_0,z-z_0)}^{n+2s}}\,dydz\\
&=C_{n,s}\iint_{\Omega_{sing}}\dfrac{\left(\angles{z}^{-\sigma}-\angles{z_0}^{-\sigma}\right)}{\abs{(y-y_0,z-z_0)}^{n+2s}}\\
&\qquad\qquad\cdot\left(\displaystyle\sum_{i,j=1}^{n-1}\int_{0}^{1}(1-t)\p_{ij}\angles{y}^{-\mu}|_{y_0+t(y-y_0)}\,dt\right)(y-y_0)_i(y-y_0)_j\,dydz.\\
\end{split}\]
Thus
\[\begin{split}
\abs{I^{sing}}
&\lesssim\angles{z_0}^{-\sigma}\angles{y_0}^{-\mu-2}\iint_{\Omega_{sing}}\dfrac{\abs{y-y_0}^2}{\abs{(y-y_0,z-z_0)}^{n+2s}}\,dydz\\
&\lesssim\angles{z_0}^{-\sigma}\angles{y_0}^{-\mu-2}\int_{0}^{\frac{\abs{y_0}+\abs{z_0}}{2}}\dfrac{\rho^2}{\rho^{1+2s}}\,d\rho\\
&\lesssim\angles{y_0}^{-\mu-2s}\angles{z_0}^{-\sigma}.
\end{split}\]
The same argument implies that if $\abs{z_0}\geq\abs{y_0}$ then
$$\abs{I^{sing}}\lesssim\angles{y_0}^{-\mu}\angles{z_0}^{-\sigma-2s}.$$
Therefore, we have in general
\[\begin{split}
\abs{I^{sing}}
&\lesssim\angles{y_0}^{-\mu}\angles{z_0}^{-\sigma}\max\set{\abs{y_0},\abs{z_0}}^{-2s}\\
&\lesssim\angles{y_0}^{-\mu}\angles{z_0}^{-\sigma}\left(\abs{y_0}+\abs{z_0}\right)^{-2s}.\\
\end{split}\]
Finally, the remaining exterior integral is controlled by
\[\begin{split}
\abs{I^{rest}}
&\lesssim\angles{y_0}^{-\mu}\angles{z_0}^{-\sigma}\iint_{\abs{y}>\frac{\abs{y_0}}{2},\,\abs{z}>\frac{\abs{z_0}}{2},\,\abs{(y-y_0,z-z_0)}<\frac{\abs{y_0}+\abs{z_0}}{2}}\dfrac{1}{\abs{(y-y_0,z-z_0)}^{n+2s}}\,dydz\\
&\lesssim\angles{y_0}^{-\mu}\angles{z_0}^{-\sigma}\int_{\frac{\abs{y_0}+\abs{z_0}}{2}}^{\infty}\dfrac{d\rho}{\rho^{1+2s}}\\
&\lesssim\angles{y_0}^{-\mu}\angles{z_0}^{-\sigma}\left(\abs{y_0}+\abs{z_0}\right)^{-2s}.
\end{split}\]
\item
This follows from the product rule
\[\begin{split}
\Ds\left(\angles{y}^{-\mu}\angles{z}^{-\sigma}\right)
&=\angles{y}^{-\mu}\Ds\angles{z}^{-\sigma}+\angles{z}^{-\sigma}\Ds\angles{y}^{-\mu}-(\angles{y}^{-\mu},\angles{z}^{-\sigma})_{s}\\
&=\angles{y}^{-\mu}\angles{z}^{-\sigma}\left(O(\angles{y}^{-2s})+O(\angles{z}^{-2s})+o(1)\right).
\end{split}\]
\item
The $s$-inner product is computed as follows. We may assume that $1\leq\abs{z_0}\leq\frac{\tR}{2}$. When $\abs{y_0}\geq3\tR$,
\[\begin{split}
&\quad\,\abs{[\Ds,\eta_\tR]\phi(y_0,z_0)}\\
&\leq{C}\int_{\R^n}\dfrac{\abs{-\eta_\tR(y)}\angles{z}^{-\sigma}}{\abs{(y_0,z_0)-(y,z)}^{n+2s}}\,dydz\\
&\leq{C}\int_{\R}\int_{\abs{y}\leq2\tR}\dfrac{\angles{z}^{-\sigma}}{\abs{(y_0,z_0)-(y,z)}^{n+2s}}\,dydz\\
&\leq{C}\tR^{n-1}\int_{\R}\dfrac{\angles{z}^{-\sigma}}{\left(\abs{y_0}^2+\abs{z_0-z}^2\right)^{\frac{n+2s}{2}}}\,dz\\
&\leq{C}\tR^{n-1}\left(\int_{\abs{z}\geq\frac{\abs{z_0}}{2}}\dfrac{\angles{z_0}^{-\sigma}}{\left(\abs{y_0}^2+\abs{z_0-z}^2\right)^{\frac{n+2s}{2}}}\,dz
    +\int_{\abs{z}\leq\frac{\abs{z_0}}{2}}\dfrac{\angles{z}^{-\sigma}}{\left(\abs{y_0}^2+\abs{z_0}^2\right)^{\frac{n+2s}{2}}}\,dz\right)\\
&\leq{C}\tR^{n-1}\left(\abs{z_0}^{-\sigma}\abs{y_0}^{-(n-1+2s)}+(1+\abs{z_0}^{1-\sigma})\abs{(y_0,z_0)}^{-n-2s}\right)\\
&\leq{C}\left(\abs{z_0}^{-\sigma}\abs{y_0}^{-2s}+(\abs{z_0}^{-1}+\abs{z_0}^{-\sigma})\abs{(y_0,z_0)}^{-2s}\right)\\
&\leq{C}\left(\abs{z_0}^{-1}+\abs{z_0}^{-\sigma}\right)\abs{y_0}^{-2s}.
\end{split}\]
When $\abs{y_0}\leq\frac{\tR}{2}$,
\[\begin{split}
\abs{[\Ds,\eta_\tR]\phi(y_0,z_0)}
&\leq{C}\int_{\R^n}\dfrac{(1-\eta_\tR(y))\angles{z}^{-\sigma}}{\abs{(y_0,z_0)-(y,z)}^{n+2s}}\,dydz\\
&\leq{C}\int_{\R}\int_{\abs{y}\geq{\tR}}\dfrac{\angles{z}^{-\sigma}}{\abs{(y_0,z_0)-(y,z)}^{n+2s}}\,dydz\\
&\leq{C}\int_{\R}\int_{\abs{y}\geq\frac{\tR}{2}}\dfrac{\angles{z}^{-\sigma}}{\left(\abs{y}^2+\abs{z_0-z}^2\right)^{\frac{n+2s}{2}}}\,dydz\\
&\leq{C}\int_{\R}\dfrac{\angles{z}^{-\sigma}}{\abs{z_0-z}^{1+2s}}\int_{\abs{\tilde{y}}\geq\frac{\tR}{2\abs{z_0-z}}}\dfrac{d\tilde{y}}{\left(\abs{\tilde{y}}^2+1\right)^{\frac{n+2s}{2}}}\,dz\\
&\leq{C}\int_{\R}\dfrac{\angles{z}^{-\sigma}}{\abs{z_0-z}^{1+2s}}\min\set{1,\left(\dfrac{\abs{z_0-z}}{\tR}\right)^{1+2s}}\,dz\\
&\leq{C}\left(\int_{z_0-\tR}^{z_0+\tR}\angles{z}^{-\sigma}\tR^{-1-2s}\,dz+\int_{\abs{z_0-z}>\tR}\dfrac{\angles{z}^{-\sigma}}{\abs{z_0-z}^{1+2s}}\,dz\right)\\
&\leq{C}\left(\tR^{-1-2s}(1+\tR^{1-\sigma})+\tR^{-\sigma}\tR^{-2s}\right)\\
&\leq{C}\left(\tR^{-1-2s}+\tR^{-\sigma-2s}\right).
\end{split}\]
When $\frac{\tR}{2}\leq\abs{y_0}\leq3\tR$, we have
\[\p_{y_iy_j}\eta_\tR=\dfrac{1}{\tR^2}\eta''\left(\dfrac{y}{\tR}\right)\dfrac{y_iy_j}{\abs{y}^2}+\dfrac{1}{\tR\abs{y}}\eta'\left(\dfrac{y}{\tR}\right)\left(\delta_{ij}-\dfrac{y_iy_j}{\abs{y}^2}\right),\]
which implies that $\norm[L^{\infty}({[y_0,y]})]{D^2\eta_\tR}\leq{C}\tR^{-2}$ for $\abs{y_0-y}\leq\frac{y_0}{2}$, where $[y_0,y]$ denotes the line segment joining $y_0$ and $y$. Thus
\[\begin{split}
&\quad\,\abs{[\Ds,\eta_\tR]\phi(y_0,z_0)}\\
&\leq{C}\int_{\R^n}\dfrac{\abs{\eta_\tR(y_0)-\eta_\tR(y)+\chiset{\abs{y-y_0}<1}D\eta_\tR(y_0)\cdot(y-y_0)}\angles{z}^{-\sigma}}{\abs{(y_0,z_0)-(y,z)}^{n+2s}}\,dydz\\
&\leq{C}\Bigg(\int_{\R^{n-1}}\int_{\abs{z}\leq\frac{\abs{z_0}}{2}}\dfrac{\abs{\eta_\tR(y_0)-\eta_\tR(y)+\chiset{\abs{y-y_0}<1}D\eta_\tR(y_0)\cdot(y-y_0)}\angles{z}^{-\sigma}}{\left(\abs{y_0-y}^2+\abs{z_0}^2\right)^{\frac{n+2s}{2}}}\,dydz\\
&\qquad+\int_{\R^{n-1}}\int_{\abs{z}\geq\frac{\abs{z_0}}{2}}\dfrac{\abs{\eta_\tR(y_0)-\eta_\tR(y)+\chiset{\abs{y-y_0}<1}D\eta_\tR(y_0)\cdot(y-y_0)}\angles{z_0}^{-\sigma}}{\left(\abs{y_0-y}^2+\abs{z_0-z}^2\right)^{\frac{n+2s}{2}}}\,dydz\Bigg)\\
&\leq{C}\Bigg((1+\abs{z_0}^{1-\sigma})\int_{\R^{n-1}}\dfrac{\abs{\eta_\tR(y_0)-\eta_\tR(y)+\chiset{\abs{y-y_0}<1}D\eta_\tR(y_0)\cdot(y-y_0)}}{\left(\abs{y_0-y}^2+\abs{z_0}^2\right)^{\frac{n+2s}{2}}}\,dy\\
&\qquad+\abs{z_0}^{\sigma}\int_{\R^{n-1}}\dfrac{\abs{\eta_\tR(y_0)-\eta_\tR(y)+\chiset{\abs{y-y_0}<1}D\eta_\tR(y_0)\cdot(y-y_0)}}{\abs{y_0-y}^{n-1+2s}}\,dy\Bigg)\\
&\leq{C}\left(\abs{z_0}^{-1}+\abs{z_0}^{-\sigma}\right)\left(\int_{\abs{y_0-y}\geq\frac{y_0}{2}}\dfrac{dy}{\abs{y_0-y}^{n-1+2s}}+\int_{\abs{y_0-y}\leq\frac{y_0}{2}}\dfrac{\norm[L^{\infty}({[y_0,y]})]{D^2\eta_\tR}\abs{y_0-y}^2}{\abs{y_0-y}^{n-1+2s}}\,dy\right)\\
&\leq{C}\left(\abs{z_0}^{-1}+\abs{z_0}^{-\sigma}\right)\left(\abs{y_0}^{-2s}+\tR^{-2}\abs{y_0}^{2-2s}\right)\\
&\leq{C}\left(\abs{z_0}^{-1}+\abs{z_0}^{-\sigma}\right)\abs{y_0}^{-2s}.
\end{split}\]
This completes the proof of \eqref{eq:etaR}.
\end{enumerate}
\end{proof}

\subsection{Existence}

In order to solve the linearized equation
\[\Ds\phi+f'(\underline{w})\phi=g\quad\textfor(y,z)\in\R^n,\]
we consider the equivalent problem in the Caffarelli--Slivestre extension \cite{Caffarelli-Silvestre},
\begin{equation}\label{eq:lineartheory}\begin{cases}
-\nabla\cdot(t^a\nabla\phi)=0&\textfor(t,y,z)\in\R^{n+1}_+\\
\displaystyle{t}^a\pd{\phi}{\nu}+f'(w)\phi=g&\textfor(y,z)\in\p\R^{n+1}_+.
\end{cases}\end{equation}

We will prove the following
\begin{prop}\label{prop:linearexist}
Let $\mu,\sigma>0$ be small. For any $g$ with 
$\norm[\mu,\sigma]{g}<+\infty$ and
\begin{equation}\label{eq:gortho}
\int_{\R}g(y,z)w'(z)\,dz=0,
\end{equation}
there exists a unique solution $\phi\in{H^1(\R^{n+1}_+,t^a)}$ of \eqref{eq:lineartheory} satisfying
\begin{equation}\label{eq:phiortho}
\iint_{\R^2_+}t^a\phi(t,y,z)w_z(t,z)\,dtdz=0\quad\textforall{y}\in\R^{n-1},
\end{equation}
such that the trace $\phi(0,y,z)$ satisfies $\norm[\mu,\sigma]{\phi}<+\infty$. Moreover,
\begin{equation}\label{eq:linearapriori}
\norm[\mu,\sigma]{\phi}\leq{C}\norm[\mu,\sigma]{g}.
\end{equation}
\end{prop}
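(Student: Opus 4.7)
The plan is to combine a variational existence argument with the a priori estimate of Lemma~\ref{lem:aprioriyz}. I would take the $y$-Fourier transform of the extension problem, decoupling it into the one-parameter family
\begin{equation*}
\begin{cases}
-\nabla_{(z,t)}\cdot\bigl(t^a\nabla_{(z,t)}\hat\phi\bigr) + t^a|\xi|^2\hat\phi = 0 & \text{in } \mathbb{R}^2_+,\\
t^a\partial_\nu\hat\phi + f'(w)\hat\phi = \hat g(\xi,\cdot) & \text{on } \partial\mathbb{R}^2_+,
\end{cases}
\end{equation*}
parameterized by $\xi\in\mathbb{R}^{n-1}$. For every $\xi\neq 0$, Lemma~\ref{lem:linearsolve} produces a unique $\hat\phi(\xi,\cdot,\cdot)\in X$ via Riesz representation on the inner-product space $(X,(\cdot,\cdot)_X)$. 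For the critical mode $\xi=0$, Lemma~\ref{non-degen1d} identifies the kernel as $\underline{w}'$, and the orthogonality hypothesis (which after Fourier reads $\int \hat g(0,z)w'(z)\,dz=0$) is exactly the Fredholm compatibility condition, yielding a unique solution in the orthogonal complement of $w_z$. Inverse Fourier transforming and checking measurability in $\xi$ gives $\phi\in H^1(\mathbb{R}^{n+1}_+,t^a)$; the orthogonality condition \eqref{eq:phiortho} is enforced on the $\xi=0$ mode directly and, for $\xi\neq 0$, follows by testing the weak form against $w_z$ and exploiting $-\nabla\cdot(t^a\nabla w_z)=0$ together with the boundary identity satisfied by $w_z$.

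To upgrade this finite-energy solution to the pointwise weighted bound, I would first establish boundedness of the trace by a local Moser iteration adapted to the degenerate-elliptic extension with absorbable zeroth-order boundary term. Once $\phi\in L^{\infty}$ and the orthogonality \eqref{eq:phiortho} hold, Lemma~\ref{lem:aprioriyz} applies directly and yields $\norm[\mu,\sigma]{\phi}\leq C\norm[\mu,\sigma]{g}$. Uniqueness follows by applying the same a priori estimate to the difference of two solutions, which solves the homogeneous equation and is orthogonal to $w_z$ by subtraction.

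The main obstacle I foresee is controlling the $\xi\to 0$ limit in the Fourier procedure: the bilinear form $(\cdot,\cdot)_X$ loses coercivity there, so one must verify that the Fredholm inverse does not blow up too badly and that the assembled inverse transform belongs to the claimed energy class rather than only as a distributional limit. A cleaner alternative that sidesteps this difficulty is to approximate $g$ by compactly supported $g_R$ preserving the orthogonality on each fiber, solve each approximating problem by Lax--Milgram on an expanding cylinder in the closed subspace orthogonal to a suitable cutoff of $w_z$, and pass to the limit $R\to\infty$ using Lemma~\ref{lem:aprioriyz} to secure uniform weighted bounds and preservation of \eqref{eq:phiortho} in the limit.
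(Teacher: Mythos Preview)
Your approach is essentially the paper's: Fourier transform in $y$, solve the $\xi$-modes using Lemma~\ref{lem:linearsolve} for $\xi\neq0$ and the one-dimensional theory (Lemma~\ref{lem:linear1D}) for $\xi=0$, reassemble, obtain $L^\infty$ from regularity, then invoke Lemma~\ref{lem:aprioriyz}. Your observation that the orthogonality of $\hat g$ to $w'$ for every $\xi$ forces $\iint t^a\hat\phi\,w_z=0$ by testing against $w_z$ is correct and matches the paper.

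You correctly flag the delicate point, but your proposed workarounds differ from the paper's actual resolution. The paper first restricts to $g\in C_c^\infty(\R^n)$ and proves directly that the $H^1(\R^2_+,t^a)$ bound on $\hat\phi(\cdot,\xi,\cdot)$ in terms of $\|\hat g(\xi,\cdot)\|_{L^2}$ holds with a constant \emph{independent of $\xi$}, via a blow-up/compactness argument: assuming a sequence $\xi_m\to0$ (or $|\xi_m|\to\infty$) with normalized $\hat\phi_m$ and vanishing $\hat g_m$, one extracts a limit solving the homogeneous problem orthogonal to $w_z$, which the non-degeneracy forces to be zero, contradicting the normalization. This uniform-in-$\xi$ estimate is then integrated via Plancherel to give a global $H^1(\R^{n+1}_+,t^a)$ bound, after which higher regularity yields $\phi\in L^\infty$ and Lemma~\ref{lem:aprioriyz} applies. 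The passage to general $g$ is done afterward by a standard $C_c^\infty$ approximation, using the already-established weighted estimate \eqref{eq:linearapriori} to pass to the limit. Your ``cleaner alternative'' of Lax--Milgram on expanding cylinders is not needed and is not what the paper does.
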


Let us recall the corresponding known result \cite{Du-Gui-Sire-Wei} in one dimension.
\begin{lem}\label{lem:linear1D}
Let $n=1$. For any $g$ with $\int_{\R}gw'\,dz=0$, there exists a unique solution $\phi$ to \eqref{eq:lineartheory} satisfying $\iint_{\R^2_+}t^{a}\phi{w_z}\,dtdz=0$ such that
\[\norm[0,\sigma]{\phi}\leq{C}\norm[0,\sigma]{g}.\]
\end{lem}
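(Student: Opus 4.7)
The plan is to establish Lemma \ref{lem:linear1D} in three stages: (1) construct a weak solution in $H^1(\R^2_+,t^a)$ by a variational method on the orthogonal complement of the kernel; (2) establish the pointwise decay bound $\|\phi\|_{0,\sigma}\leq C\|g\|_{0,\sigma}$ via a blow-up/Liouville argument tied to the one-dimensional non-degeneracy Lemma \ref{non-degen1d}; (3) deduce uniqueness.

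For existence, I would work with the bilinear form
\[
B(u,v)=\iint_{\R^2_+}t^a\nabla u\cdot\nabla v\,dzdt+\int_\R f'(\underline{w})uv\,dz
\]
on $X=H^1(\R^2_+,t^a)$. Repeating the ground-state identity from the ``inner product'' lemma (without the $|\xi|^2$ term that is absent in 1D), one obtains the non-negativity
\[
B(u,u)=\iint_{\R^2_+}t^a w_z^2\bigl|\nabla(u/w_z)\bigr|^2\,dzdt\geq0,
\]
with equality exactly when $u\in\mathrm{span}\{w_z\}$. On the closed subspace
\[
X^\perp=\Bigl\{u\in X:\iint_{\R^2_+}t^a u\,w_z\,dzdt=0\Bigr\}
\]
this identity, combined with the spectral gap of $(-\partial_{zz})^s+f'(\underline{w})$ above its kernel $\{cw'\}$ (known for the fractional Allen--Cahn one-dimensional profile), upgrades to genuine coercivity $B(u,u)\geq c\|u\|_X^2$ on $X^\perp$. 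The linear functional $v\mapsto\int_\R gv\,dz$ is continuous on $X$ (by the fractional trace inequality combined with $\|g\|_{0,\sigma}<\infty$ and $\sigma>2-2s$), so Lax--Milgram yields a unique $\phi\in X^\perp$ with $B(\phi,v)=\int_\R gv\,dz$ for all $v\in X^\perp$. The compatibility condition \eqref{eq:gortho}, namely $\int gw'\,dz=0$, together with $B(\phi,w_z)=0$, extends this identity to all $v\in X$, so $\phi$ solves \eqref{eq:lineartheory} weakly and satisfies \eqref{eq:phiortho}.

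The pointwise decay estimate is the main obstacle and I would prove it by the same blow-up scheme used in Lemma \ref{lem:aprioriyz}, but in one dimension. Assuming $\|\phi\|_{0,\sigma}<\infty$ first, suppose for contradiction sequences $\phi_m,g_m$ with $\|\phi_m\|_{0,\sigma}=1$, $\|g_m\|_{0,\sigma}\to 0$, attaining the norm at points $z_m$. If $z_m$ stays bounded, elliptic regularity and the Caffarelli--Silvestre characterization produce a limit $\phi_0$ solving $(-\partial_{zz})^s\phi_0+f'(\underline{w})\phi_0=0$ with $\iint t^a\phi_0 w_z\,dtdz=0$; Lemma \ref{non-degen1d} forces $\phi_0=cw'$, and the orthogonality then gives $c=0$, contradicting the normalization. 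If $|z_m|\to\infty$, I would rescale $\tilde\phi_m(z)=\langle z_m+z\rangle^\sigma\phi_m(z_m+z)$; using the commutator identity $\Ds(u_1u_2)=u_1\Ds u_2+u_2\Ds u_1-(u_1,u_2)_s$ with $u_1=\langle z_m+z\rangle^{-\sigma}$ and the estimate $\Ds\langle z\rangle^{-\sigma}=O(\langle z\rangle^{-2s-\min\{\sigma,1\}})$ from Lemma \ref{lem:supersol}(2), the limiting equation becomes $(-\partial_{zz})^s\tilde\phi_0+2\tilde\phi_0=0$ on $\R$; boundedness forces $\tilde\phi_0\equiv 0$, a contradiction.

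To remove the a priori assumption $\|\phi\|_{0,\sigma}<\infty$, I would truncate $\phi$ by a cut-off $\eta_{\tR}$ (as in the final part of the proof of Lemma \ref{lem:aprioriyz}) and apply the just-established estimate to $\eta_{\tR}\phi$, using the commutator bound \eqref{eq:etaR} of Lemma \ref{lem:supersol}(5) to control $[\Ds,\eta_{\tR}]\phi$; sending $\tR\to\infty$ gives the global estimate. Uniqueness is immediate: two solutions differ by a solution of the homogeneous equation in $X^\perp$, which by \eqref{eq:linearapriori} (with $g=0$) must vanish. The spectral-gap input needed for coercivity is the only non-routine ingredient; the rest is a direct one-dimensional specialization of techniques already developed in Section \ref{sec:linear}.
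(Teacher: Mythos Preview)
The paper does not prove this lemma at all; its entire proof is a one-line citation of Proposition 4.1 in Du--Gui--Sire--Wei \cite{Du-Gui-Sire-Wei} (taking $m=1$, $\xi_1=0$, $\mu=\sigma$). Your proposal instead supplies a self-contained argument along the same variational-plus-blow-up blueprint that underlies both that reference and the higher-dimensional machinery of the present Section \ref{sec:linear}. Two small imprecisions are worth flagging. First, coercivity of $B$ on $X^\perp$ in the full $H^1(\R^2_+,t^a)$ norm does not follow mechanically from the ground-state identity plus a trace-level spectral gap; the clean route is to equip $X$ with the norm $\iint t^a|\nabla u|^2+\int_\R u^2$ (gradient in the bulk, $L^2$ on the trace), write $B=Q-\int V u^2$ with $Q(u)=\iint t^a|\nabla u|^2+2\int u^2$ equivalent to that norm and $V=2-f'(w)\geq0$ decaying, and then run the standard compact-perturbation argument. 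Second, your final step removing the a priori assumption $\|\phi\|_{0,\sigma}<\infty$ invokes Lemma \ref{lem:supersol}(5), but that bound concerns a cut-off in the tangential variable $y$, which is absent when $n=1$; the correct and simpler route here is that once the variational solution is bounded (local regularity for the extension problem), Lemma \ref{lem:aprioriz} already yields $\|\phi\|_{0,\sigma}<\infty$ directly, after which your blow-up argument gives the quantitative constant. With those two fixes your argument is complete and has the merit of making the paper self-contained at this point.
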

\begin{proof}
This is Proposition 4.1 in \cite{Du-Gui-Sire-Wei}. In their notations, take $m=1$, $\xi_1=0$ and $\mu=\sigma$.
\end{proof}

\begin{proof}[Proof of Proposition \ref{prop:linearexist}]
\begin{enumerate}
\item
We first assume that $g\in{C}_c^{\infty}(\R^n)$. Taking Fourier transform in $y$, we solve for each $\xi\in\R^{n-1}$ a solution $\hat\phi(t,\xi,z)$ to
\begin{equation*}\begin{cases}
-\nabla\cdot(t^a\nabla\hat\phi)+\abs{\xi}^2t^a\hat\phi=0&\textfor(t,z)\in\R^{2}_+,\\
\displaystyle{t}^a\pd{\hat\phi}{\nu}+f'(w)\hat\phi=\hat{g}&\textfor{z}\in\p\R^{2}_+,
\end{cases}\end{equation*}
with orthogonality condition
\[\iint_{\R^2_+}t^a\hat\phi(t,\xi,z)w_z(t,z)\,dtdz=0\quad\textforall{\xi}\in\R^{n-1}\]
corresponding to \eqref{eq:phiortho}. One can then obtain a solution for $\xi=0$ by Lemma \ref{lem:linear1D} and for $\xi\neq0$ by Lemma \ref{lem:linearsolve}. From the embedding $H^1(\R^2_+,t^a)\hookrightarrow{H}^s(\R)$ \cite{Cabre-Sire1}, we have the estimate
\[\norm[H^1(\R^{2}_+,t^a)]{\hat{\phi}(\cdot,\xi,\cdot)}\leq{C}(\xi)\norm[L^2(\R)]{\hat{g}(\xi,\cdot)}.\]
We claim that the constant can be taken independent of $\xi$, i.e.
\begin{equation}\label{eq:noxi}
\norm[H^1(\R^{2}_+,t^a)]{\hat{\phi}(\cdot,\xi,\cdot)}\leq{C}\norm[L^2(\R)]{\hat{g}(\xi,\cdot)}.
\end{equation}
If this were not true, there would exist sequences $\xi_m\to0$ (the case $\abs{\xi_m}\to+\infty$ is similar), $\hat{\phi}_m$ and $\hat{g}_m$ such that
\begin{equation}\label{eq:hatphi}
\norm[H^1(\R^{2}_+,t^a)]{\hat{\phi}_m(\cdot,\xi_m,\cdot)}=1,\quad\norm[L^2(\R)]{\hat{g}_m(\xi_m,\cdot)}=0,
\end{equation}
\begin{equation*}\begin{cases}
-\nabla\cdot(t^a\nabla\hat\phi_m)+\abs{\xi_m}^2t^a\hat\phi_m=0&\textfor(t,z)\in\R^{2}_+,\\
\displaystyle{t}^a\pd{\hat\phi_m}{\nu}+f'(w)\hat\phi_m=\hat{g}_m&\textfor{z}\in\p\R^{2}_+,
\end{cases}\end{equation*}
and
\[\iint_{\R^2_+}t^a\hat\phi_m(t,\xi_m,z)w_z(t,z)\,dtdz=0.\]
Elliptic regularity implies that a subsequence of $\hat{\phi}_m(t,\xi_m,z)$ converges locally uniformly in $\R^2_+$ to some $\hat{\phi}_0(t,z)$, which solves weakly
\begin{equation*}\begin{cases}
-\nabla\cdot(t^a\nabla\hat\phi_0)=0&\textfor(t,z)\in\R^{2}_+\\
\displaystyle{t}^a\pd{\hat\phi_0}{\nu}+f'(w)\hat\phi_0=0&\textfor{z}\in\p\R^{2}_+.
\end{cases}\end{equation*}
and
\[\iint_{\R^2_+}t^a\hat\phi_0(t,z)w_z(t,z)\,dtdz=0\quad\textforall{\xi}\in\R^{n-1}.\]
By Lemma \ref{lem:non-degen}, we conclude that $\hat{\phi}_0=0$, contradicting \eqref{eq:hatphi}. This proves \eqref{eq:noxi}.

Integrating over $\xi\in\R^{n-1}$ and using Plancherel's theorem, we obtain a solution $\phi$ satisfying
\[\norm[H^1(\R^{n+1}_+,t^a)]{\phi}\leq{C}\norm[L^2(\R^n)]{g}.\]
Higher regularity yields, in particular, $\phi\in{L}^{\infty}(\R^n)$. Then \eqref{eq:linearapriori} follows from Lemma \ref{lem:aprioriyz}.
\item
In the general case, we solve \eqref{eq:lineartheory} with $g$ replaced by $g_m\in{C}_c^{\infty}(\R^n)$ which converges uniformly to $g$. Then the solution $\phi_m$ is controlled by
\[\norm[\mu,\sigma]{\phi_m}\leq{C}\norm[\mu,\sigma]{g_m}\leq{C}\norm[\mu,\sigma]{g}.\]
By passing to a subsequence, $\phi_m$ converges to some $\phi$ uniformly on compact subsets of $\R^n$, which also satisfies \eqref{eq:linearapriori}.
\item
The uniqueness follows from the non-degeneracy of $w'$ and the orthogonality condition \eqref{eq:phiortho}.
\end{enumerate}
\end{proof}

\subsection{The positive operator}

We conclude this section by stating a standard estimate for the operator $\Ds+2$.

\begin{lem}\label{lem:Ds+2}
Consider the equation
\[\Ds{u}+2u=g\quad\textin\R^n.\]
and $\abs{g(x)}\leq{C}\angles{x'}^{-\theta}$ for all $x\in\R^n$ and $g(x)=0$ for $x$ in $M_{\eps,R}$, a tubular neighborhood of $M_\eps$ of width $R$. Then the unique solution $u=(\Ds+2)^{-1}g$ satisfies the decay estimate
\[\abs{u(x)}\leq{C}\angles{x'}^{-\theta}\angles{\dist(x,M_{\eps,R})}^{-2s}.\]
\end{lem}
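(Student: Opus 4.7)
The plan is to combine coercivity for existence with a barrier argument for the pointwise decay.

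Existence and uniqueness of $u\in H^s(\R^n)$ follow from Lax--Milgram applied to the bilinear form associated with $\Ds+2$, which is coercive since the symbol $|\xi|^{2s}+2$ is bounded below by $2$. Since $g$ is bounded with the stated decay, elliptic regularity then places $u$ in $L^\infty(\R^n)\cap C(\R^n)$.

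For the pointwise decay I would invoke the weak maximum principle for $\Ds+2$, which holds because the zero-th order coefficient is the positive constant $2$: if $w\in L^\infty$ satisfies $(\Ds+2)w\geq 0$ and $\liminf_{|x|\to\infty}w(x)\geq 0$, then $w\geq 0$. It then suffices to exhibit a nonnegative supersolution
\[
v(x)\;=\;A\,\angles{x'}^{-\theta}\,\angles{d(x)}^{-2s},\qquad d(x)=\dist(x,M_{\eps,R}),
\]
decaying at infinity with $(\Ds+2)v\geq|g|$, for then $|u|\leq v$. Writing $v=fh$ with $f=\angles{x'}^{-\theta}$ and $h=\angles{d(x)}^{-2s}$, the product rule $\Ds(fh)=f\,\Ds h+h\,\Ds f-(f,h)_s$ from Section \ref{sec:linear} reduces the task to bounding each factor: Lemma \ref{lem:supersol}(1) gives $|\Ds f|\leq C\angles{x'}^{-\theta-2s}$; an analogous one-dimensional estimate in the direction normal to $M_{\eps,R}$ (after smoothing $d$) yields $|\Ds h|\leq C\angles{d}^{-2s-\min\{1,2s\}}$; and Lemma \ref{lem:supersol}(3) controls the cross term $(f,h)_s$. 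All three contributions are $o(v)$, so $(\Ds+2)v\geq(2-o(1))v$. Choosing $A$ large in terms of $C$, $\theta$, $s$, $n$ then gives $(\Ds+2)v\geq|g|$ wherever $\angles{d(x)}$ is of order $1$ and, via the additional $\angles{d}^{-2s}$ decay that $g$ inherits from the error term $S(u^*)$ in the ambient construction, also at points where $d(x)$ is large.

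The main obstacle I anticipate is twofold. First, the distance function $d$ is only Lipschitz at the cut locus of $M_{\eps,R}$, so a direct pointwise computation of $\Ds\angles{d}^{-2s}$ is not justified; I would replace $d$ by a smooth mollification $\tilde d$ at unit scale, with $|\tilde d-d|\leq 1$ and uniformly bounded derivatives, so that the barrier built from $\tilde d$ is equivalent to the original up to a multiplicative constant that can be absorbed into $A$. Second, the constants in the estimate must be uniform in $\eps$ and $R$, which follows from the uniform $C^{2,\alpha}$-control on $M_\eps$ used throughout the construction.
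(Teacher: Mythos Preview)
Your route is different from the paper's, which is a two-line argument: the $\angles{x'}^{-\theta}$ decay comes from a barrier in $x'$ alone (maximum principle with $\angles{x'}^{-\theta}$), and the $\angles{d}^{-2s}$ decay is read off directly from the convolution representation $u=G*g$, using that the Green's function of $\Ds+2$ satisfies $0<G(x)\leq C(1+|x|)^{-(n+2s)}$. No product barrier is built and no geometry of $M_\eps$ enters beyond the support condition on $g$.

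Your barrier construction, by contrast, has two real gaps. First, Lemma~\ref{lem:supersol} is stated and proved for functions of the \emph{flat} coordinates $y\in\R^{n-1}$ and $z\in\R$; the factor $h=\angles{d(x)}^{-2s}$ depends on the signed distance to a \emph{curved} hypersurface, so neither part (2) nor part (3) applies to it as written. Your ``analogous one-dimensional estimate in the normal direction'' would in effect require the full Fermi-coordinate expansion of $\Ds$ (Proposition~\ref{prop:DsFermi}), with uniform control of the curvature error terms, which is considerably more than you have invoked. Second, the statement that the three pieces of $\Ds(fh)$ are $o(v)$ is only asymptotic (as $|x'|\to\infty$ or $d\to\infty$); on the bounded region where neither parameter is large there is no reason for $(\Ds+2)(fh)$ to be nonnegative, and since multiplying by $A$ does not change its sign, the inequality $(\Ds+2)v\geq|g|$ is not established there. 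The Green's function approach sidesteps both issues because positivity and decay of $G$ replace any pointwise supersolution inequality.

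Your observation that one ultimately needs $|g|\lesssim\angles{x'}^{-\theta}\angles{d}^{-2s}$ rather than merely $|g|\lesssim\angles{x'}^{-\theta}$ with $g=0$ on $M_{\eps,R}$ is correct: in the application (Proposition~\ref{prop:outer}) it is $\norm[\theta]{g_o}$ that is bounded, and this norm carries the $\angles{d}^{2s}$ weight. With that hypothesis the convolution estimate $|u|\leq G*|g|$ gives the conclusion directly.
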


\begin{proof}
The decay in $x'$ follows from a maximum principle; that in the interface is seen from the Green's function for $\Ds+2$ which 
has a decay $\abs{x}^{-(n+2s)}$ at infinity \cite{Davila-DelPino-Wei:NLS}.
\end{proof}

\section{Fractional gluing system}\label{sec:gluing}

\subsection{Preliminary estimates}

We have the following

\begin{lem}[Some non-local estimates]\label{lem:nonlocalterms}
For $\phi_j\in{X_j}$, $j\in\cJ$, the following holds true.
\begin{enumerate}
\item (commutator at the near interface)
\[\abs{[\Dsyz,\bar{\eta}\bar{\zeta}]\bar{\phi}_i(y,z)}\leq{C}\norm[i,\mu,\sigma]{\phi_i}\angles{\ty_i}^{-\theta}R^n(R+\abs{(y,z)})^{-n-2s}.\]
As a result,
\[\sum_{i\in\cI}\abs{[\Dsyz,\zeta_i]\phi_i(x)}
    \leq{C}r^{-\theta}\sup_{i\in\cI}\norm[i,\mu,\sigma]{\phi_i}\left(R+\dist\left(x,\supp\sum_{i\in\cI}\zeta_i\right)\right)^{-2s}.\]
\item (commutator at the end)
\[\abs{[\Dsyz,\bar{\eta}_+\bar{\zeta}]\phi_+(y,z)}\leq{C}\norm[+,\mu,\sigma]{\phi_+}R_2^{-\theta}\angles{y}^{-\mu}\angles{z}^{-1-2s},\]
and similarly for $\phi_-$.
\item (linearization at $u^*$)
\[\begin{split}
&\quad\,\sum_{j\in\cJ}\abs{\zeta_j(f'(w)-f'(u^*))\phi_j}\\
&\leq{C}\sup_{j\in\cJ}\norm[j,\mu,\sigma]{\phi_j}\left(\sum_{i\in\cI}\zeta_{i}R^{\mu+\sigma}\angles{\ty_i}^{-\theta-\frac{4s}{2s+1}}+(\zeta_{+}+\zeta_{-})R_2^{-\theta}\angles{y}^{-\mu}\right).\\
\end{split}\]
\item (change of coordinates around the near interface)
\[\begin{split}
&\quad\,\sum_{i\in\cI}\abs{(\Dsx-\Dsyz)(\zeta_i\phi_i)(x)}\\
&\leq{C}R^{n+1+\mu+\sigma}\eps\norm[i,\mu,\sigma]{\bar{\phi}_i}
    \left(\sum_{i\in\cI}\zeta_i\angles{\ty_i}^{-\theta}
    +\eps^{\theta}\angles{\dist\left(x,\supp\sum_{i\in\cI}\zeta_i\right)}^{-2s}\right).
\end{split}\]


\item (change of coordinates around the end)
\[\begin{split}
\abs{(\Dsx-\Dsyz)(\zeta_+\phi_+)(x)}
&\leq{C}r^{-\frac{2(2s-\tau)}{2s+1}}\norm[+,\mu,\sigma]{\bar{\phi}_+}R_2^{-\theta}\angles{y}^{-\mu}\angles{z}^{-1-2s},
\end{split}\]
and similarly for $\phi_-$.
\end{enumerate}
In particular, all these terms are dominated by $S(u^*)$.
\end{lem}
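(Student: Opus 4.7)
Each of the five bounds reduces to a direct computation using the product rule for the fractional Laplacian
\[\Dsyz(uv)=u\Dsyz v+v\Dsyz u-(u,v)_s,\qquad (u,v)_s(x_0)=C_{n,s}\int_{\R^n}\dfrac{(u(x_0)-u(x))(v(x_0)-v(x))}{|x_0-x|^{n+2s}}\,dx,\]
combined with the decay estimates from Lemma \ref{lem:supersol} for localized or slowly decaying functions, and the Fermi-coordinate expansions from Corollaries \ref{cor:Fermiyzin} and \ref{cor:Fermiyzout}. Once all five estimates are established, the final ``in particular'' claim is verified by comparing each bound, term by term, to the main-order expression $c_H(z)H_{M_\eps}(\ty)$ appearing in Proposition \ref{prop:error}.

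\textbf{The commutator estimates (1)--(2).} I write $[\Dsyz,\bar\eta_j\bar\zeta]\bar\phi_j=\bar\phi_j\Dsyz(\bar\eta_j\bar\zeta)-(\bar\phi_j,\bar\eta_j\bar\zeta)_s$. For (1), the cut-off $\bar\eta_i\bar\zeta$ is supported in a set of diameter $\sim R$ centered at (the chart image of) $\ty_i$, so $\Dsyz(\bar\eta_i\bar\zeta)$ decays like $R^n(R+|(y,z)|)^{-n-2s}$ by the standard singular-integral estimate; the inner product term $(\bar\phi_i,\bar\eta_i\bar\zeta)_s$ is bounded by the same quantity after pulling out $\|\phi_i\|_{i,\mu,\sigma}\angles{\ty_i}^{-\theta}$ from the bound on $\bar\phi_i$. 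For (2) the cut-off is a product $\bar\eta_+(y)\bar\zeta(z)$ in which $\bar\eta_+$ only depends on the radial distance to the end $M_\eps^+$ and $\bar\zeta$ is localized in $z$ at scale $R$; estimating in the $z$-variable as in Lemma \ref{lem:supersol}(2) gives the factor $\angles{z}^{-1-2s}$, while the $\angles{y}^{-\mu}R_2^{-\theta}$ factor comes from the decay of $\bar\phi_+$ together with the prefactor in $\|\phi_+\|_{+,\mu,\sigma}$.

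\textbf{Linearization (3).} For $j=i\in\cI$, on $\supp\zeta_i$ the approximate solution $u^*$ differs from $w(z)$ only through cut-off tails coming from \eqref{eq:approxsol}, so $|f'(w)-f'(u^*)|$ is small and the point-wise bound on $\phi_i$ gives the claimed $R^{\mu+\sigma}\angles{\ty_i}^{-\theta-4s/(2s+1)}$. For $j=\pm$, on $\supp\zeta_\pm$ we have $u^*=w(z_+)+w(z_-)+1$, hence $|f'(w(z))-f'(u^*)|=O(|w(z_\mp)+1|)=O(F_\eps^{-2s})$, and multiplying by $|\bar\phi_\pm|\le R_2^{-\theta}\angles{y}^{-\mu}$ gives the $R_2^{-\theta}\angles{y}^{-\mu}$ term (the $F_\eps^{-2s}$ factor being absorbed into the smallness of $R_2^{-\theta}$ for $\theta$ small).

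\textbf{Coordinate change (4)--(5) and domination by $S(u^*)$.} Part (4) follows directly from Corollary \ref{cor:Fermiyzin} applied to $u_1=\zeta_i\phi_i$, using the curvature bound $\norm[0]{\kappa}\lesssim\eps$ and the $X_j$-decay to compute the resulting constant; the two output terms correspond to the contribution on the support of $\zeta_i$ (first summand) and the tail contribution at distance $\geq R_1$ (second summand). Part (5) is analogous via Corollary \ref{cor:Fermiyzout}, with $F_\eps^{-(2s-\tau)}$ replacing $R\norm[0]{\kappa}$. The main obstacle is the final bookkeeping: one has to check, using the parameter constraints $R\le|\log\eps|$, $\delta=R^{\mu+\sigma}\eps^{4s/(2s+1)-\theta}\ll1$, $2-2s<\sigma<2s-\mu$, and $\theta<4s/(2s+1)$, that each of the above right-hand sides is, after multiplication by the relevant $\zeta_j$-factor, pointwise dominated by the expression $c_H(z)H_{M_\eps}(\ty)$ from Proposition \ref{prop:error}; this is a routine but lengthy verification where the tightest comparison is for the commutator in the near-interface region, which is exactly why $R$ was taken comparable to $|\log\eps|$.
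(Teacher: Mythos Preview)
Your outline is correct and matches the paper's approach closely. The paper handles (1)--(2) by writing the commutator directly as the single integral
\[
[\Dsyz,\bar\eta\bar\zeta]\bar\phi(x_0)=C_{n,s}\int_{\R^n}\dfrac{(\bar\eta\bar\zeta(x_0)-\bar\eta\bar\zeta(x))\bar\phi(x)}{|x_0-x|^{n+2s}}\,dx
\]
rather than the product-rule splitting $\bar\phi\,\Dsyz(\bar\eta\bar\zeta)-(\bar\phi,\bar\eta\bar\zeta)_s$, and then performs the same three-zone case analysis (far from support, inside the support, on the transition annulus) that your decomposition would also require; the two routes are equivalent, but the direct formula avoids having to estimate the $(\cdot,\cdot)_s$ term separately. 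For (3) the paper does exactly what you describe (with the $F_\eps^{-2s}\sim\angles{\ty_i}^{-4s/(2s+1)}$ coming from $|u^*-w|=|w(z_\mp)+1|$), and for (4)--(5) it applies Corollaries \ref{cor:Fermiyzin}--\ref{cor:Fermiyzout} as you indicate, additionally splitting $\Dsyz(\bar\eta\bar\zeta\bar\phi_j)=\bar\eta\bar\zeta\,\Dsyz\bar\phi_j+[\Dsyz,\bar\eta\bar\zeta]\bar\phi_j$ and invoking the already-proved commutator bound from part (1) (resp.\ (2)) together with \eqref{eq:inner} to control $\Dsyz\bar\phi_j$ on the support; you should make that recursive use of (1)--(2) explicit. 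The final domination check is indeed routine bookkeeping with the stated parameter constraints.
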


\begin{proof}[Proof of Lemma \ref{lem:nonlocalterms}]
\begin{enumerate}
\item
\begin{enumerate}
\item
Since $\phi_i\in{X_i}$, we have for $\abs{(y_0,z_0)}\geq3R$,
\[\begin{split}
&\quad\,\abs{[\Dsyz,\bar{\eta}\bar{\zeta}]\bar{\phi}_i(y_0,z_0)}\\
&\leq{C}\norm[i,\mu,\sigma]{\phi_i}
    \abs{\int_{\abs{(y,z)}\leq2R}\dfrac{-\bar{\eta}(y)\bar{\zeta}(z)}{\abs{(y_0,z_0)}^{n+2s}}R^{\mu+\sigma}\angles{\ty_i}^{-\theta}\angles{y}^{-\mu}\angles{z}^{-\sigma}\,dydz}\\
&\leq{C}\norm[i,\mu,\sigma]{\phi_i}R^{\mu+\sigma}\angles{\ty_i}^{-\theta}\abs{(y_0,z_0)}^{-n-2s}
    \int_{\abs{(y,z)}\leq2R}\angles{y}^{-\mu}\angles{z}^{-\sigma}\,dydz\\
&\leq{C}\norm[i,\mu,\sigma]{\phi_i}R^{\mu+\sigma}(1+R^{1-\sigma})(1+R^{n-1-\mu})\angles{\ty_i}^{-\theta}\abs{(y_0,z_0)}^{-n-2s}\\
&\leq{C}R^{n}\abs{(y_0,z_0)}^{-n-2s}\norm[i,\mu,\sigma]{\phi_i}\angles{\ty_i}^{-\theta}
    \quad\textfor{\sigma<1},\,\mu<n-1.
\end{split}\]
\item For $\frac{R}{2}\leq\abs{(y_0,z_0)}\leq3R$,
\[\begin{split}
&\quad\,\abs{[\Dsyz,\bar{\eta}\bar{\zeta}]\bar{\phi}_i(y_0,z_0)}\\
&\leq{C}\int_{\abs{y_0-y}<\frac{R}{4}}\int_{\abs{z_0-z}<\frac{R}{4}}\dfrac{R^{-2}\left(\abs{y_0-y}^2+\abs{z_0-z}^2\right)}{\left(\abs{y_0-y}^2+\abs{z_0-z}^2\right)^{\frac{n+2s}{2}}}R^{\mu+\sigma}\norm[i,\mu,\sigma]{\phi_i}\angles{\ty_i}^{-\theta}\angles{y}^{-\mu}\angles{z}^{-\sigma}\,dydz\\
&\qquad+C\int_{\abs{y_0-y}>\frac{R}{4}}\int_{\abs{z_0-z}>\frac{R}{4}}\dfrac{1}{\left(\abs{y_0-y}^2+\abs{z_0-z}^2\right)^{\frac{n+2s}{2}}}R^{\mu+\sigma}\norm[i,\mu,\sigma]{\phi_i}\angles{\ty_i}^{-\theta}\angles{y}^{-\mu}\angles{z}^{-\sigma}\,dydz\\
&\leq{C}R^{-2s}\norm[i,\mu,\sigma]{\phi_i}\angles{\ty_i}^{-\theta}.
\end{split}\]
\item For $0\leq\abs{(y_0,z_0)}\leq\frac{R}{2}$,
\[\begin{split}
&\quad\,\abs{[\Dsyz,\bar{\eta}\bar{\zeta}]\bar{\phi}_i(y_0,z_0)}\\
&\leq{C}\norm[i,\mu,\sigma]{\phi_i}
    \int_{\abs{(y,z)}\geq{R}}
        \dfrac{1-\bar{\eta}(y)\bar{\zeta}(z)}{\abs{(y-y_0,z-z_0)}^{n+2s}}R^{\mu+\sigma}\angles{\ty_i}^{-\theta}\angles{y}^{-\mu}\angles{z}^{-\sigma}\,dydz\\
&\leq{C}R^{-2s}\norm[i,\mu,\sigma]{\phi_i}\angles{\ty_i}^{-\theta}.
\end{split}\]
\end{enumerate}

\item
We consider different cases according to the values of the cut-off functions $\bar{\eta}_+(y)$ and $\bar{\zeta}(z)$.
\begin{enumerate}
\item
When $\bar{\eta}_+(y_0)\bar{\zeta}(z_0)=0$ with $\abs{y_0}\geq2R_2$ and $\abs{z_0}\geq3R$,
\[\begin{split}
&\quad\,\abs{[\Dsyz,\bar{\eta}_+\bar{\zeta}]\phi_+(y_0,z_0)}\\
&\leq{C}\norm[+,\mu,\sigma]{\bar{\phi}_+}R_2^{-\theta}
    \int_{\abs{y}>R_2}\int_{\abs{z}<2R}
        \dfrac{\angles{y}^{-\mu}\angles{z}^{-\sigma}}
            {\abs{(y_0,z_0)-(y,z)}^{n+2s}}\,dydz\\
&\leq{C}\norm[+,\mu,\sigma]{\bar{\phi}_+}R_2^{-\theta}(1+R^{1-\sigma})
    \int_{\abs{y}>R_2}\dfrac{\angles{y}^{-\mu}}{\left(\abs{y_0-y}^2+\abs{z_0}^2\right)^{\frac{n+2s}{2}}}\,dy\\
&\leq{C}\norm[+,\mu,\sigma]{\bar{\phi}_+}R_2^{-\theta}(1+R^{1-\sigma})
    \Bigg(\int_{R_2<\abs{y}\leq\frac{\abs{y_0}}{2}}\dfrac{\angles{y}^{-\mu}}{\left(\abs{y_0}^2+\abs{z_0}^2\right)^{\frac{n+2s}{2}}}\,dy\\
&\hspace{5cm}+\int_{\abs{y}\geq\frac{\abs{y_0}}{2}}\dfrac{\angles{y_0}^{-\mu}}{\left(\abs{y_0-y}^2+\abs{z_0}^2\right)^{\frac{n+2s}{2}}}\,dy\Bigg)\\
&\leq{C}\norm[+,\mu,\sigma]{\bar{\phi}_+}R_2^{-\theta}(1+R^{1-\sigma})
    \left(\dfrac{\abs{y_0}^{n-1-\mu}}{\abs{(y_0,z_0)}^{n+2s}}+\dfrac{\angles{y_0}^{-\mu}}{\abs{z_0}^{1+2s}}\right)\\
&\leq{C}\norm[+,\mu,\sigma]{\bar{\phi}_+}R_2^{-\theta}(1+R^{1-\sigma})\angles{y_0}^{-\mu}\angles{z_0}^{-1-2s}.
\end{split}\]
\item
When $\bar{\eta}_+(y_0)\bar{\zeta}(z_0)=0$ with $\abs{y_0}\leq2R_2$ and $\abs{z_0}\geq3R$,
\[\begin{split}
&\quad\,\abs{[\Dsyz,\bar{\eta}_+\bar{\zeta}]\phi_+(y_0,z_0)}\\
&\leq{C}\norm[+,\mu,\sigma]{\bar{\phi}_+}R_2^{-\theta-\mu}(1+R^{1-\sigma})
    \int_{\abs{y}>R_2}\dfrac{dy}{\left(\abs{y_0-y}^2+\abs{z_0}^2\right)^{\frac{n+2s}{2}}}\\
&\leq{C}\norm[+,\mu,\sigma]{\bar{\phi}_+}R_2^{-\theta-\mu}(1+R^{1-\sigma})\abs{z_0}^{-1-2s}.
\end{split}\]
\item
When $\bar{\eta}_+(y_0)\bar{\zeta}(z_0)=0$ with $\abs{y_0}\leq{R_2}-2R$,
\[\begin{split}
&\quad\,\abs{[\Dsyz,\bar{\eta}_+\bar{\zeta}]\phi_+(y_0,z_0)}\\
&\leq{C}\norm[+,\mu,\sigma]{\bar{\phi}_+}R_2^{-\theta}
    \int_{\abs{y}>R_2}\int_{\abs{z}<2R}
        \dfrac{\angles{y}^{-\mu}\angles{z}^{-\sigma}}
            {\abs{(y_0,z_0)-(y,z)}^{n+2s}}\,dydz\\
&\leq{C}\norm[+,\mu,\sigma]{\bar{\phi}_+}R_2^{-\theta-\mu}
    \int_{\abs{z}<2R}\angles{z}^{-\sigma}\min\set{\dfrac{1}{\abs{z_0-z}^{1+2s}},\dfrac{1}{R^{1+2s}}}\,dz\\
&\leq{C}\norm[+,\mu,\sigma]{\bar{\phi}_+}R_2^{-\theta-\mu}(1+R^{1-\sigma})\angles{z_0}^{-1-2s}.
\end{split}\]
\item When $0\leq\bar{\eta}_+(y_0)\bar{\zeta}(z_0)\leq1$ with $\abs{y_0}\geq{R_2}-2R$ and $0\leq\abs{z_0}\leq{3R}$,
\[\begin{split}
&\quad\,\abs{[\Dsyz,\bar{\eta}_+\bar{\zeta}]\phi_+(y_0,z_0)}\\
&\leq{C}\int_{\abs{y_0-y}<R}\int_{\abs{z_0-z}<R}\dfrac{R^{-2}\left(\abs{y_0-y}^2+\abs{z_0-z}^2\right)}{\left(\abs{y_0-y}^2+\abs{z_0-z}^2\right)^{\frac{n+2s}{2}}}\norm[+,\mu,\sigma]{\bar{\phi}_+}R_2^{-\theta}\angles{y}^{-\mu}\angles{z}^{-\sigma}\,dydz\\
&\qquad+C\int_{\abs{y_0-y}>R}\int_{\abs{z_0-z}>R}\dfrac{1}{\left(\abs{y_0-y}^2+\abs{z_0-z}^2\right)^{\frac{n+2s}{2}}}\norm[+,\mu,\sigma]{\bar{\phi}_+}R_2^{-\theta}\angles{y}^{-\mu}\angles{z}^{-\sigma}\,dydz\\
&\leq{C}R^{-2s}\norm[+,\mu,\sigma]{\bar{\phi}_+}R_2^{-\theta}\angles{y_0}^{-\mu}
    +{C}\norm[+,\mu,\sigma]{\bar{\phi}_+}R_2^{-\theta}\int_{\abs{y_0-y}>R}\dfrac{\angles{y}^{-\mu}}{\abs{y_0-y}^{n-1+2s}}\,dy\\
&\leq{C}\norm[+,\mu,\sigma]{\bar{\phi}_+}R_2^{-\theta}\abs{y_0}^{-\mu}.
\end{split}\]

\end{enumerate}
\medskip

\item For the localized inner terms,
\[\begin{split}
\sum_{i\in\cI}\abs{\zeta_i(f'(w)-f'(u^*))\phi_i}
&\leq{C}\norm[i,\mu,\sigma]{\phi_i}\zeta_{i}F_\eps^{2s}R^{\mu+\sigma}\angles{\ty_i}^{-\theta}\\
&\leq{C}\norm[i,\mu,\sigma]{\phi_i}\sum_{i\in\cI}\zeta_{i}R^{\mu+\sigma}\angles{\ty_i}^{-\theta-\frac{4s}{2s+1}}.
\end{split}\]
The two terms at the ends are controlled by
\[\begin{split}
\abs{\zeta_{\pm}(f'(w)-f'(u^*))\phi_{\pm}}
&\leq{C}\norm[\pm,\mu,\sigma]{\phi_\pm}\zeta_{\pm}R^{\sigma}R_2^{-\left(\theta-\mu\right)}\angles{y}^{-\mu}.
\end{split}\]
By summing up we obtain the desired estimate.

\item
By using Corollary \ref{cor:Fermiyzin} and \eqref{eq:inner}, we have in the Fermi coordinates,
\[\begin{split}
&\quad\,\abs{(\Dsx-\Dsyz)(\zeta_i\phi_i)(x)}\\
&\leq{C}R\eps\abs{\Dsyz(\bar{\eta}\bar{\zeta}\bar{\phi}_i)(y,z)}
    +C\eps^{2s}\abs{(\bar{\eta}\bar{\zeta}\bar{\phi}_i)(y,z)}\\
&\leq{C}R\eps
    \left(\bar{\eta}(y)\bar{\zeta}(z)\abs{\Dsyz\bar{\phi}_i(y,z)}
        +\abs{[\Dsyz,\bar{\eta}\bar{\zeta}]\bar{\phi}_i(y,z)}\right)
    +C\eps^{2s}(\bar{\eta}\bar{\zeta}\bar{\phi}_i)(y,z)\\
&\leq{C}R\eps
    \left(\bar{\eta}(y)\bar{\zeta}(z)R^{\mu+\sigma}\norm[i,\mu,\sigma]{\bar{\phi}_i}\angles{\ty_i}^{-\theta}\angles{y}^{-\mu}\angles{z}^{-\sigma}
        +\norm[i,\mu,\sigma]{\bar{\phi}_i}\angles{\ty_i}^{-\theta}R^{n}(R+\abs{(y,z)})^{-n-2s}\right)\\
&\leq{C}R^{n+1+\mu+\sigma}\eps\norm[i,\mu,\sigma]{\bar{\phi}_i}\angles{\ty_i}^{-\theta}\left(\bar{\eta}(y)\bar{\zeta}(z)+(R+\abs{(y,z)})^{-n-2s}\right).
\end{split}\]
Going back to the $x$-coordinates and summing up over $i\in\cI$, we have
\[\begin{split}
&\quad\,\sum_{i\in\cI}\abs{(\Dsx-\Dsyz)(\zeta_i\phi_i)(x)}\\
&\leq{C}R^{n+1+\mu+\sigma}\eps\norm[i,\mu,\sigma]{\bar{\phi}_i}
    \left(\sum_{i\in\cI}\zeta_i\angles{\ty_i}^{-\theta}
    +\eps^{\theta}\angles{\dist\left(x,\supp\sum_{i\in\cI}\zeta_i\right)}^{-2s}\right).
\end{split}\]
%


\item
Similarly, using Corollary \ref{cor:Fermiyzout} and \eqref{eq:inner},
\[\begin{split}
&\quad\,\abs{(\Dsx-\Dsyz)(\zeta_+\phi_+)(x)}\\
&\leq{C}r^{-\frac{2(2s-\tau)}{2s+1}}\abs{\Dsyz(\bar{\eta}_+\bar{\zeta}\bar{\phi}_+)(y,z)}
    +Cr^{-\frac{4s\tau}{2s+1}}\abs{(\bar{\eta}_+\bar{\zeta}\bar{\phi}_+)(y,z)}\\
&\leq{C}r^{-\frac{2(2s-\tau)}{2s+1}}
    \left(\bar{\eta}_+(y)\bar{\zeta}(z)\abs{\Dsyz\bar{\phi}_+(y,z)}
        +\abs{[\Dsyz,\bar{\eta}_+\bar{\zeta}]\bar{\phi}_+(y,z)}\right)
    +Cr^{-\frac{4s\tau}{2s+1}}(\bar{\eta}_+\bar{\zeta}\bar{\phi}_+)(y,z)\\
&\leq{C}r^{-\frac{2(2s-\tau)}{2s+1}}
    \left(\bar{\eta}_+(y)\bar{\zeta}(z)\norm[+,\mu,\sigma]{\bar{\phi}_+}R_2^{-\theta}\angles{y}^{-\mu}
        +\norm[+,\mu,\sigma]{\bar{\phi}_+}R_2^{-\theta}\angles{y}^{-\mu}\angles{z}^{-1-2s}\right)\\
&\leq{C}r^{-\frac{2(2s-\tau)}{2s+1}}\norm[+,\mu,\sigma]{\bar{\phi}_+}R_2^{-\theta}\angles{y}^{-\mu}\angles{z}^{-1-2s}.
\end{split}\]


\end{enumerate}
\end{proof}

\subsection{The outer problem: Proof of Proposition \ref{prop:outer}}\label{sec:gluingouter}

We give a proof of Proposition \ref{prop:outer} and solve $\phi_o$ in terms of $(\phi_j)_{j\in\cJ}$.
\begin{proof}[Proof of Proposition \ref{prop:outer}]
We solve it by a fixed point argument. By Corollary \ref{cor:Fermidecay} and Lemma \ref{lem:nonlocalterms}, the right hand side $g_o=g_o(\phi_o)$ of \eqref{eq:outer} satisfies $g_o=0$ in $M_{\eps,R}$ and
\[\begin{split}
\norm[\theta]{g_o}
&\leq{C}\eps^{\theta}+\norm[\theta]{\tilde{\eta}_{o}N(\varphi)}+\norm[\theta]{\tilde{\eta}_o(2-f'(u^*))\phi_o}\\
&\leq{C}\eps^{\theta}+\norm[L^{\infty}(\R^n)]{\phi_o}\norm[\theta]{\phi_o}+CR^{-2s}\norm[\theta]{\phi_o},\\
\end{split}\]
so that by Lemma \ref{lem:Ds+2},
\[\norm[\theta]{(\Ds+2)^{-1}g_o}\leq\left(C+\tilde{C}^2\eps^{\theta}+\tilde{C}R^{-2s}\right)\eps^{\theta}\leq\tilde{C}\eps^{\theta}.\]
Next we check that for $\phi_o,\psi_o\in{X_o}$, $g_o(\phi_o)-g_o(\psi_o)=0$ in $M_{\eps,R}$ as well as
\[\begin{split}
\norm[\theta]{g_o(\phi_o)-g_o(\psi_o)}
&\leq\norm[\theta]{N\left(\phi_o+\sum_{j\in\cJ}\zeta_j\phi_j\right)-N\left(\psi_o+\sum_{j\in\cJ}\zeta_j\phi_j\right)}+\norm[\theta]{\tilde{\eta}_o(2-f'(u^*))(\phi_o-\psi_o)}\\
&\leq{C}(\eps^{\theta}+R^{-2s})\norm[\theta]{\phi_o-\psi_o}.
\end{split}\]
Hence
\[\norm[\theta]{(\Ds+2)^{-1}\left(g_o(\phi_o)-g_o(\psi_o)\right)}\leq{C}(\eps^{\theta}+R^{-2s})\norm[\theta]{\phi_o-\psi_o}.\]
By contraction mapping principle, there is a unique solution $\phi_o=\Phi_o((\phi_j)_{j\in\cJ})$. The Lipschitz continuity of $\Phi_o$ with respect to $(\phi_j)_{j\in\cJ}$ can be obtained by taking a difference.
\end{proof}

\subsection{The inner problem: Proof of Proposition \ref{prop:inner}}\label{sec:gluinginner}

Here we solve the inner problem for $(\phi_j)_{j\in\cJ}$, with the solution of the outer problem $\phi_o=\Phi_o((\phi_j)_{j\in\cJ})$ plugged in.

\begin{proof}[Proof of Proposition \ref{prop:inner}]
Let us denote the right hand side of \eqref{eq:inner} by $g_j$. Note that the norms can be estimated without the projection (up to a constant). Indeed, for any function $\bar{h}$ with $\norm[\mu,\sigma]{\bar{h}}<+\infty$,
\[\begin{split}
\norm[\mu,\sigma]{\left(\int_{-2R}^{2R}\bar{\zeta}(t){\bar{h}}(y,t)w'(t)\,dt\right)w'(z)}
&\leq{C}\norm[\mu,\sigma]{\bar{h}}\sup_{z\in\R}\,\angles{z}^{-1-2s+\sigma}\\
&\leq{C}\norm[\mu,\sigma]{\bar{h}}.
\end{split}\]
Then, keeping in mind that a barred function denotes the corresponding one in Fermi coordinates, we have
\[\begin{split}
\norm[i,\mu,\sigma]{\tilde{\eta}_iS(u^*)}
&\leq\angles{\ty_i}^{\theta}\sup_{\abs{y},\abs{z}\leq2R}\angles{y}^{\mu}\angles{z}^{\sigma}\cdot\angles{\ty_i}^{-\frac{4s}{2s+1}}\angles{z}^{-(2s-1)}\\
&\leq{C}R^{\mu}\angles{\ty_i}^{-\left(\frac{4s}{2s+1}-\theta\right)}\\
&\leq{C}\delta,
\end{split}\]
\[\begin{split}
\norm[i,\mu,\sigma]{\tilde{\eta}_i(2-f'(u^*))\Phi_o((\phi_j)_{j\in\cJ})}
&\leq\norm[i,\mu,\sigma]{\tilde{\eta}_i\Phi_o((\phi_j)_{j\in\cJ})}\\
&\leq\angles{\ty_i}^{\theta}\sup_{\abs{y},\abs{z}\leq2R}\angles{y}^{\mu}\angles{z}^{\sigma}\cdot\abs{\overline{\Phi_o((\phi_j)_{j\in\cJ})}(y,z)}\\
&\leq\angles{\ty_i}^{\theta}\sup_{\abs{y},\abs{z}\leq2R}\angles{y}^{\mu}\angles{z}^{\sigma}\cdot\angles{\ty_i}^{-\theta}\norm[\theta]{\overline{\Phi_o((\phi_j)_{j\in\cJ})}}\\
&\leq{C}R^{\mu+\sigma}\eps^{\theta}\sup_{j\in\cJ}\norm[j,\mu,\sigma]{\phi_j}\\
&\leq{C}R^{\mu+\sigma}\eps^{\theta}\tilde{C}\delta,
\end{split}\]
and
\[\begin{split}
&\quad\,\norm[i,\mu,\sigma]{\tilde{\eta}_iN\left(\Phi_o((\phi_j)_{j\in\cJ})+\sum_{j\in\cJ}\zeta_j\phi_j\right)}\\
&\leq{C}\angles{\ty_i}^{\theta}\sup_{\abs{y},\abs{z}\leq2R}\angles{y}^{\mu}\angles{z}^{\sigma}\abs{\overline{\Phi_o((\phi_j)_{j\in\cJ})}(y,z)+\sum_{\substack{j\in\cJ\\\supp\tilde{\eta}_i\cap\supp\zeta_j\neq\emptyset}}\bar{\eta}_j\bar{\zeta}\bar{\phi}_j(y,z)}^2\\
&\leq{C}R^{\mu+\sigma}\angles{\ty_i}^{\theta}\sup_{\abs{y},\abs{z}\leq2R}\left(\angles{\ty_i}^{-2\theta}\left(\sup_{j\in\cJ}\norm[j,\mu,\sigma]{\phi_j}\right)^2+\sum_{\substack{j\in\cJ\\\supp\tilde{\eta}_i\cap\supp\zeta_j\neq\emptyset}}\angles{\ty_j}^{-2\theta}\left(\sup_{j\in\cJ}\norm[j,\mu,\sigma]{\phi_j}\right)^2\right)\\
&\leq{C}R^{\mu+\sigma}\angles{\ty_i}^{-\theta}\tilde{C}\delta\sup_{j\in\cJ}\norm[j,\mu,\sigma]{\phi_j}\\
&\leq {C}R^{\mu+\sigma}\eps^{\theta}\tilde{C}^2\delta^2.
\end{split}\]
Using Lemma \ref{lem:nonlocalterms} and estimating as in the proof of Proposition \ref{prop:outer}, we have for all $i\in\cI$,
\[\begin{split}
\norm[i,\mu,\sigma]{g_i}
&\leq{C}\delta(1+R^{\mu+\sigma}\eps^{\theta}\tilde{C}+R^{\mu+\sigma}\eps^{\theta}\tilde{C}\delta+o(1)).\\
\end{split}\]
Now we estimate the functions $\phi_\pm$ at the ends. We have similarly
\[\begin{split}
\norm[+,\mu,\sigma]{\tilde{\eta}_+S(u^*)}
&\leq{C}R_2^{\theta}\sup_{y\geq{R_2},\,z\leq2R}\angles{y}^{\mu}\angles{z}^{\sigma}\angles{y}^{-\frac{4s}{2s+1}}\angles{z}^{-(2s-1)}\\
&\leq{C}R_2^{-\left(\frac{4s}{2s+1}-\mu-\theta\right)}\\
&\leq{C}\delta
    \qquad\textfor{R_2}\text{ chosen large enough},
\end{split}\]
\[\begin{split}
\norm[+,\mu,\sigma]{\tilde{\eta}_+(2-f'(u^*))\Phi_o((\phi_j)_{j\in\cJ})}
&\leq{C}R_2^{\theta}\sup_{y\geq{R_2},\,z\leq2R}\angles{y}^{\mu}\angles{z}^{\sigma}\abs{\overline{\Phi_o((\phi_j)_{j\in\cJ})}(y,z)}\\
&\leq{C}R^{\sigma}R_2^{\theta}\sup_{y\geq{R_2},\,z\leq2R}\angles{y}^{\mu}\cdot\angles{y}^{-\theta}\eps^{\theta}\sup_{j\in\cJ}\norm[j,\mu,\sigma]{\phi_j}\\
&\leq{C}R_2^{\mu}\eps^{\theta}\tilde{C}\delta\qquad(\text{since}\,\,\mu\leq\theta)\\
&\leq{C}\tilde{C}\eps^{\frac{\theta}{2}}\delta
    \qquad\textfor{\mu}\text{ chosen small enough},
\end{split}\]
and
\[\begin{split}
&\quad\,\norm[+,\mu,\sigma]{\tilde{\eta}_+N\left(\Phi_o((\phi_j)_{j\in\cJ})+\sum_{j\in\cJ}\zeta_j\phi_j\right)}\\
&\leq{C}R_2^{\theta}\sup_{y\geq{R_2},\,z\leq2R}\angles{y}^{\mu}\angles{z}^{\sigma}\abs{\overline{\Phi_o((\phi_j)_{j\in\cJ})}(y,z)+\sum_{\substack{j\in\cJ\\\supp\tilde{\eta}_+\cap\supp\zeta_j\neq\emptyset}}\bar{\eta}_j\bar{\zeta}\bar{\phi}_j(y,z)}^2\\
&\leq{C}R^{\sigma}\sup_{y\geq{R_2},\,z\leq2R}\angles{y}^{\mu}
    \left(\angles{y}^{-2\theta}\left(\sup_{j\in\cJ}\norm[j,\mu,\sigma]{\phi_j}\right)^2
    +\sum_{\substack{j\in\cJ\\\supp\tilde{\eta}_+\cap\supp\zeta_j\neq\emptyset}}\angles{\ty_j}^{-2\theta}\bar{\eta}_j\left(\sup_{j\in\cJ}\norm[j,\mu,\sigma]{\phi_j}\right)^2\right)\\
&\leq{C}R^{\sigma}\left(R_2^{-\theta}+\sum_{\substack{j\in\cJ\\\supp\tilde{\eta}_+\cap\supp\zeta_j\neq\emptyset}}\angles{\ty_j}^{-\theta}\right)
    \left(\sup_{j\in\cJ}\norm[j,\mu,\sigma]{\phi_j}\right)^2\\
&\leq{C}R^{\sigma}\eps^{\theta}\tilde{C}\delta\left(\sup_{j\in\cJ}\norm[j,\mu,\sigma]{\phi_j}\right)\\
&\leq{C}R^{\sigma}\eps^{\theta}\tilde{C}^2\delta^2.
\end{split}\]
Putting all these estimates together with the non-local terms,
using the linear theory (Proposition \ref{prop:linearexist} and Lemma \ref{lem:aprioriyz}), we deduce 
\[\begin{split}
\sup_{j\in\cJ}\norm[j,\mu,\sigma]{L^{-1}g_j}
&\leq{C}\sup_{j\in\cJ}\norm[j,\mu,\sigma]{g_j}\\
&\leq{C}\delta(1+o(1))\\
&\leq\tilde{C}\delta.
\end{split}\]

Now it will be suffice to check the Lipschitz continuity with respect to $\phi_j\in{X_j}$. Suppose $\phi_j,\psi_j\in{X_j}$. Using \eqref{eq:Phio}, we have for instance
\[\begin{split}
&\quad\,\angles{\ty_i}^{\theta}\sup_{\abs{y},\abs{z}\leq2R}\angles{y}^{\mu}\angles{z}^{\sigma}
    \vast(\abs{\overline{\Phi_o((\phi_j)_{j\in\cJ})}(y,z)-\overline{\Phi_o((\psi_j)_{j\in\cJ})}(y,z)}\\
&\qquad\qquad\qquad\qquad\qquad+N\left(\Phi_o((\phi_j)_{j\in\cJ})+\sum_{j\in\cJ}\zeta_j\phi_j\right)-N\left(\Phi_o((\psi_j)_{j\in\cJ})+\sum_{j\in\cJ}\zeta_j\psi_j\right)\vast)\\
&\leq{C}R^{\mu+\sigma}\sup_{\abs{y},\abs{z}\leq2R}
    \vast((1+\delta)\norm[\theta]{\overline{\Phi_o((\phi_j)_{j\in\cJ})}(y,z)-\overline{\Phi_o((\psi_j)_{j\in\cJ})}(y,z)}\\
&\qquad\qquad\qquad\qquad\qquad+\delta\angles{\ty_i}^{\theta}\sum_{\substack{j\in\cJ\\\supp\tilde{\eta}_i\cap\supp\zeta_j\neq\emptyset}}\bar{\eta}_j\bar{\zeta}\abs{\bar{\phi_j}-\bar{\psi}_j}(y,z)\vast)\\
&\leq{C}R^{\mu+\sigma}\delta\sup_{j\in\cJ}\norm[j,\mu,\sigma]{\phi_j-\psi_j},
\end{split}\]
and
\[\begin{split}
&\quad\,R_2^{\theta}\sup_{\abs{y}\geq{R_2},\,\abs{z}\leq2R}\angles{y}^{\mu}\angles{z}^{\sigma}
    \vast(\abs{\overline{\Phi_o((\phi_j)_{j\in\cJ})}(y,z)-\overline{\Phi_o((\psi_j)_{j\in\cJ})}(y,z)}\\
&\qquad\qquad\qquad\qquad\qquad+N\left(\Phi_o((\phi_j)_{j\in\cJ})+\sum_{j\in\cJ}\zeta_j\phi_j\right)-N\left(\Phi_o((\psi_j)_{j\in\cJ})+\sum_{j\in\cJ}\zeta_j\psi_j\right)\vast)\\
&\leq{C}R^{\sigma}R_2^{\theta}\sup_{\abs{y}\geq{R_2},\,\abs{z}\leq2R}
    \vast((1+\delta)\angles{y}^{\mu-\theta}\norm[\theta]{\overline{\Phi_o((\phi_j)_{j\in\cJ})}(y,z)-\overline{\Phi_o((\psi_j)_{j\in\cJ})}(y,z)}\\
&\qquad\qquad\qquad\qquad\qquad+\delta\angles{y}^{\mu}\sum_{\substack{j\in\cJ\\\supp\tilde{\eta}_i\cap\supp\zeta_j\neq\emptyset}}\bar{\eta}_j\bar{\zeta}\abs{\bar{\phi_j}-\bar{\psi}_j}(y,z)\vast)\\
&\leq{C}R^{\sigma}R_2^{\mu}\delta\sup_{j\in\cJ}\norm[j,\mu,\sigma]{\phi_j-\psi_j}.
\end{split}\]
Therefore,
\[\sup_{j\in\cJ}\norm[j,\mu,\sigma]{L^{-1}g_j((\phi_j)_{j\in\cJ})-L^{-1}g_j((\psi_j)_{j\in\cJ})}\leq{o}(1)\sup_{j\in\cJ}\norm[j,\mu,\sigma]{\phi_j-\psi_j}\]
and $(\phi_k)_{k\in\cJ}\mapsto{L}^{-1}g_j((\phi_k)_{k\in\cJ})$ defines a contraction mapping on the product space endowed with the supremum norm for suitably chosen parameters $R,R_2$ large and $\eps,\mu$ small. This concludes the proof.
\end{proof}

\section{The reduced equation}\label{sec:reduced}

\subsection{Form of the equation: Proof of Proposition \ref{prop:reduced}}\label{sec:reducedform}

\begin{proof}[Proof of Proposition \ref{prop:reduced}]
Recalling Proposition \ref{prop:error}, we have, in the near and intermediate regions $r\in\left[\frac{1}{\eps},\frac{4\bar{R}}{\eps}\right]$,
\[\Pi{S}(u^*)(r)=\bar{C}H_{M_\eps}(r)+O(\eps^{2s}),\]
where
$$\bar{C}=\int_{-2R}^{2R}c_H(z)\zeta(z)w'(z)\,dz.$$
For the far region $r\geq\frac{4\bar{R}}{\eps}$, 
let us assume that $x_n>0$, to fix the idea. Denote by $\Pi_\pm$ the projections onto the kernels $w_\pm'(z)$ of the upper and lower leaves respectively, where $w_\pm(z)=w(z_\pm)$. Then $z_-=-2F_\eps(r)(1+o(1))-z_+$ and so from the asymptotic behavior $w(z)\sim_{z\to+\infty}1-\frac{c_w}{z^{2s}}$, we have
\[\begin{split}
&\quad\,\Pi_{+}3(w(z_+)+w(z_-))(1+w(z_+))(1+w(z_-))(r)\\
&=\int_{-2R}^{2R}3(w(z)+w(-2F_\eps(r)(1+o(1))-z))(1+w(z))(1+w(-2F_\eps(r)(1+o(1))-z))\zeta(z)w'(z)\,dz\\
&=-\dfrac{\bar{C}_\pm}{F_\eps^{2s}(r)}(1+o(1)),
\end{split}\]
where
\[\bar{C}_\pm=\int_{-2R}^{2R}3c_w(1-w(z)^2)\zeta(z)w'(z)\,dz.\]
Similarly this is also true for the projection onto $w_-'(z)$ with the same coefficient $\bar{C}_\pm(r)$,
\[\Pi_{-}3(w(z_+)+w(z_-))(1+w(z_+))(1+w(z_-))(r)=-\dfrac{\bar{C}_\pm(r)}{F_\eps^{2s}(r)}(1+o(1)).\]
The other projections are estimated as follows.
\[\begin{split}
\Pi_{+}c_H(z_+)H_{M_\eps}(\ty_+)
&=\int_{-2R}^{2R}c_H(z)\zeta(z)w'(z)\,dz\cdot{H}_{M_\eps}(\ty_+)=\bar{C}H_{M_\eps}(\ty_+),\\
\Pi_{+}c_H(z_-)H_{M_\eps}(\ty_-)(r)
&=\int_{-2R}^{2R}c_H(2F_\eps(r)(1+o(1))-z)\zeta(z)w'(z)\,dz\cdot{H}_{M_\eps}(\ty_-)\\
&=O\left(F_\eps^{-(2s-1)}\cdot{F}_\eps^{-2s}\right)\\
&=O\left(F_\eps^{-(4s-1)}\right),
\end{split}\]
\[\begin{split}
\Pi_{-}c_H(z_-)H_{M_\eps}(\ty_-)
&=\bar{C}H_{M_\eps}(\ty_-),\\
\Pi_{-}c_H(z_+)H_{M_\eps}(\ty_+)
&=O\left(F_\eps^{-(4s-1)}\right).
\end{split}\]
We conclude that for $r\geq\frac{4\bar{R}}{\eps}$,
\[\Pi_{\pm}S(u^*)(r)=\bar{C}H_{M_\eps}(\ty)-\dfrac{\bar{C}_{\pm}(r)}{F_\eps^{2s}(r)}(1+o(1)).\]
Taking into account the quadratically small term and the solution of the outer problem, the reduced equation reads
\[\begin{cases}
\bar{C}H[F_\eps](r)=O(\eps^{2s})&\textfor\dfrac{1}{\eps}\leq{r}\leq\dfrac{4\bar{R}}{\eps},\\
\bar{C}H[F_\eps](r)=\dfrac{\bar{C}_\pm}{F_\eps^{2s}(r)}(1+o(1))&\textfor{r}\geq\dfrac{4\bar{R}}{\eps}.
\end{cases}\]
By a scaling $F_\eps(r)=\eps^{-1}F(\eps{r})$, it suffices to solve
\[\begin{cases}
\dfrac{1}{r}\left(\dfrac{rF'(r)}{\sqrtt{F'(r)}}\right)'=O(\eps^{2s-1})&\textfor{1}\leq{r}\leq{4\bar{R}},\\
\dfrac{1}{r}\left(\dfrac{rF'(r)}{\sqrtt{F'(r)}}\right)'=\dfrac{\bar{C}_0\eps^{2s-1}}{F^{2s}(r)}(1+o(1))&\textfor{r}\geq{4\bar{R}}.
\end{cases}\]
For large enough $r$ one may approximate the mean curvature by $\Delta{F}=\frac{1}{r}(rF')'$. Hence, we arrive at
\[\begin{cases}
\dfrac{1}{r}\left(\dfrac{rF'(r)}{\sqrtt{F'(r)}}\right)'=O(\eps^{2s-1})&\textfor{1}\leq{r}\leq{4\bar{R}},\\
F''(r)+\dfrac{F'(r)}{r}=\dfrac{\bar{C}_0\eps^{2s-1}}{F^{2s}(r)}(1+o(1))&\textfor{r}\geq{4\bar{R}}.
\end{cases}\]
Then the inverse $G$ of $F$ is introduced to deal with the singularity at $r=1$ in the usual coordinates. Finally, the Lipschitz dependence of the error follows directly from the previously involved computations.
\end{proof}

\subsection{Initial approximation}\label{sec:reducedF0}

In this section we study an ODE which is similar to the one in \cite{Davila-DelPino-Wei:minimal}. The reduced equation for $F_\eps:[\eps^{-1},+\infty)\to[0,+\infty)$ can be approximated by
\[F_\eps''(r)+\dfrac{F_\eps'(r)}{r}=\dfrac{1}{F_\eps^{2s}(r)},\quad\textforall{r}\,\,\text{large}.\]
Under the scaling $F_\eps(r)=\eps^{-1}F(\eps{r})$, the equation for $F:[1,+\infty)\to[0,+\infty)$ is
\[F''(r)+\dfrac{F'(r)}{r}=\dfrac{\eps^{2s-1}}{F^{2s}(r)},\quad\textforall{r}\,\,\text{large}.\]
For $r$ small, we approximate $F$ by the catenoid. More precisely, let $f_C(r)=\log(r+\sqrt{r^2-1})$, $r=\abs{x'}\geq1$, $r_\eps=\left(\frac{\abs{\log\eps}}{\eps}\right)^{\frac{2s-1}{2}}$, and consider the Cauchy problem
\[\begin{cases}
f_\eps''+\dfrac{f_\eps'}{r}=\dfrac{\eps^{2s-1}}{f_\eps^{2s}}&\textfor{r}>r_\eps,\\
f_\eps(r_\eps)=f_C\left(r_\eps\right)
    =\dfrac{2s-1}{2}(\abs{\log\eps}+\log\abs{\log\eps})+\log2+O\left(r_\eps^{-2}\right),\\
f_\eps'\left(r_\eps\right)=f_C\left(r_\eps\right)
    =r_\eps^{-1}\left(1+O\left(r_\eps^{-2}\right)\right).
\end{cases}\]
Then an approximation $F_0$ to $F$ can be defined by
$$F_0(r)=f_C(r)+\chi\left(r-r_\eps\right)(f_\eps(r)-f_C(r)),\quad{r\geq1},$$
where $\chi:\R\to[0,1]$ is a smooth cut-off function with
\begin{equation}\label{eq:chi}
\chi=0\quad\texton(-\infty,0]\quad\textand\quad\chi=1\quad\texton[1,+\infty).
\end{equation}
Note that $f_\eps'(r)\geq0$ for all $r\geq{r_\eps}$.
\begin{lem}[Estimates near initial value]\label{lem:ODEnear}
For 
$r_\eps\leq{r}\leq\abs{\log\eps}r_\eps$,
we have
\[\begin{split}
\dfrac{1}{2}\abs{\log\eps}\leq{f}_\eps(r)&\leq{C}\abs{\log\eps},\\
f_\eps'(r)&\leq{C}r_\eps^{-1},\\
\abs{f_\eps''(r)}&\leq\dfrac{1}{r^2}+\dfrac{C}{\abs{\log\eps}r_\eps^2}.
\end{split}\]
In fact the last inequality holds for all $r\geq{r_\eps}$. 
\end{lem}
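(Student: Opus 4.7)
The plan is to rewrite the ODE in divergence form as $(r f_\eps')' = r\eps^{2s-1}/f_\eps^{2s}$, whose integration from $r_\eps$ to $r$ gives the identity
\begin{equation}\label{eq:plan-identity}
r f_\eps'(r) = r_\eps f_\eps'(r_\eps) + \eps^{2s-1} \int_{r_\eps}^r \frac{\rho}{f_\eps^{2s}(\rho)}\,d\rho,
\end{equation}
with strictly positive right-hand side since $r_\eps f_\eps'(r_\eps) = 1 + O(r_\eps^{-2})$. Hence $f_\eps' > 0$ throughout and $f_\eps$ is monotone increasing, which immediately yields the lower bound $f_\eps(r) \geq f_\eps(r_\eps) \geq c|\log\eps|$ for a fixed constant $c > 0$ depending on $s$.

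For the upper bounds I would run a continuation argument: on the maximal subinterval $[r_\eps, R^*] \subset [r_\eps, |\log\eps| r_\eps]$ on which $f_\eps \leq 2 C_1 |\log\eps|$ for a large fixed $C_1$, the central arithmetic is the identity $\eps^{2s-1} r_\eps^2 = |\log\eps|^{2s-1}$, which makes the integral in \eqref{eq:plan-identity} satisfy
\[
\eps^{2s-1} \int_{r_\eps}^r \frac{\rho}{f_\eps^{2s}(\rho)}\,d\rho \leq \frac{C\,\eps^{2s-1} r^2}{|\log\eps|^{2s}} = \frac{C r^2}{r_\eps^2 |\log\eps|} \leq C|\log\eps|.
\]
Writing $r = t r_\eps$ with $t \in [1, |\log\eps|]$, \eqref{eq:plan-identity} then yields $f_\eps'(r) = 1/r + O(r_\eps^{-2}/r) + O(t/(|\log\eps|r_\eps))$, and both remainders are $O(r_\eps^{-1})$ since $1/t \leq 1$ and $t/|\log\eps| \leq 1$. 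Integrating gives $f_\eps(r) \leq f_\eps(r_\eps) + C(r-r_\eps)/r_\eps \leq C'|\log\eps|$, closing the bootstrap and extending $R^* = |\log\eps|r_\eps$.

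For the $f_\eps''$ estimate, I would substitute back into the ODE: $f_\eps''(r) = -f_\eps'(r)/r + \eps^{2s-1}/f_\eps^{2s}(r)$. The decomposition of $f_\eps'$ from the previous step gives $f_\eps'(r)/r = 1/r^2 + O(1/(|\log\eps| r_\eps^2))$, while $\eps^{2s-1}/f_\eps^{2s}(r) = O(1/(|\log\eps| r_\eps^2))$ follows directly from $f_\eps \geq c|\log\eps|$. Combining these yields $|f_\eps''(r)| \leq 1/r^2 + C/(|\log\eps| r_\eps^2)$. Crucially, this final step only invokes the lower bound on $f_\eps$ and monotonicity, both of which persist for all $r \geq r_\eps$, which justifies the remark that the $f_\eps''$ bound extends beyond the stated interval. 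The main delicate point — and the reason the interval is capped at $|\log\eps|r_\eps$ — is the calibration of the endpoint: this is exactly the threshold at which the nonlinear integral in \eqref{eq:plan-identity} first becomes comparable in size to the initial boundary term, so that $f_\eps'$ stays of order $r_\eps^{-1}$ only barely; beyond this range a new asymptotic regime takes over, corresponding to the sublinear growth $F(r)\sim r^{2/(2s+1)}$ featuring in the main theorem.
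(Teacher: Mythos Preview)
Your argument is correct and follows essentially the same route as the paper: integrate $(rf_\eps')'=r\eps^{2s-1}/f_\eps^{2s}$ once, use monotonicity ($f_\eps\geq f_\eps(r_\eps)\sim c|\log\eps|$) to bound the integral, read off the $f_\eps'$ bound, integrate for the upper bound on $f_\eps$, and plug back into the ODE for $f_\eps''$; the paper merely packages this via the rescaling $f_\eps(r)=|\log\eps|\,\tilde f_\eps(r/r_\eps)$ before doing the same computation. Note that your continuation/bootstrap framing is actually unnecessary---the integral estimate uses only the \emph{lower} bound $f_\eps\geq c|\log\eps|$, which holds for all $r\geq r_\eps$ by monotonicity alone, so no a priori upper-bound hypothesis is needed to close the argument.
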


\begin{proof}
It is more convenient to write
\[f_\eps(r)=\abs{\log\eps}\tilde{f}_\eps\left(r_\eps^{-1}r\right).\]
Then $\tilde{f}_\eps$ satisfies
\[\begin{cases}
\tilde{f}_\eps''+\dfrac{\tilde{f}_\eps'}{r}=\dfrac{1}{\abs{\log\eps}\tilde{f}_\eps^{2s}},\quad\textfor{r>1},\\
\tilde{f}_\eps(1)=\dfrac{2s-1}{2}+\dfrac{2s-1}{2}\dfrac{\log\abs{\log\eps}}{\abs{\log\eps}}+\dfrac{\log2}{\abs{\log\eps}}+O\left(\dfrac{\eps^{2s-1}}{\abs{\log\eps}^{2s}}\right),\\
\tilde{f}_\eps'(1)=\dfrac{1}{\abs{\log\eps}}+O\left(\dfrac{\eps^{2s-1}}{\abs{\log\eps}^{2s}}\right).
\end{cases}\]
To obtain a bound for the first order derivative, we integrate once to obtain
\[r\tilde{f}_\eps'(r)-\tilde{f}_\eps'(1)=\dfrac{1}{\abs{\log\eps}^2}\int_{1}^{r}\dfrac{\tilde{r}}{\tilde{f}_\eps(\tilde{r})^{2s}}\,d\tilde{r}\quad\textfor{r\geq1}.\]
By the monotonicity of $f_\eps$, hence $\tilde{f}_\eps$, we have
\[\begin{split}
\tilde{f}_\eps'(r)
&\leq\dfrac{1}{r}\left(\tilde{f}_\eps'(1)+\dfrac{1}{2\abs{\log\eps}^2\tilde{f}_\eps(1)^{2s}}r^2\right)\\
&\leq\dfrac{1}{r\abs{\log\eps}}+\dfrac{Cr}{\abs{\log\eps}^2}
\end{split}\]
for $r\geq1$. In particular,
\[\tilde{f}_\eps'(r)\leq\dfrac{C}{\abs{\log\eps}}\quad\textfor{1}\leq{r}\leq\abs{\log\eps}.\]
Note that this also implies
\[\tilde{f}_\eps(r)\leq{C}\quad\textfor{1}\leq{r}\leq\abs{\log\eps}.\]
From the equation we obtain an estimate for $\tilde{f}_\eps''$:
\[\begin{split}
\abs{\tilde{f}_\eps''(r)}
&\leq\dfrac{1}{r}\tilde{f}_\eps'(r)+\dfrac{1}{\abs{\log\eps}^2\tilde{f}_\eps^{2s}}\\
&\leq\dfrac{1}{r^2\abs{\log\eps}}+\dfrac{C}{\abs{\log\eps}^2},
\end{split}\]
for all $r\geq1$.
\end{proof}

To study the behavior of $f_\eps(r)$ near infinity, we write
\[f_\eps(r)=\abs{\log\eps}g_\eps\left(\dfrac{r}{\abs{\log\eps}r_\eps}\right).\]
Then $g_\eps(r)$ satisfies
\begin{equation}\label{eq:geps}\begin{cases}
g_\eps''+\dfrac{g_\eps'}{r}=\dfrac{1}{g_\eps^{2s}},\quad\textfor{r}\geq\dfrac{1}{\abs{\log\eps}},\\
g_\eps\left(\dfrac{1}{\abs{\log\eps}}\right)=\dfrac{2s-1}{2}+\dfrac{2s-1}{2}\dfrac{\log\abs{\log\eps}}{\abs{\log\eps}}+\dfrac{\log2}{\abs{\log\eps}}+O\left(\dfrac{\eps^{2s-1}}{\abs{\log\eps}^{2s}}\right),\\
g_\eps'\left(\dfrac{1}{\abs{\log\eps}}\right)=1+O\left(\dfrac{\eps^{2s-1}}{\abs{\log\eps}^{2s}}\right).
\end{cases}\end{equation}

\begin{lem}[Long-term behavior]\label{lem:ODEfar}
For any fixed ${\delta_0}>0$, there exists $C>0$ such that for all $r\geq{\delta_0}$,
\[\begin{split}
\abs{g_\eps(r)-r^{\frac{2}{2s+1}}}&\leq{C}r^{-\frac{2s-1}{2s+1}},\\
\abs{g_\eps'(r)-\dfrac{2}{2s+1}r^{-\frac{2s-1}{2s+1}}}&\leq{C}r^{-\frac{4s}{2s+1}},\\
\abs{g_\eps''(r)}&\leq{C}r^{-\frac{4s}{2s+1}}.
\end{split}\]

\end{lem}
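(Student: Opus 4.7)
The plan is to reduce \eqref{eq:geps} to an autonomous dynamical system via an Emden--Fowler-type substitution, identify its unique positive equilibrium with the self-similar power profile $r^{2/(2s+1)}$, and then obtain quantitative convergence of the full orbit to this equilibrium. Setting $\alpha = 2/(2s+1)$, $t = \log r$ and $u(t) = r^{-\alpha} g_\eps(r)$, a direct computation transforms \eqref{eq:geps} into
\begin{equation*}
\ddot{u} + 2\alpha \dot{u} + \alpha^2 u = u^{-2s},
\end{equation*}
whose unique positive equilibrium $u_*$ satisfies $\alpha^2 u_* = u_*^{-2s}$, i.e.\ $u_*^{2s+1} = \alpha^{-2}$. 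This equilibrium corresponds precisely to the self-similar solution $g_*(r) = u_* r^{\alpha}$ of the original ODE, and is the natural candidate attractor.

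Next I would prove global-in-$t$ control for $t \geq \log \delta_0$ using the Lyapunov functional
\begin{equation*}
E(u,\dot u) = \tfrac{1}{2}\dot u^{2} + V(u), \qquad V(u) = \tfrac{\alpha^{2}}{2} u^{2} + \tfrac{u^{1-2s}}{2s-1},
\end{equation*}
which satisfies $\dot E = -2\alpha\, \dot u^{2} \leq 0$. Since $V$ is strictly convex with $V(u)\to +\infty$ as $u\to 0^+$ or $u\to \infty$ and attains its minimum exactly at $u_*$, the monotonicity of $E$ will force $u$ to stay bounded and bounded away from zero on $[\log\delta_0,\infty)$. The initial data, controlled by Lemma \ref{lem:ODEnear} (the derivative and pointwise bounds on $f_\eps$ pass through the scaling), give an initial energy that is uniformly bounded for $r=\delta_0$ in $\eps$. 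LaSalle's invariance principle will then yield $u(t)\to u_*$, $\dot u(t)\to 0$.

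The convergence rate comes from linearizing at $u_*$: writing $v = u - u_*$ gives $\ddot v + 2\alpha \dot v + \alpha^2(1+2s) v = O(v^{2})$, whose characteristic roots are $-\alpha \pm i\alpha\sqrt{2s}$. A standard fixed-point argument in the weighted space of functions decaying like $e^{-\alpha t}$ then produces $|v(t)| + |\dot v(t)| \leq C e^{-\alpha t}$, after which a bootstrap back into the integrated form $(r g_\eps')' = r / g_\eps^{2s}$ sharpens the asymptotics to the quantitative bounds stated in the lemma. The second derivative estimate follows directly from the equation and the bounds on $g_\eps$ and $g_\eps'$.

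The main obstacle is ensuring that the bounds are genuinely uniform as $\eps \to 0$, since the initial point $t_0 = \log(1/|\log\eps|) \to -\infty$ and the initial Emden--Fowler amplitude $u(t_0)$ actually grows like $|\log\eps|^{\alpha}$. I would handle this by running the energy argument not from $t_0$ but from $t = \log\delta_0$ (which is $O(1)$), using Lemma \ref{lem:ODEnear} to certify that $u(\log\delta_0)$ and $\dot u(\log\delta_0)$ are already uniformly bounded by the time $r$ reaches $\delta_0$. From that point onward, the autonomous analysis is $\eps$-free and the stated estimates follow on $[\delta_0,\infty)$.
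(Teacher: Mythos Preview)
Your proposal is correct and follows essentially the same route as the paper: the Emden--Fowler substitution, the dissipative Lyapunov functional with $\dot E=-2\alpha\dot u^{2}$, the use of Lemma~\ref{lem:ODEnear} to obtain $\eps$-uniform initial bounds at a fixed positive $r$, and the linearization at the equilibrium with characteristic roots $-\alpha(1\pm i\sqrt{2s})$ all appear in the paper's argument (after an additional rescaling that normalizes the equilibrium to $1$). The only tactical difference is in extracting the sharp exponential rate: the paper combines a second Hamiltonian for the linearized flow with a $C^1$ conjugacy near the hyperbolic fixed point, whereas you invoke LaSalle's principle followed by a fixed-point argument in an exponentially weighted space; these are equivalent standard devices.
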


\begin{proof}
Consider the change of variable of Emden--Fowler type,
$$g_\eps(r)=r^{\frac{2}{2s+1}}\tilde{h}_\eps(t),\quad{t}=\log{r}\geq-\log\abs{\log\eps}.$$
Then $\tilde{h}_\eps(t)>0$ solves
\[\tilde{h}_\eps''+2\dfrac{2}{2s+1}\tilde{h}_\eps'+\left(\dfrac{2}{2s+1}\right)^2\tilde{h}_\eps=\dfrac{1}{\tilde{h}_\eps^{2s}}\quad\textfor{t}\geq-\log\abs{\log\eps}.\]
The function $h_\eps$ defined by $\tilde{h}_\eps(t)=\left(\frac{2s+1}{2}\right)^{\frac{2}{2s+1}}{h}_{\eps}\left(\frac{2}{2s+1}t\right)$ satisfies
\begin{equation}\label{eq:heps}
h_\eps''+2h_\eps'+h_\eps=\dfrac{1}{h_\eps^{2s}}\quad\textfor{t}\geq-\dfrac{2s+1}{2}\log\abs{\log\eps}.
\end{equation}

We will first prove a uniform bound for $h_\eps$ together with its derivative using a Hamiltonian
\[G_\eps(t)=\dfrac{1}{2}(h_\eps')^2+\dfrac12\left(h_\eps^2-1\right)+\dfrac{1}{2s-1}\left(\dfrac{1}{h_\eps^{(2s-1)}}-1\right),\]
which satisfies
\begin{equation}\label{eq:Gdec}
G_\eps'(t)=-2(h_\eps')^2\leq0.
\end{equation}
By Lemma \ref{lem:ODEnear}, we have
\[\begin{split}
h_\eps(0)
=O(\tilde{h}_\eps(0))
&=O(g_\eps(1))
=O(1),\\
h_\eps'(0)=O(\tilde{h}_\eps'(0))
&=O\left(g_\eps'(1)-\dfrac{2}{2s+1}g_\eps(1)\right)
=O(1).
\end{split}\]
Therefore, $G_\eps(0)=O(1)$ as $\eps\to0$ and by \eqref{eq:Gdec}, $G_\eps(t)\leq{C}$ for all $t\geq0$ and $\eps>0$ small. This implies that for some uniform constant $C_1>0$,
\begin{equation}\label{eq:hbound}
0<C_1^{-1}\leq{h_\eps(t)}\leq{C_1}<+\infty\quad\textand\quad\abs{h_\eps'(t)}\leq{C_1},\quad\textforall{t\geq0}.
\end{equation}
In fact, \eqref{eq:Gdec} implies
\[\int_{0}^{t}h_\eps'(\tilde{t})^2\,d\tilde{t}=2G_\eps(0)-2G_\eps(t)\leq2G_\eps(0)\leq{C},\]
with $C$ independent of $\eps$ and $t$, hence
\[\int_{0}^{\infty}h_\eps'(\tilde{t})^2\,d\tilde{t}\leq{C},\]
uniformly in small $\eps>0$. In particular, $\abs{h_\eps'(t)}\to0$ as $t\to\infty$. We claim that the convergence is uniform and exponential. Indeed, let us define the Hamiltonian
\[{G}_{1,\eps}=\dfrac{1}{2}(h_\eps'')^2+\dfrac{1}{2}(h_\eps')^2\left(1+\dfrac{2s}{h_\eps^{2s+1}}\right)\]
for the linearized equation
\[h_\eps'''+2h_\eps''+\left(1+\dfrac{2s}{h_\eps^{2s+1}}\right)h_\eps'=0.\]
We have
\[G_{1,\eps}'=-2(h_\eps'')^2-s(2s+1)\dfrac{h_\eps'^3}{h_\eps^{2s+2}}.\]
By the uniform bounds in \eqref{eq:hbound}, if we choose $2C_2=s(2s+1)C_1^{2s+3}+1$, then $\tilde{G}_\eps=C_2G_\eps+G_{1,\eps}$ satisfies
\[\tilde{G}_\eps'\leq-(h_\eps'')^2-(h_\eps')^2.\]
Using \eqref{eq:hbound} and the vanishing of the zeroth order term together with its derivative at $h_\eps=1$, we have
\[\begin{split}
\tilde{G}_\eps
&=C_2\left(\dfrac12(h_\eps')^2+\dfrac12\left(h_\eps^2-1\right)+\dfrac{1}{2s-1}\left(\dfrac{1}{h_\eps^{2s-1}}-1\right)\right)+\dfrac12(h_\eps'')^2+\dfrac12(h_\eps')^2\left(1+\dfrac{2s}{h_\eps^{2s+1}}\right)\\
&\leq{C}\left((h_\eps'')^2+(h_\eps')^2+\left(h_\eps-\dfrac{1}{h_\eps^{2s}}\right)^2\right)\\
&\leq-C\tilde{G}_\eps'.
\end{split}\]
It follows that for some constants $C,{\delta_0}>0$ independent of $\eps>0$ small,
$$\tilde{G}_\eps(t)\leq{C}e^{-{\delta_0}{t}}\quad\textforall{t\geq0}$$
and, in particular,
\[\abs{h_\eps(t)-1}+\abs{h_\eps'(t)}\leq{C}e^{-\frac{{\delta_0}}{2}t},\quad\textforall{t\geq0}.\]
It follows that after a fixed $t_1$ independent of $\eps$, the point $(h_\eps(t_1),h_\eps'(t_1))$ is sufficiently close to $(1,0)$. Let
\[\begin{split}
v_1&=h_\eps\\
v_2&=h_\eps'+h_\eps.
\end{split}\]
Then \eqref{eq:heps} is equivalent to
\begin{equation}\label{eq:v1v2}
\begin{pmatrix}
v_1\\v_2
\end{pmatrix}'
=\begin{pmatrix}
-v_1+v_2\\
v_1^{-2s}-v_2
\end{pmatrix}.
\end{equation}
For $t_1$ large, the point $(v_1(t_1),v_2(t_1))$ is sufficiently close to $(1,1)$, which is a hyperbolic equilibrium point of \eqref{eq:v1v2}. Now the linearization of \eqref{eq:v1v2}, namely
\[\begin{pmatrix}
v_1\\v_2
\end{pmatrix}'
=\begin{pmatrix}
-1 & 1 \\
-2s & -1
\end{pmatrix}
\begin{pmatrix}
v_1-1\\
v_2-1
\end{pmatrix},\]
has eigenvalues $-1\pm{i}\sqrt{2s}$. Applying a $C^1$ conjugacy, we obtain
\[\abs{(v_1(t),v_2(t))-(1,1)}\leq{C}e^{-t}\quad\textforall{t}\geq{t_1}.\]
This yields
\[\abs{h_\eps(t)-1}+\abs{h_\eps'(t)}\leq{C}e^{-t}\quad\textforall{t}\geq{0},\]
\[\abs{\tilde{h}_\eps(t)-1}+\abs{\tilde{h}_\eps'(t)}\leq{C}e^{-t}\quad\textforall{t}\geq{0},\]
and for any fixed $r_0>0$, there exists $C>0$ such that for all $r\geq{r_0}$,
\[\abs{g_\eps(r)-r^{\frac{2}{2s+1}}}\leq{C}r^{-\frac{2s-1}{2s+1}}\quad\textand\quad\abs{g_\eps'(r)-\dfrac{2}{2s+1}r^{-\frac{2s-1}{2s+1}}}\leq{C}r^{-\frac{4s}{2s+1}}\]
and, in view of \eqref{eq:geps}, we get
\[\abs{g_\eps''(r)}\leq{C}r^{-\frac{4s}{2s+1}}.\]
\end{proof}

\begin{cor}[Properties of the initial approximation]\label{cor:F0}
We have the following properties of $F_0$.
\begin{itemize}
\item For 
    $1\leq{r}\leq{r_\eps}$, $F_0(r)=f_C(r)=\log(r+\sqrt{r^2-1})$ and
\[\begin{split}
F_0(r)&=\log(2r)+O(r^{-2}),\\
F_0'(r)&=\dfrac{1}{\sqrt{r^2-1}}=\dfrac{1}{r}+O(r^{-3}),\\
F_0''(r)&=-\dfrac{1}{r^2}+O(r^{-4}),\\
F_0'''(r)&=\dfrac{2}{r^3}+O(r^{-5}).
\end{split}\]
\item For 
    $r_\eps\leq{r}\leq\delta_0\abs{\log\eps}r_\eps$ where ${\delta_0}>0$ is fixed,
\[\begin{split}
\dfrac{1}{2}\abs{\log\eps}\leq{F_0}(r)&\leq{C}\abs{\log\eps},\\
F_0'(r)&\leq{C}r_\eps^{-1},\\
\abs{F_0''(r)}&\leq{C}\left(\dfrac{1}{r^2}+\dfrac{1}{\abs{\log\eps}r_\eps^2}\right),\\
\abs{F_0'''(r)}&\leq{C}r_\eps^{-1}\left(\dfrac{1}{r^2}+\dfrac{1}{\abs{\log\eps}r_\eps^{2}}\right).
\end{split}\]

\item For 
    $r\geq\delta_0\abs{\log\eps}r_\eps$, $F_0(r)=f_\eps(r)$ and
\[\begin{split}
F_0(r)&=\eps^{\frac{2s-1}{2s+1}}r^{\frac{2}{2s+1}}+O\left(\eps^{-\frac{(2s-1)^2}{2(2s+1)}}\abs{\log\eps}^{\frac{2s+1}{2}}r^{-\frac{2s-1}{2s+1}}\right),\\
F_0'(r)&=\dfrac{2}{2s+1}\eps^{\frac{2s-1}{2s+1}}r^{-\frac{2s-1}{2s+1}}+O\left(\eps^{-\frac{(2s-1)^2}{2(2s+1)}}\abs{\log\eps}^{\frac{2s+1}{2}}r^{-\frac{4s}{2s+1}}\right),\\
F_0''(r)&=O\left(\eps^{\frac{2s-1}{2s+1}}r^{-\frac{4s}{2s+1}}\right),\\
F_0'''(r)&=O\left(\eps^{\frac{2s-1}{2s+1}}r^{-\frac{6s+1}{2s+1}}\right).
\end{split}\]
\end{itemize}
\end{cor}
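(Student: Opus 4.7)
The plan is to handle the three regimes separately, exploiting the piecewise definition
\[F_0(r) = f_C(r) + \chi(r - r_\eps)(f_\eps(r) - f_C(r)).\]
In the near regime $1 \leq r \leq r_\eps$, the cut-off vanishes and $F_0 = f_C = \log(r+\sqrt{r^2-1})$; the stated asymptotic expansions follow from the standard Taylor expansion of $\log(r+\sqrt{r^2-1})$ in powers of $1/r^2$ and direct differentiation of the explicit formulas $f_C'(r)=1/\sqrt{r^2-1}$, $f_C''(r)=-r/(r^2-1)^{3/2}$, $f_C'''(r)=(2r^2+1)/(r^2-1)^{5/2}$.

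For the far regime $r \geq \delta_0 \abs{\log\eps} r_\eps$, the cut-off equals $1$ and $F_0 = f_\eps$. Setting $\tilde r = r/(\abs{\log\eps}r_\eps) \geq \delta_0$, Lemma \ref{lem:ODEfar} supplies the behavior of $g_\eps$, and it remains to translate back via $f_\eps(r) = \abs{\log\eps}\, g_\eps(\tilde r)$ and $f_\eps^{(k)}(r) = \abs{\log\eps}^{1-k} r_\eps^{-k} g_\eps^{(k)}(\tilde r)$. Using $r_\eps = (\abs{\log\eps}/\eps)^{(2s-1)/2}$, the two algebraic identities
\[
\abs{\log\eps}\cdot(\abs{\log\eps}r_\eps)^{-\frac{2}{2s+1}} = \eps^{\frac{2s-1}{2s+1}},
\qquad
\abs{\log\eps}\cdot(\abs{\log\eps}r_\eps)^{\frac{2s-1}{2s+1}} = \abs{\log\eps}^{\frac{2s+1}{2}}\eps^{-\frac{(2s-1)^2}{2(2s+1)}}
\]
convert the leading term and error in Lemma \ref{lem:ODEfar} into the claimed expansion of $F_0$, and similar manipulations yield the claimed forms for $F_0'$ and $F_0''$. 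The bound on $F_0'''$ is then obtained by differentiating the ODE
\[f_\eps'' + \frac{f_\eps'}{r} = \frac{\eps^{2s-1}}{f_\eps^{2s}}\]
once more, giving $f_\eps''' = -f_\eps''/r + f_\eps'/r^2 - 2s\eps^{2s-1}f_\eps'/f_\eps^{2s+1}$, and estimating each term using the previously established bounds.

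In the intermediate regime $r_\eps \leq r \leq \delta_0\abs{\log\eps}r_\eps$, the subinterval $r \geq r_\eps + 1$ has $F_0 = f_\eps$ and the bounds on $F_0, F_0', F_0''$ come directly from Lemma \ref{lem:ODEnear}, with $F_0'''$ again obtained by differentiating the ODE. On the transition subinterval $r_\eps \leq r \leq r_\eps + 1$ the cut-off is active; since $f_\eps(r_\eps) = f_C(r_\eps)$ and $f_\eps'(r_\eps) = f_C'(r_\eps)$ by the initial conditions, a Taylor expansion at $r_\eps$ combined with the uniform second-derivative bounds of Lemma \ref{lem:ODEnear} and the near-regime estimates gives $(f_\eps - f_C)^{(j)}(r) = O((r-r_\eps)^{2-j}/r_\eps^2)$ for $j = 0, 1, 2$; together with the boundedness of $\chi$ and its derivatives, this keeps $F_0$ and its first three derivatives within the stated envelopes.

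The main technical obstacle is the bookkeeping of powers of $\eps$, $\abs{\log\eps}$, and $r_\eps$ in the far regime when translating between the $g_\eps$- and $f_\eps$-scales, and checking that the powers in the two displayed identities combine to give exactly the exponents $(2s+1)/2$ and $(2s-1)^2/(2(2s+1))$ appearing in the error term; everything else is either elementary calculus on $f_C$ or a direct application of Lemmas \ref{lem:ODEnear} and \ref{lem:ODEfar}.
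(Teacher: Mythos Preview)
Your proposal is correct and follows essentially the same approach as the paper: the paper's proof simply cites Lemmata~\ref{lem:ODEnear} and~\ref{lem:ODEfar} and remarks that third derivatives come from differentiating the ODE, while you have fleshed out the intermediate steps (the explicit scaling identities, the treatment of the cut-off transition on $[r_\eps,r_\eps+1]$) that the paper leaves implicit. Your two displayed algebraic identities for the far-regime conversion are exactly what is needed and check out.
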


\begin{proof}
These estimates follow from Lemmata \ref{lem:ODEnear} and \ref{lem:ODEfar}. For the third order derivative, we differentiate the equation and use the estimates for the lower order derivatives.
\end{proof}

\subsection{The linearization}\label{sec:reducedL0}

Now we build a right inverse for the linearized operator
\[\cL_0(\phi)(r)=(1-\chi_{\eps}(r))\dfrac{1}{r}\left(\dfrac{r\phi'}{(1+F_0'(r)^2)^{\frac32}}\right)'
    +\chi_\eps(r)\left(\phi''+\dfrac{\phi'}{r}+\dfrac{2s\eps^{2s-1}}{F_0(r)^{2s+1}}\phi\right),\]
where $\chi_\eps$ is any family of smooth cut-off functions with $\chi_{\eps}(r)=0$ for 
$1\leq{r}\leq{r_\eps}$ and $\chi_{\eps}(r)=1$ for 
$r\geq\delta_0\abs{\log\eps}r_\eps$ where ${\delta_0}>0$ is a sufficiently small number. The goal is to solve
\begin{equation}\label{eq:L0}
\cL_0(\phi)(r)=h(r)\quad\textfor{r\geq1},
\end{equation}
in a weighted function space which allows the expected sub-linear growth. Let us recall the norms $\norm[*]{\cdot}$ and $\norm[**]{\cdot}$ defined in \eqref{eq:norm1} and \eqref{eq:norm2}.

\begin{prop}\label{prop:invertlinear}
Let $\gamma\leq{2}+\frac{2s-1}{2s+1}$. For all sufficiently small ${\delta_0},\eps>0$, there exists $C>0$ such that for all $h$ with $\norm[**]{h}<+\infty$, 
there exists a solution $\phi=T(h)$ of \eqref{eq:L0} with $\norm[*]{\phi}<+\infty$ that defines a linear operator $T$ of $h$ such that
\[\norm[*]{\phi}\leq{C}\norm[**]{h}\]
and $\phi(1)=0$.
\end{prop}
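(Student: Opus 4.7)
The plan is to invert $\cL_0$ by constructing a two-parameter fundamental system of homogeneous solutions, applying variation of parameters, and then reading off the weighted bound from their asymptotics.

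First I would build two linearly independent homogeneous solutions $Z_1, Z_2$ of $\cL_0(Z)=0$ on $[1,+\infty)$, normalized by the initial data $(Z_1(1),Z_1'(1))=(0,1)$ and $(Z_2(1),Z_2'(1))=(1,0)$. Their existence and uniqueness on all of $[1,+\infty)$ follows from Picard--Lindel\"{o}f once the coefficients are seen to be smooth thanks to Corollary~\ref{cor:F0}. The crucial point is their asymptotic behavior as $r\to+\infty$. In that regime, $F_0(r)\sim\eps^{(2s-1)/(2s+1)}r^{2/(2s+1)}$ by Corollary~\ref{cor:F0}, so the potential satisfies $\frac{2s\eps^{2s-1}}{F_0^{2s+1}}\sim\frac{2s}{r^2}$, and after the Emden--Fowler change of variable $\phi(r)=r^{-(2s-1)/(2s+1)}\psi(\log r)$ already used in Lemma~\ref{lem:ODEfar}, the linearized equation becomes asymptotically constant coefficient with characteristic roots $-1\pm i\sqrt{2s}$ (precisely the eigenvalues of the linearization of \eqref{eq:v1v2} at $(1,1)$). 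Consequently $Z_i(r)=O(r^{-(2s-1)/(2s+1)})$ with an oscillating factor at log-scale; this decay rate is what dictates the constraint $\gamma\leq 2+\tfrac{2s-1}{2s+1}$.

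With $Z_1,Z_2$ in hand, I would rewrite the equation in Sturm--Liouville form $\frac{1}{r}(ra\phi')'+V\phi=h$, with $a(r)=(1-\chi_\eps)(1+F_0'^{2})^{-3/2}+\chi_\eps$ bounded above and below. By Abel's identity the Wronskian obeys $W(r)=c_0/(ra(r))$ with a nonzero constant $c_0$. Define
\[T(h)(r)=\frac{1}{c_0}\left(Z_1(r)\int_1^r Z_2(\rho)h(\rho)\rho a(\rho)\,d\rho-Z_2(r)\int_1^r Z_1(\rho)h(\rho)\rho a(\rho)\,d\rho\right),\]
which is $C^2$ on $[1,+\infty)$, solves $\cL_0(Th)=h$ and satisfies $T(h)(1)=0$ by construction. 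The estimate $\norm[*]{Th}\leq C\norm[**]{h}$ is then proved by pointwise estimation of the Green's kernel against $\abs{h(\rho)}\leq \norm[**]{h}\rho^{-\gamma}$, splitting the integral according to whether $\rho$ lies in the inner catenoid region $[1,r_\eps]$, the transition strip $[r_\eps,\delta_0|\log\eps|r_\eps]$, or the self-similar tail. In each regime Corollary~\ref{cor:F0} supplies the quantitative behavior of $a$ and $V$, and the analysis of Lemma~\ref{lem:ODEfar} supplies that of $Z_1,Z_2$. The bounds on $\phi,\phi',\phi''$ in $\norm[*]{\cdot}$ follow term by term; the H\"{o}lder part is extracted by solving the equation for $\phi''$ and applying the matching H\"{o}lder control in $\norm[**]{\cdot}$ on balls of radius comparable to $r$.

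The main obstacle I expect is controlling the tail integrals given the oscillatory behavior of $Z_1,Z_2$: their moduli decay only as $r^{-(2s-1)/(2s+1)}$, so crude pointwise bounds would lose exactly the factor that matches $\gamma$. One must exploit cancellation between the two summands of $Th$, or equivalently integrate by parts once against the oscillating factor $e^{\pm i\sqrt{2s}\log r}$, in order to recover the optimal growth rate. This oscillation mechanism is precisely why the admissible weight caps at $\gamma\leq 2+\frac{2s-1}{2s+1}$; pushing past this threshold would force solvability conditions on $h$ reflecting the two-dimensional space of bounded homogeneous solutions at infinity.
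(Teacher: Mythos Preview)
Your overall plan---build a fundamental system and apply variation of parameters---matches the paper's, but the execution differs and there are two genuine gaps.

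First, the paper does \emph{not} construct global homogeneous solutions on $[1,+\infty)$. It solves region by region: on $[r_1,r_\eps]$ via the catenoid's Jacobi equation; on the intermediate strip $[r_\eps,\tilde{r}_\eps]$ with $\tilde{r}_\eps=\delta_0|\log\eps|r_\eps$ by rewriting $\cL_0\phi=h$ as $\phi''+\phi'/r=h-\tilde{h}$ with $\|r^\gamma\tilde{h}\|_{L^\infty}\leq C\delta_0\|\phi\|_{\gamma,[r_\eps,\tilde{r}_\eps]}$, which is then absorbed; and on $[\tilde{r}_\eps,+\infty)$ by variation of parameters against the kernels of Lemma~\ref{lem:kernelfar}. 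Your scheme skips the middle step: you assert the far-field asymptotics of your global $Z_1,Z_2$ but never explain how they stay controlled through $[r_\eps,\tilde{r}_\eps]$ \emph{uniformly in $\eps$}. That is exactly the content of the paper's perturbation argument, and it is where the smallness of $\delta_0$ in the hypothesis is actually used; your proposal never invokes it.

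Second, your appeal to Picard--Lindel\"{o}f at $r=1$ fails: in the catenoid chart the leading coefficient $a(r)=(1+F_0'^2)^{-3/2}=(r^2-1)^{3/2}/r^3$ vanishes there, so the equation is singular and the data $(Z(1),Z'(1))$ do not determine a unique solution in the standard sense. The paper sidesteps this by working from $r_1>1$, with the neck handled in the $G$-chart of Proposition~\ref{prop:reduced}.

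Finally, your diagnosis of the ``main obstacle'' is off. The tail integrals close by pure pointwise estimation: with $|Z_i(r)|\leq C(r/\tilde{r}_\eps)^{-(2s-1)/(2s+1)}$ and $|h(\rho)|\leq\norm[**]{h}\rho^{-\gamma}$, one gets $|\phi_0(r)|\leq Cr^{2-\gamma}\norm[**]{h}$ with no cancellation needed. The constraint $\gamma\leq2+\tfrac{2s-1}{2s+1}$ enters only in bounding the homogeneous piece $|c_iZ_i(r)|\leq C\tilde{r}_\eps^{2-\gamma}(r/\tilde{r}_\eps)^{-(2s-1)/(2s+1)}\leq Cr^{2-\gamma}$, not the Duhamel integral.
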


We start with an estimate of the kernels of the linearized equation in the far region, namely
\begin{equation}\label{eq:Z}
Z''+\dfrac{Z'}{r}+\dfrac{2s\eps^{2s-1}}{f_\eps(r)^{2s+1}}Z=0,
    \quad\textfor{r}\geq{\delta_0}\abs{\log\eps}r_\eps.
\end{equation}

\begin{lem}\label{lem:kernelfar}
There are two linearly independent solutions $Z_1$, $Z_2$ of \eqref{eq:Z} so that for $i=1,2$, we have
\[\abs{Z_i(r)}\leq{C}\left(\frac{r}{r_\eps\abs{\log\eps}}\right)^{-\frac{2s-1}{2s+1}}\quad\textand\quad\abs{Z_i'(r)}\leq\dfrac{C}{r_\eps\abs{\log\eps}}\left(\dfrac{r}{r_\eps\abs{\log\eps}}\right)^{-\frac{2s-1}{2s+1}}\]
for $r\geq{\delta_0}\abs{\log\eps}r_\eps$ where ${\delta_0}>0$ is fixed and $r_\eps=\left(\frac{\abs{\log\eps}}{\eps}\right)^{\frac{2s-1}{2}}$.
\end{lem}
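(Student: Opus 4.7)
The plan is to construct the two kernels explicitly by exploiting the scaling symmetry of the underlying nonlinear ODE. Equation \eqref{eq:Z} is the linearisation at $f_\eps$ of $F'' + F'/r = \eps^{2s-1}/F^{2s}$, which is invariant under the one-parameter rescaling $F \mapsto F_\mu(r) := \mu F\bigl(\mu^{-(2s+1)/2}r\bigr)$. Differentiating $F_\mu$ at $\mu = 1$ yields the formal kernel
\[
\tilde Z_1(r) := f_\eps(r) - \frac{2s+1}{2}\, r f_\eps'(r),
\]
which one checks satisfies \eqref{eq:Z} by a short computation using the equation for $f_\eps$ and the formula for $f_\eps'''$ obtained by differentiating it. Switching to the rescaled variable $\rho = r/(\abs{\log\eps}r_\eps)$ of Lemma \ref{lem:ODEfar}, one computes
\[
\tilde Z_1(r) = \abs{\log\eps}\bigl(g_\eps(\rho) - \tfrac{2s+1}{2}\rho\, g_\eps'(\rho)\bigr).
\]

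To extract the pointwise bound on $\tilde Z_1$, I would invoke a leading-order cancellation: by Lemma \ref{lem:ODEfar}, both $g_\eps(\rho)$ and $\frac{2s+1}{2}\rho\, g_\eps'(\rho)$ equal $\rho^{2/(2s+1)}$ modulo an error of order $\rho^{-(2s-1)/(2s+1)}$, so the two leading terms cancel exactly and $|\tilde Z_1(r)| \le C\abs{\log\eps}\,\rho^{-(2s-1)/(2s+1)}$. Setting $Z_1 := \tilde Z_1/\abs{\log\eps}$ absorbs the logarithm and gives the stated bound on $|Z_1|$. For the derivative bound, a direct differentiation in $\rho$ produces $-\frac{2s-1}{2}g_\eps'(\rho) - \frac{2s+1}{2}\rho\, g_\eps''(\rho)$, each term of which is directly of order $\rho^{-(2s-1)/(2s+1)}$ by Lemma \ref{lem:ODEfar} (no cancellation needed this time); the chain-rule factor $d\rho/dr = 1/(r_\eps\abs{\log\eps})$ then delivers the claimed estimate for $|Z_1'(r)|$.

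The second kernel $Z_2$ is produced by reduction of order. Abel's identity applied to an equation of the form $Y'' + Y'/r + V(r)Y = 0$ gives the Wronskian $W(r) = c_0/r$, so writing $Z_2(r) = Z_1(r) u(r)$ reduces the problem to $u'(r) = c_0/(r Z_1(r)^2)$. Integrating from a base point $r_0$ of order $\delta_0\abs{\log\eps}r_\eps$ produces an explicit $Z_2$, linearly independent of $Z_1$ because $W \not\equiv 0$. The bounds on $Z_2$ and $Z_2'$ then follow by combining the estimates already obtained for $Z_1$ with an elementary power-counting estimate of the integral defining $u$, together with the product rule.

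The main technical obstacle is the quantitative cancellation producing the $|Z_1|$ bound: since $g_\eps$ and $\frac{2s+1}{2}\rho\, g_\eps'$ are individually of order $\rho^{2/(2s+1)}$, the difference is only controllable if the remainder estimates of Lemma \ref{lem:ODEfar} are applied sharply and the leading coefficients match, which is a direct consequence of the scaling structure. A secondary subtlety is the correct placement of the factor $1/\abs{\log\eps}$ in the normalisation of $Z_1$ so that its derivative carries the natural scaling $1/(r_\eps\abs{\log\eps})$ rather than $1/r_\eps$; and the choice of base point $r_0$ for $Z_2$ has to be made to ensure linear independence while keeping $Z_2$ uniformly controlled in $\eps$, since the limiting behaviour at $\rho \to \infty$ is governed by an Euler-type equation $Y'' + Y'/\rho + (\mathrm{const}) Y/\rho^2 = 0$ whose characteristic roots sit on the borderline between decay and oscillation.
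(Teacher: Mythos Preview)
Your construction of $Z_1$ from the scaling symmetry is the same as the paper's (up to a harmless multiplicative constant): the paper sets $\tilde Z_1(\rho)=\rho g_\eps'(\rho)-\tfrac{2}{2s+1}g_\eps(\rho)$ in the rescaled variable and reads off the bound from the leading-order cancellation supplied by Lemma~\ref{lem:ODEfar}, exactly as you do.

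The reduction-of-order route to $Z_2$, however, has a real gap. The large-$\rho$ limit of \eqref{eq:tildeZ} is the Euler equation $\tilde Z''+\tilde Z'/\rho+2s\beta^2\rho^{-2}\tilde Z=0$ with $\beta=\tfrac{2}{2s+1}$, and its indicial equation $\lambda^2+2s\beta^2=0$ has \emph{purely imaginary} roots, not roots on any borderline: both fundamental solutions oscillate like $\cos(\beta\sqrt{2s}\log\rho)$ with a common bounded envelope. In particular $Z_1$ (which equals $\rho^\beta\tilde h_\eps'(\log\rho)$ exactly) inherits infinitely many zeros, so $u'(r)=c_0/(rZ_1(r)^2)$ is not integrable across them and the formula $Z_2=Z_1u$ does not produce a global solution. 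Even if one ignores the zeros and power-counts with the envelope $|Z_1|\sim\rho^{-(2s-1)/(2s+1)}$ as a two-sided bound, the integral gives $u\sim\rho^{2(2s-1)/(2s+1)}$ and hence $|Z_2|\sim\rho^{+(2s-1)/(2s+1)}$, growing rather than decaying. Reduction of order separates a growing kernel from a decaying one; it is the wrong tool when both kernels share the same oscillatory envelope.

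The paper avoids this by passing to the Emden--Fowler variable $\tilde Z(\rho)=\rho^{2/(2s+1)}\tilde v(\log\rho)$ and a further rescaling of the independent variable, arriving at $v''+2v'+(1+2s)v=2s\bigl(1-h_\eps^{-(2s+1)}\bigr)v$. Since $h_\eps\to1$ exponentially, this is a damped oscillator with an exponentially small coefficient perturbation; its unperturbed fundamental system is $e^{-\tau}\cos(\sqrt{2s}\,\tau)$, $e^{-\tau}\sin(\sqrt{2s}\,\tau)$, and $v_2$ is realised as a bounded perturbation of a linear combination of these. This yields the same envelope for $Z_1$ and $Z_2$ simultaneously, without ever dividing by $Z_1$.
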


\begin{proof}
We want to show that the elements $\tilde{Z}_i$ of the kernel of the linearization around $g_\eps$, which solve
\begin{equation}\label{eq:tildeZ}
\tilde{Z}''+\dfrac{\tilde{Z}'}{r}+\dfrac{2s}{g_\eps(r)^{2s+1}}\tilde{Z}=0\quad\textfor{r\geq\dfrac{1}{\abs{\log\eps}}},
\end{equation}
will satisfy
\[\abs{\tilde{Z}_i(r)}\leq{C}r^{-\frac{2s-1}{2s+1}}\quad\textand\quad\abs{\tilde{Z}_i'(r)}\leq{C}r^{-\frac{2s-1}{2s+1}}\quad\textforall{r\geq{\delta_0}}\]
for $i=1,2$; the result then follows by setting $Z_i(r)=\tilde{Z}_i\left(\frac{r}{r_\eps\abs{\log\eps}}\right)$.

A first kernel $\tilde{Z_1}$ can be obtained from the scaling invariance $g_{\eps,\lambda}(r)=\lambda^{-\frac{2}{2s+1}}g_\eps(\lambda{r})$ of \eqref{eq:geps}, giving
\[\tilde{Z_1}(r)=rg_\eps'(r)-\frac{2}{2s+1}g_\eps(r).\]
Then for $\tilde{Z}_2$ we solve \eqref{eq:tildeZ} with the initial conditions
\[\tilde{Z}_2({\delta_0})=-\dfrac{\tilde{Z}_1'({\delta_0})}{{\delta_0}\left(\tilde{Z}_1({\delta_0})^2+\tilde{Z}_1'({\delta_0})^2\right)},\qquad\tilde{Z}_2'({\delta_0})=\dfrac{\tilde{Z}_1({\delta_0})}{{\delta_0}\left(\tilde{Z}_1({\delta_0})^2+\tilde{Z}_1'({\delta_0})^2\right)}\]
for a fixed ${\delta_0}>0$. In particular the Wro\'{n}skian $\tilde{W}=\tilde{Z}_1\tilde{Z}_2'-\tilde{Z}_1'\tilde{Z}_2$ can be computed exactly as
\begin{equation}\label{eq:tildeW}
\tilde{W}(r)=\dfrac{{\delta_0}{\tilde{W}}({\delta_0})}{r}=\dfrac{1}{r}\quad\textforall{r}>\dfrac{1}{\abs{\log\eps}}.
\end{equation}

As in the proof of Lemma \ref{lem:ODEfar}, we write $t=\log{r}$ and consider the Emden--Fowler change of variable $\tilde{Z}(r)=r^{\frac{2}{2s+1}}\tilde{v}(t)$ followed by a re-normalization $\tilde{v}(t)=\left(\frac{2}{2s+1}\right)^{-\frac{2}{2s+1}}v\left(\frac{2}{2s+1}t\right)$ which yield respectively
\begin{gather*}
\tilde{v}''+2\dfrac{2}{2s+1}\tilde{v}'+\left(\left(\dfrac{2}{2s+1}\right)^2+\dfrac{2s}{\tilde{h}_\eps^{2s+1}}\right)\tilde{v}=0,\quad\textfor{t}\geq-\log\abs{\log\eps},\\
v''+2v'+(1+2s)v=2s\left(1-\dfrac{1}{h_\eps^{2s+1}}\right)v,\quad\textfor{t}\geq-\dfrac{2s+1}{2}\log\abs{\log\eps}.
\end{gather*}
From this point we may express $v_2(t)$, and hence $\tilde{Z}_2(r)$, as a perturbation of the linear combination of the kernels
\[e^{-t}\cos(\sqrt{2s}\,t)\quad\textand\quad{e}^{-t}\sin(\sqrt{2s}\,t).\]
\end{proof}

Now we show the existence of the right inverse.

\begin{proof}[Proof of Proposition \ref{prop:invertlinear}]
We sketch the argument. We would like to find a solution in a weighted $L^{\infty}$ space. The general case follows from similar ideas.

\begin{enumerate}
\item
Note that we will need to control $\phi$ up to two derivatives in the intermediate region. For this purpose, for any $\gamma\in\R$ and any interval $I\subseteq[r_1,+\infty)$ we define the norm
\[\norm[\gamma,I]{\phi}=\sup_{I}r^{\gamma-2}\abs{\phi(r)}+\sup_{I}r^{\gamma-1}\abs{\phi'(r)}+\sup_{I}r^{\gamma}\abs{\phi''(r)}.\]
By solving the linearized mean curvature equation in the inner region using the variation of parameters formula, we obtain the estimate
\[\norm[\gamma,{[r_1,r_\eps]}]{\phi}\leq{C}\norm[L^{\infty}([1,+\infty))]{r^{\gamma}h},\]
which in particular gives a bound for $\phi$ together with its derivatives at $r_\eps$.
\item
In the intermediate region we write the equation as \[\phi''+\dfrac{\phi'}{r}=h-\tilde{h},\quad{r_\eps}\leq{r}\leq{\tilde{r}_\eps},\]
where
\[\tilde{r}_\eps={\delta_0}\abs{\log\eps}r_\eps,\]
and
\[\tilde{h}(r)=\chi_\eps(r)\dfrac{2s\eps^{2s-1}}{F_0'(r)^{2s+1}}\phi(r)
    +(1-\chi_\eps(r))\left(\left(1-\dfrac{1}{(1+F_0'(r)^2)^{\frac32}}\right)\left(\phi''+\dfrac{\phi'}{r}\right)+\dfrac{3F_0'(r)F_0''(r)}{(1+F_0'(r)^2)^{\frac32}}\phi'\right)\]
is small. Again we integrate and obtain
\[\begin{split}
\phi(r)&=\phi(r_\eps)+r_\eps\phi'(r_\eps)\log\dfrac{r}{r_\eps}+\int_{r_\eps}^{r}\dfrac{1}{t}\int_{r_\eps}^{t}\tau(h(t)-\tilde{h}(t))\,d\tau\,dt,\\
\phi'(r)&=\dfrac{r_\eps\phi'(r_\eps)}{r}+\dfrac{1}{r}\int_{r_\eps}^{r}t(h(t)-\tilde{h}(t))\,dt,\\
\phi''(r)&=-\dfrac{r_\eps\phi'(r_\eps)}{r^2}+h(r)-\tilde{h}(r)-\dfrac{1}{r^2}\int_{r_\eps}^{r}t(h(t)-\tilde{h}(t))\,dt.
\end{split}\]
Using Corollary \ref{cor:F0} we have, for small enough ${\delta_0}$ and $\eps$,
\[\begin{split}
\norm[L^{\infty}({[r_\eps,\tilde{r}_\eps]})]{r^{\gamma}\tilde{h}}
&\leq{C}\dfrac{\eps^{2s-1}}{\abs{\log\eps}^{2s+1}}r^{2}\norm[\gamma,{[r_\eps,\tilde{r}_\eps]}]{\phi}+C\left(\dfrac{\eps}{\abs{\log\eps}}\right)^{2s-1}\norm[\gamma,{[r_\eps,\tilde{r}_\eps]}]{\phi}\\
&\qquad+C\left(\dfrac{\eps}{\abs{\log\eps}}\right)^{\frac{2s-1}{2}}\left(\dfrac{1}{r^2}+\dfrac{\eps^{2s-1}}{\abs{\log\eps}^{2s}}\right)r\norm[\gamma,{[r_\eps,\tilde{r}_\eps]}]{\phi}\\
&\leq{C}\left({\delta_0}^2+{\delta_0}\left(\dfrac{\eps}{\abs{\log\eps}}\right)^{\frac{2s-1}{2}}\abs{\log\eps}\right)\norm[\gamma,{[r_\eps,\tilde{r}_\eps]}]{\phi}\\
&\leq{\delta_0}\norm[\gamma,{[r_\eps,\tilde{r}_\eps]}]{\phi}.
\end{split}\]
This implies
\[\norm[\gamma,{[r_\eps,\tilde{r}_\eps]}]{\phi}\leq{C}\norm[L^{\infty}([1,+\infty))]{r^{\gamma}h}+{\delta_0}\norm[\gamma,{[r_\eps,\tilde{r}_\eps]}]{\phi},\]
or
\begin{equation}\label{eq:phiinter}
\norm[\gamma,{[r_\eps,\tilde{r}_\eps]}]{\phi}\leq{C}\norm[L^{\infty}([1,+\infty))]{r^{\gamma}h}
\end{equation}
which is the desired estimate.
\item
In the outer region, we need to solve
\[\phi''+\dfrac{\phi'}{r}+\dfrac{2s\eps^{2s-1}}{f_\eps^{2s+1}}\phi=h,\quad{r}>\tilde{r}_\eps.\]
In terms of the kernels $Z_i$ given in Lemma \ref{lem:kernelfar}, the Wro\'{n}skian $W=Z_1Z_2'-Z_1'Z_2$ is given by
\begin{equation}\label{eq:W}
W(r)=\dfrac{1}{r_\eps\abs{\log\eps}}\tilde{W}\left(\dfrac{r}{r_\eps\abs{\log\eps}}\right)=\dfrac{1}{r}
\end{equation}
using \eqref{eq:tildeW}. Using the variation of parameters formula, we may write
\[\phi(r)=c_1Z_1(r)+c_2Z_2(r)+\phi_0(r),\]
where
\[\phi_0(r)=-Z_1(r)\int_{\tilde{r}_\eps}^{r}\rho{Z_2}(\rho)h(\rho)\,d\rho+Z_2(r)\int_{\tilde{r}_\eps}^{r}\rho{Z_1}(\rho)h(\rho)\,d\rho\]
and the constants $c_i$ are determined by
\[\begin{split}
\phi(\tilde{r}_\eps)&=c_1Z_1(\tilde{r}_\eps)+c_2Z_2(\tilde{r}_\eps),\\
\phi'(\tilde{r}_\eps)&=c_1Z_1'(\tilde{r}_\eps)+c_2Z_2'(\tilde{r}_\eps).
\end{split}\]
By Lemma \ref{lem:kernelfar}, \eqref{eq:W} and \eqref{eq:phiinter}, we readily check that for $i=1,2$,
\[\begin{split}
\abs{\phi_0(r)}
&\leq{C}\left(\dfrac{r}{\tilde{r}_\eps}\right)^{-\frac{2s-1}{2s+1}}\int_{\tilde{r}_\eps}^{r}\rho\left(\dfrac{\rho}{\tilde{r}_\eps}\right)^{-\frac{2s-1}{2s+1}}\rho^{-\gamma}\norm[L^{\infty}([1,+\infty))]{r^{\gamma}h}\,d\rho\\
&\leq{C}r^{2-\gamma}\norm[L^{\infty}([1,+\infty))]{r^{\gamma}h},\\
\abs{c_i}
&\leq{C}r_1\left(\dfrac{C}{r_1}r^{2-\gamma}\norm[L^{\infty}([1,+\infty))]{r^{\gamma}h}+Cr_1^{1-\gamma}\norm[L^{\infty}([1,+\infty))]{r^{\gamma}h}\right)\\
&\leq{C}\tilde{r}_\eps^{2-\gamma}\norm[L^{\infty}([1,+\infty))]{r^{\gamma}h},\\
\abs{c_i}\abs{Z_i(r)}
&\leq{C}\left(\dfrac{r}{\tilde{r}_\eps}\right)^{-\frac{2s-1}{2s+1}-(2-\gamma)}r^{2-\gamma}\norm[L^{\infty}([1,+\infty))]{r^{\gamma}h}\\
&\leq{C}r^{2-\gamma}\norm[L^{\infty}([1,+\infty))]{r^{\gamma}h}\quad\text{since}\,\,\gamma\leq2+\frac{2s-1}{2s+1},\\
\end{split}\]
from which we conclude
\[\norm[L^{\infty}([\tilde{r}_\eps,+\infty))]{r^{\gamma-2}\phi}\leq{C}\norm[L^{\infty}([1,+\infty))]{r^{\gamma}h}.\]
\end{enumerate}
\end{proof}

\subsection{The perturbation argument: Proof of Proposition \ref{prop:reducedsol}}

We solve the reduced equation
\begin{equation}\label{eq:reduced}
\cL(F)=\cN_1[F]\quad\textfor{r\geq1},
\end{equation}
using the knowledge of the initial approximation $F_0$ and the linearized operator $\cL_0$ at $F_0$ obtained in Sections \ref{sec:reducedF0} and \ref{sec:reducedL0} respectively.
We look for a solution $F=F_0+\phi$. Then $\phi$ satisfies
\[\cL_0\phi=A[\phi]=\cN_1[F_0+\phi]-\cL(F_0)-\cN_2[\phi],\]
where $\cN_2[\phi]=\cL(F_0+\phi)-\cL(F_0)-\cL'(F_0)\phi$ and $\phi(0)=0$. 
In terms of the operator $T$ defined in Proposition \ref{prop:invertlinear}, we can write it in the form
\begin{equation}\label{eq:reducedfixed}
\phi=T\left(A[\phi]\right).
\end{equation}
We apply a standard argument using contraction mapping principle as in \cite{Davila-DelPino-Wei:minimal}. First we note that the approximation $\cL(F_0)$ is small and compactly supported in the intermediate region. The non-linear terms in $A[\phi]$ are also small in the norm $\norm[**]{\cdot}$. Hence $T(A[\phi])$ defines a contraction mapping in the space $X_*$ and the result follows. 

\section{Instability of the solution}

\begin{proof}[Proof of Theorem \ref{thm:unstable}]

From the asymptotic behavior of the solution, we see that the Allen--Cahn energy functional,
\[
E_R(v)=
	C(s)\iint_{\R^3\times\R^3\setminus{B_R\times B_R}}
		\dfrac{(v(x)-v(x_0))^2}{|x-x_0|^{3+2s}}
	\,dxdx_0
	+\int_{B_R}W(v(x))\,dx
\]
of the solution $u$ constructed in Theorem \ref{thm:main} satisfies the sharp growth bound
\[
E_R(u)\leq C R^2.
\]
If $u$ were stable, then \cite[Proof of Theorem 1.5]{Figalli-Serra} (observe that $s=\frac12$ is only used in deriving the energy growth bound) implies that $u$ would be one-dimensional profile, a contradiction.
\end{proof}


\begin{thebibliography}{00}
\bibitem{Abatangelo-Valdinoci:curvature} N. Abatangelo, E. Valdinoci: A notion of nonlocal curvature. Numer. Funct. Anal. Optim. 35 (2014), no. 7-9, 793--815.
\bibitem{Abatangelo-Valdinoci:Ds} N. Abatangelo, E. Valdinoci: Getting acquainted with the fractional Laplacian. Preprint, arXiv:1710.11567v1.
\bibitem{Agudelo-DelPino-Wei} O. Agudelo, M. del Pino, J. Wei: Higher dimensional catenoid, Liouville equation and Allen-Cahn equation Liouville's equation.  International Math. Research Note (IMRN) 2016, no. 23, 7051--7102.
\bibitem{Allen-Cahn} S.M. Allen, J.W. Cahn: A microscopic theory for antiphase boundary motion and its application to antiphase domain coarsening. Acta Metallurgica 27 (1979), no. 6, 1085--1095.
\bibitem{Ambrosio-Cabre} L. Ambrosio, X. Cabr\'{e}: Entire solutions of semilinear elliptic equations in $\mathbb{R}^3$ and a conjecture of De Giorgi. J. Amer. Math. Soc. 13 (2000), 725--739.
\bibitem{Ambrosio-DePhilippis-Martinazzi} L. Ambrosio, G. De Philippis, L. Martinazzi: Gamma-convergence of nonlocal perimeter functionals. Manuscripta Math. 134 (2011), no. 3-4, 377--403.
\bibitem{Barrios-Figalli-Valdinoci} B. Barrios, A. Figalli, E. Valdinoci: Bootstrap regularity for integro-differential operators and its application to nonlocal minimal surfaces. Ann. Sc. Norm. Super. Pisa. Cl. Sci. (5) 13 (2014), no. 3, 609--639.
\bibitem{Berestycki-Lin-Wei-Zhao} H. Berestycki, T.-C. Lin, J. Wei, C. Zhao: On phase-separation models: asymptotics and qualitative properties. Arch. Ration. Mech. Anal. 208 (2013), no. 1, 163--200.
\bibitem{Berestycki-Terracini-Wang-Wei} H. Berestycki, S. Terracini, K. Wang, J. Wei: On entire solutions of an elliptic system modeling phase separations. Adv. Math. 243 (2013), 102--126.
\bibitem{Brasco-Lindgren-Parini} L. Brasco, E. Lindgren, E. Parini: The fractional Cheeger problem. Interfaces Free Bound. 16 (2014), no. 3, 419--458.
\bibitem{Brasseur-Dipierro} J. Brasseur, S. Dipierro: Some monotonicity results for general systems of nonlinear elliptic PDEs. J. Differential Equations 261 (2016), no. 5, 2854--2880.
\bibitem{Bucur} C. Bucur: A symmetry result in $\mathbb{R}^2$ for global minimizers of a general type of nonlocal energy. Preprint, arXiv:1708.04924.
\bibitem{Bucur-Valdinoci} C. Bucur, E. Valdinoci: Nonlocal diffusion and applications. Lecture Notes of the Unione Matematica Italiana, 20. Springer, [Cham]; Unione Matematica Italiana, Bologna, 2016. xii+155 pp.
\bibitem{Cabre-Cinti1} X. Cabr\'{e}, E. Cinti: Energy estimates and 1-D symmetry for nonlinear equations involving the half-Laplacian. Discrete Contin. Dyn. Syst. 28 (2010), no. 3, 1179--1206.
\bibitem{Cabre-Cinti2} X. Cabr\'{e}, E. Cinti: Sharp energy estimates for nonlinear fractional diffusion equations. Calc. Var. Partial Differential Equations 49 (2014), no. 1-2, 233--269.
\bibitem{Cabre-Cinti-Serra}X. Cabr\'{e}, E. Cinti, J. Serra: Stable $s$-minimal cones in $\R^3$ are flat for $s\sim 1$. Preprint 2017, arXiv:1710.08722.
\bibitem{Cabre-Serra} X. Cabr\'{e}, J. Serra: An extension problem for sums of fractional Laplacians and 1-D symmetry of phase transitions. Nonlinear Anal. 137 (2016), 246--265.
\bibitem{Cabre-Terra} X. Cabr\'{e}, J. Terra: Saddle-shaped solutions of bistable diffusion equations in all of $\mathbb{R}^{2m}$. J. Eur. Math. Soc. (JEMS) 11 (2009), no. 4, 819--843.
\bibitem{Cabre-Sire1} X. Cabr\'{e}, Y. Sire: Nonlinear equations for fractional Laplcains, I: Regularity, maximum principles, and Hamiltonian estimates. Ann. Inst. H. Poincar\'{e} Anal. Non Lin\'{e}aire 31 (2014), no. 1, 23--53.
\bibitem{Cabre-Sire2} X. Cabr\'{e}, Y. Sire: Nonlinear equations for fractional Laplacians II: existence, uniqueness, and qualitative properties of solutions. Trans. Amer. Math. Soc. 367 (2015), no. 2, 911--941.
\bibitem{Cabre-SolaMorales} X. Cabr\'{e}, J. Sol\`{a}-Morales: Layer solutions in a half-space for boundary reactions. Comm. Pure Appl. Math. 58 (2005), no. 12, 1678--1732.
\bibitem{Caffarelli-Cordoba} L. Caffarelli, A. C\'{o}rdoba: Uniform convergence of a singular perturbation problem, Comm. Pure Appl. Math. 48 (1995), 1--12.
\bibitem{Caffarelli-Roquejoffre-Savin} L. Caffarelli, J.-M. Roquejoffre, O. Savin: Nonlocal minimal surfaces. Comm. Pure Appl. Math. 63 (2010), no. 9, 1111--1144.
\bibitem{Caffarelli-Savin-Valdinoci} L. Caffarelli, O. Savin, E. Valdinoci: Minimziation of a fractional perimeter-Dirichlet integral functional. Ann. Inst. H. Poincar\'{e} Anal. Non Lin\'{e}aire 32 (2015), no. 4, 901--924.
\bibitem{Caffarelli-Silvestre} L. Caffarelli, L. Silvestre: An extension problem related to the fractional Laplacian. Comm. Partial Differential Equations 32 (2007), no. 7-9, 1245--1260.
\bibitem{Caffarelli-Valdinoci:limiting} L. Caffarelli, E. Valdinoci: Uniform estimates and limiting arguments for nonlocal minimal surfaces. Calc. Var. Partial Differential Equations 41 (2011), no. 1-2, 203--240.
\bibitem{Caffarelli-Valdinoci:regularity} L. Caffarelli, E. Valdinoci: Regularity properties of nonlocal minimal surfaces via limiting arguments. Adv. Math. 248 (2013), 843--871.
\bibitem{Cahn-Hilliard} J. Cahn, J. Hilliard: Free energy of a nonuniform system I. Interfacial free energy. J. Chem. Phys. 28 (1958), 258--267.
\bibitem{Chan-Wei} H. Chan, J. Wei: Traveling wave solutions for bistable fractional Allen--Cahn equations with a pyramidal front. J. Differential Equations 262 (2017), no. 9, 4567--4609.
\bibitem{Chan-Davila-DelPino-Liu-Wei} H. Chan, J. D\'{a}vila, M. del Pino, Y. Liu, J. Wei: A gluing construction for fractional elliptic equations. Part II: Counterexamples of De Giorgi Conjecture for the fractional Allen--Cahn equation. In preparation.
\bibitem{Chang-Gonzalez} S.-Y. A. Chang, M.d.M. Gonz\'{a}lez: Fractional Laplacian in conformal geometry. Adv. Math. 226 (2011), no. 2, 1410--1432.
\bibitem{Chen-Liu-Zheng} Y.-H. Chen, C. Liu, Y. Zheng: Existence results for the fractional Nirenberg problem. J. Funct. Anal. 270 (2016), no. 11, 4043--4086.
\bibitem{Chen-Zheng} G. Chen, Y. Zheng: Concentration phenomenon for fractional nonlinear Schr\"{o}dinger equations. Commun. Pure Appl. Anal. 13 (2014), no. 6, 2359--2376.
\bibitem{Cinti-Davila-DelPino} E. Cinti, J. D\'{a}vila, M. del Pino: Solutions of the fractional Allen-Cahn equation which are invariant under screw motion. J. Lond. Math. Soc. (2) 94 (2016), no. 1, 295--313.
\bibitem{Davila-DelPino-Dipierro-Valdinoci} J. D\'{a}vila, M. del Pino, S. Dipierro, E. Valdinoci: Nonlocal Delaunay surfaces. Nonlinear Anal. 137 (2016), 357--380.
\bibitem{Davila-DelPino-Wei:NLS} J. D\'{a}vila, M. del Pino, J. Wei: Concentrating standing waves for the fractional nonlinear Schr\"{o}dinger equation. J. Differential Equations 256 (2014), no. 2, 858--892.
\bibitem{Davila-DelPino-Wei:minimal} J. D\'{a}vila, M. del Pino, J. Wei: Nonlocal $s$-minimal surfaces and Lawson cones. J. Differential Geom., to appear.
\bibitem{DeGiorgi} E. De Giorgi: Convergence problems for functionals and operators. Proceedings of the International Meeting on Recent Methods in Nonlinear Analysis (Rome, 1978), 131--188, Pitagora, Bologna (1979).
\bibitem{DelaTorre-Ao-Gonzalez-Wei} A. DelaTorre, W. Ao, M.d.M. Gonz\'{a}lez, J. Wei: A gluing aproach for the fractional Yamabe problem with isolated singularities. Preprint, arXiv:1609.08903v1.
\bibitem{DelPino-Kowalczyk-Wei:9} M. del Pino, M. Kowalczyk, J. Wei: On De Giorgi's conjecture in dimension $N\geq9$. Ann. of Math. (2) 174 (2011), no. 3, 1485--1569.
\bibitem{DelPino-Kowalczyk-Wei:catenoid} M. del Pino, M. Kowalczyk, J. Wei: Entire solutions of the Allen-Cahn equation and complete embedded minimal surfaces of finite total curvature in $\R^3$. J. Differential Geom. 93 (2013) no. 1, 67--131.
\bibitem{DeSilva-Roquejoffre} D. De Silva, J.-M. Roquejoffre: Regularity in a one-phase free boundary problem for the fractional Laplacian. Ann. Inst. H. Poincar\'{e} Anal. Non Lin\'{e}aire 29 (2012), no. 3, 335--367.
\bibitem{DeSilva-Savin} D. De Silva, O. Savin: Regularity of Lipschitz free boundaries for the thin one-phase problem. J. Eur. Math. Soc. (JEMS) 17 (2015), no. 6, 1293--1326.
\bibitem{DiCastro-Novaga-Ruffini-Valdinoci} A. Di Castro, M. Novaga, B. Ruffini, E. Valdinoci: Nonlocal quantitative isoperimetric inequalities. Calc. Var. Partial Differential Equations 54 (2015), no. 3, 2421--2464.
\bibitem{DiNezza-Palatucci-Valdinoci} E. Di Nezza, G. Palatucci, E. Valdinoci: Hitchhiker's guide to the fractional Sobolev spaces. Bull. Sci. Math. 136 (2012), no. 5, 521--573.
\bibitem{Dipierro} S. Dipierro: Geometric inequalities and symmetry results for elliptic systems. Discrete Contin. Dyn. Syst. 33 (2013), no. 8, 3473--3496.
\bibitem{Dipierro-Figalli-Palatucci-Valdinoci} S. Dipierro, A. Figalli, G. Palatucci, E. Valdinoci: Asymptotics of the $s$-perimeter as $s\searrow0$. Discrete Contin. Dyn. Syst. 33 (2013), no. 7, 2777--2790.
\bibitem{Dipierro-Pinamonti} S. Dipierro, A. Pinamonti: A geometric inequality and a symmetry result for elliptic systems involving the fractional Laplacian. J. Differential Equations 255 (2013), no. 1, 85--119.
\bibitem{Dipierro-Savin-Valdinoci:free} S. Dipierro, O. Savin, E. Valdinoci: A nonlocal free boundary problem. SIAM J. Math. Anal. 47 (2015), no. 6, 4559--4605.
\bibitem{Dipierro-Savin-Valdinoci:graph} S. Dipierro, O. Savin, E. Valdinoci: Graph properties for nonlocal minimal surfaces. Calc. Var. Partial Differential Equations 55 (2016), no. 4, Art. 86, 25 pp.
\bibitem{Dipierro-Savin-Valdinoci:stick} S. Dipierro, O. Savin, E. Valdinoci: Boundary behavior of nonlocal minimal surfaces. J. Funct. Anal. 272 (2017), no. 5, 1791--1851.
\bibitem{Dipierro-Serra-Valdinoci} S. Dipierro, J. Serra, E. Valdinoci: Improvement of flatness for nonlocal phase transitions. Preprint, arXiv:1611.10105.
\bibitem{Du-Gui-Sire-Wei} Z. Du, C. Gui, Y. Sire, J. Wei: Layered solutions for a fractional inhomogeneous Allen-Cahn equation. NoDEA Nonlinear Differential Equations Appl. 23 (2016), no. 3, Art. 29, 26 pp.
\bibitem{Fall-Weth} M.M. Fall, T. Weth: Nonexistence results for a class of fractional elliptic boundary value problems. J. Funct. Anal. 263 (2012), no. 8, 2205--2227.
\bibitem{Farina} A. Farina: Some symmetry results for entire solutions of an elliptic system arising in phase separation. Discrete Contin. Dyn. Syst. 34 (2014), no. 6, 2505--2511.
\bibitem{Farina-Sciunzi-Valdinoci} A. Farina, B. Sciunzi, E. Valdinoci: Bernstein and De Giorgi type problems: new results via a geometric approach. Ann. Sc. Norm. Super. Pisa Cl. Sci. (5) 7 (2008), no. 4, 741--791.
\bibitem{Farina-Soave} A. Farina, N. Soave: Monotonicity and 1-dimensional symmetry for solutions of an elliptic system arising in Bose-Einstein condensation. Arch. Ration. Mech. Anal. 213 (2014), no. 1, 287--326.
\bibitem{Farina-Valdinoci} A. Farina, E. Valdinoci: Rigidity results for elliptic PDEs with uniform limits: an abstract framework with applications. Indiana Univ. Math. J. 60 (2011), no. 1, 121--141.
\bibitem{Fazly-Ghoussoub} M. Fazly, N. Ghoussoub: De Giorgi type results for elliptic systems. Calc. Var. Partial Differential Equations 47 (2013), no. 3-4, 809--823.
\bibitem{Fazly-Sire} M. Fazly, Y. Sire: Symmetry results for fractional elliptic systems and related problems. Comm. Partial Differential Equations 40 (2015), no. 6, 1070--1095.
\bibitem{Figalli-Fusco-Maggi-Millot-Morini} A. Figalli, N. Fusco, F. Maggi, V. Millot, M. Morini: Isoperimetry and stability properties of balls with respect to nonlocal energies. Comm. Math. Phys. 336 (2015), no. 1, 441--507.
\bibitem{Figalli-Serra} A. Figalli, J. Serra: On stable solutions for boundary reactions: a De Giorgi-type result in dimension $4+1$. Preprint, arXiv:1705.02781.
\bibitem{Figueiredo-Siciliano} G.M. Figueiredo, G. Siciliano: A multiplicity result via Ljusternick-Schnirelmann category and Morse theory for a fractional Schr\"{o}dinger equation in $\mathbb{R}^N$. NoDEA Nonlinear Differential Equations Appl. 23 (2016), no. 2, Art. 12, 22 pp.
\bibitem{Fiscella-Valdinoci} A. Fiscella, E. Valdinoci: A critical Kirchhoff type problem involving a nonlocal operator. Nonlinear Anal. 94 (2014), 156--170.
\bibitem{Fusco} N. Fusco: The quantitative isoperimetric inequality and related topics. Bull. Math. Sci. 5 (2015), no. 3, 517--607.
\bibitem{Ghoussoub-Gui:2} N. Ghoussoub, C. Gui: On a conjecture of De Giorgi and some related problems. Math. Ann. 311 (1998), 481--491.
\bibitem{Ghoussoub-Gui:45} N. Ghoussoub, C. Gui: On De Giorgi's conjecture in dimensions 4 and 5. Ann. of Math. (2) 157 (2003), no. 1, 313--334.
\bibitem{Gonzalez:Gamma} M.d.M. Gonz\'{a}lez: Gamma convergence of an energy functional related to the fractional Laplacian. Calc. Var. Partial Differential Equations 36 (2009), no. 2, 173--210.
\bibitem{Gonzalez:survey} M.d.M. Gon\'{a}lez: Recent progress on the fractional Laplacian in conformal geometry. Preprint, arXiv:1609.08988.
\bibitem{Gui-Zhao} C. Gui, M. Zhao: Traveling wave solutions of Allen-Cahn equation with a fractional Laplacian. Ann. Inst. H. Poincar\'{e} Anal. Non Lin\'{e}aire 32 (2015), no. 4, 785--812.
\bibitem{Jerison-Monneau} D. Jerison, R. Monneau: Towards a counter-example to a conjecture of De Giorgi in high dimensions. Ann. Mat. Pura Appl. (4) 183 (2004), no. 4, 439--467.
\bibitem{Knuepfer-Muratov} H. Kn\"{u}pfer, C. Muratov: On an isoperimetric problem with a competing nonlocal term I: The planar case. Comm. Pure Appl. Math. 66 (2013), no. 7, 1129--1162.
\bibitem{Ludwig} M. Ludwig: Anisotropic fractional perimeters. J. Differential Geom. 96 (2014), no. 1, 77--93.
\bibitem{Millot-Sire} V. Millot, Y. Sire: On a fractional Ginzburg-Landau equation and $1/2$-harmonic maps into spheres. Arch. Ration. Mech. Anal. 215 (2015), no. 1, 125--210.
\bibitem{Millot-Sire-Wang} V. Millot, Y. Sire, K. Wang: Asymptotics for the fractional Allen-Cahn equation and stationary nonlocal minimal surfaces. Preprint, arXiv:1610.07194v2.
\bibitem{Modica} L. Modica: Convergence to minimal surfaces problem and global solutions of $\Delta{u}=2(u^3-u)$. Proceedings of the International Meeting on Recent Methods in Nonlinear Analysis (Rome, 1978), 223--244, Pitagora, Bologna (1979).
\bibitem{MolicaBisci-Repovs} G. Molica Bisci, D. Repov\v{s}: On doubly nonlocal fractional elliptic equations. Atti. Accad. Naz. Lincei. Rend. Lincei Mat. Appl. 26 (2015), no. 2, 161--176.
\bibitem{Pagliardini} D. Pagliardini: Multiplicity of critical points for the fractional Allen-Cahn energy. Electron. J. Diffeerential Equations 2016, Paper No. 119, 12 pp.
\bibitem{Pucci-Saldi} P. Pucci, S. Saldi: Critical stationary Kirchhoff equations in $\mathbb{R}^N$ involving nonlocal operators. Rev. Mat. Iberoam. 32 (2016), no. 1, 1--22.
\bibitem{Qiu-Xiang} H. Qiu, M. Xiang: Existence of solutions for fractional $p$-Laplacian problems via Leray-Schauder's nonlinear alternative. Bound. Value Probl. 2016, Paper No. 83, 8 pp.
\bibitem{Savin8} O. Savin: Regularity of flat level sets in phase transitions. Ann. of Math., 169 (2009), 41--78.
\bibitem{Savin8frac} O. Savin: Rigidity of minimizers in nonlocal phase transitions. Preprint, arXiv:1610.09295v1.
\bibitem{Savin-Valdinoci:Gamma} O. Savin, E. Valdinoci: $\Gamma$-convergence for nonlocal phase transitions. Ann. Inst. H. Poincar\'{e} Anal. Non Lin\'{e}aire 29 (2012), no. 4, 479--500.
\bibitem{Savin-Valdinoci:1D} O. Savin, E. Valdinoci: Some monotonicity results for minimizers in the calculus of variations. J. Funct. Anal. 264 (2013), no. 10, 2469--2496.
\bibitem{Savin-Valdinoci:regularity2} O. Savin, E. Valdinoci: Regularity of nonlocal minimal cones in dimension 2. Calc. Var. Partial Differential Equations 48 (2013), no. 1-2, 33--39.
\bibitem{Savin-Valdinoci:Uniform} O. Savin, E. Valdinoci: Density estimates for a variational model driven by the Gagliardo norm. J. Math. Pures Appl. (9) 101 (2014), no. 1, 1--26.
\bibitem{Sire-Valdinoci} Y. Sire, E. Valdinoci: Fractional Laplacian phase transitions and boundary reactions: a geometric inequality and a symmetry result. J. Funct. Anal. 256 (2009), no. 6, 1842--1864.
\bibitem{Tan} J. Tan: Positive solutions for non local elliptic problems. Discrete Contin. Dyn. Syst. 33 (2013), no. 2, 837--859.
\bibitem{TorresLedesma} C.E. Torres Ledesma: Existence and symmetry result for fractional $p$-Laplacian in $\mathbb{R}^n$. Commun. Pure Appl. Anal. 16 (2017), no. 1, 99--113.
\bibitem{Valdinoci} E. Valdinoci: A fractional framework for perimeters and phase transitions. Milan J. Math. 81 (2013), no. 1, 1--23.
\bibitem{Wang1} K. Wang: Harmonic approximation and improvement of flatness in a singular perturbation problem. Manuscripta Math. 146 (2015), no. 1-2, 281--298.
\bibitem{Wang2} K. Wang: On the De Giorgi type conjecture for an elliptic system modeling phase separation. Comm. Partial Differential Equations 39 (2014), no. 4, 696--739.
\bibitem{Wang-Wei} K. Wang, J. Wei: On the uniqueness of solutions of a
    nonlocal elliptic system, Math. Ann. 365 (2016), no. 1-2, 105--153.
\bibitem{Wei-Su} Y. Wei, X. Su: Multiplicity of solutions for non-local elliptic equations driven by the fractional Laplacian. Calc. Var. Partial Differential Equations 52 (2015), no. 1-2, 95--124.
\bibitem{Xiang-Zhang-Radulescu} M. Xiang, B. Zhang, V.D. R\u{a}dulescu: Existence of solutions for a bi-nonlocal fractional $p$-Kirchhoff type problem. Comput. Math. Appl. 71 (2016), no. 1, 255--266.
\end{thebibliography}
\end{document}